\documentclass[12pt,twoside,a4paper]{amsart}
\usepackage{graphicx} 
\usepackage{amssymb,amscd,amsxtra,calc,mathrsfs}
\usepackage{amsmath}
\usepackage{bm}
\usepackage[all]{xy}
\usepackage{graphicx}
\usepackage{tikz}
\usepackage{float}
\usetikzlibrary{patterns}
\tikzset{>=latex}


\allowdisplaybreaks[1]

\numberwithin{equation}{section}

\title{Elementary links from prime Fano threefolds along two lines}
\date{\today}
\subjclass[2020]{Primary 14J45; Secondary 14J30, 14J50}
\keywords{Fano varieties}

\author{Kento Fujita} 
\address{Department of Mathematics, Graduate School of Science, 
Osaka University, Toyonaka, Osaka 560-0043, Japan}
\email{fujita@math.sci.osaka-u.ac.jp}

\newcommand{\pr}{\mathbb{P}}

\newcommand{\Z}{\mathbb{Z}}
\newcommand{\Q}{\mathbb{Q}}
\newcommand{\R}{\mathbb{R}}
\newcommand{\C}{\mathbb{C}}
\newcommand{\F}{\mathbb{F}}
\newcommand{\D}{\mathbb{D}}
\newcommand{\T}{\mathbb{T}}
\newcommand{\U}{\mathbb{U}}
\newcommand{\X}{\mathbb{X}}
\newcommand{\Y}{\mathbb{Y}}

\newcommand{\W}{\mathbb{W}}
\newcommand{\V}{\mathbb{V}}
\newcommand{\B}{{\bf B}}

\newcommand{\A}{\mathbb{A}}
\newcommand{\G}{\mathbb{G}}
\newcommand{\E}{\mathbb{E}}

\newcommand{\be}{\mathbf{e}}
\newcommand{\Bf}{\mathbf{f}}
\newcommand{\Bc}{\mathbf{c}}

\newcommand{\ND}{\operatorname{N}^1}

\newcommand{\Nef}{\operatorname{Nef}}
\newcommand{\Mov}{\operatorname{Mov}}

\newcommand{\Psef}{\operatorname{Psef}}

\newcommand{\Supp}{\operatorname{Supp}}
\newcommand{\Exc}{\operatorname{Exc}}
\newcommand{\Sing}{\operatorname{Sing}}

\newcommand{\Spec}{\operatorname{Spec}}
\newcommand{\Pic}{\operatorname{Pic}}

\newcommand{\Ext}{\operatorname{Ext}}
\newcommand{\Aut}{\operatorname{Aut}}
\newcommand{\Proj}{\operatorname{Proj}}

\newcommand{\PGL}{\operatorname{PGL}}

\newcommand{\mult}{\operatorname{mult}}

\newcommand{\MU}{\operatorname{MU}}

\newcommand{\KP}{\operatorname{KP}}

\newcommand{\sC}{\mathcal{C}}

\newcommand{\sO}{\mathcal{O}}
\newcommand{\sN}{\mathcal{N}}
\newcommand{\sE}{\mathcal{E}}

\newcommand{\sL}{\mathcal{L}}

\newcommand{\dm}{\mathfrak{m}}

\newcommand{\ttx}{\mathtt{x}}
\newcommand{\tty}{\mathtt{y}}
\newcommand{\ttz}{\mathtt{z}}
\newcommand{\ttt}{\mathtt{t}}
\newcommand{\ttw}{\mathtt{w}}
\newcommand{\ttf}{\mathtt{f}}
\newcommand{\ttg}{\mathtt{g}}
\newcommand{\tth}{\mathtt{h}}

\newtheorem{thm}{Theorem}[section]
\newtheorem{lemma}[thm]{Lemma}
\newtheorem{proposition}[thm]{Proposition}
\newtheorem{corollary}[thm]{Corollary}

\theoremstyle{definition}
\newtheorem{definition}[thm]{Definition}
\newtheorem{remark}[thm]{Remark}

\newtheorem{example}[thm]{Example}
\newtheorem*{ack}{Acknowledgments}

\begin{document}

\maketitle 

\begin{abstract}
For prime Fano threefolds $X$ of genus $g=12$, $10$ or $9$, and for 
totally disjoint pairs of lines $Z_1$, $Z_2$ in $X$, we establish links from the blowups 
of $X$ along $Z_1$ and $Z_2$. If $g=12$, then the links end with the blowups of 
Fano threefolds of type 2.21 along bi-cubic curves; if $g=10$, then the links 
end with the blowups of the projectivization of the tangent bundle of the 
projective plane along genus $2$ bi-quintic curves with a mild condition; if $g=9$, 
then the links end with conic bundles over the product of two projective lines 
with the discriminant loci of bidegree $(3,3)$. When $g=12$ or $g=10$, we also 
establish the converses of the above links. 
Moreover, we especially focus on the links when $g=12$ and the links are 
$\mathbb{G}_m$-equivariant. 
\end{abstract}

\setcounter{tocdepth}{1}
\tableofcontents

\part{Preliminaries}\label{part:prelim}

\section{Introduction}\label{section:intro}

We always work over an 
algebraically closed field $\Bbbk$ of characteristic zero. 
A \emph{prime Fano threefold} is defined to be 
a smooth Fano threefold $X$ such that its Picard group is generated by its 
anti-canonical divisor $-K_X$ (see Definition 
\ref{definition:X22}). 
The \emph{degree} of $X$ is defined to be its anti-canonical volume $(-K_X)^{\cdot 3}$, 
and the \emph{genus} of $X$ is defined to be the value $1+\frac{1}{2}(-K_X)^{\cdot 3}$. 
We mainly consider the cases that the genus $g$ of $X$ is one of $9,10,12$. 
For the cases, the anti-canonical divisor $-K_X$ is very ample and gives an embedding 
$X\hookrightarrow\pr^{g+1}$. A \emph{line} (resp., \emph{conic}) in $X$ is an irreducible 
curve in $X$ which is a line (resp., conic) inside $\pr^{g+1}$ under the above 
anti-canonical embedding. Two pairwise disjoint lines $Z_1$, $Z_2\subset X$ are said to 
be a \emph{totally disjoint pair of lines} if there is no line $Z\subset X$ in $X$ with 
$Z\cap Z_1\neq\emptyset$ and $Z\cap Z_2\neq\emptyset$ (see Definition 
\ref{definition:X22}). 
Prime Fano threefolds of genus $9$, $10$, $12$ have many interesting structures 
in the sense of birational geometry. 
For example, Iskovskikh \cite{Isk79} analyzed the Sarkisov link starting from the blowup 
of $X$ along a line. 
Later, Takeuchi \cite{Tak89} found another interesting Sarkisov link starting from 
the blowup of $X$ along a conic. 

Let us firstly consider the case that the genus of $X$ is equal to $12$. 
They observed that, Iskovskikh's link ends with the del Pezzo threefold of degree 
$5$ together with a rational quintic curve, and Takeuchi's link ends with 
the $3$-dimensional smooth hyperquadric together with a sextic rational curve. 
The \emph{del Pezzo threefold of degree $5$} is defined to be a smooth Fano threefold 
$V$ such that its Picard group is generated by the half $-\frac{1}{2}K_V$ of the 
anti-canonical divisor and $\left(-\frac{1}{2}K_V\right)^{\cdot 3}=5$ holds 
(see Definition \ref{definition:dP5}). 
Takao Fujita \cite{Fuj81} and Iskovskikh \cite{Isk77} independently showed that, 
the isomorphism class of such $V$ is unique, and it can be analyzed from the 
Sarkisov link \eqref{equation:fujita} 
starting from the blowup of $V$  along lines (i.e., curves with the 
half-anti-canonical degree $1$). 

One of the most interesting subclass among prime Fano threefolds $X$ of genus $12$ 
are the class of such $X$ with \emph{infinite} automorphism groups $\Aut(X)$. 
Such $X$ are classified by Kuznetsov, Prokhorov and Shramov, and their automorphism 
groups are determined \cite{KPS18}. They in particular showed that, if the identity 
component $\Aut^0(X)$ of $\Aut(X)$ is isomorphic to the multiplicative group 
$\G_m=\Spec\Bbbk[t^{\pm 1}]$, then the set of such isomorphism classes form 
a $1$-dimensional family $\{X_{22}^m(v)\}_{v\in\left\{0,1,-4,\infty\right\}}$. 
(We follow the parametrization by \cite{DFK}. See \S \ref{section:parameter} in detail.)
Moreover, we have 
$\Aut(X_{22}^m(v))\cong\G_m\rtimes\boldsymbol{\mu}_2$, where $\boldsymbol{\mu}_2$ 
is the cyclic group scheme of order $2$. In order to show this deep result, the authors 
of \cite{KPS18} firstly observed that there is a pair of disjoint $\G_m$-invariant 
lines $Z_1^m(v)$, $Z_2^m(v)$ in $X_{22}^m(v)$ such that the union 
$Z_1^m(v)\cup Z_2^m(v)$ of such line is $\Aut(X_{22}^m(v))$-invariant. 
The difficult step is to find an involution $\iota$ of $X_{22}^m(v)$, 
which becomes the generator of $\boldsymbol{\mu}_2$, 
\emph{swapping $Z_1^m(v)$ and $Z_2^m(v)$}. The authors of \cite{KPS18} showed 
the existence of such involution by using the theory of varieties of sums of powers 
\cite{M, DKK17}. Later, Kuznetsov and Prokhorov \cite{KP18} gave an alternative proof 
for the existence of such involution $\iota$. They firstly show that there exists an 
$\Aut(X_{22}^m(v))$-invariant
smooth conic by focusing on the Hilbert scheme of conics in $X_{22}^m(v)$. 
After running Takeuchi's link equivariantly from the above conic,  they get the desired 
involution. (Recently, Ito, Kanemitsu, Takamatsu and Tanaka \cite{IKTT} gave an 
alternative and simple proof for the existence of such a conic. See Remark 
\ref{remark:IKTT}.)

For the existence of such involution $\iota$ of the above $X_{22}^m(v)$, the authors in 
\cite{DFK} gave another proof. The idea of 
their proof is the following. Firstly we blowup $\sigma\colon X_0^m(v)\to X_{22}^m(v)$ 
along \emph{the union of $Z_1^m(v)$ and $Z_2^m(v)$} and let $F_1+F_2$ be the union of 
$\sigma$-exceptional divisors. Then, the variety $X_0^m(v)$ is 
a smooth weak Fano threefold and the ample model $\hat{Q}^m(v)$ of 
$\sigma^*(-2K_{X_{22}^m(v)})-3(F_1+F_2)$ is a Fano threefold of type 2.21 with an 
effective $\G_m$-action. A \emph{Fano threefold $\hat{Q}$ of type 2.21} is defined to be 
the blowup $\rho\colon\hat{Q}\to Q$ of smooth $3$-dimensional hyperquadric 
along a twisted quartic curve. 
A \emph{bi-cubic curve} in $\hat{Q}$ is a smooth 
rational curve $\hat{C}\subset\hat{Q}$ such that $\left(\rho^*\sO_Q(1)\cdot \hat{C}
\right)=\left(S\cdot\hat{C}\right)=3$ holds, where $S\subset \hat{Q}$ is the 
exceptional divisor of $\rho$ (see Definition \ref{definition:2-21a} in detail). 
From the structure of of $\hat{Q}^m(v)$, there exists a suitable involution on 
$\hat{Q}^m(v)$ such that we can export it to the desired involution $\iota$ on 
$X_{22}^m(v)$. This is the key idea of \cite[Lemma 23]{DFK}. 

The purpose of this article is to generalize the above idea obtained in \cite{DFK}. 
Moreover, we consider that the genus of prime Fano threefolds is not only $12$ but 
also $10$ and $9$.

More precisely, from prime Fano threefolds $X$ of genus $g\in\{12, 10, 9\}$ 
together with pairs of totally disjoint lines $Z_1$, $Z_2$ in $X$, 
we construct the Sarkisov-like elementary links starting from the blowups of 
$X$ along $Z_1$ and $Z_2$. In order to state our main theorem, we prepare the following 
notation. The \emph{del Pezzo threefold $U$ of degree $6$ and rank $2$} is 
nothing but $U=\pr_{\pr^2}(T_{\pr^2})$. The variety $U$ admits exactly $2$ numbers of 
$\pr^1$-bundle structures $\rho_1\colon U\to \pr^2$ and $\rho_2\colon U\to\pr^2$. 
An irreducible curve $\Gamma\subset U$ is said to be a \emph{bi-quintic curve} 
if $(\rho_i^*\sO(1)\cdot \Gamma)=5$ for $i=1,2$ 
(see Definition \ref{definition:dP6} in detail). Here is one of our main result in this 
article: 

\begin{thm}\label{thm:main}
Let $X$ be a prime Fano threefold of genus $g\in\{12,10,9\}$ and let 
$Z_1$, $Z_2\subset X$ be a totally disjoint pair of lines in $X$. 
Consider the blowup 
$\sigma\colon X_0\to X$ along $Z_1\cup Z_2$, and let $F_1$, $F_2\subset X_0$ be the 
exceptional divisors. Then $X_0$ is a smooth weak Fano threefold and its anti-canonical 
model $\alpha\colon X_0\to \bar{X}_0$ is a small morphism of relative Picard rank $2$. 
The $(\sigma^*K_X)$-flop $\alpha^+\colon X_0^+\to\bar{X}_0$ of $\alpha$ 
is again a \emph{smooth} weak Fano threefold, and 
the strict transform of $\sigma^*(-2K_X)-3(F_1+F_2)$ on $X_0^+$ is semiample and 
gives a contraction morphism $\tau\colon X_0^+\to\Y$. 
As a consequence, we get the following elementary link: 
\[
X \, \stackrel{\sigma}{\longleftarrow} X_0 \, \stackrel{\chi}{\dasharrow} \, X_0^+ \, 
\stackrel{\tau}{\longrightarrow} \, \Y,
\]
where $\chi:=(\alpha^+)^{-1}\circ\alpha$. Moreover, we have the following: 
\begin{enumerate}
\renewcommand{\theenumi}{\arabic{enumi}}
\renewcommand{\labelenumi}{(\theenumi)}
\item\label{thm:main1}
If $g=12$, then $\Y=\hat{Q}$ is a Fano threefold of type 2.21 and $\tau$ is the 
blowup of $\hat{Q}$ along a bi-cubic curve $\hat{C}$. 
\item\label{thm:main2}
If $g=10$, then $\Y=U$ is the del Pezzo threefold of degree $6$ and rank $2$ 
and $\tau$ is the 
blowup of $U$ along a smooth bi-quintic curve $\Gamma$ of genus $2$. 
Moreover, the curve $\Gamma$ satisfies that 
$\mult_{p_i}\left(\rho_i(\Gamma)\right)\leq 2$ holds for any $i=1,2$ and for any 
$p_i\in\rho_i(\Gamma)$. 
\item\label{thm:main3}
If $g=9$, then $\Y=\pr^1\times\pr^1$ and the morphism $\tau$ is a conic bundle 
with the discriminant locus $\Delta_\tau\in|\sO(3,3)|$. 
\end{enumerate}
\end{thm}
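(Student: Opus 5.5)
Write $H:=\sigma^*(-K_X)$, so that $-K_{X_0}=H-F_1-F_2$. The plan has four stages: check that $X_0$ is weak Fano, identify and flop the small contraction, identify the contraction $\tau$ on $X_0^+$, and then name $\Y$ in each genus.

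\textbf{Step 1: $X_0$ is weak Fano.} The lines are disjoint, so the standard formulas for the blowup of a line $Z_i$ with normal bundle of degree $-1$ give
\[
H^3=2g-2,\qquad H^2F_i=0,\qquad HF_i^2=-1,\qquad F_i^3=-\deg N_{Z_i/X}=1,\qquad F_1F_2=0 .
\]
Hence
\[
(-K_{X_0})^3=(2g-2)-6+2=2g-6>0 .
\]
For nefness, note that each linear system $|H-F_i|$ is the projection from $Z_i$, and these are base point free by Iskovskikh's analysis of the single-line link. So $H-F_1-F_2$ is nef away from the curves meeting both $Z_1$ and $Z_2$ in a degenerate way. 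I would show that $|H-F_1-F_2|$ is base point free: a general member of $|H-F_1|$ containing $Z_2$ should do, using that $Z_1$ and $Z_2$ span a $\pr^3$ because they are disjoint. The curves with $-K_{X_0}\cdot\tilde C\le 0$ are strict transforms of curves $C$ with
\[
\deg C\le (C\cdot Z_1)+(C\cdot Z_2) .
\]
Total disjointness rules out lines meeting both $Z_i$. What remains is conics meeting both lines, and possibly twisted cubics meeting each $Z_i$ twice. These curves have $-K_{X_0}$-degree $0$, and the anti-canonical model $\alpha$ contracts exactly them.

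\textbf{Step 2: $\alpha$ is small, of relative Picard rank $2$, and the flop is smooth.} Smallness follows because the union of these conics is one-dimensional. Indeed, the conics meeting $Z_1$ form a curve's worth of conics sweeping out a surface, and requiring them also to meet $Z_2$ cuts this down to finitely many.

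The relative Picard rank is $2$ because $\rho(X_0)=3$ and the relative cone is spanned by two extremal classes. I expect these to be the classes of conics meeting $Z_1$ twice and $Z_2$ once, and vice versa, or the two symmetric types determined by which $F_i$ the curve meets more. These two classes should be distinguished by their intersection numbers with $F_1-F_2$.

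The flopping curves should be $(-1,-1)$ or $(0,-2)$ rational curves. The flop is then an Atiyah flop or a Reid pagoda, so $X_0^+$ is smooth. This step relies on smoothness of conics and on the normal bundle of conics in $X$.

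\textbf{Step 3: the contraction $\tau$.} Let $D=2H-3F_1-3F_2$. This class is chosen so that $D=-K_{X_0}+(H-2F_1-2F_2)$. On $X_0^+$, the class $D$ is nef on the flopped curves because the flop reverses the sign of $H-2F_1-2F_2$ on them. It is also nef on the other extremal ray of the movable cone. Its top self-intersection is
\[
D^3=8(2g-2)-3\cdot 2\cdot 9\cdot(-1)\cdot 2\cdots ,
\]
which I would compute exactly. The outcome I expect is:
\begin{itemize}
\item $D^3>0$ for $g=12,10$, so $\tau$ is birational and, by Mori's classification, a divisorial contraction of type E1 onto a smooth curve;
\item $D^3=0$ with $D^2\neq 0$ for $g=9$, so $\tau$ is a conic bundle over a surface.
\end{itemize}
Semiampleness follows from the basepoint-free theorem, since $D-K_{X_0^+}$ is nef and big, or $X_0^+$ is weak Fano.

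\textbf{Step 4: identifying $\Y$ and the centre.} The target $\Y$ has Picard rank $2$.
\begin{itemize}
\item For $g=12$ and $g=10$, I would compute the invariants of $\Y$ as the blowdown: $(-K_\Y)^3=(-K_{X_0^+})^3+2(-K_\Y\cdot\Gamma)-2+2p_a(\Gamma)$, together with the degrees of the two pushforward classes. I would then match $\Y$ against the Mori--Mukai table to obtain type 2.21 and $\pr(T_{\pr^2})$, respectively. The bidegrees of the centre are read off by computing $-K_{X_0}$-degrees of the images of $F_1,F_2$.
\item For $g=9$, the base is a rank-two smooth del Pezzo-type surface, forced to be $\pr^1\times\pr^1$ by the two pencil-like structures given by $F_1$ and $F_2$. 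The discriminant class comes from $4K_S+\tau_*K^2$.
\end{itemize}

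\textbf{Expected main obstacle.} The hardest part is the bi-quintic multiplicity condition $\mult_{p_i}\rho_i(\Gamma)\le2$, together with the precise analysis of the flopping locus in Steps 1--2. For the multiplicity condition I would argue by contradiction: a triple point of $\rho_i(\Gamma)$ gives a fibre of $\rho_i$ that is $3$-secant to $\Gamma$, whose strict transform is $K$-positive. Pulled back to $X$, this would produce a line meeting both $Z_1$ and $Z_2$, contradicting total disjointness.
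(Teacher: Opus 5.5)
There are genuine gaps, beginning with your description of $\Exc(\alpha)$, and Steps 1--2 inherit the error. By your own criterion, the strict transform of any line $Z'$ meeting $Z_1$ has $(H,F_1,F_2)$-degrees $(1,1,0)$ and is therefore $-K_{X_0}$-trivial. Such curves always exist: either a line meeting $Z_1$, or the $(-3)$-curve of $F_1$ when $\sN_{Z_1/X}\cong\sO(1)\oplus\sO(-2)$. This is because the blowup of a single line is not Fano (Theorem~\ref{thm:double-projection-from-line}). The classes $(1,1,0)$ and $(1,0,1)$ span the relative cone of $\alpha$ and give relative Picard rank $2$.

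A conic meeting both lines has class $(2,1,1)$, which is their sum, so it is not extremal. Your proposed classes cannot occur at all once $-K_{X_0}$ is nef: a conic meeting $Z_1$ twice and $Z_2$ once, or a cubic meeting each $Z_i$ twice, has $-K_{X_0}$-degree $-1$.

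Your smallness argument is only a genericity count. It breaks exactly when $Z_2$ lies on the surface swept out by conics meeting $Z_1$, and it ignores other $-K_{X_0}$-trivial families. The paper instead writes a contracted prime divisor as $D\sim aH-b_1F_1-b_2F_2$. It then compares $0=(-K_{X_0})^2\cdot D=(2g-4)a-3b_1-3b_2$ with the bounds $b_i\le ra$ coming from the one-line links. A separate argument is needed for $g=9$, where equality can occur. Also, $(-K_{X_0})^3=2g-10$, not $2g-6$.

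The central gap is Step 2. Because $\alpha$ is not elementary, a fibre of $\alpha$ can be $B_{1k}\cup B_{2k}$: two intersecting lines, one meeting $Z_1$ and one meeting $Z_2$. This is compatible with total disjointness; it only violates absolute disjointness. The flop over such a point is neither an Atiyah flop nor a pagoda. In the paper it is a composite of three elementary flops over the same point. So you have not shown that $X_0^+$ is smooth, or even $\mathbb{Q}$-factorial. Nor do you know $\Nef(X_0^+)$, which you need both for nefness of $D$ and for $\tau$ to be an elementary $K$-negative contraction.

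Step 3 also fails concretely, because triple intersections change under flops. The blowup formulas on $X_0$ give $D^3=16g-178$, that is, $14,-18,-34$ for $g=12,10,9$. On $X_0^+$ the actual values are $28,6,0$. The same problem affects your $\tau_*K^2$ computation of $\Delta_\tau$. Moreover, matching $\Y$ in the Mori--Mukai table presupposes that $-K_\Y$ is ample, which you never prove.

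The missing idea is the paper's factorization through Iskovskikh's one-line links, in three steps:
\begin{itemize}
\item Lift $\chi_i$ to $X_0$; this is the flop $\chi_{i1}$.
\item Blow up the image of $Z_j$ on $\X_i$, which is a line meeting $\Gamma_i$ in length $1,2,3$ for $g=12,10,9$, and flop as in Example~\ref{example:3flop}; this is $\chi_{i2}$.
\item Use the second contraction $\psi_i$ of $Y_i$ to produce $\chi_{i3}$.
\end{itemize}
Each step is an elementary flop of a smooth threefold, so Theorem~\ref{thm:kollar} gives smoothness. One then checks that the routes $i=1,2$ reach the same $X_0^+$. Finally, $\Y$ is identified directly: for $g=12$ it is the blowup of $Q_i$ along the twisted quartic $\Gamma_i^{Q_i}$; for $g=10$ it is $\pr(\mathcal{E})$ with $\mathcal{E}\cong T_{\pr^2}(-2)$, by Langer's classification; for $g=9$ it is $\pr^1\times\pr^1$, by counting sections.

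Your triple-point argument for $\mult_{p_i}\rho_i(\Gamma)\le 2$ does work once $X_0^+$ is known to be weak Fano. It needs no detour through lines in $X$.
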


In fact, we will give \emph{all} of the small $\Q$-factorial modifications of $X_0$. 
See the diagrams \eqref{equation:big}, \eqref{equation:big2} and 
Theorem \ref{thm:diamond}. 

We remark that, since $\alpha$ and $\alpha^+$ are of relative Picard rank higher 
than $1$, it is not trivial that their flops are $\Q$-factorial. 
We also remark that, the link in Theorem \ref{thm:main} \eqref{thm:main1} has been 
partially observed in \cite[Remark 2.13]{CS} \emph{under the highly restricted 
assumption that there is an involution of $X$ swapping the two lines}. 
If there exists such an involution, then the situation is much easier; we can consider a 
usual equivariant Sarkisov link. 
Our assumption is much weaker, and it turned out to be much difficult to analyze 
the links, since the blowups of $X$ along the lines will be of Picard rank $3$. 
We overcome the difficulty by looking at all of small $\Q$-factorial modifications 
of the blowups. 

When $g\in\{12, 10\}$, we can also construct the reverse of the links 
constructed in Theorem \ref{thm:main}. Here is another main result in this article: 

\begin{thm}\label{thm:main-back}
\begin{enumerate}
\renewcommand{\theenumi}{\arabic{enumi}}
\renewcommand{\labelenumi}{(\theenumi)}
\item\label{thm:main-back1}
Let $\hat{Q}$ be a Fano threefold of type 2.21 and let 
$\hat{C}\subset\hat{Q}$ be a bi-cubic curve in $\hat{Q}$. Consider the blowup 
$\tau\colon X_0^+\to \hat{Q}$ along $\hat{C}$, and let $E^+\subset X_0^+$ be 
the exceptional divisor. Then $X_0^+$ is a smooth weak Fano threefold and 
its anti-canonical model $\alpha^+\colon X_0^+\to \bar{X}_0$ is a small morphism of 
relative Picard rank $2$. The $(\tau^*K_{\hat{Q}})$-flop $\alpha\colon X_0\to\bar{X}_0$ 
of $\alpha^+$ is again a \emph{smooth} weak Fano threefold, and 
the strict transform of $\tau^*(-2K_{\hat{Q}})-3E^+$ on $X_0$ is semiample and 
gives the birational morphism 
$\sigma\colon X_0\to X$, which is the blowup of a prime Fano threefold of genus $12$ 
along a totally disjoint pair of lines $Z_1$, $Z_2\subset X$ in $X$. 
As a consequence, there exists the following elementary link: 
\[
\hat{Q} \, \stackrel{\tau}{\longleftarrow} X_0^+ \, 
\stackrel{\chi^{-1}}{\dasharrow} \, X_0 \, \stackrel{\sigma}{\longrightarrow} \, X,
\]
where $\chi^{-1}:=\alpha^{-1}\circ\alpha^+$. 
Moreover, the links in Theorem \ref{thm:main} \eqref{thm:main1} and the above 
are converse to each other. 
In particular, we have 
\[
\Aut(X; Z_1\cup Z_2)\cong\Aut(X_0)\cong 
\Aut(X_0^+)\cong\Aut(\hat{Q}; \hat{C}) 
\]
under the above links. 
\item\label{thm:main-back2}
Let $U$ be the del Pezzo threefold of degree $6$ and rank $2$, and let 
$\Gamma\subset U$ be a smooth bi-quintic curve in $U$ of genus $2$ such that 
the multiplicity of the curve $\rho_i(\Gamma)\subset\pr^2$ at each point in 
$\rho_i(\Gamma)$ 
is at most $2$ for any $i\in\{1,2\}$. Consider the blowup 
$\tau\colon X_0^+\to U$ along $\Gamma$, and let $E^+\subset X_0^+$ be 
the exceptional divisor. Then $X_0^+$ is a smooth weak Fano threefold and 
its anti-canonical model $\alpha^+\colon X_0^+\to \bar{X}_0$ is a small morphism of 
relative Picard rank $2$. The $(\tau^*K_U)$-flop $\alpha\colon X_0\to\bar{X}_0$ 
of $\alpha^+$ 
is again a \emph{smooth} weak Fano threefold, and 
the strict transform of $\tau^*\left(-\frac{5}{2}K_U\right)-3E^+$ on $X_0$ 
is semiample and gives the birational morphism 
$\sigma\colon X_0\to X$, which is the blowup of a prime Fano threefold of genus $10$ 
along a totally disjoint pair of lines $Z_1$, $Z_2\subset X$ in $X$. 
As a consequence, there exists the following elementary link: 
\[
U \, \stackrel{\tau}{\longleftarrow} X_0^+ \, 
\stackrel{\chi^{-1}}{\dasharrow} \, X_0 \, \stackrel{\sigma}{\longrightarrow} \, X,
\]
where $\chi^{-1}:=\alpha^{-1}\circ\alpha^+$. 
Moreover, the links in Theorem \ref{thm:main} \eqref{thm:main2} and the above 
are converse to each other. 
In particular, we have 
\[
\Aut(X; Z_1\cup Z_2)\cong\Aut(X_0)\cong 
\Aut(X_0^+)\cong\Aut(U; \Gamma) 
\]
under the above links. 
\end{enumerate}
\end{thm}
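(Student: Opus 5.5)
We prove Theorem \ref{thm:main-back} by running the link of Theorem \ref{thm:main} backwards. The approach is to show that $X_0^+$ is a smooth weak Fano threefold of Picard rank $3$ whose cones of divisors have the same shape as the ones found in the forward direction. Once that is established, the same description of all small $\Q$-factorial modifications (Theorem \ref{thm:diamond} and the diagrams \eqref{equation:big}, \eqref{equation:big2}) applies.

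\textbf{Step 1: intersection numbers and weak Fano property.} We start from $\tau\colon X_0^+\to\hat{Q}$ (resp.\ $U$). Write $H_1,H_2$ for the two extremal nef classes pulled back from $\hat{Q}$ (resp.\ $\rho_i^*\sO(1)$). We compute $(-K_{X_0^+})^3$ and the mixed intersection numbers with $E^+$ from $\deg\hat C$ (bidegree $(3,3)$) and $g(\hat C)=0$, respectively from bidegree $(5,5)$ and $g(\Gamma)=2$. Then we check that $-K_{X_0^+}$ is nef and big. For nefness we show that $|-K_{X_0^+}|$ is base point free, i.e.\ that $\hat C$ (resp.\ $\Gamma$) is cut out scheme-theoretically by members of $|-K|$ of the base. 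In the $U$ case we use the hypothesis $\mult_{p}\rho_i(\Gamma)\le 2$ here, to exclude trisecant fibres of $\rho_i$, which would be $K$-negative curves. Bigness is then $(-K)^3=2g-2-2\cdot(\text{line contribution})>0$; the value must match $(-K_{X_0})^3$ from the forward side.

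\textbf{Step 2: the flopping locus and the flop.} We identify the curves on which $-K_{X_0^+}$ is trivial. In the 2.21 case these are the strict transforms of curves in $\hat Q$ meeting $\hat C$ too often relative to their degree, for instance bisecant fibres of the two conic-bundle-type contractions. In the $U$ case they are the fibres of $\rho_i$ meeting $\Gamma$ twice. There are finitely many such curves, and they fall into two numerical classes. This shows that $\alpha^+$ is small of relative Picard rank $2$. The main obstacle is proving that the flop $X_0$ is smooth and $\Q$-factorial, since $\alpha^+$ has relative Picard rank $2$. We plan to handle this as in the forward direction: we flop the two extremal rays one at a time, each flop being an Atiyah flop of disjoint $(-1,-1)$-curves, by checking the normal bundles of the flopping curves. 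This passes through the intermediate models in diagram \eqref{equation:big}, and $X_0$ is reached as the opposite chamber. Smoothness is preserved at each step.

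\textbf{Step 3: the contraction $\sigma$ and identification.} On $X_0$ the divisor $D=\tau^*(-2K_{\hat Q})-3E^+$ (resp.\ $\tau^*(-\tfrac52K_U)-3E^+$) is nef with the same numerics as $\sigma^*(-K_X)$ in the forward link; this uses the lattice identification from Step 1. Next we show that $D$ is semiample by basepoint-free theorem, since $D-K_{X_0}$ is nef and big. We then show that $D$ contracts exactly two disjoint divisors $F_1,F_2$, which are the strict transforms of the two remaining extremal effective divisors, as $\pr^1$-bundles with $F_i|_{F_i}$ of degree $-1$ on fibres. By Mori's classification of $K$-negative extremal contractions of type E1, $\sigma$ is the blowup of a smooth $X$ along two smooth curves. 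From $\rho(X)=1$, $(-K_X)^3=22$ (resp.\ $18$) and $-K_X\cdot Z_i=1$, $X$ is a prime Fano threefold of the right genus and $Z_i$ are lines; they are disjoint because $F_1\cap F_2=\emptyset$. Total disjointness holds because a line meeting both $Z_i$ would give a $K_{X_0}$-negative curve $\ell$ with $-K_{X_0}\cdot\ell=1-2<0$, contradicting the weak Fano property.

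\textbf{Step 4: converse and automorphisms.} Composing with Theorem \ref{thm:main}, both links are determined by the pair $(X_0,X_0^+)$ together with the fixed divisors: they are the unique SQMs and contractions in the Mori chamber decomposition. Hence the two constructions are mutually inverse. Every automorphism of $X_0$ preserves $-K$ and hence its anticanonical model, so it lifts through $\chi$ to $X_0^+$. Such automorphisms preserve or swap the two extremal contractions as a set, compatibly on both sides, which yields the chain of isomorphisms of automorphism groups.
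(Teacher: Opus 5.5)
Your Steps 2 and 3 are where the converse has to be proved, and as written they are circular. Theorem~\ref{thm:diamond} and the diagrams \eqref{equation:big}, \eqref{equation:big2} describe the chambers of $\Mov(X_0)$ only once $X_0$ is known to be the blowup of a prime Fano threefold along a totally disjoint pair of lines. Every wall there is located using $X$, via the double projections from $Z_1$ and $Z_2$; starting from $X_0^+$ you have none of this. Flopping ``one extremal ray at a time'' takes three elementary flops on either side, $X_0^+\dashrightarrow W_i^+\dashrightarrow W_i\dashrightarrow X_0$, and it does not tell you where the walls are. So it does not show that the relative class $\tau^*K_{\hat Q}\equiv_{\bar X_0}-E^+$ lies in the interior of a chamber, which is the real $\Q$-factoriality issue for the $(\tau^*K_{\hat Q})$-flop. (Smoothness of each elementary flop is automatic by Theorem~\ref{thm:kollar} and Proposition~\ref{proposition:MDS}; no normal-bundle check is needed, and in general these flops are not Atiyah flops of disjoint curves.) Nor does it show that $D$ is nef on that chamber, or that $D$ contracts exactly two disjoint $E1$-divisors; you never say what $F_1,F_2$ are. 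The ``lattice identification'' of Step 1 only gives intersection numbers on $X_0^+$. In the 2.21 case your description of the flopping curves is also off: the $\rho_i$ are blowups of twisted quartics, not conic bundles, and the relevant $\rho_i$-fibres meet $\hat C$ once (besides bisecant bi-lines).

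The missing idea is to build $(X;Z_1,Z_2)$ from known links and only then invoke Theorem~\ref{thm:diamond}. For $g=12$, proceed as follows. Run the link \eqref{equation:fujita-converse} from the twisted cubic $C_1=\rho_1(\hat C)\subset Q_1$ to a line $Z_2^{V_1}$ in the quintic del Pezzo threefold $V_1$. Show that the image $\Gamma_1\subset V_1$ of $\Gamma_1^{Q_1}$ is a smooth rational quintic with $\operatorname{length}(\sO_{\Gamma_1\cap Z_2^{V_1}})=1$ and $Z_2^{V_1}\not\subset\langle\Gamma_1\rangle$. Apply Theorem~\ref{thm:double-projection-from-line-converse} to $(V_1,\Gamma_1)$ to get $(X,Z_1)$, and check that $Z_2^{V_1}$ avoids the flopping locus and becomes a line $Z_2\subset X$. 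The blowup of $X$ along $Z_1\cup Z_2$ is then visibly an SQM of $X_0^+$ (through $W_1^+$ and $W_1$), and your total-disjointness argument finishes.

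For $g=10$ this is substantially harder. After the flop over $\pr^2_1$ of Proposition~\ref{proposition:conic-bundle}, write the new $\pr^1$-bundle as $\pr(\sE)$ with $c_1(\sE)=0$, $h^0(\sE)=1$, $h^0(\sE(-1))=0$. Identify it via Yasutake's classification with the blowup of a quadric along a line, and again apply the converse double projection. These cohomological facts come from Propositions~\ref{proposition:non-existence-surface} and~\ref{proposition:vanishing}, which is where the multiplicity hypothesis does real work.

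The same facts are what your Step 1 lacks. Excluding trisecant fibres only gives $(\rho_i\circ\tau)$-nefness (Lemma~\ref{lemma:conic-bundle}). Global generation of $-K_{X_0^+}$ needs $h^j(\sO_{X_0^+}(2-j,2)-E^+)=0$ together with Proposition~\ref{proposition:CM}. For $g=12$, nefness is elementary: $-2K_{X_0^+}$ is the sum of the two classes $(\rho_i\circ\tau)^*\sO_{Q_i}(2)-E^+$. Their base loci lie over $C_i\cap\Gamma_i^{Q_i}$, where the $\rho_i$-fibres have $-K_{X_0^+}$-degree $1-1=0$.
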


In \S \ref{section:flop}, especially in Theorem \ref{thm:flop-curve}, 
we analyze the flopping and flopped curves in the diagram \eqref{equation:big}. 
In \S \ref{section:configuration}, as a corollary of the results in \S \ref{section:flop}, 
we analyze possibilities for the configurations of lines in prime Fano threefold 
of genus $12$. In fact, we get the following: 

\begin{thm}[{see Theorem \ref{thm:configuration} in detail}]\label{thm:configuration-intro}
Let $X$ be a prime Fano threefold of genus $12$ and take any $m\in\{4,5,6\}$. 
Then there is no pairwise distinct lines $Z'_1,\dots,Z'_m$ in $X$ satisfying 
$Z'_i\cap Z'_{i+1}\neq\emptyset$ for all $1\leq i\leq m$, 
where we set $Z'_{m+1}:=Z'_1$. 
\end{thm}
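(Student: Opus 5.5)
We argue by contradiction, using the link of Theorem \ref{thm:main} \eqref{thm:main1} and the analysis of flopping curves in \S \ref{section:flop} (Theorem \ref{thm:flop-curve}). Suppose $Z'_1,\dots,Z'_m$ with $m\in\{4,5,6\}$ form a cycle of lines in the genus-$12$ prime Fano threefold $X$. We look for a pair of lines in the cycle that is \emph{totally disjoint}. For $m=4$ take $Z_1:=Z'_1$, $Z_2:=Z'_3$. For $m=5,6$ take $Z'_1$ and $Z'_3$ (or $Z'_4$). We must first check that the chosen pair is at least disjoint. If it is not, a shorter cycle appears. For example, $Z'_1\cap Z'_3\neq\emptyset$ gives a triangle $Z'_1,Z'_2,Z'_3$. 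Three pairwise meeting lines are coplanar or concurrent, and this is excluded on $X_{22}$: $X$ is an intersection of quadrics and contains no planes. Concurrent lines in $X$ through one point number at most three and span a specific configuration, which one must also rule out, or else reduce to the smaller-$m$ case by induction. So we proceed by increasing $m$, eliminating shorter cycles first, including $m=3$ if needed.

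The disjoint pair need not be totally disjoint. If some line $Z$ meets both, we get a shorter cycle containing $Z$ and part of the original cycle, and induction handles it, provided the lengths stay in the admissible range $\ge 4$. For instance, with $m=6$, $Z_1=Z'_1$, $Z_2=Z'_4$, a transversal $Z$ produces two $4$- or $5$-cycles. So we may assume $Z_1,Z_2$ totally disjoint, and Theorem \ref{thm:main} applies.

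In $X_0=\mathrm{Bl}_{Z_1\cup Z_2}X$, the strict transforms of the lines of the cycle meeting both $Z_1$ and $Z_2$ through a chain are the key objects. Lines meeting $Z_i$ have degree $1-1=0$ against $-K_{X_0}=\sigma^*(-K_X)-F_1-F_2$ when they meet exactly one $Z_i$. A chain $Z'_2$ (meeting $Z_1$ and $Z'_3$), $Z'_3$ (meeting $Z_2$) gives reducible curves. The chain in the cycle running from $Z_1$ back to $Z_2$ through the other side gives another. Theorem \ref{thm:flop-curve} describes the flopping curves of $\alpha$ precisely: which lines and conics meeting $Z_1$, $Z_2$ occur, and the structure of their union (e.g.\ no two flopping lines through $Z_1$ meet, or the flopping locus over each point of $\bar X_0$ is a tree of a given type). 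The cycle forces two $\alpha$-exceptional lines, attached respectively to $Z_1$ and $Z_2$, to intersect: $Z'_2\cap Z'_3\neq\emptyset$ when $m=4$. Alternatively, they are connected through a chain of $K_{X_0}$-trivial curves forming a loop together with the fibres of $F_1,F_2$. This contradicts the description in Theorem \ref{thm:flop-curve}, for instance the statement that connected components of the exceptional locus are disjoint smooth rational trees whose images behave correctly under the flop to the smooth $X_0^+$. A cross-check is intersection numbers on $X_0^+$ after pushing to $\hat Q$: the images become curves of low degree meeting the bi-cubic $\hat C$, which would contradict $\hat C$ being smooth or $\hat Q$ having Picard rank $2$.

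The main obstacle is matching each $m$ to the right contradiction with Theorem \ref{thm:flop-curve}. For $m=4$, two flopping lines meet, which is directly forbidden. For $m=5,6$ the intermediate lines are not themselves $K_{X_0}$-trivial, since they meet neither $Z_i$. One then has to track them: they become $(-K_{X_0})$-degree $1$ curves meeting two flopping curves, and compute their transforms on $X_0^+$ or on $\hat Q$. The extra step is to show that their images in $\hat Q$ would give a line or conic meeting $\hat C$ in too many points, violating the bi-cubic degrees $(3,3)$. I would carry out this computation explicitly for $m=5$ and $m=6$.
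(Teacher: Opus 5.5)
Your plan stalls at its first step for $m=4$ and $m=5$. In a cycle of length at most $5$, any two non-adjacent lines have a common neighbour in the cycle: $Z'_2$ meets both $Z'_1$ and $Z'_3$, and in a $5$-cycle $Z'_5$ meets both $Z'_1$ and $Z'_4$. So \emph{no} pair of lines from the cycle is totally disjoint, and neither Theorem \ref{thm:main} nor Theorem \ref{thm:flop-curve} ever becomes applicable. Your reduction ``a transversal $Z$ gives a shorter cycle'' is vacuous here, because the transversal is itself a line of the cycle.

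Moreover, the contradiction you expect is not what Theorem \ref{thm:flop-curve} says. It does not forbid a line meeting $Z_1$ from intersecting a line meeting $Z_2$; that is the case $\bar m=1$. For $g=12$ what is excluded is $\bar m+m_0\geq 2$, by Theorem \ref{thm:flop-curve} \eqref{thm:flop-curve51}. This counting statement is exactly what handles $m=6$. Take the opposite pair $Z'_1,Z'_4$; each intermediate line does meet one of them, contrary to what you wrote. Then the chains $Z'_2,Z'_3$ and $Z'_6,Z'_5$ give $\bar m\geq 2$. However, proving that $Z'_1,Z'_4$ are totally disjoint already requires the cases $m=4,5$. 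It also requires that each line of a minimal cycle meets exactly two others of it, so that a transversal cannot be a cycle line.

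The missing tool for $m=4,5$ is the one-line link of Theorem \ref{thm:double-projection-from-line} from $Z'_1$, combined with Lemma \ref{lemma:negativity}. The strict transforms of $Z'_2$ and $Z'_m$ are flopping curves of $\beta$. The flop $\chi$ is negative for $\sigma^*(-K_X)-2F'=\chi^{-1}_*\tau^*\sO_V(1)$, which is nef after the flop. Hence a curve's degree drops by at least the number of points where it meets $\Exc(\beta)$.
\begin{itemize}
\item For $m=4$, the curve $(Z'_3)^{X'}$ has degree $1$ but meets $\Exc(\beta)$ in two distinct points, giving $1\geq 2$.
\item For $m=5$, the curves $(Z'_3)^{X'}$ and $(Z'_4)^{X'}$ are forced to become fibres of $\tau$. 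These fibres would intersect, which is impossible for the blowup of $V$ along a smooth curve.
\end{itemize}

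The distinctness of those points, and the induction itself, require controlling concurrency. Here you cannot hope to exclude $3$-cycles, since three concurrent lines do occur (Example \ref{example:3lines}). Instead, apply the same negativity argument to the fibre $\sigma^{-1}(p)$, together with the pairwise disjointness of flopping curves (Proposition \ref{proposition:disjoint-lines}). This shows at most three lines pass through a point. One then deduces that in a minimal cycle of length $\geq 4$ no point lies on three of its lines, and each line meets exactly two others (Lemmas \ref{lemma:3lines} and \ref{lemma:configuration}).
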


The above theorem is not a main result in this article, but it looks important 
for the further studies of lines in prime Fano threefolds of genus $12$.

This article is divided into 3 numbers of parts. Part \ref{part:prelim} is a preliminary part. 
In \S \ref{section:3flops}, we recall and see several general theory of birational geometry. 
Moreover, we see several examples of $3$-dimensional elementary flops, which will be 
crucial in Part \ref{part:construction-links} and Part \ref{part:application}. 
In \S \ref{section:fano3}, 
we recall and see fundamental properties of several known smooth Fano threefolds. 
Especially we will focus on the del Pezzo threefold of degree $5$ and the del Pezzo 
threefold of degree $6$ and rank $2$. In \S \ref{section:prime3}, we recall 
Iskovskikh's double projection from several prime Fano threefold along lines, and 
then we will see basic properties of totally disjoint pairs of lines in prime Fano threefolds. 
One of the main goal of the section is to prove Proposition 
\ref{proposition:totally-blowup}. 

Part \ref{part:construction-links} is the main part of the article. In \S \ref{section:go}, 
we not only prove Theorem \ref{thm:main} but also describe all rational contraction 
maps starting from the blowup of prime Fano threefolds of genus $g\in\{12, 10, 9\}$ 
along totally disjoint pairs of lines. See the big diagrams \eqref{equation:big}, 
\eqref{equation:big2} and 
Theorem \ref{thm:diamond} in detail. 
In \S \ref{section:back}, we see the converse of the links studied in \S \ref{section:go} 
when $g\in\{12,10\}$. Theorem \ref{thm:main-back} is an immediate consequence of 
Corollaries \ref{corollary:back-g12} and \ref{corollary:back-g10}. 
In \S \ref{section:flop}, we analyze the flopping and flopped curves for each steps of 
elementary flops in \eqref{equation:big2}. 

In Part \ref{part:application}, we see several applications of the diagrams 
\eqref{equation:big} and \eqref{equation:big2} when $g=12$. 
In \S \ref{section:configuration}, we analyze the configurations 
of lines in prime Fano threefolds of genus $12$. 
In \S \ref{section:special} and \S \ref{section:parameter}, we consider a 
special case where the link \eqref{equation:big} are effectively $\G_m$-equivariant.

\begin{ack}
First of all, the author would like to thank Adrien Dubouloz 
and Takashi Kishimoto for many discussions on prime Fano threefolds of genus $12$ 
with effective $\G_m$-actions during our collaboration on the paper \cite{DFK}. 
In fact, an idea in \cite[Theorem 22]{DFK} is the starting point of the paper. 
The author would like to thank Alexander Kuznetsov for various 
constructive suggestions. 
For example, the proofs of Proposition \ref{proposition:totally-blowup} 
\eqref{proposition:totally-blowup1} and Lemma \ref{lemma:gm} can be simplified 
thanks to his comments. 
The author would like to thank Akihiro Kanemitsu for his deep insights on 
prime Fano threefolds. In fact, he suggested the author to consider 
Proposition \ref{proposition:non-existence-surface}. 
The author would like thank Yujiro Kawamata 
and Evgeny Shinder for many important comments, especially about \S \ref{section:flop}. 
The author would like to thank Lu Qi for organizing an interesting workshop 
``Workshop on K-stability'' at East China Normal University on January 2026, 
where a question that led to the cases genus $9$ or $10$ was raised by an audience.
The author was supported by JSPS KAKENHI Grant Number 26K00601, 
Royal Society International Collaboration Award 
ICA\textbackslash 1\textbackslash 23109 and Asian Young Scientist Fellowship. 
\end{ack}

\section{On basics of birational geometry and three-dimensional 
flops}\label{section:3flops}

We firstly see an elementary fact on the lengths of the intersections of curves and 
submanifolds. 

\begin{lemma}\label{lemma:lengths}
Let $o\in \U$ be a germ of a smooth variety with a closed point, 
and let $\Delta$, $\Xi\subsetneq\U$ be closed (irreducible and reduced) 
subvarieties such that 
$\Delta$ is smooth and $\Xi$ is a curve with $\Xi\not\subset\Delta$. 
Let $\sigma_\Delta\colon\U_\Delta\to\U$ (resp., $\sigma_o\colon\U_o\to \U$)
be the blowup along $\Delta\subset\U$ (resp., along $o\in\U$), let 
$\E_\Delta\subset\U_\Delta$ (resp., $\E_o\subset\U_o$) 
be the exceptional divisor of $\sigma_\Delta$ (resp., $\sigma_o$), and let 
$\Xi_\Delta\subset\U_\Delta$ (resp., $\Xi_o\subset\U_o$)
be the strict transform of $\Xi\subset\U$. 
\begin{enumerate}
\renewcommand{\theenumi}{\arabic{enumi}}
\renewcommand{\labelenumi}{(\theenumi)}
\item\label{lemma:length1}
Let $\Xi^\nu\to\Xi$ be the normalization, and let us 
consider the fiber product 
\[
\Xi^\nu\cap\Delta:=\Xi^\nu\times_\U\Delta
\subset\Xi^\nu. 
\]
Then we have 
\[
\operatorname{length}\left(\sO_{\Xi^\nu\cap\Delta}\right)
=\left(\E_\Delta\cdot\Xi_\Delta\right)_o, 
\]
where $\left(\E_\Delta\cdot\Xi_\Delta\right)_o$ is the local intersection number 
over $o\in\U$, i.e., the sum of the local intersection numbers over all points 
in $\Xi_\Delta$ over $o\in\U$. 
\item\label{lemma:length2}
Let us set $\F_o:=\sigma_\Delta^{-1}(o)\subset\E_\Delta$ with the reduced 
structure. 
We again consider the fiber product
\[
\Xi^\nu\cap\F_o:=\Xi^\nu\times_{\U_\Delta}\F_o
\subset\Xi^\nu, 
\]
where $\Xi^\nu\to\U_\Delta$ is defined to be the composition 
$\Xi^\nu\to\Xi_\Delta\subset\U_\Delta$. Then we have 
\[
\operatorname{length}\left(\sO_{\Xi^\nu\cap\F_o}\right)
=\left(\E_o\cdot\Xi_o\right). 
\]
\end{enumerate}
\end{lemma}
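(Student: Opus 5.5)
The plan is to reduce both statements to the universal property of blowups combined with the projection formula on the normalization. Let $\nu\colon\Xi^\nu\to\Xi$ be the normalization, a smooth curve, and work locally over $o$.

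For (1), the composition $\Xi^\nu\to\U$ does not factor through $\Delta$, since $\Xi\not\subset\Delta$. The inverse image ideal sheaf $I_\Delta\cdot\sO_{\Xi^\nu}$ is therefore a nonzero ideal on a smooth curve, hence invertible. By the universal property of the blowup, $\Xi^\nu\to\U$ lifts uniquely to $\Xi^\nu\to\U_\Delta$. This lift is birational onto $\Xi_\Delta$, so it is the normalization of $\Xi_\Delta$, and we have $I_\Delta\cdot\sO_{\Xi^\nu}=\sO_{\U_\Delta}(-\E_\Delta)|_{\Xi^\nu}$. Taking degrees over the points lying above $o$, the projection formula for the finite birational map $\Xi^\nu\to\Xi_\Delta$ gives
\[
\left(\E_\Delta\cdot\Xi_\Delta\right)_o=\deg_o\left(\sO(\E_\Delta)|_{\Xi^\nu}\right)=\operatorname{length}\left(\sO_{\Xi^\nu}/I_\Delta\sO_{\Xi^\nu}\right)_o.
\]
The right-hand side is exactly $\operatorname{length}\left(\sO_{\Xi^\nu\cap\Delta}\right)$, since the fiber product is cut out by $I_\Delta\sO_{\Xi^\nu}$. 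Here we restrict to the germ, so only the points over $o$ contribute.

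For (2), we first apply (1) with $\Delta$ replaced by the point $o$, which is smooth. This gives $\left(\E_o\cdot\Xi_o\right)=\operatorname{length}\left(\sO_{\Xi^\nu}/\dm_o\sO_{\Xi^\nu}\right)$. It then suffices to show that the two ideals $\dm_o\sO_{\Xi^\nu}$ and $I_{\F_o}\sO_{\Xi^\nu}$ agree, where the second is pulled back via $\Xi^\nu\to\U_\Delta$. Since $\F_o=\sigma_\Delta^{-1}(o)_{\mathrm{red}}$, the comparison is done in local coordinates.

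Choose coordinates $x_1,\dots,x_n$ at $o$ with $\Delta=(x_1=\dots=x_c=0)$. On the standard chart $x_i=x_1y_i$ for $i\le c$ of $\U_\Delta$, the ideal of $\F_o$ is $(x_1,x_{c+1},\dots,x_n)$, because $\F_o\cong\pr^{c-1}$ is the full fiber over $o$. The ideal $\dm_o$ pulls back to $(x_1,x_1y_2,\dots,x_1y_c,x_{c+1},\dots,x_n)$, which equals $(x_1,x_{c+1},\dots,x_n)$. So on $\U_\Delta$ we have $\sigma_\Delta^{-1}\dm_o\cdot\sO_{\U_\Delta}=I_{\F_o}$, and in particular the scheme-theoretic fiber is already reduced. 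Pulling back to $\Xi^\nu$ yields the equality of ideals, and hence of lengths.

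The main obstacle is this last identification, namely that the scheme-theoretic fiber $\sigma_\Delta^{-1}(o)$ is reduced. The chart computation above settles it, and that computation is routine. The remaining care in (1) is to confirm that the lift is indeed the normalization, so that the local intersection number summed over the points above $o$ equals the degree of the pulled-back invertible sheaf.
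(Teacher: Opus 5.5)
Your proof is correct. Part (1) is essentially the paper's argument: the paper also notes that the ideal $J$ of $\Xi^\nu\cap\Delta$ is invertible on the smooth curve $\Xi^\nu$. It identifies $J$ with $\nu_\Delta^*\bigl(\sO_{\U_\Delta}(-\E_\Delta)|_{\Xi_\Delta}\bigr)$ using the canonical surjection $\sigma_\Delta^*I_\Delta\twoheadrightarrow\sO_{\U_\Delta}(-\E_\Delta)$, which is the same content as your appeal to the universal property.

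Part (2) takes a genuinely different route. The paper blows up $\F_o\subset\U_\Delta$ to get $\tilde\U$ and invokes, as a ``well-known'' fact, that $\tilde\U$ is also the blowup of $\U_o$ along the strict transform $\Delta'$ of $\Delta$, with $\sigma_{\Delta'}^*\E_o=\tilde\E_o$. It then applies (1) to the smooth center $\F_o$ in $\U_\Delta$, giving $\operatorname{length}(\sO_{\Xi^\nu\cap\F_o})=(\tilde\E_o\cdot\tilde\Xi)$, and finishes with the projection formula along $\sigma_{\Delta'}$.

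You instead apply (1) directly to the center $\{o\}\subset\U$. You then show that the subschemes $\Xi^\nu\times_\U\{o\}$ and $\Xi^\nu\times_{\U_\Delta}\F_o$ of $\Xi^\nu$ coincide, because $\dm_o\sO_{\U_\Delta}=I_{\F_o}$, i.e.\ the scheme-theoretic fiber $\sigma_\Delta^{-1}(o)$ is already reduced. Your chart computation does prove this, and rigorously, since the Rees algebra of the regular sequence $x_1,\dots,x_c$ gives exactly the charts you use.

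What each route buys: yours is more elementary and self-contained. It stays in the germ setting in which (1) is stated, needs no comparison of iterated blowups, and proves a slightly stronger statement (equality of subschemes, not just of lengths). The paper's route is more geometric and records how $\U_\Delta$ and $\U_o$ are related. The cost is that it uses (1) for a positive-dimensional center in a non-germ ambient space, plus the blowup-commutation fact. Your identity $\dm_o\sO_{\U_\Delta}=I_{\F_o}$ is precisely what makes $\dm_o$ invertible on $\tilde\U$. So it is what produces the morphism $\tilde\U\to\U_o$ by the universal property, i.e.\ the engine behind the paper's ``well-known'' step.
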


\begin{proof}
\eqref{lemma:length1} 
Let $\nu\colon\Xi^\nu\to\Xi$ be 
the normalization and let $\nu_\Delta\colon\Xi^\nu\to\Xi_\Delta$ be the 
induced morphism. We get the following diagram: 
\[\xymatrix{
\Xi^\nu \ar[r]^-{\nu_\Delta} \ar[dr]_-{\nu}& \Xi_\Delta \ar@{}[r]|{\subset}
\ar[d]^-{\sigma_\Delta|_{\Xi_\Delta}}
& \U_\Delta \ar@{}[r]|{\supset} \ar[d]^-{\sigma_\Delta} & \E_\Delta 
\ar[d]^-{\sigma_\Delta|_{\E_{\Delta}}} \\
& \Xi \ar@{}[r]|{\subset} &  \U \ar@{}[r]|{\supset} & \Delta
}\]
Let $I_\Delta\subset\sO_\U$ (resp., $I_\Xi\subset\sO_\U$) be the ideal sheaf 
of $\Delta\subset\U$ (resp., $\Xi\subset\U$). 
Let $J\subset\sO_{\Xi^\nu}$ be the ideal 
sheaf of $\Xi^\nu\cap\Delta\subset\Xi^\nu$. From the definition of 
$\Xi^\nu\cap\Delta$, there is a natural surjection 
\[
\nu^*\left(I_\Delta|_\Xi\right)\twoheadrightarrow J.
\]
By the definition of the blowup, there is a canonical surjection 
\[
\sigma_\Delta^* I_\Delta\twoheadrightarrow\sO_\U(-\E_\Delta). 
\]
By taking $\nu_\Delta^*$, we get the surjection 
\[
\nu^*\left(I_\Delta|_\Xi\right)\twoheadrightarrow\nu_\Delta^*\left(
\sO_\U(-\E_\Delta)|_{\Xi_\Delta}\right).
\]
The above surjection factors through $\nu^*\left(I_\Delta|_\Xi\right)
\twoheadrightarrow J$. Moreover, since $\Xi^\nu$ is a smooth curve, the ideal 
sheaf $J$ on $\Xi^\nu$ is invertible. Thus we have an isomorphism 
\[
J\cong \nu_\Delta^*\left(\sO_\U(-\E_\Delta)|_{\Xi_\Delta}\right).
\]
Recall that the local intersection number $\left(\E_\Delta\cdot\Xi_\Delta\right)_o$ 
is nothing but the sum of the orders of zeros of 
$\nu_\Delta^*\left(\sO_\U(-\E_\Delta)|_{\Xi_\Delta}\right)$. Thus we get the 
assertion \eqref{lemma:length1}. 

\eqref{lemma:length2} 
We may assume that $\{o\}\subsetneq\Delta$. Let $\sigma_{\F_o}\colon
\tilde{\U}\to\U_\Delta$ be the blowup along $\F_o\subset\U_\Delta$, let 
$\tilde{\E}_o\subset\tilde{\U}$ be the $\sigma_{\F_o}$-exceptional divisor 
and let $\tilde{\Xi}\subset\tilde{\U}$ be the strict transform of $\Xi\subset\U$. 
It is well-known that there is a birational morphism $\sigma_{\Delta'}\colon\tilde{\U}
\to\U_o$ with $\sigma_o\circ\sigma_{\Delta'}=\sigma_\Delta\circ\sigma_{\F_o}$. 
Moreover, the morphism $\sigma_{\Delta'}$ is the blowup along 
$\Delta':=(\sigma_o)^{-1}_*\Delta\subset\U_o$, the $\sigma_{\Delta'}$-exceptional 
divisor coincides with $(\sigma_{\F_o})^{-1}_*\E_\Delta$, and 
$\tilde{\E}_o=(\sigma_{\Delta'})^{-1}_*\E_o$. 
We summarize the diagram: 
\[\xymatrix{
& \tilde{\E}_o \ar@{}[d]|{\cap}\ar[r] & 
\F_o \ar@{}[d]|-{\cap} \ar@{}[r]|-{:=} & \sigma_\Delta^{-1}(o) \\
\left(\sigma_{\F_o}\right)^{-1}_*\E_\Delta \ar[d] \ar@{}[r]|-{\subset} 
& \tilde{\U} \ar[r]^-{\sigma_{\F_o}} \ar[d]_-{\sigma_{\Delta'}} & 
\U_\Delta \ar[d]^-{\sigma_\Delta} \ar@{}[r]|-{\supset} & \E_\Delta \ar[d] \\
\Delta' \ar@{}[r]|-{\subset} & \U_o \ar[r]_-{\sigma_o}  \ar@{}[d]|-{\cup} & 
\U \ar@{}[r]|-{\supset} \ar@{}[d]|-*[@]{\ni} & \Delta \\
& \E_o \ar[r] & o & 
}\]
We have already seen in \eqref{lemma:length1} that 
\[
\operatorname{length}\left(\sO_{\Xi^\nu\cap\F_o}\right)=\left(\tilde{\E}_o\cdot
\tilde{\Xi}\right)
\]
holds. On the other hand, since $(\sigma_{\Delta'})^*\E_o=\tilde{\E}_o$, we have 
\[
\left(\tilde{\E}_o\cdot\tilde{\Xi}\right)=\left(\E_o\cdot(\sigma_{\Delta'})_*
\tilde{\Xi}\right)=\left(\E_o\cdot\Xi_o\right).
\]
Thus we get the assertion \eqref{lemma:length2}. 
\end{proof}

For the minimal model program, we refer the readers to \cite{KoMo}. 
We fix several terminologies. 

\begin{definition}\label{definition:birational}
Let $\X$ be a normal projective variety. 
\begin{enumerate}
\renewcommand{\theenumi}{\arabic{enumi}}
\renewcommand{\labelenumi}{(\theenumi)}
\item\label{definition:birational1}
Let $\chi\colon\X\dashrightarrow\X'$ be a birational map between normal projective 
varieties. The \emph{exceptional locus} $\Exc(\chi)\subset\X$ of $\chi$ is defined 
to be the smallest closed subset of $\X$ such that the restriction 
$\chi|_{\X\setminus\Exc(\chi)}$ is an isomorphism onto its image. 
\item\label{definition:birational2} \cite[Definition 1.8]{HK}
A birational map $\chi\colon \X\dashrightarrow\X'$ between normal projective varieties 
is said to be \emph{small} 
if both $\Exc(\chi)\subset \X$ and $\Exc(\chi^{-1})\subset\X'$ are of codimension 
bigger than $1$. If moreover both $\X$ and $\X'$ are $\Q$-factorial, then 
we say that $\chi$ is a \emph{small $\Q$-factorial modification of $\X$}. 
\item\label{definition:birational3}
A \emph{contraction morphism} $\beta\colon \X \to \Y$ is a morphism with 
$\Y$ normal projective and $\beta_*\sO_{\X}=\sO_{\Y}$. 
If moreover $\beta$ is not an isomorphism and all curves in $\X$ contracted 
by $\beta$ are numerically proportional, then we say that $\beta$ is an 
\emph{elementary contraction morphism}. 
\item\label{definition:birational4}
Assume that $\beta\colon\X\to \Y$ is a contraction morphism. 
Let $A_{\X}$ be a $\Q$-Cartier $\Q$-divisor on $\X$. We say that $\beta$ is 
\emph{$A_{\X}$-negative} (resp., \emph{$A_{\X}$-trivial}, \emph{$A_{\X}$-positive})
if $A_{\X}$ is anti-ample over $\Y$ (resp., $A_{\X}$ is numerically trivial over $\Y$, 
$A_{\X}$ is ample over $\Y$). 
\item\label{definition:birational5} \cite[\S 6.1]{KoMo}
Assume that a contraction morphism $\beta\colon \X\to \Y$ is small. 
We say that $\beta$ is a \emph{flopping contraction} if $K_{\X}$ is $\Q$-Cartier and 
$\beta$ is $K_{\X}$-trivial. 
\emph{Flopping curves of $\beta$} are the curves in $\X$ contracted by $\beta$. 
If moreover $\beta$ is an elementary contraction morphism, then we call $\beta$ an 
\emph{elementary flopping contraction}. 
\item\label{definition:birational6} \cite[\S 6.1]{KoMo}
Assume that a contraction morphism $\beta\colon \X\to \Y$ is small, $\X$ is 
$\Q$-factorial and $A_{\X}$ is a $\Q$-divisor on $\X$ such that $\beta$ is 
$A_{\X}$-negative. The \emph{$A_{\X}$-flip of $\beta$} is the diagram 
\[\xymatrix{
\X \ar[dr]_-{\beta} \ar@{-->}[rr]^-{\chi} && \X^+ \ar[dl]^-{\beta^+}\\
& \Y&
}\]
(or, the birational map $\chi\colon\X\dashrightarrow\X^+$) such that the morphism 
$\beta^+$ is obtained by 
\[
\Proj_{\Y}\bigoplus_{m\geq 0}\beta_*\sO_{\X}\left(\lfloor m A_{\X}\rfloor\right). 
\]
If $\beta$ is a flopping contraction, then we call it the \emph{$A_{\X}$-flop of $\beta$}, 
and curves in $\X^+$ contracted by $\beta^+$ are said to be \emph{flopped curves 
of $\beta$} (or, \emph{of $\chi$}). 
We remark that the $A_{\X}$-flip may not exist in general. Moreover, the variety $\X^+$ 
may not be $\Q$-factorial in general. 
If $\beta$ is an elementary contraction morphism, then the $A_{\X}$-flip does not 
depend on the choice of $A_{\X}$, and the rational map $\chi$ 
is a small $\Q$-factorial modification of $\X$. 
If $\beta$ is an elementary contraction and 
a $\Q$-divisor $H_{\X}$ on $\X$ satisfies that $\beta$ is $H_\X$-negative, then we 
sometimes say that 
\emph{$\chi$ is $H_{\X}$-negative} for simplicity. 
If $\beta$ is an elementary flopping 
contraction, then we call $\chi$ the \emph{elementary flop of $\beta$}. 
\end{enumerate}
\end{definition}

We frequently use the following result: 

\begin{thm}[{\cite[Proposition 2.2 and Theorem 2.4]{Kol} and \cite[Corollary 
A.16]{Pr25}}]\label{thm:kollar}
Let $\X$ be a $3$-dimensional smooth projective variety and let $\beta\colon\X\to \Y$ 
be an elementary flopping contraction morphism. Then the flop 
$\beta^+\colon\X^+\to\Y$ of $\beta$ exists, and $\X^+$ is smooth. 
Moreover, for any point $p\in\Y$ with $p\in\beta\left(\Exc(\beta)\right)$, 
we have a (non-standard) isomorphism $\beta^{-1}(p)\cong(\beta^+)^{-1}(p)$ such that, 
for any irreducible component $\mathbf{B}\subset\beta^{-1}(p)$, if $\mathbf{B}^+
\subset(\beta^+)^{-1}(p)$ be the image of $\mathbf{B}$ under the above isomorphism, 
then we have 
\[
\left(H_{\X^+}\cdot \mathbf{B}^+\right)=-\left(H_{\X}\cdot \mathbf{B}\right)
\] 
for any $\Q$-divisor $H_{\X}$ on $\X$, where 
$H_{\X^+}:=((\beta^+)^{-1}\circ\beta)_*H_\X$. 
\end{thm}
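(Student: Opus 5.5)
This theorem is cited from Kollár and Prokhorov, so the plan is to reconstruct the argument along the lines of Kollár's classification of three-dimensional flops from smooth threefolds. It has three steps: existence of the flop, smoothness of $\X^+$, and the fibre isomorphism with the sign change.

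\emph{Reduction to local statements.} Since $\beta$ is small and $K_\X$-trivial, $\Y$ has only Gorenstein terminal singularities; in fact they are isolated compound Du Val points. By a Reid-type argument, a general hyperplane section $S\ni p$ of $\Y$ is a Du Val surface, and its preimage $\beta^{-1}(S)$ is a partial resolution of it. Smoothness of $\X$ forces $\beta^{-1}(S)$ to be smooth along $\beta^{-1}(p)$. Hence $\beta^{-1}(p)$ is a tree of smooth rational curves, dual to part of a Dynkin diagram.

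\emph{Existence and smoothness.} Every cDV point is a hypersurface singularity $g(x,y,z)+t\,h(x,y,z,t)=0$. Following Reid and Kollár, I would write it locally as a double cover $u^2=f(x,y,z)$, or equivalently use the involution coming from the Weierstrass-type form after completing the square. The flop is then the conjugate of $\X$ by the covering involution $\iota$ of $\Y$ over $\Y/\iota$. Because $\iota$ is a local analytic automorphism, $\X^+\cong\X$ locally analytically over $\Y$. This gives both the existence of the flop and the smoothness of $\X^+$. For projectivity I would use the standard fact that the $A_\X$-flip is $\Proj_\Y$ of a finitely generated algebra, which holds by the finite generation of the relative canonical ring in dimension three.

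\emph{The fibre isomorphism and the sign change.} The isomorphism $\beta^{-1}(p)\cong(\beta^+)^{-1}(p)$ is induced by $\iota$. Since $\iota$ acts on the local class group of $(\Y,p)$ by $-1$, it sends each divisor $H_\X$ to a divisor numerically equivalent to $-H_{\X^+}$ near $p$. This yields $(H_{\X^+}\cdot\mathbf{B}^+)=-(H_\X\cdot\mathbf{B})$ componentwise.

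The main obstacle is justifying that $\iota$ acts as $-1$ on the local class group and is compatible with each component $\mathbf{B}$, not only numerically on the whole fibre. The whole-fibre statement alone would follow from relative Picard number one. For the componentwise statement I would follow Prokhorov's appendix: intersect with a general hyperplane section $S$, reducing to the Du Val surface, where the covering involution acts on the exceptional curves as the diagram automorphism composed with $-1$ on the lattice.
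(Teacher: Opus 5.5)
The paper gives no argument for this theorem beyond citing Kollár and Prokhorov. Your route is the one in the cited source (Kollár's argument):
\begin{itemize}
\item $\Y$ is Gorenstein terminal, hence cDV;
\item hence $\Y$ is locally analytically a double cover $u^2=f(x,y,z)$ of a smooth germ;
\item $\X^+$ is $\X$ with its structure map composed with the covering involution $\iota$.
\end{itemize}
That core is sound, but two parts of your write-up need correcting.

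\textbf{A false auxiliary claim.} It is not true that smoothness of $\X$ forces $\beta^{-1}(S)$ to be smooth along $\beta^{-1}(p)$. Reid's argument only shows that $\beta^{-1}(S)$ is a partial crepant resolution of the Du Val surface $S$, so it has Du Val singularities. Take a flopping curve of length $2$ in a smooth threefold, e.g.\ Laufer's curve with normal bundle $\sO_{\pr^1}(1)\oplus\sO_{\pr^1}(-3)$. There $S$ is of type $D_4$ and $\beta^{-1}(S)$ extracts only the central curve, so it has three $A_1$ points on $\beta^{-1}(p)$. Nothing in the theorem depends on this claim: the existence and smoothness step only uses that $\Y$ is cDV, and the tree structure of the fibre is not part of the statement.

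\textbf{The ``main obstacle'' is not an obstacle.} You should drop the detour through Du Val surfaces. As phrased it is also muddled: a biregular involution permutes effective curves and cannot act by $-1$ on their classes.

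That $\iota$ acts by $-1$ on the local class group takes one line. For the quotient $\pi\colon(\Y,p)\to(\C^3,0)$ one has $D+\iota^*D=\pi^*\pi_*D$, which is Cartier because the target is smooth. This identity is also exactly what makes the twisted $\X$ the flop, so it is the heart of your existence step too.

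For the sign formula, take an $\iota$-stable neighbourhood $U\ni p$ and the isomorphism $\phi\colon\beta^{-1}(U)\to(\beta^+)^{-1}(U)$ with $\beta^+\circ\phi=\iota\circ\beta$. Note that $\phi$ lies over $\iota$, not over the identity of $\Y$. Since $\beta$ and $\beta^+$ are small,
$\phi^*H_{\X^+}=\beta^{-1}_*\iota^*(\beta_*H_\X)\sim_{\Q}-H_\X+\beta^*(\text{a $\Q$-Cartier divisor on }U)$.
This is a $\Q$-linear equivalence on a neighbourhood of the fibre, not merely a numerical statement about the whole fibre. So intersecting with any single component $\mathbf{B}$ and setting $\mathbf{B}^+:=\phi(\mathbf{B})$ gives $(H_{\X^+}\cdot\mathbf{B}^+)=-(H_\X\cdot\mathbf{B})$ directly. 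The map $\phi$ carries $\beta^{-1}(p)$ onto $(\beta^+)^{-1}(p)$ because $\iota(p)=p$.

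\textbf{Finite generation.} The algebra you need is $\bigoplus_m\sO_\Y(mD_\Y)$ for a $\beta$-anti-ample $D$, not a relative canonical ring. The relative canonical ring is trivial for a flop and has $\operatorname{Proj}$ equal to $\Y$. You can either invoke three-dimensional log flips for $(\X,\epsilon D)$, or use the same involution. After completing at $p$, multiplying by powers of a local equation of $D_\Y+\iota^*D_\Y$ identifies the algebra with $\iota^*\bigoplus_m\beta_*\sO_\X(-mD)$. This is finitely generated because $-D$ is $\beta$-ample, and finite generation descends along the faithfully flat completion.
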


If $\beta^{-1}(p)$ is irreducible, then the irreducible curve $(\beta^+)^{-1}(p)$ is 
said to be \emph{the flopped curve of (the flopping curve) $\beta^{-1}(p)$ with respects 
to the elementary flop $(\beta^+)^{-1}\circ\beta$}. 

The following lemma is just an application of the negativity lemma 
\cite[Lemma 3.39]{KoMo}, but is powerful in order to show the results in 
\S \ref{section:flop} and \S \ref{section:configuration}. 

\begin{lemma}[{cf.\ \cite[Lemma 3.8]{C}}]\label{lemma:negativity}
Let $\X$ be a normal $\Q$-factorial projective variety, $A_{\X}$ be a $\Q$-divisor 
on $\X$, and let $\beta\colon\X\to \Y$ be a small and $A_{\X}$-negative 
contraction morphism. Assume that the $A_{\X}$-flip 
\[\xymatrix{
\X \ar[dr]_-{\beta} \ar@{-->}[rr]^-{\chi} && \X^+ \ar[dl]^-{\beta^+}\\
& \Y&
}\]
of $\beta$ exists and $\X^+$ is $\Q$-factorial. 
\begin{enumerate}
\renewcommand{\theenumi}{\arabic{enumi}}
\renewcommand{\labelenumi}{(\theenumi)}
\item\label{lemma:negativity1}
We have $\Exc(\chi)=\Exc(\beta)$. 
\item\label{lemma:negativity2}
Assume that an irreducible curve $\Xi\subset\X$ satisfies that 
$\Xi\not\subset\Exc(\chi)$, and let $\Xi^+:=\chi_*\Xi\subset\X^+$ be the 
strict transform of $\Xi$ to $\X^+$. Set $A_{\X^+}:=\chi_*A_{\X}$. 
\begin{enumerate}
\renewcommand{\theenumii}{\roman{enumii}}
\renewcommand{\labelenumii}{(\theenumii)}
\item\label{lemma:negativity21}
We have 
\[
\left(A_{\X}\cdot\Xi\right)\geq \left(A_{\X^+}\cdot \Xi^+\right). 
\]
\item\label{lemma:negativity22}
If moreover $\Xi\cap\Exc(\chi)\neq \emptyset$, then we have 
\[
\left(A_{\X}\cdot\Xi\right)> \left(A_{\X^+}\cdot \Xi^+\right). 
\]
\item\label{lemma:negativity23}
If moreover both $A_{\X}$ on $\X$ and $A_{\X^+}$ on $\X^+$ are Cartier divisors and 
if we set $m:=\#\left(\Xi\cap\Exc(\chi)\right)_{\operatorname{red}}$, then we have 
\[
\left(A_{\X}\cdot\Xi\right)-\left(A_{\X^+}\cdot \Xi^+\right)\geq m. 
\]
\end{enumerate}
\end{enumerate}
\end{lemma}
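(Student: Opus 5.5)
The plan is to use a common resolution of $\chi$ together with the negativity lemma \cite[Lemma 3.39]{KoMo}. Take a common resolution $p\colon W\to\X$, $q\colon W\to\X^+$ with $\beta\circ p=\beta^+\circ q$. Since $\chi$ is small and both $\X$ and $\X^+$ are $\Q$-factorial, we may write $A_{\X^+}=\chi_*A_\X$ and set
\[
D:=p^*A_{\X}-q^*A_{\X^+}.
\]
This is a $\Q$-Cartier divisor on $W$. It is exceptional over $\Y$, because $p_*D=A_\X-\chi^{-1}_*A_{\X^+}=0$. Next, $-A_\X$ is $\beta$-ample and $A_{\X^+}$ is $\beta^+$-ample; the latter holds by the construction of the flip as $\Proj$ of the ring of $A_\X$.

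\emph{Sign of $D$.} Over $\Y$ we have $-D\equiv q^*A_{\X^+}$ modulo $p^*(\text{anti-ample})$. More precisely, $-D=q^*A_{\X^+}-p^*A_\X$, and both $q^*A_{\X^+}$ and $-p^*A_\X$ are nef over $\Y$. Hence $-D$ is nef over $\Y$. Since $D$ is $(\beta\circ p)$-exceptional, hence $p$-exceptional, and $p_*(-D)=0$, the negativity lemma (applied over $\X$, or over $\Y$) gives $D\geq 0$. The finer statement is the one that matters: $\Supp D$ equals the full preimage $p^{-1}(\Exc(\beta))$. I would prove it as follows. For a point $x\in\Exc(\beta)$, the fiber $p^{-1}(x)$ together with curves over $\beta(x)$ forms a connected set over $\beta(x)$. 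The strict negativity version of the lemma says that if $-D$ is nef over $\Y$ and $D\ge0$, then over each point of $\Y$ either $D$ is zero along the whole fiber or $\Supp D$ contains the whole fiber. Now $D$ is not zero over $\beta(x)$, since otherwise $A_\X$ and $A_{\X^+}$ would agree there, contradicting $-A_\X$ being $\beta$-ample on a contracted curve. Hence $\Supp D\supseteq(\beta\circ p)^{-1}(\beta(\Exc\beta))$.

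\emph{Part (1).} By definition $\Exc(\chi)\subseteq\Exc(\beta)$, since $\chi$ is an isomorphism over $\Y\setminus\beta(\Exc\beta)$. Conversely, suppose $\chi$ were a local isomorphism near some $x\in\Exc(\beta)$. Then a $\beta$-contracted curve through $x$ would map isomorphically to a $\beta^+$-contracted curve. That curve would have $A_\X$-degree negative and $A_{\X^+}$-degree positive, which is impossible for equal divisors under an isomorphism. Note that $\beta$ small means every component of $\Exc(\beta)$ is covered by contracted curves. This gives (1).

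\emph{Part (2).} Let $\tilde\Xi\subset W$ be the strict transform of $\Xi$. Then
\[
(A_\X\cdot\Xi)-(A_{\X^+}\cdot\Xi^+)=(D\cdot\tilde\Xi)\ge0,
\]
because $D\ge0$ and $\tilde\Xi\not\subset\Supp D$; indeed $\Xi\not\subset\Exc\chi$. This proves (i). For (ii), $\tilde\Xi$ meets $\Supp D$ at each point lying over $\Xi\cap\Exc(\chi)$, since $\Supp D$ contains $p^{-1}(\Exc\beta)$. So the intersection number is strictly positive.

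For (iii), a slight refinement is needed. When $A_\X$ and $A_{\X^+}$ are Cartier, $D$ is an integral Cartier divisor. Hence $(D\cdot\tilde\Xi)$ is a sum of local intersection multiplicities at the points of $\tilde\Xi\cap\Supp D$, and each of these is a positive integer. Over each of the $m$ points of $\Xi\cap\Exc(\chi)$ there is at least one such intersection point, because $\tilde\Xi\to\Xi$ is surjective. Hence the total is at least $m$.

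The main obstacle is the support statement $\Supp D\supseteq p^{-1}(\Exc\beta)$, i.e.\ the strict form of the negativity lemma. \cite[Lemma 3.39(2)]{KoMo} states exactly this fiberwise dichotomy, so I would invoke it directly rather than reprove it.
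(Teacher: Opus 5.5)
Your argument for the hard inclusion $\Exc(\beta)\subseteq\Exc(\chi)$ in (1) has a gap. You assume $\chi$ is an isomorphism on a neighbourhood of one point $x$ of a $\beta$-contracted curve $B$. From this you conclude that $B$ ``maps isomorphically'' onto $B^+$ with $(A_{\X}\cdot B)=(A_{\X^+}\cdot B^+)$. That does not follow: $B$ may pass through other points of $\Exc(\chi)$, and intersection numbers are global. Part (2)(ii) itself says $\chi$ changes the $A$-degree of a curve as soon as the curve meets $\Exc(\chi)$, so a local isomorphism at $x$ gives no control over $(A_{\X^+}\cdot B^+)$.

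The paper closes exactly this hole. It takes an effective $D_{\X}\sim_{\Q}\beta^*H_{\Y}-A_{\X}$ with $B\not\subset\Supp D_{\X}$. Then $-(A_{\X^+}\cdot B^+)=(\chi_*D_{\X}\cdot B^+)\geq 0$ needs only $B^+\not\subset\Supp\chi_*D_{\X}$, not an isomorphism along all of $B$.

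Your own tools repair it immediately; you just need to reorder. Prove $D\geq 0$ and (2)(i) first; neither uses the hard inclusion. Then for a contracted $B$ through $x\notin\Exc(\chi)$, (2)(i) gives $(A_{\X}\cdot B)\geq(A_{\X^+}\cdot B^+)$. This contradicts $(A_{\X}\cdot B)<0<(A_{\X^+}\cdot B^+)$, since $B^+$ is $\beta^+$-contracted. Alternatively, your support statement gives $p^{-1}(\Exc(\beta))\subseteq\Supp D\subseteq p^{-1}(\Exc(\chi))$, and surjectivity of $p$ yields the inclusion.

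Part (2) is essentially the paper's proof. The paper takes a resolution $\gamma,\gamma^+$ with $\gamma$ an isomorphism over $\X\setminus\Exc(\chi)$, gets $\gamma^*A_{\X}-(\gamma^+)^*A_{\X^+}\geq 0$ by the negativity lemma, and uses the dichotomy of \cite[Lemma 3.39]{KoMo}. The only difference is where the dichotomy is applied. The paper applies it to $\gamma$ at $x\in\Xi\cap\Exc(\chi)$. You apply it to the birational morphism $\beta\circ p$ over $\beta(x)$, which is legitimate and gives the slightly stronger support statement.

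Two points you leave implicit should be stated:
\begin{enumerate}
\item Choose $W$ so that $p$ is an isomorphism over $\X\setminus\Exc(\chi)$, so that strict transforms make sense.
\item Note that $\Supp D\subseteq p^{-1}(\Exc(\chi))$, because on $p^{-1}(\X\setminus\Exc(\chi))$ we have $q=\chi\circ p$ and $\chi^*A_{\X^+}=A_{\X}$ there. This is what actually justifies $\tilde\Xi\not\subset\Supp D$ in (2)(i).
\end{enumerate}
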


\begin{proof}
\eqref{lemma:negativity1}
Since $\chi$ is an isomorphism on $\X\setminus\Exc(\beta)$, we have 
$\Exc(\chi)\subset\Exc(\beta)$. Note that $\Exc(\beta)$ is the union of curves 
$B\subset\X$ contracted by $\beta$. Take any such curve $B\subset\X$. 
Assume that $B\not\subset\Exc(\chi)$. 
Then we can consider the strict transform $B^+\subset\X^+$ of $B$ on $\X^+$. 
Note that $-A_{\X}$ is $\beta$-ample. Thus there exists an ample Cartier 
divisor $H_{\Y}$ on $\Y$ such that $\beta^*H_{\Y}-A_{\X}$ is an ample $\Q$-divisor 
on $\X$. Hence there exists an effective $\Q$-divisor $D_{\X}\sim_\Q \beta^*H_{\Y}
-A_{\X}$ on $\X$ such that $B\not\subset\Supp D_{\X}$ and $\left(D_{\X}\cdot
B\right)>0$. Since $D_{\X^+}:=\chi_*D_{\X}$ satisfies that $D_{\X^+}\sim_\Q
(\beta^+)^*H_{\Y}-A_{\X^+}$ and $B^+\not\subset\Supp D_{\X^+}$, we have 
\[
-\left(A_{\X^+}\cdot B^+\right)=\left(D_{\X^+}\cdot B^+\right)\geq 0. 
\]
However, since $B^+$ is contracted by $\beta^+$ and $A_{\X^+}$ is $\beta^+$-ample, 
this leads to a contradiction. Thus we have $\Exc(\chi)\supset\Exc(\beta)$ and 
we get the assertion \eqref{lemma:negativity1}.

\eqref{lemma:negativity2}
Let 
\[\xymatrix{
&\tilde{\X} \ar[dl]_-{\gamma} \ar[dr]^-{\gamma^+}& \\
\X \ar@{-->}[rr]^-{\chi} && \X^+
}\]
be a resolution of indeterminacy of $\chi$ with $\tilde{\X}$ normal such that 
$\gamma$ is an isomorphism over $\X\setminus\Exc(\chi)$. 
Let $\E_1,\dots,\E_m\subset\tilde{\X}$ be the set of $\gamma$-exceptional 
(equivalently, $\gamma^+$-exceptional) prime divisors on $\tilde{\X}$. 
Note that the union $\bigcup_{i=1}^m\gamma(\E_i)$ is contained in $\Exc(\chi)$. 
Let $\tilde{\Xi}\subset\tilde{\X}$ be the strict transform of $\Xi$ on $\tilde{\X}$. 
Moreover, set 
\[
\gamma^*A_{\X}-(\gamma^+)^*A_{\X^+}=:\E=\sum_{i=1}^m e_i\E_i. 
\]
Note that 
\[
\left(A_{\X}\cdot\Xi\right)-\left(A_{\X^+}\cdot \Xi^+\right)
=\left(\E\cdot\tilde{\Xi}\right)
\]
holds. Since $\gamma_*\E=0$ and $-\E$ is $\gamma$-nef, the $\Q$-divisor $\E$ is 
effective by the negativity lemma \cite[Lemma 3.39]{KoMo}. 
Then, 
from the assumption, we have 
$\left(\E\cdot\tilde{\Xi}\right)\geq 0$. 

From now on, assume that there exists a point $x\in\Xi\cap\Exc(\chi)$. 
Again by the negativity lemma \cite[Lemma 3.39]{KoMo},  
either $\gamma^{-1}(x)\cap\Supp\E=\emptyset$ or 
$\gamma^{-1}(x)\subset\Supp\E$ holds. Assume that 
$\gamma^{-1}(x)\cap\Supp\E=\emptyset$. Take any irreducible curve $B\subset\X$
with $\beta_*B=0$ and $x\in B$. (By \eqref{lemma:negativity1}, we have 
$\Exc(\beta)=\Exc(\chi)$.) Let us also take an irreducible curve 
$\tilde{B}\subset\tilde{\X}$ with $\gamma(\tilde{B})=B$. Since 
$\tilde{B}\not\subset\Supp\E$ and $(\beta^+\circ\gamma^+)_*\tilde{B}=0$, we have 
\[
0\leq\left(\E\cdot\tilde{B}\right)=\left(A_{\X}\cdot\gamma_*\tilde{B}\right)
-\left(A_{\X^+}\cdot(\gamma^+)_*\tilde{B}\right)<0, 
\]
a contradiction. Thus $\gamma^{-1}(x)\subset\Supp\E$ holds. 
In particular, we get 
$\left(\E\cdot\tilde{\Xi}\right)>0$. 
If both $A_{\X}$ and $A_{\X^+}$ are Cartier, then $\E$ is also Cartier. 
Thus, for any point $x\in\Xi\cap\Exc(\chi)$, the local intersection number 
$\left(\E\cdot\tilde{\Xi}\right)$ over $x\in\Xi$ is a positive integer. 
Thus we get the assertion \eqref{lemma:negativity2}. 
\end{proof}

\begin{definition}\label{definition:cones}
Let $\X$ be a normal projective variety and let $\ND(\X)$ be the set of numerically 
equivalence classes of $\R$-Cartier $\R$-divisors in $\X$. 
The \emph{nef cone $\Nef(\X)$} (resp., \emph{movable cone $\Mov(\X)$}, 
\emph{pseudo-effective cone} $\Psef(\X)$) 
is the smallest closed cone in $\ND(\X)$ containing 
the classes of all nef (resp., movable, effective) Cartier divisors on $\X$. 
If $\X$ is $\Q$-factorial and 
$\X\dashrightarrow\X'$ is a small $\Q$-factorial modification, 
then we can canonically identify the spaces $\ND(\X)$ and $\ND(\X')$. 
We often consider the cones $\Nef(\X)$ and $\Nef(\X')$ in the same space 
$\ND(\X)$ under the above identification. 
\end{definition}

\begin{definition}\label{definition:fano}
\begin{enumerate}
\renewcommand{\theenumi}{\arabic{enumi}}
\renewcommand{\labelenumi}{(\theenumi)}
\item\label{definition:fano1}
A \emph{smooth Fano manifold} (resp., \emph{smooth weak Fano manifold}) 
is defined to be 
a smooth projective variety $\X$ with $-K_{\X}$ ample (resp., nef and big). 
We say that $\X$ is a \emph{smooth Fano threefold} 
(resp., \emph{smooth weak Fano threefold}) if moreover $\X$ is $3$-dimensional. 
For a smooth weak Fano manifold $\X$, the anti-canonical divisor $-K_{\X}$ is 
semiample by the base point free theorem \cite[Theorem 3.3]{KoMo}. 
The \emph{anti-canonical model of $\X$} is 
the birational contraction morphism $\alpha\colon\X\to \bar{\X}$ defined by 
sufficiently divisible multiples of $-K_{\X}$. 
Obviously, if $\X$ is a smooth Fano manifold, then the anti-canonical model of $\X$ is 
$\X$ itself. 
\item\label{definition:fano2}
More generally, for smooth projective varieties $\X$, $\U$ together with a morphism 
$\pi\colon\X\to\U$, assume that $-K_{\X}$ is nef and big over $\U$. 
Then, again by the base point free theorem \cite[Theorem 3.24]{KoMo}, 
we can consider the \emph{anti-canonical model} 
$\X\to\bar{\X}\to\U$ 
\emph{of $\X$ over} $\U$ defined to be 
\[
\bar{\X}:=\Proj_{\U}\bigoplus_{m\geq 0}\pi_*\sO_{\X}(-mK_{\X}).
\]
\end{enumerate}
\end{definition}

The following lemma is probably well-known. We give a proof for the 
readers' convenience. 

\begin{lemma}[{cf.\ \cite[Proposition 4.5]{MM83}}]\label{lemma:image-weak}
Let $\Y$ be a $3$-dimensional smooth projective variety, let $\Xi\subset\Y$ be a 
smooth irreducible non-rational curve, and let $\sigma_\Xi\colon\X\to\Y$ be 
the blowup of $\Y$ along $\Xi\subset\Y$. If $\X$ is a smooth weak Fano threefold, 
then so is $\Y$. 
\end{lemma}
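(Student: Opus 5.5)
We need $-K_\Y$ nef and big; write $E$ for the exceptional divisor of $\sigma_\Xi$, so that $K_\X=\sigma_\Xi^*K_\Y+E$. Bigness is immediate: $(\sigma_\Xi)_*(-K_\X)=-K_\Y$, and the pushforward of a big divisor under a birational morphism is big. (Concretely, $\sigma_\Xi^*(-K_\Y)=-K_\X+E$ is a big divisor plus an effective one.) So the whole content is nefness of $-K_\Y$.

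Suppose $-K_\Y$ is not nef. We first locate the offending curves. Take an irreducible curve $C\subset\Y$ with $(-K_\Y\cdot C)<0$. If $C\not\subset\Xi$, let $\tilde C$ be its strict transform. Then
\[
(-K_\Y\cdot C)=(\sigma_\Xi^*(-K_\Y)\cdot\tilde C)=(-K_\X\cdot\tilde C)+(E\cdot\tilde C)\geq 0,
\]
since $-K_\X$ is nef and $(E\cdot\tilde C)\geq 0$. Hence $C=\Xi$, and the only $K_\Y$-positive curve is $\Xi$ itself, with $(-K_\Y\cdot\Xi)<0$.

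Now restrict to $E\cong\pr_\Xi(N^\vee)$, a ruled surface over $\Xi$, where $N=N_{\Xi/\Y}$ has rank $2$ and degree $\deg N=(-K_\Y\cdot\Xi)+2g(\Xi)-2$. The nefness of $-K_\X$ gives $(-K_\X)|_E$ nef on $E$. By adjunction, $-K_\X|_E=-K_E+(E|_E)$, which can be written as $\sigma_\Xi^*(-K_\Y)|_E-E|_E$. Let $f$ be a fibre and $\xi=-E|_E$ the tautological class $\sO_{\pr(N^\vee)}(1)$. Then
\[
-K_\X|_E\equiv\xi+(-K_\Y\cdot\Xi)f .
\]
A divisor $\xi+af$ on a ruled surface is nef if and only if $a\geq$ the negative of the minimal slope-type invariant. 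Concretely, nefness forces
\[
a+\mu_{\min}\geq 0,
\]
where $\mu_{\min}$ is the minimal degree of a quotient line bundle of $N^\vee$ (equivalently, it is governed by $\deg$ of sub-line-bundles of $N$). Combining this with $\xi^2=\deg N^\vee=-\deg N$ and $(-K_\X|_E)^2\geq 0$, we get
\[
0\leq(-K_\X|_E)^2=-\deg N+2(-K_\Y\cdot\Xi)=(-K_\Y\cdot\Xi)-(2g-2).
\]
Since $g\geq1$, this already gives $(-K_\Y\cdot\Xi)\geq 2g-2\geq 0$ when $g\geq 2$, contradicting negativity.

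The main obstacle is the case $g(\Xi)=1$, where the bound above only gives $(-K_\Y\cdot\Xi)\geq 0$. This is, however, exactly what is needed, since it contradicts $(-K_\Y\cdot\Xi)<0$. (The argument fails for rational $\Xi$, where $-2g+2=2$, which is why the hypothesis is needed.) It remains to check the self-intersection computation carefully, using the sign conventions of $\pr(N^\vee)$ versus $\pr(N)$ and the formula $E^3=-\deg N_{\Xi/\Y}$, via $(-K_\X)^2\cdot E$. Alternatively, one can compute $(-K_\X)^2\cdot E=-K_\Y\cdot\Xi+2-2g$ directly from the standard blowup formulas, then use that $(-K_\X|_E)^2\geq 0$ because the restriction is nef. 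With this, $-K_\Y$ is nef, and $\Y$ is a smooth weak Fano threefold.
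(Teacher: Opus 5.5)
Your proof is correct and is essentially the paper's argument: bigness from $\sigma_\Xi^*(-K_\Y)=-K_\X+E$, nefness on curves other than $\Xi$ via strict transforms, and for $\Xi$ itself the inequality $0\leq((-K_\X)^{2}\cdot E)=(-K_\Y\cdot\Xi)+2-2g(\Xi)$. The paper quotes that formula from \cite[Lemma 2.1]{MM85}, while you derive it on the ruled surface $E$. Your aside giving a nefness criterion for $\xi+af$ via quotient line bundles is imprecise; the correct invariant is the minimal Harder--Narasimhan slope. Since you never use it, just delete it.
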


\begin{proof}
Note that $-K_{\X}+\E=\sigma_\Xi^*(-K_{\Y})$, where $\E$ is the exceptional divisor of 
$\sigma_\Xi$. Thus, the anti-canonical divisor $-K_{\Y}$ of $\Y$ is obviously big. 
Thus it is enough to show the inequality $\left(-K_{\Y}\cdot\Theta\right)\geq 0$
for any irreducible curve $\Theta\subset\Y$. 
If $\Theta\neq\Xi$, then we have 
$\left(-K_{\Y}\cdot\Theta\right)=\left(-K_{\X}\cdot(\sigma_\Xi)^{-1}_*\Theta\right)
+\left(\E\cdot(\sigma_\Xi)^{-1}_*\Theta\right)\geq 0$. 
On the other hand, since $\Xi$ is non-rational, 
we have $\left(-K_{\Y}\cdot\Xi\right)=\left((-K_{\X})^{\cdot 2}\cdot\E\right)-2
+2p_a(\Xi)\geq 0$ (see \cite[Lemma 2.1]{MM85}). 
\end{proof}

\begin{proposition}\label{proposition:MDS}
Let $\X$ be a smooth weak Fano threefold. 
\begin{enumerate}
\renewcommand{\theenumi}{\arabic{enumi}}
\renewcommand{\labelenumi}{(\theenumi)}
\item\label{proposition:MDS1}
The set of small $\Q$-factorial modifications of $\X$ 
is a finite set 
\[
\left\{\X=\X^1,\dots,\X^m\right\}.
\]
(Precisely, we consider the set of birational maps $\X\dashrightarrow\X^i$.)
Moreover, each $\X^i$ is a smooth weak Fano threefold. 
In particular, for any $1\leq i\leq m$, the nef cone $\Nef\left(\X^i\right)$ of $\X^i$ 
contains the class $\left[-K_\X\right]\in\ND(\X)$ of $-K_\X$. 
\item\label{proposition:MDS2}
Assume moreover that the anti-canonical model $\X\to\bar{\X}$ of $\X$ is small.
Then the class $\left[- K_{\X}\right]$ belongs to the interior of the 
movable cone of $\X$.
\end{enumerate}
\end{proposition}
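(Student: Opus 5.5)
The plan is to use that a smooth weak Fano variety is a Mori dream space. By the Kawamata--Viehweg vanishing theorem, $H^i(\X,\sO_\X)=0$ for $i>0$, so $\Pic(\X)$ is finitely generated and equals $\ND(\X)\cap H^2(\X,\Z)$ up to torsion. Moreover, $\X$ is log Fano: since $-K_\X$ is nef and big, we can write $-K_\X\sim_\Q A+E$ with $A$ ample and $E$ effective. For small $\epsilon$, the pair $(\X,\epsilon E)$ is klt and $-(K_\X+\epsilon E)$ is ample. By Birkar--Cascini--Hacon--McKernan, $\X$ is therefore a Mori dream space. Hu--Keel \cite{HK} then gives the following description. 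The movable cone is the union of finitely many chambers $\Nef(\X^i)$, where $\X\dashrightarrow\X^i$ runs over the small $\Q$-factorial modifications. These modifications are finitely many, and each is obtained by running a $D$-MMP for suitable movable $D$. This gives finiteness in \eqref{proposition:MDS1}.

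Next I will show each $\X^i$ is a smooth weak Fano threefold. Since $\chi_i\colon\X\dashrightarrow\X^i$ is small, $K_{\X^i}=\chi_{i*}K_\X$ and the section rings of $-K_\X$ and $-K_{\X^i}$ agree. So $-K_{\X^i}$ is big, and both varieties have the same anti-canonical model $\bar{\X}$. To see that $-K_{\X^i}$ is nef, I will use the MMP description. The map $\chi_i$ factors as a sequence of $D$-flips, each over some contraction $\beta$ small and $D$-negative. Because $-K$ is semiample on $\X$, I will argue that the birational map $\X^i\dashrightarrow\bar{\X}$ is a morphism. Let $p\colon W\to\X$ and $q\colon W\to \X^i$ be a common resolution. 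Then $p^*(-K_\X)$ is the pullback of an ample divisor from $\bar{\X}$. The difference $q^*(-K_{\X^i})-p^*(-K_\X)$ is $q$-exceptional, and the negativity lemma (as in Lemma \ref{lemma:negativity}) applied to $q$ and to $p$ forces it to vanish, because both maps are isomorphisms in codimension one and the divisor pushes forward to zero on each side. Hence $q^*(-K_{\X^i})=p^*(-K_\X)$ is semiample. So $-K_{\X^i}$ is nef, and $\X^i\to\bar{\X}$ is crepant and small.

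Smoothness is the main obstacle. Here I will use that $\X^i\to\bar{\X}$ and $\X\to\bar{\X}$ are two $\Q$-factorial crepant small models of the same variety. They are connected by a sequence of $K$-trivial flops, namely elementary flops over $\bar{\X}$, which we obtain by running an MMP over $\bar{\X}$ with a divisor that is ample on $\X^i$. Each step is an elementary flopping contraction, and by induction the source is smooth. By Theorem \ref{thm:kollar}, the flop exists and is again smooth. Induction along the chain then gives that $\X^i$ is smooth. Finally, $[-K_\X]\in\Nef(\X^i)$ under the identification of $\ND$, since $-K_{\X^i}=\chi_{i*}(-K_\X)$ is nef.

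For \eqref{proposition:MDS2}, I will show that $[-K_\X]$ lies in the interior of $\Mov(\X)$. Suppose instead it lies on the boundary. Then, by the Mori dream space chamber structure, there is a contraction from some $\X^i$ that is divisorial or of fiber type and is $(-K)$-trivial, with $[-K_\X]$ on the corresponding wall. But $-K_{\X^i}$ is big, so it is not trivial on the fibers of any fiber-type map. It also cannot be pulled back from a divisorial contraction. The reason is that $-K_{\X^i}$ defines the small map $\X^i\to\bar{\X}$, and any $-K$-trivial contraction factors through it, so it contracts no divisor. Hence every face of $\Mov$ containing $[-K_\X]$ has to be a wall between two nef chambers. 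Therefore $[-K_\X]$ lies in the interior of the union of chambers, i.e., in the interior of $\Mov(\X)$. Equivalently, $-K_\X$ is big and its stable base locus has codimension $\geq2$, and the same holds for nearby classes $-K_\X+\epsilon D$.
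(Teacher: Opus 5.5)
\textbf{The gap is your proof that $-K_{\X^i}$ is nef}, i.e.\ that $\X^i\dashrightarrow\bar{\X}$ is a morphism. Your smoothness argument (flops over $\bar{\X}$ plus Theorem~\ref{thm:kollar}) and your part (2) both rest on this step.

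The negativity lemma needs a relative nefness hypothesis. Pushing forward to zero on both sides is not enough, since every common exceptional divisor of $p$ and $q$ does that. Over $\X^i$ the argument works: $p^*(-K_\X)-q^*(-K_{\X^i})$ is $q$-nef, so $q^*(-K_{\X^i})-p^*(-K_\X)\geq 0$. Over $\X$, however, you would need $q^*(-K_{\X^i})$ to be $p$-nef, which is exactly the nefness you are trying to prove. The argument is circular.

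In general only the one-sided inequality holds. Your argument uses neither $\dim\X=3$ nor smoothness, and the conclusion is false in dimension four. Take $Y=\pr_{\pr^2}(\sO\oplus\sO(1)^{\oplus 2})$. It is a smooth Fano fourfold, and its tautological contraction is small and $K_Y$-negative: it contracts the section $\pr^2$, whose normal bundle is $\sO(-1)^{\oplus 2}$, to a point. The flip $Y^+$ is a small $\Q$-factorial modification of $Y$ with $-K_{Y^+}$ negative on the flipped curves. On a common resolution, $q^*(-K_{Y^+})-p^*(-K_Y)$ is a nonzero effective exceptional divisor.

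\textbf{The missing ingredient} is a genuinely three-dimensional fact: a smooth projective threefold has no small $K$-negative extremal contraction \cite[Theorem 3.3]{Mo82}. The paper uses it as follows.
\begin{enumerate}
\item Join $\Nef(\X)$ to $\Nef(\X^i)$ by a path in $\Mov(\X)$ crossing walls. This writes $\X\dashrightarrow\X^i$ as a chain of elementary flips $\X^{j_l}\to\bar{\X}^{j_l}\leftarrow\X^{j_{l+1}}$.
\item Assume inductively that $\X^{j_l}$ is smooth weak Fano, so $[-K_\X]\in\Nef(\X^{j_l})$.
\item If $[-K_\X]$ were not on the wall $\Nef(\bar{\X}^{j_l})$, the small contraction $\X^{j_l}\to\bar{\X}^{j_l}$ would be $K$-negative, contradicting Mori's theorem.
\item Hence each crossing is a flop. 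Theorem~\ref{thm:kollar} gives smoothness of $\X^{j_{l+1}}$, and $[-K_\X]\in\Nef(\bar{\X}^{j_l})\subset\Nef(\X^{j_{l+1}})$.
\end{enumerate}

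Once (1) is established this way, your part (2) goes through, but the factorization runs the other way from what you wrote. If $[-K_\X]\in\Nef(\Y^i)$ for a contraction $\X^i\to\Y^i$, then the small anticanonical model $\X^i\to\bar{\X}$ factors \emph{through} $\X^i\to\Y^i$, and this forces $\X^i\to\Y^i$ to be small.
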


\begin{proof}
If $\X$ is a smooth Fano threefold, then $\X$ itself is the unique small $\Q$-factorial 
modification of $\X$ by \cite[Theorem 3.3]{Mo82}, and the assertion is trivial. 
We assume that $\X$ is not a smooth Fano threefold from now on. 
Note that $\X$ is a Mori dream space by \cite[Corollary 1.3.2]{BCHM}. 
By \cite[Definition 1.10]{HK}, the number 
of small $\Q$-factorial modifications is finite and 
the movable cone of $\X$ is the union of the nef cones of $\X^i$. 
Moreover, for any $1\leq i\leq m$, the modification $\X^1\dashrightarrow\X^i$ 
can be decomposed into a sequence 
\[\xymatrix{
\X^{j_l} \ar[rd] \ar@{-->}[rr]& & \X^{j_{l+1}}\ar[ld]\\
&\bar{\X}^{j_l}&
}\]
of elementary flips. In this case, the nef cone $\Nef\left(\bar{\X}^{j_l}\right)$ of 
$\bar{\X}^{j_l}$
is a facet of both $\Nef\left(\X^{j_l}\right)$ and $\Nef\left(\X^{j_{l+1}}\right)$. 
Assume that we can inductively show that $\X^{j_l}$ is a smooth weak Fano 
threefold. Then the class of $-K_{\X}$ is contained in $\Nef\left(\X^{j_l}\right)$ 
since $\X^{j_l}$ is a smooth weak Fano threefold. 
If the class of $-K_{\X}$ is not contained in 
$\Nef\left(\bar{\X}^{j_l}\right)$, then the contraction $\X^{j_l}\to \bar{\X}^{j_l}$ is a 
$K_{\X^{j_l}}$-negative elementary contraction. Since the contraction is 
small, it contradicts \cite[Theorem 3.3]{Mo82}. Thus 
the class of $-K_{\X}$ is contained in $\Nef\left(\bar{\X}^{j_l}\right)$. Then 
$\X^{j_l}\dashrightarrow\X^{j_{l+1}}$ is an elementary flop. Thus $\X^{j_{l+1}}$ is 
also a \emph{smooth} weak Fano threefold by Theorem \ref{thm:kollar}. 
Thus we get the assertion \eqref{proposition:MDS1}. 

If the assertion \eqref{proposition:MDS2} does not hold, then the class of $-K_\X$ 
belongs to a facet of 
the movable cone of $\X$. In other words, there exists $1\leq i\leq m$ and 
there exists an elementary non-small contraction 
$\X^i\to\Y^i$ such that the class of $-K_\X$ 
belong to $\Nef\left(\Y^i\right)$. But it leads to a contradiction since 
the small anti-canonical model $\X^i\to\bar{\X}$ of $\X^i$ 
is factored by $\X^i\to\Y^i$. 
Thus the assertion \eqref{proposition:MDS2} also holds. 
\end{proof}

We prepare the following proposition which is an analogue of the 
Castelnuovo--Mumford regularity theorem \cite[\S 1.8]{L1}. On the other hand, we 
consider regularities with respect to \emph{non-ample} line bundles. 

\begin{proposition}\label{proposition:CM}
Let $\X, \mathbb{S}$ be smooth projective varieties with $\dim \X=n$ and 
$\dim \mathbb{S}=n-1$, let $\pi\colon\X\to \mathbb{S}$ be a flat morphism such that 
the anti-canonical divisor $-K_{\X}$ is $\pi$-nef and $\pi$-big. Take any 
$\pi$-nef line bundle $\sL$ on $\X$. 
\begin{enumerate}
\renewcommand{\theenumi}{\arabic{enumi}}
\renewcommand{\labelenumi}{(\theenumi)}
\item\label{proposition:CM1}
For any $j\geq 1$, we have $R^j\pi_*\sL=0$. Moreover, the line bundle 
$\sL$ is $\pi$-free, i.e., the adjoint homomorphism
$\pi^*\pi_*\sL\to\sL$ is surjective. 
\item\label{proposition:CM2}
Take any ample and globally generated 
line bundle $A$ on $\mathbb{S}$. Assume that 
\[
H^j\left(\X, \sL\otimes\pi^*A^{\otimes -j}\right)=0 \quad\text{for all }j\geq 1. 
\]
Then we have the following: 
\begin{enumerate}
\renewcommand{\theenumii}{\roman{enumii}}
\renewcommand{\labelenumii}{(\theenumii)}
\item\label{proposition:CM21}
For any $k\in\Z_{\geq 0}$, the line bundle $\sL\otimes\pi^*A^{\otimes k}$ is 
globally generated. 
\item\label{proposition:CM22}
For any $k\in\Z_{\geq 0}$, the evaluation homomorphism 
\[
H^0\left(\X,\sL\right)\otimes_\Bbbk H^0\left(\X,\pi^*A^{\otimes k}\right)
\to H^0\left(\X,\sL\otimes\pi^*A^{\otimes k}\right)
\]
is surjective. 
\item\label{proposition:CM23}
For any $k\in\Z_{\geq 0}$ and for any $j\geq 1$, we have 
$H^j\left(\X, \sL\otimes\pi^*A^{\otimes k-j}\right)=0.$
\end{enumerate}
\end{enumerate}
\end{proposition}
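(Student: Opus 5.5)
For part (1) I will use relative Kawamata--Viehweg vanishing. Write $\sL=K_{\X}+(\sL-K_{\X})$. Since $\sL$ is $\pi$-nef and $-K_{\X}$ is $\pi$-nef and $\pi$-big, the divisor $\sL-K_{\X}$ is $\pi$-nef and $\pi$-big. Hence $R^j\pi_*\sL=0$ for $j\geq 1$.

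For $\pi$-freeness I will argue fibrewise. Since $\pi$ is flat and both $\X$ and $\mathbb{S}$ are smooth, every fibre $\X_s$ is a Gorenstein curve with $-K_{\X_s}=-K_{\X}|_{\X_s}$. Bigness over the curve-fibred base means $-K_{\X_s}$ has positive degree on the general fibre, so the general fibre is $\pr^1$. Each fibre is then a degeneration of a conic-type curve with arithmetic genus $0$ (this follows from flatness and constancy of $\chi(\sO)$). Such a fibre is a tree of smooth rational curves, and $\sL|_{\X_s}$ has nonnegative degree on each component. The standard fact that a nef line bundle on a reduced tree of $\pr^1$'s is globally generated with $H^1=0$ then applies. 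For non-reduced fibres I would use the same vanishing applied to $\sL\otimes I$ along the fibre, or the theorem on formal functions.

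By cohomology and base change, the vanishing of $H^1(\X_s,\sL|_{\X_s})$ gives that $\pi_*\sL$ commutes with base change. Hence the map $\pi^*\pi_*\sL\to\sL$ is surjective, since it is surjective on every fibre. I expect the hardest step to be the non-reduced fibres. For those I would first try the base point free theorem over $\mathbb{S}$: the divisor $\sL-K_{\X}$ is $\pi$-nef and big, so some multiple of $\sL$ is $\pi$-free. I would then combine this with the vanishing $R^1\pi_*(\sL\otimes\sO(-x))$ along a local section to get freeness of $\sL$ itself.

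For part (2), set $\mathcal{F}:=\pi_*\sL$. By part (1) and the Leray spectral sequence,
\[
H^j\bigl(\mathbb{S},\mathcal{F}\otimes A^{\otimes k}\bigr)=H^j\bigl(\X,\sL\otimes\pi^*A^{\otimes k}\bigr)
\]
for all $k$. The hypothesis therefore says exactly that $\mathcal{F}$ is $0$-regular with respect to the ample, globally generated line bundle $A$ on $\mathbb{S}$. Mumford's regularity theorem \cite[\S 1.8]{L1} then gives three conclusions:
\begin{enumerate}
\item $\mathcal{F}\otimes A^{\otimes k}$ is globally generated for all $k\geq 0$;
\item the multiplication maps $H^0(\mathcal{F})\otimes H^0(A^{\otimes k})\to H^0(\mathcal{F}\otimes A^{\otimes k})$ are surjective;
\item $\mathcal{F}\otimes A^{\otimes k}$ is $0$-regular for every $k\geq 0$.
\end{enumerate}

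Translating back through the projection formula and Leray gives (iii) and (ii) of the statement directly; for (ii) we also use $H^0(\X,\pi^*A^{\otimes k})\supset H^0(\mathbb{S},A^{\otimes k})$. For (i), global generation of $\mathcal{F}\otimes A^{\otimes k}$ combines with the surjectivity of $\pi^*\pi_*\sL\to\sL$ from part (1). Pulling back, this shows that $\sL\otimes\pi^*A^{\otimes k}$ is generated by the pullbacks of global sections, which proves (i).
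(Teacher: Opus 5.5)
Your relative Kawamata--Viehweg step is correct, but the $\pi$-freeness in (1) has a genuine gap at non-reduced fibres. These are not a marginal case. The proposition is applied in Corollary~\ref{corollary:g10converse-weakness} to $\pi=\rho_1\circ\tau\colon X_0^+\to\pr^2_1$. Over a cusp $p$ of $\rho_1(\Gamma)$, which the hypothesis $\mult_p\le 2$ does not exclude, the fibre is $\tau^{-1}_*l+2\,\tau^{-1}(q)$, where $l=\rho_1^{-1}(p)$ is tangent to $\Gamma$ at $q$.

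Your ``tree of smooth rational curves'' argument does not apply to such a fibre, and none of your fallbacks closes the gap:
\begin{itemize}
\item A local section $\sigma$ of $\pi$ with $\sigma(s)=x$ would give $d\pi_x\circ d\sigma_s=\mathrm{id}$, so $\pi$ would be smooth at $x$. That fails at every point of a non-reduced fibre component, so twisting by $\sO(-\sigma(\mathbb{S}))$ is unavailable exactly where you need it.
\item The base point free theorem only gives $\pi$-freeness of $m\sL$ for $m\gg 0$, and nothing passes from $m\sL$ to $\sL$.
\item ``Formal functions'' is not yet an argument.
\end{itemize}
Relatedly, constancy of $\chi(\sO_{\X_s})$ does not by itself give $h^1(\sO_{\X_s})=0$ on special fibres, since $h^0(\sO_{\X_s})$ can jump there.

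The missing ingredient has two parts.
\begin{itemize}
\item The fibres are one-dimensional, so cohomology and base change in top degree, together with $R^1\pi_*\sO_{\X}=R^1\pi_*\sL=0$ (both from your vanishing), gives for \emph{every} closed fibre $l_s$ that $H^1(l_s,\sO_{l_s})=H^1(l_s,\sL|_{l_s})=0$ and $\pi_*\sL\otimes\Bbbk(s)\cong H^0(l_s,\sL|_{l_s})$. No tree structure is needed for this.
\item You need the lemma that on a projective one-dimensional scheme $C$, possibly non-reduced, with $H^1(C,\sO_C)=0$, every line bundle of non-negative degree on each irreducible component is globally generated. This is what the paper imports as \cite[Lemma 2.8]{nakayama}.
\end{itemize}
With both in hand, Nakayama's lemma gives surjectivity of $\pi^*\pi_*\sL\to\sL$. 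Your proof of (2)(i) uses this freeness, so it inherits the gap; (2)(ii) and (2)(iii) do not.

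Apart from that dependence, your part (2) is correct and takes a different, somewhat cleaner route than the paper. You use $R^j\pi_*\sL=0$ and the projection formula to see that $\pi_*\sL$ is $0$-regular on $\mathbb{S}$ with respect to the ample, globally generated $A$, and quote Mumford's theorem there. You also handle $\pi_*\sO_{\X}\neq\sO_{\mathbb{S}}$ correctly via $H^0(\mathbb{S},A^{\otimes k})\subset H^0(\X,\pi^*A^{\otimes k})$. The paper instead pulls the Koszul complex back to $\X$ and reruns the regularity argument for the non-ample $\pi^*A$, which needs an extra $m\gg0$ step for (i) but does not use $R^j\pi_*\sL=0$ for (ii) and (iii).
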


\begin{proof}
\eqref{proposition:CM1}
By the Kawamata--Viehweg 
vanishing theorem (see \cite[\S 9.1]{L2}), we have $R^j\pi_*\sL=0$ 
for any $j\geq 1$. In particular, we have $R^j\pi_*\sO_{\X}=0$ 
for any $j\geq 1$. By 
the cohomology and base change theorem \cite[\S 5, Corollaries 3 and 4]{mumford}, 
for any closed point $p\in\mathbb{S}$, if we set $l_p:=\pi^{-1}(p)\subset\X$, 
we have 
\begin{equation}\label{equation:prop-mumford}
\pi_*\sL\otimes\Bbbk(p)\cong H^0\left(l_p,\sL|_{l_p}\right)
\end{equation}
and $H^j\left(l_p, \sL|_{l_p}\right)=0$ for any $j\geq 1$. In particular, we have 
$H^1\left(l_p,\sO_{l_p}\right)=0$. By \cite[Lemma 2.8]{nakayama}, the line bundle 
$\sL|_{l_p}$ on $l_p$ is globally generated. Thus, from the above isomorphism 
\eqref{equation:prop-mumford} and Nakayama's lemma, we get the assertion 
\eqref{proposition:CM1}, 

\eqref{proposition:CM2}
Firstly, it is enough to show the assertion \eqref{proposition:CM22} for $k=1$ and 
the assertion \eqref{proposition:CM23} for $k=1$. Indeed, those assertions imply the 
following:  
\begin{itemize}
\item
The assertion \eqref{proposition:CM23} holds for any $k\in\Z_{\geq 0}$ by induction 
on $k$. 
\item
The assertion \eqref{proposition:CM22} holds for any $k\in\Z_{\geq 0}$ by induction 
on $k$. In fact, assume that \eqref{proposition:CM22} holds for $k=1$ and $k=k_0$. 
From the commutative diagram
\[\xymatrix{
H^0\left(\X,\sL\right)\otimes_\Bbbk H^0\left(\X,\pi^* A\right)
\otimes_\Bbbk H^0\left(\X,\pi^* A^{\otimes k_0}\right) \ar@{->>}[d] \ar[r] & 
H^0\left(\X,\sL\right)\otimes_\Bbbk H^0\left(\X,\pi^* A^{\otimes k_0+1}\right) 
\ar[d] \\
H^0\left(\X,\sL\otimes\pi^*A\right)\otimes_\Bbbk 
H^0\left(\X,\pi^* A^{\otimes k_0}\right) \ar@{->>}[r] & 
H^0\left(\X,\sL\otimes\pi^* A^{\otimes k_0+1}\right), 
}\]
we get the assertion \eqref{proposition:CM22} for $k=k_0+1$. 
\item
For the assertion \eqref{proposition:CM21}, we may assume that $k=0$. Since 
$A$ is ample, there exists $m\gg 0$ such that the coherent sheaf $\pi_*\sL\otimes
 A^{\otimes m}$ is globally generated. 
Note that the line bundle $\sL\otimes\pi^*A^{\otimes m}$ is $\pi$-free by 
\eqref{proposition:CM1}. From the sequence 
\begin{eqnarray*}
H^0\left(\X,\sL\otimes\pi^* A^{\otimes m}\right)\otimes_\Bbbk\sO_{\X}
&=&\pi^*\left(H^0\left(\mathbb{S}, \pi_*\sL\otimes A^{\otimes m}\right)
\otimes_\Bbbk\sO_{\mathbb{S}}\right)\\
\twoheadrightarrow
\pi^*\left(\pi_*\sL\otimes A^{\otimes m}\right)&\twoheadrightarrow& 
\sL\otimes \pi^*A^{\otimes m}, 
\end{eqnarray*}
we get that $\sL\otimes\pi^*A^{\otimes m}$ is globally generated. 
From the assertion \eqref{proposition:CM22} for $k=m$ and the following 
commutative diagram
\[\xymatrix{
H^0\left(\X,\sL\right)\otimes_\Bbbk H^0\left(\X,\pi^* A^{\otimes m}\right)
\otimes_\Bbbk \sO_{\X} \ar@{->>}[d] \ar[r] & 
\left(H^0\left(\X,\sL\right)\otimes_\Bbbk\sO_{\X}\right)\otimes\pi^* 
A^{\otimes m} \ar[d] \\
H^0\left(\X,\sL\otimes\pi^*A^{\otimes m}\right)\otimes_\Bbbk 
\sO_{\X} \ar@{->>}[r] & 
\sL\otimes\pi^* A^{\otimes m}, 
}\]
we get the assertion \eqref{proposition:CM21} for $k=0$. 
\end{itemize}
Since $A$ is globally generated, the complete linear system of $A$ induces a 
finite morphism $\phi\colon \mathbb{S}\to\pr^N$. The pullback of the 
Koszul complex on $\pr^N$ gives the exact sequence 
\[
0\leftarrow\sO_{\X}\leftarrow
H^0\left(\mathbb{S},A\right)\otimes_\Bbbk\pi^*A^{\otimes -1}\leftarrow
\wedge^2H^0\left(\mathbb{S},A\right)\otimes_\Bbbk\pi^*A^{\otimes -2}
\leftarrow
\wedge^3H^0\left(\mathbb{S},A\right)\otimes_\Bbbk\pi^*A^{\otimes -3}
\leftarrow
\cdots.
\]
For any $j \geq 1$, if we tensor the above by $\sL\otimes\pi^* A^{\otimes 1-j}$, then 
we can show the equality $H^j\left(\X,\sL\otimes\pi^*A^{\otimes 1-j}\right)=0$. 
Thus we get the assertion \eqref{proposition:CM23} for $k=1$. 
If we tensor the above by $\sL\otimes\pi^*A$, then we can show 
that the evaluation homomorphism 
\[
H^0\left(\X, \sL\right)\otimes_\Bbbk\pi^*H^0\left(\mathbb{S},A\right)
\to H^0\left(\X, \sL\otimes\pi^*A\right)
\]
is surjective. Thus the assertion \eqref{proposition:CM22} holds for $k=1$. 
\end{proof}

We see several examples of three-dimensional flops. 

\begin{example}\label{example:3flop}
Let $\X$ be a $3$-dimensional smooth projective variety and let 
$\Xi_1$, $\Xi_2\subset\X$ be distinct irreducible smooth projective curves such that 
\[
\left(\Xi_1\cap\Xi_2\right)_{\operatorname{red}}=\left\{o_1,\dots,o_m\right\}
\]
with $m\geq 1$. For $\{i,j\}=\{1,2\}$, let $\sigma_i\colon\X_i\to\X$ be the blowup 
along $\Xi_i\subset\X$, let $\Xi'_j\subset\X_i$ be the strict transform of 
$\Xi_j\subset\X$ on $\X_i$, and let let $\tau_j\colon \X_{ij}\to \X_i$ be the blowup 
along $\Xi'_j\subset\X_i$. 
Moreover, for any $1\leq k\leq m$, let $\B^k_{ij}\subset\X_{ij}$ be the strict transform 
of the curve $\sigma^{-1}_i(o_k)\subset \X_i$ to $\X_{ij}$. 
Obviously, the induced birational map $\chi\colon \X_{12}\dashrightarrow\X_{21}$ 
is a small $\Q$-factorial modification. 
By Lemma \ref{lemma:lengths}, we have $\left(-K_{\X_{ij}}\cdot \B^k_{ij}\right)=0$. 
In particular, the anti-canonical divisor $-K_{\X_{ij}}$ is nef and big over $\X$. 
Let 
$\beta_{ij}\colon\X_{ij}\to \X'$ be the anti-canonical model of $\X_{ij}$ over $\X$. 
Both $\beta_{12}$ and $\beta_{21}$ are elementary 
small contractions. 
Since $\chi$ is an isomorphism in codimension $1$, 
the image $\X'$ does not depend on $i$, $j$. 
Let $\gamma\colon\X'\to\X$ be the natural morphism. 
We get the following diagram: 
\[\xymatrix{
\X_{12} \ar@{-->}[rr]^-{\chi} \ar[dr]^-{\beta_{12}} \ar[d]_-{\tau_2}&& 
\X_{21} \ar[dl]_-{\beta_{21}} \ar[d]^-{\tau_1}\\
\X_1 \ar[dr]_-{\sigma_1} & \X' \ar[d]_(.4){\gamma} & \X_2 \ar[dl]^-{\sigma_2} \\
 &\X &
}\]
Note that, the exceptional divisor $\Exc(\tau_2)$ of $\tau_2$ on $\X_{12}$ is 
satisfies that $\beta_{12}$ is $\Exc(\tau_2)$-positive, but 
its strict transform $\chi_*\Exc(\tau_2)$ to $\X_{21}$ satisfies that 
$\beta_{21}$is $(\chi_*\Exc(\tau_2))$-negative. 
Thus $\chi$ is an elementary flop of $\beta_{12}$. Moreover, the flopping curves 
of $\beta_{12}$ are equal to $\B_{12}^1,\dots,\B_{12}^m\subset\X_{12}$, flopped to 
$\B_{21}^1,\dots,\B_{21}^m\subset\X_{21}$. 
If $\Xi_1$ and $\Xi_2$ are transversal at one point
(i.e., $m=1$ and ${\operatorname{length}}_{o_1}\left(\sO_{\Xi_1\cap\Xi_2}\right)=1$), 
then the rational map $\chi$ is nothing but the Atiyah flop. 
\end{example}

\begin{definition}[{\cite{Bea}, \cite[\S 4]{MM85}}]\label{definition:conic-bundle}
Let $\mathbb{S}$ be a smooth projective surface and let $\Y$ be a $3$-dimensional 
smooth projective variety. A morphism $\pi\colon\Y\to\mathbb{S}$ is said to be 
a \emph{conic bundle} (over $\mathbb{S}$) if $\pi$ is flat, 
$\pi_*\sO_{\Y}=\sO_{\mathbb{S}}$ and $-K_{\Y}$ is $\pi$-ample. 
As in \cite[Proposition 4.3]{MM85}, this is equivalent to the condition that every closed 
fiber of $\pi$ is scheme-theoretically isomorphic to a plane conic. 
The \emph{discriminant locus} $\Delta_\pi$ of the conic bundle $\pi$ is defined to be 
\[
\Delta_\pi:=\left\{s\in\mathbb{S}\mid \pi^{-1}(s) \text{ is not smooth}
\right\}\subset\mathbb{S}
\]
with the reduced structure. As in \cite[Proposition 1.2]{Bea}, the locus $\Delta_\pi$ 
is an effective divisor on $\mathbb{S}$ and has at worst nodal singularities. Moreover, 
for $s\in\Delta_\pi$, the fiber $\pi^{-1}(s)$ is non-reduced if and only if $s$ lies on 
the singular point of $\Delta_\pi$. 
When $\Delta_\pi=\emptyset$, we say that the conic bundle $\pi$ is a 
\emph{$\pr^1$-bundle} (over $\mathbb{S}$). 
\end{definition}

\begin{lemma}\label{lemma:conic-bundle}
Let $\Y$, $\mathbb{S}$ be smooth projective varieties with $\dim\Y=3$ and 
$\dim\mathbb{S}=2$, and let $\pi\colon\Y\to\mathbb{S}$ be a $\pr^1$-bundle. 
Assume that a smooth irreducible curve $\Xi\subset\Y$ satisfies that the restriction 
morphism $\pi|_{\Xi}\colon\Xi\to\pi(\Xi)$ is birational. Let $\sigma_\Xi\colon\X\to\Y$ 
be the blowup of $\Y$ along the curve $\Xi\subset\Y$. 
\begin{enumerate}
\renewcommand{\theenumi}{\arabic{enumi}}
\renewcommand{\labelenumi}{(\theenumi)}
\item\label{lemma:conic-bundle1}
The anti-canonical divisor $-K_{\X}$ is $(\pi\circ\sigma_\Xi)$-ample if and only if 
the morphism $\pi|_{\Xi}\colon\Xi\to\pi(\Xi)$ is an isomorphism (i.e., 
the curve $\pi(\Xi)$ is smooth). Moreover, if the above condition is satisfied, then 
the morphism $\pi\circ\sigma_\Xi\colon\X\to\mathbb{S}$ is a conic bundle, and 
$\Delta_{\pi\circ\sigma_\Xi}=\pi(\Xi)$ holds. 
\item\label{lemma:conic-bundle2}
The anti-canonical divisor $-K_{\X}$ is $(\pi\circ\sigma_\Xi)$-nef if and only if 
$\mult_p\left(\pi(\Xi)\right)\leq 2$ holds for any $p\in\pi(\Xi)$. 
\end{enumerate}
\end{lemma}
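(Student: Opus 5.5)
The criterion reduces to fibrewise computations over the points of $\pi(\Xi)$. Set $\varphi:=\pi\circ\sigma_\Xi$ and let $\E$ be the exceptional divisor of $\sigma_\Xi$. Off $\pi(\Xi)$ the morphism $\varphi$ is just the $\pr^1$-bundle $\pi$, on which $-K_\Y$ has degree $2$ on fibres. So we only need to check the curves lying over points $p\in\pi(\Xi)$.

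First I will describe the fibre over such a point. Let $l_p:=\pi^{-1}(p)\cong\pr^1$. The scheme-theoretic fibre of $\varphi$ over $p$ is the strict transform $\tilde l_p$ of $l_p$ together with the fibres of $\E\to\Xi$ over the points of $\Xi\cap l_p$. Each fibre $f$ of $\E\to\Xi$ satisfies $(-K_\X\cdot f)=1>0$. For the strict transform, Lemma~\ref{lemma:lengths}\eqref{lemma:length1} (applied with $\Delta=\Xi$ and $\Xi$ there equal to $l_p$, locally around each point of $\Xi\cap l_p$) gives $(\E\cdot\tilde l_p)=\operatorname{length}(\sO_{l_p\cap\Xi})$. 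Hence
\[
\left(-K_\X\cdot\tilde l_p\right)=2-\operatorname{length}\left(\sO_{l_p\cap\Xi}\right).
\]
The key claim is that $\operatorname{length}(\sO_{l_p\cap\Xi})=\mult_p(\pi(\Xi))$ whenever $\pi|_\Xi$ is birational. To prove it, take local coordinates $(x,y,z)$ on $\Y$ with $\pi$ the projection to $(x,y)$ and $l_p=\{x=y=0\}$. For each point $q\in\Xi\cap l_p$, the local length of $\sO_{l_p\cap\Xi}$ at $q$ equals $\dim\sO_{\Xi,q}/(x,y)$. Because $\pi|_\Xi$ is finite and birational onto its image, summing these over $q$ computes the intersection of the image curve with a general line through $p$, which is $\mult_p\pi(\Xi)$. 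This identification is the main obstacle: the multiplicity is the minimum over lines through $p$, and one has to check that every branch of $\Xi$ is smooth, so that a general pair of coordinates realizes the minimum. I would handle this through the normalization, computing intersection orders branch by branch.

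With the claim in hand, part \eqref{lemma:conic-bundle2} follows at once. Every curve contracted by $\varphi$ is either $\tilde l_p$ or some $f$, so $-K_\X$ is $\varphi$-nef if and only if $\mult_p\pi(\Xi)\le2$ for all $p\in\pi(\Xi)$. For part \eqref{lemma:conic-bundle1}, $-K_\X$ is $\varphi$-ample exactly when $(-K_\X\cdot\tilde l_p)>0$ for all $p$, that is, when $\mult_p\pi(\Xi)=1$ everywhere, which means $\pi(\Xi)$ is smooth. Since $\pi|_\Xi$ is birational onto a smooth curve from a smooth curve, this is equivalent to $\pi|_\Xi$ being an isomorphism.

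It remains to prove the conic bundle statement under the smoothness assumption. The morphism $\varphi$ has equidimensional one-dimensional fibres between smooth varieties, so it is flat by miracle flatness. It also satisfies $\varphi_*\sO_\X=\sO_{\mathbb S}$, being a composition of contractions, and $-K_\X$ is $\varphi$-ample. Therefore $\varphi$ is a conic bundle. Over $p\in\pi(\Xi)$ the fibre is $\tilde l_p+f$, two lines of anticanonical degree $1$ meeting in one point, which is singular. So the discriminant locus is exactly $\Delta_\varphi=\pi(\Xi)$.
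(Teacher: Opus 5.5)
Your proof is correct, and its skeleton matches the paper's. Both reduce to $(-K_{\X}\cdot\tilde l_p)=2-\operatorname{length}(\sO_{\Xi\cap l_p})$ via Lemma~\ref{lemma:lengths}, and then to the identity $\operatorname{length}(\sO_{\Xi\cap l_p})=\mult_p(\pi(\Xi))$.

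The difference is in how that identity is proved. The paper blows up $p\in\mathbb{S}$ and observes that the base change $\Y\times_{\mathbb{S}}\mathbb{S}'\to\Y$ is the blowup along $l_p$, with exceptional divisor $(\pi')^*\be_p$. It then applies Lemma~\ref{lemma:lengths} a second time, with the roles of the two curves exchanged, and uses the projection formula to reach $(\be_p\cdot\Xi')=\mult_p(\pi(\Xi))$, where $\Xi'$ is the strict transform of $\pi(\Xi)$. Birationality of $\pi|_{\Xi}$ is used only to push the strict transform of $\Xi$ forward onto $\Xi'$ with degree one.

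You argue in local algebra instead. The ideal of $l_p$ is generated by $x,y$ pulled back from $\mathbb{S}$. So $\Xi\cap l_p$ is the scheme-theoretic fibre over $p$ of the normalization $\Xi\to\pi(\Xi)$. Its length $\sum_q\min(\operatorname{ord}_q x,\operatorname{ord}_q y)$ is the branch formula for the multiplicity.

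The step you flag as the main obstacle is actually immediate. Since $\Xi$ is smooth by hypothesis, each $\sO_{\Xi,q}$ is a DVR, so $(x,y)\sO_{\Xi,q}$ is generated by whichever of $x,y$ has smaller order. A general combination $ax+by$ attains that order at all of the finitely many $q$ simultaneously.

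Your version is more elementary and shows exactly where smoothness of $\Xi$ and birationality enter. The paper's version is coordinate-free and reuses its own Lemma~\ref{lemma:lengths}. For the conic-bundle and discriminant assertions, you argue directly via miracle flatness and the explicit reducible fibre $\tilde l_p\cup f$, whereas the paper cites Mori--Mukai; your argument is fine.

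One harmless imprecision: the fibre over $p$ equals $\tilde l_p\cup\bigcup_q f_q$ only set-theoretically, since the $f_q$ can appear with multiplicity, e.g.\ over a cusp of $\pi(\Xi)$. You only use the set of contracted curves, so nothing is affected.
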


\begin{proof}
Firstly, if $\pi|_{\Xi}\colon\Xi\to\pi(\Xi)$ is an isomorphism, then it is well-known 
(see \cite[Definition 4.11]{MM85}) that $\pi\circ\sigma_\Xi$ is a conic bundle
with $\Delta_{\pi\circ\sigma_\Xi}=\pi(\Xi)$. 
Take any point $p\in\pi(\Xi)$. Set $l:=\pi^{-1}(p)\subset\Y$  and 
$\tilde{l}:=(\sigma_\Xi)^{-1}_*l\subset\X$. It is enough to show the equality 
\begin{equation}\label{equation:conic-bundle}
\left(-K_{\X}\cdot\tilde{l}\right)=2-\mult_p\left(\pi(\Xi)\right). 
\end{equation}
By Lemma \ref{lemma:lengths}, we have 
$\left(-K_{\X}\cdot\tilde{l}\right)=2-\operatorname{length}\left(\sO_{\Xi\cap l}\right)$. 
Let $\sigma_p\colon\mathbb{S}'\to\mathbb{S}$ be the blowup of $\mathbb{S}$ along 
$p\in\mathbb{S}$, let $\be_p\subset\mathbb{S}'$ be the exceptional curve of 
$\sigma_p$, and let us set $\Xi':=(\sigma_p)^{-1}_*(\pi(\Xi))\subset\mathbb{S}'$. 
Then, by the definition of multiplicity, we have 
$\mult_p\left(\pi(\Xi)\right)=\left(\Xi'\cdot\be_p\right)$. 
Let us consider the fiber product
\[\xymatrix{
\Y' \ar[r]^-{\sigma_l} \ar[d]_-{\pi'}& \Y \ar[d]^-{\pi}\\
\mathbb{S}' \ar[r]_-{\sigma_p}& \mathbb{S}.
}\]
Then the morphism $\sigma_l$ is the blowup of $\Y$ along $l\subset\Y$, and the 
exceptional divisor $\E_l\subset\Y'$ of $\sigma_l$ is equal to $(\pi')^*\be_p$. 
Again by Lemma \ref{lemma:lengths} and the projection formula, we have 
\[
\operatorname{length}\left(\sO_{\Xi\cap l}\right)
=\left(\E_l\cdot(\sigma_l)^{-1}_*\Xi\right)=\left((\pi')^*\be_p
\cdot(\sigma_l)^{-1}_*\Xi\right)=\left(\be_p\cdot\Xi'\right)=\mult_p\left(\pi(\Xi)\right). 
\]
Thus we get the desired equality \eqref{equation:conic-bundle}.
\end{proof}

\begin{proposition}\label{proposition:conic-bundle}
Let $\Y$, $\mathbb{S}$ be smooth projective varieties with $\dim\Y=3$ and 
$\dim\mathbb{S}=2$, and let $\pi\colon\Y\to\mathbb{S}$ be a conic bundle with the 
relative Picard number $\rho(\Y/\mathbb{S})$ equal to $1$. Let $\Xi\subset\Y$ be 
a smooth irreducible curve such that $\pi|_\Xi\colon\Xi\to\pi(\Xi)$ is birational and 
$\pi(\Xi)\not\subset\Delta_\pi$. Let $\sigma\colon\X\to\Y$ be the blowup of $\Y$ 
along $\Xi\subset\Y$ and let $\E\subset\X$ be the exceptional divisor of $\sigma$. 
Assume that $-K_{\X}$ is nef over $\mathbb{S}$. 
Since $-K_{\X}$ is nef and big over $\mathbb{S}$, we can take the anti-canonical model 
\[\xymatrix{
\X \ar[dr]_-{\pi\circ\sigma} \ar[r]^-{\beta} & \bar{\X} \ar[d]^-{\gamma} \\
&\mathbb{S}
}\]
of $\X$ over $\mathbb{S}$. Let $\beta^+\colon \X^+\to\bar{\X}$ be 
\begin{itemize}
\item
equal to $\beta$ if $\beta$ is an isomorphism, 
\item
the flop of $\beta$ if $\beta$ is an elementary flopping contraction.
\end{itemize}
Note that $\rho(\X^+/\mathbb{S})=2$. Let $\sigma^+\colon\X^+\to\Y^+$ over 
be the elementary contraction morphism over $\mathbb{S}$ together 
with the commutative diagram  
\[\xymatrix{
\X^+ \ar[d]_-{\gamma\circ\beta^+} \ar[r]^-{\sigma^+} & \Y^+ \ar[dl]^-{\pi^+} \\
\mathbb{S}&
}\]
defined to be 
\begin{itemize}
\item
the other elementary contraction morphism over $\mathbb{S}$ other than $\sigma$ 
if $\beta$ is an isomorphism, 
\item
the unique $K_{\X^+}$-negative elementary contraction over $\mathbb{S}$ if 
$\beta$ is an elementary flopping contraction. 
\end{itemize}
Moreover, set $\chi:=(\beta^+)^{-1}\circ\beta$. As a consequence, we get 
the following diagram: 
\[\xymatrix{
\X \ar@{-->}[rr]^-{\chi} \ar[dr]^-{\beta} \ar[d]_-{\sigma}&& 
\X^+ \ar[dl]_-{\beta^+} \ar[d]^-{\sigma^+}\\
\Y \ar[dr]_-{\pi} & \bar{\X} \ar[d]_(.4){\gamma} & \Y^+ \ar[dl]^-{\pi^+} \\
 &\mathbb{S} &
}\]
We have the following: 
\begin{enumerate}
\renewcommand{\theenumi}{\arabic{enumi}}
\renewcommand{\labelenumi}{(\theenumi)}
\item\label{proposition:conic-bundle1}
The morphism $\sigma^+$ is birational and contracts the prime divisor 
$\F^+\subset\Y^+$ onto a smooth curve $\Xi^+\subset\Y^+$. 
The prime divisor $\F^+$ is equal to 
the strict transform of the prime divisor $\pi^{-1}\left(\pi(\Xi)\right)\subset\Y$ to 
$\X^+$. 
Moreover, $\Y^+$ is smooth and the morphism $\sigma^+$ is the blowup of $\Y^+$ 
along $\Xi^+\subset\Y^+$. 
\item\label{proposition:conic-bundle2}
The morphism $\pi^+\colon\Y^+\to\mathbb{S}$ is a conic bundle with 
$\rho(\Y^+/\mathbb{S})=1$. Moreover, the restriction morphism 
$\pi^+|_{\Xi^+}\colon\Xi^+\to\pi^+(\Xi^+)$ is birational, and 
$\pi(\Xi)=\pi^+(\Xi^+)$ holds. Moreover, we have $\Delta_\pi=\Delta_{\pi^+}$. 
\end{enumerate}
\end{proposition}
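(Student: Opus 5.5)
We work throughout over $\mathbb{S}$ and use the structure of the relative cone of $\X^+$. Write $G:=\pi^{-1}(\pi(\Xi))\subset\Y$; it is a prime divisor, because $\pi(\Xi)\not\subset\Delta_\pi$ makes the general fiber over $\pi(\Xi)$ a smooth conic. Let $\tilde G\subset\X$ be its strict transform. Since $\Xi\subset G$ and $G$ is smooth generically along $\Xi$, we have $\sigma^*G=\tilde G+\E$.

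\emph{Step 1: what $\sigma^+$ contracts.} For a general point $s\in\pi(\Xi)$, the fiber $(\pi\circ\sigma)^{-1}(s)$ is $\tilde l\cup f$. Here $\tilde l$ is the strict transform of the conic $l=\pi^{-1}(s)$, and $f$ is the fiber of $\E$ over the point $\Xi\cap l$, which is a single point because $\pi|_\Xi$ is birational. Using $(-K_\Y\cdot l)=2$ and Lemma \ref{lemma:lengths}, we get $(-K_\X\cdot\tilde l)=1$, $(\E\cdot\tilde l)=1$ and $(\tilde G\cdot\tilde l)=-1$. Hence $\tilde G$ is covered by curves on which it is negative.

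Next we check that $\chi$ is an isomorphism near a general such $\tilde l$. The flopping curves are $K$-trivial curves over finitely many points of $\mathbb{S}$, namely the points where $\mult\ge2$ or where the fiber degenerates. So the curves $\tilde l^+$ still cover $\F^+:=\chi_*\tilde G$, and satisfy $(-K_{\X^+}\cdot\tilde l^+)=1$ and $(\F^+\cdot\tilde l^+)=-1$.

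Now $\rho(\X^+/\mathbb{S})=2$, so the relative cone $\overline{NE}(\X^+/\mathbb{S})$ has two rays. One ray is the flopped ray, or the $\sigma$-ray if $\beta$ is an isomorphism. The other ray is $K$-negative. We argue that $[\tilde l^+]$ spans this second ray. Indeed, $\F^+$ is negative on $\tilde l^+$, and $\F^+$ is nonnegative on the flopped curves: by Theorem \ref{thm:kollar} the flop reverses sign, and $\tilde G$ is nonpositive on the flopping curves. The last claim needs a check. Flopping curves are components of fibers over points of $\pi(\Xi)$; they are either strict transforms of components of $l$ meeting $\Xi$ with multiplicity $2$, or components lying in $\tilde G$. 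This case analysis, which also uses Lemma \ref{lemma:conic-bundle}\eqref{lemma:conic-bundle2}, is the main obstacle. When $\beta$ is an isomorphism, the two rays are $[f]$ and $[\tilde l]$, and the claim is immediate.

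Once $[\tilde l^+]$ spans the $K$-negative ray, $\sigma^+$ contracts exactly the curves numerically proportional to $\tilde l^+$. These lie in $\F^+$, so $\sigma^+$ is divisorial with exceptional divisor $\F^+$, and the image $\Xi^+:=\sigma^+(\F^+)$ maps onto $\pi(\Xi)$. So $\Xi^+$ is a curve.

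\emph{Step 2: $\sigma^+$ is a smooth blowup.} Apply Mori's classification of extremal divisorial contractions on smooth threefolds, in its relative form. The contracted curves satisfy $(-K_{\X^+}\cdot\tilde l^+)=1$ and $(\F^+\cdot\tilde l^+)=-1$, and this forces the type "blowup of a smooth threefold along a smooth curve". Therefore $\Y^+$ is smooth and $\Xi^+$ is smooth. This proves \eqref{proposition:conic-bundle1}.

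\emph{Step 3: $\pi^+$ is a conic bundle.} First, $\rho(\Y^+/\mathbb{S})=2-1=1$. The morphism $\pi^+$ is equidimensional: every fiber is a curve, because the fibers of $\X^+\to\mathbb{S}$ are one-dimensional and $\sigma^+$ contracts only curves over points. Since $\Y^+$ and $\mathbb{S}$ are smooth, $\pi^+$ is flat by miracle flatness. Also $-K_{\Y^+}$ is $\pi^+$-ample: relative Picard number $1$ together with $-K_{\Y^+}$ being positive on a general fiber conic gives this. So $\pi^+$ is a conic bundle.

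The generic fiber over $\pi(\Xi)$ is $\tilde l^+\cup f^+$, and $\sigma^+$ contracts $\tilde l^+$ to a point. Hence $f^+$ maps to a smooth conic meeting $\Xi^+$ in one point. This gives that $\pi^+|_{\Xi^+}$ is birational and that $\pi^+(\Xi^+)=\pi(\Xi)$.

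Finally, $\Delta_{\pi^+}=\Delta_\pi$. Outside $\pi(\Xi)$, the whole diagram is an isomorphism over $\mathbb{S}$, because $\chi$, $\sigma$ and $\sigma^+$ are all isomorphisms there. Over a general point of $\pi(\Xi)$, the fiber of $\pi^+$ is smooth. Since discriminants are divisors, they coincide: $\pi(\Xi)$ is not a component of either discriminant.
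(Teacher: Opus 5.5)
Your strategy is correct and is essentially the paper's. In both arguments the key point is that the $K_{\X^+}$-negative ray $R$ of $\overline{NE}(\X^+/\mathbb{S})$ is $\F^+$-negative, obtained by chasing signs through the flop. Mori's classification and a relative-Picard-number argument for $\pi^+$ then finish.

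The paper gets $\F^+\cdot R<0$ from the $\E^+$-positivity of $\sigma^+$ together with the fact that $\E^++\F^+$ is a pullback from $\mathbb{S}$. It also excludes a conic-bundle structure on $\sigma^+$ by a separate pullback argument. Your direct use of $\F^+$ makes that separate argument unnecessary, since $\F^+\cdot R<0$ already forces $\sigma^+$ to be birational. You are also more explicit than the paper about the flatness of $\pi^+$ and the birationality of $\pi^+|_{\Xi^+}$. As written, however, two steps need repair, and a third needs a small correction.

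\emph{The ``main obstacle'' is a one-line computation.} No case analysis is needed, and neither is Lemma~\ref{lemma:conic-bundle}, which concerns $\pr^1$-bundles.
\begin{itemize}
\item Since $\sigma^*G=\tilde G+\E$ and $G=\pi^*(\pi(\Xi))$, you have $\tilde G\cdot B=-\E\cdot B$ for every curve $B$ lying over a point of $\mathbb{S}$.
\item No flopping curve $B$ is contained in $\E$. Because $\Xi\to\pi(\Xi)$ is finite, the curves of $\E$ over points of $\mathbb{S}$ are $\sigma$-fibres, and these have $-K_{\X}$-degree $1$.
\item Hence $\E\cdot B=(-K_{\Y}\cdot\sigma_*B)>0$, so $\tilde G\cdot B<0$.
\end{itemize}
This is exactly the paper's observation that $\E^++\F^+$ is pulled back from $\mathbb{S}$.

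\emph{The two sign facts do not show that $[\tilde l^+]$ spans $R$.} Knowing that $\F^+$ is negative on $\tilde l^+$ and nonnegative on the flopped ray allows $[\tilde l^+]$ to lie in the interior of the two-dimensional cone. What these facts do give, by writing $[\tilde l^+]$ as a nonnegative combination of the two extremal classes, is $\F^+\cdot R<0$. That is enough:
\begin{itemize}
\item every $\sigma^+$-contracted curve lies in $\F^+$, so $\sigma^+$ is birational;
\item smooth threefolds admit no small $K$-negative extremal contractions \cite{Mo82}, so $\Exc(\sigma^+)$ is a divisor contained in the prime divisor $\F^+$, hence equal to it.
\end{itemize}
Only after this does it follow that $\tilde l^+$ is a $\sigma^+$-fibre. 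The same repair covers the case where $\beta$ is an isomorphism: there you only need $\tilde G\cdot f=1>0$ and $\tilde G\cdot\tilde l=-1$. Identifying $[\tilde l]$ as the extremal class itself would require checking every fibre component.

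\emph{Identifying the Mori type in Step 2.} The numbers $(-K_{\X^+}\cdot\tilde l^+,\ \F^+\cdot\tilde l^+)=(1,-1)$ do not single out type E1. Lines on the exceptional quadric in types E3 and E4 give the same pair. What forces E1 is that $\sigma^+(\F^+)$ dominates the curve $\pi(\Xi)$, which you had already observed.

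With these repairs your proof is complete.
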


\begin{proof}
Since $\pi$ is a conic bundle, the morphism $\beta$ is an isomorphism over 
$\mathbb{S}\setminus\left(\pi(\Xi)\cap\Delta_\pi\right)$. 
Moreover, since $\rho(\X/\mathbb{S})=2$, we have the following: 
\begin{itemize}
\item
If $\beta$ is an isomorphism, then $-K_{\X}$ is ample over $\mathbb{S}$. Thus 
there is the unique $K_{\X}$-negative elementary contraction morphism $\sigma^+$ 
over $\mathbb{S}$ other than $\sigma$. 
\item
If $\beta$ is not an isomorphism, then $\beta$ must be an elementary flopping 
contraction. Thus $\X^+$ is smooth by Theorem \ref{thm:kollar} and 
$\rho(\X^+/\mathbb{S})=2$ holds. Hence there exists a unique $K_{\X^+}$-negative 
elementary contraction morphism $\sigma^+$ over $\mathbb{S}$ since $-K_{\X^+}$ is 
nef and big over $\mathbb{S}$. 
\end{itemize}

Set $\F^+:=\chi_*\sigma^{-1}_*\pi^{-1}\left(\pi(\Xi)\right)\subset\X^+$. 
We firstly remark that every fiber of $\pi^+\circ\sigma^+$ is of dimension $1$. 
Assume that $\sigma^+\colon\X^+\to\Y^+$ is a conic bundle. 
Since $\F^+$ is 
disjoint from a general fiber of $\sigma^+$, then there exists a line bundle 
$\sL^+$ on $\Y^+$ such that $(\sigma^+)^*\sL^+\cong\sO_{\X^+}\left(\F^+\right)$
by the contraction theorem \cite[\S 3]{Mo82}. On the other hand, let us take 
a general point $q\in\pi(\Xi)$ and let us set $l:=\pi^{-1}(q)\subset\Y$. 
The strict transform $l^+\subset\F^+\subset\X^+$ of $l\subset\Y^+$ to $\X^+$ 
satisfies that, since $\chi$ is an isomorphism around a neighborhood of 
$\sigma^{-1}_*l$, $\left(\F^+\cdot l^+\right)
=\left(\chi_*^{-1}\F^+\cdot \sigma^{-1}_* l\right)=-1$. However, since 
the birational morphism $\Y^+\to\mathbb{S}$ must be an isomorphism over 
$q\in\mathbb{S}$, the curve $l^+$ must be contracted by $\sigma^+$. 
This implies that 
$-1=\left(\F^+\cdot l^+\right)=\left((\sigma^+)^*\sL^+\cdot l^+\right)=0$, 
a contradiction. Thus the morphism $\sigma^+$ is not a conic bundle. 

By the classification of three-dimensional elementary $K_{\X^+}$-negative 
contraction morphisms \cite[\S 3]{Mo82}, the variety $\Y^+$ is smooth and the 
morphism $\sigma^+\colon \X^+\to\Y^+$ is the blowup of $\Y^+$ along 
a smooth curve $\Xi^+$ in $\Y^+$. 
Let $\E^+\subset\X^+$ be the strict transform of $\E\subset\X$ to $\X^+$. 
Since $\sigma$ is $\E$-negative and $\E$ is a nonzero effective divisor, 
the morphism $\sigma^+$ is $\E^+$-positive. 
(Indeed, if $\beta$ is not an isomorphism, then $\beta$ is $\E$-positive 
by considering curves over $\mathbb{S}$ passing through $\E$ but are not contained 
in $\E$. Thus $\beta^+$ is $\E^+$-negative. By the same reason, we can show that 
$\sigma^+$ is $\E^+$-positive. If $\beta$ is an isomorphism, then the proof 
is much easier.)
On the other hand, observe that $\E^++\F^+$ is obtained by the pullback of 
$\pi(\Xi)\subset\mathbb{S}$. Thus $\sigma^+$ is $\F^+$-negative. Therefore, 
the exceptional divisor of $\sigma^+$ is equal to $\F^+$. 
In particular, $\pi^+(\Xi^+)=\pi(\Xi)$ holds. Since $\rho(\Y^+/\mathbb{S})=1$, the 
anti-canonical divisor $-K_{\Y^+}$ is $\pi^+$-ample. Hence $\pi^+$ is a conic bundle. 
Moreover, from the construction, we have 
$\pi(\Xi)\not\subset\Delta_{\pi^+}$ and 
$\Delta_{\pi^+}\setminus\pi(\Xi)=\Delta_\pi\setminus\pi(\Xi)$. 
Thus we get the assertions \eqref{proposition:conic-bundle1} and 
\eqref{proposition:conic-bundle2}. 
\end{proof}

\begin{remark}\label{remark:conic-bundle}
Under the assumption in Proposition \ref{proposition:conic-bundle}, assume that 
$\pi$ is a $\pr^1$-bundle. In this case, the morphism $\pi^+$ is also a $\pr^1$-bundle. 
By Lemma \ref{lemma:conic-bundle}, the $\chi$ is an elementary flop if and only if 
$\pi(\Xi)$ is not a smooth curve (i.e., the curve $\pi(\Xi)$ has a multiplicity $2$ point). 
More precisely, let $\left\{o_1,\dots,o_m\right\}\subset\pi(\Xi)$ be the set of 
singular points of $\pi(\Xi)$. Then the set of flopping curves (resp., flopped curves) 
of $\chi$ is nothing but the set 
\[
\left\{\sigma^{-1}_*\left(\pi^{-1}(o_i)\right)\right\}_{1\leq i\leq m} \quad\left(\text{resp., }
\left\{(\sigma^+)^{-1}_*\left((\pi^+)^{-1}(o_i)\right)\right\}_{1\leq i\leq m}\right). 
\]
If $\pi(\Xi)$ is a smooth curve, then $\chi$ is 
the identity morphism and the above diagram is nothing but a classical elementary 
transform of $\pr^1$-bundles in the sense of Maruyama \cite{maruyama}. 
\end{remark}

\begin{proposition}\label{proposition:dP-fibration}
Let $\T$ be a smooth projective curve and let $\Y$ be a $3$-dimensional smooth 
projective variety. Let $\pi\colon\Y\to\T$ be a morphism such that 
$\pi_*\sO_{\Y}=\sO_{\T}$, $-K_{\Y}$ is $\pi$-ample and the relative Picard number 
$\rho(\Y/\T)$ is equal to $1$. Let $\Xi\subset\Y$ be a smooth irreducible curve such 
that $\pi|_{\Xi}\colon\Xi\to\T$ is surjective. Let $\sigma\colon\X\to\Y$ be the 
blowup of $\Y$ along $\Xi\subset\Y$ and let $\E\subset\X$ be the exceptional 
divisor of $\sigma$. Assume that $-K_{\X}$ is nef and big over $\T$ and the 
anti-canonical model 
\[\xymatrix{
\X \ar[dr]_-{\pi\circ\sigma} \ar[r]^-{\beta} & \bar{\X} \ar[d]^-{\gamma} \\
&\T
}\]
of $\X$ over $\T$ satisfies that $\beta$ is small. (We allow the case $\beta$ is an 
isomorphism. Note that, if $\beta$ is not an isomorphism, then 
$\beta$ is an elementary flopping contraction since $\rho(\X/\T)=2$.)
Let $\beta^+\colon\X^+\to\bar{\X}$ be 
\begin{itemize}
\item
equal to $\beta$ if $\beta$ is an isomorphism, 
\item
the flop of $\beta$ if $\beta$ is an elementary flopping contraction. 
\end{itemize}
Note that $\rho(\X^+/\T)=2$. As in Proposition \ref{proposition:conic-bundle}, 
we can uniquely get the $K_{\X^+}$-negative elementary contraction 
$\sigma^+\colon\X^+\to\Y^+$ (other than $\sigma$) over $\T$ 
together with the commutative diagram
\[\xymatrix{
\X^+ \ar[d]_-{\gamma\circ\beta^+} \ar[r]^-{\sigma^+} & \Y^+ \ar[dl]^-{\pi^+} \\
\T.&
}\]
Moreover, set $\chi:=(\beta^+)^{-1}\circ\beta$ and $\E^+:=\chi_*\E$. 
As a consequence, we get the 
following diagram:
\[\xymatrix{
\X \ar@{-->}[rr]^-{\chi} \ar[dr]^-{\beta} \ar[d]_-{\sigma}&& 
\X^+ \ar[dl]_-{\beta^+} \ar[d]^-{\sigma^+}\\
\Y \ar[dr]_-{\pi} & \bar{\X} \ar[d]_(.4){\gamma} & \Y^+ \ar[dl]^-{\pi^+} \\
 &\T &
}\]
We have the following: 
\begin{enumerate}
\renewcommand{\theenumi}{\arabic{enumi}}
\renewcommand{\labelenumi}{(\theenumi)}
\item\label{proposition:dP-fibration1}
The variety $\Y^+$ is a smooth projective variety such that $-K_{\Y^+}$ is ample 
over $\T$, $\rho(\Y^+/\T)=1$ and $\dim \Y^+\in\{2,3\}$. 
(In particular, if $\dim \Y^+=2$, then $\Y^+$ is a geometrically ruled surface over $\T$.)
\item\label{proposition:dP-fibration2}
There uniquely exists a positive rational number $u\in\Q_{>0}$ such that 
$\sigma^+$ is $(-K_{\X^+}-u \E^+)$-trivial. 
\item\label{proposition:dP-fibration3}
Let $\mathbb{S}\subset\Y$ be a general fiber of $\pi$ and set 
$\tilde{\mathbb{S}}:=\sigma_*^{-1}\mathbb{S}$. Then $\tilde{\mathbb{S}}$ is a smooth 
del Pezzo surface. Moreover, the $\Q$-divisor 
$-K_{\tilde{\mathbb{S}}}-u(\E|_{\tilde{\mathbb{S}}})$ is non-ample 
but semiample, and induces the contraction morphism 
$\sigma^+|_{\tilde{\mathbb{S}}}\colon\tilde{\mathbb{S}}\twoheadrightarrow
\sigma^+(\tilde{\mathbb{S}})$. 
\end{enumerate}
\end{proposition}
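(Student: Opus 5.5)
We follow the pattern of Proposition \ref{proposition:conic-bundle}, now working over the curve $\T$. Since $\rho(\X/\T)=2$ and $\beta$ is small, $\beta$ is either an isomorphism or an elementary flopping contraction. In the second case Theorem \ref{thm:kollar} makes $\X^+$ smooth, and $-K_{\X^+}$ is nef and big over $\T$ because $\chi$ is an isomorphism in codimension one and $-K_{\X^+}$ is the pullback of the relatively ample divisor $-K_{\bar{\X}}$. In the first case $-K_{\X}$ is relatively ample. Either way the relative cone $\overline{NE}(\X^+/\T)$ is two-dimensional. One ray is spanned by the flopped curves, or by the $\sigma$-ray when $\beta$ is an isomorphism. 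The other ray is $K_{\X^+}$-negative, and $\sigma^+$ is its contraction.

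\emph{Step 1: proving (1).} $\sigma^+$ cannot be small, by \cite[Theorem 3.3]{Mo82}. Mori's classification \cite[\S 3]{Mo82} then says $\Y^+$ is smooth whenever $\dim\Y^+=3$. If $\dim\Y^+=2$, then $\Y^+\to\T$ has relative Picard number one, so it is a geometrically ruled surface. $\dim\Y^+=1$ is impossible, since then $\Y^+=\T$ and $\rho(\X^+/\T)=1$. Relative ampleness of $-K_{\Y^+}$ follows from $\rho(\Y^+/\T)=1$ together with the $K_{\X^+}$-negativity of the contracted ray. In the divisorial case, the relatively ample class is the pushforward of $-K_{\X^+}$, which is positive on the unique ray.

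\emph{Step 2: proving (2).} $\E^+$ is positive on the extremal ray $R^+$ contracted by $\sigma^+$. The argument is the one in Proposition \ref{proposition:conic-bundle}. $\E$ is $\sigma$-negative. Curves in fibres of $\pi\circ\sigma$ that meet $\E$ without lying in it show that $\E$ is $\beta$-positive, so $\E^+$ is $\beta^+$-negative. Since $\E^+$ is effective and negative on one boundary ray of the two-dimensional relative cone, it must be positive on the other ray $R^+$. Now $-K_{\X^+}$ is also positive on $R^+$. Setting
\[
u:=\frac{(-K_{\X^+}\cdot R^+)}{(\E^+\cdot R^+)}\in\Q_{>0}
\]
gives the trivial direction, and $u$ is unique because both intersection numbers are nonzero.

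\emph{Step 3: proving (3).} A general fibre $\mathbb{S}$ of $\pi$ is a smooth del Pezzo surface, and $\Xi$ meets it transversally in finitely many points. Hence $\tilde{\mathbb{S}}$ is the blowup of $\mathbb{S}$ at those points, and $-K_{\tilde{\mathbb{S}}}=-K_{\X}|_{\tilde{\mathbb{S}}}$. Since $\beta$ is an isomorphism over a general point of $\T$ (the exceptional locus is a union of finitely many curves), $-K_{\tilde{\mathbb{S}}}$ is ample, so $\tilde{\mathbb{S}}$ is del Pezzo. Also $\chi$ is an isomorphism near $\tilde{\mathbb{S}}$, so $\tilde{\mathbb{S}}$ is identified with a general fibre of $\X^+\to\T$.

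$-K_{\X^+}-u\E^+$ is the pullback of a $\Q$-Cartier divisor from $\Y^+$ that is relatively ample over $\T$, because $\rho(\Y^+/\T)=1$ and the class is positive on the complementary ray. Restricting to $\tilde{\mathbb{S}}$ shows that $-K_{\tilde{\mathbb{S}}}-u(\E|_{\tilde{\mathbb{S}}})$ is semiample and induces $\sigma^+|_{\tilde{\mathbb{S}}}$. It is not ample: $\sigma^+$ contracts curves in general fibres, because either the exceptional divisor dominates $\T$ or $\sigma^+$ is a fibration with one-dimensional fibres. We still need to rule out the case where the exceptional divisor of $\sigma^+$ lies in a fibre. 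In that case $\Y^+\to\T$ would have two fibre components over one point, contradicting $\rho(\Y^+/\T)=1$ with irreducible fibres. Alternatively, $\sigma^+$ would then be an isomorphism on general fibres, which would make $\rho(\tilde{\mathbb{S}})$ be accounted for incorrectly.

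\emph{Main obstacle.} This last exclusion is the delicate point. A divisor inside a fibre of $\X^+\to\T$ is numerically a combination of the fibre class and components. I would argue that an exceptional divisor $D$ of $\sigma^+$ lying over a point $t$ gives $\sigma^+$-negativity of $D$. Since the fibre $F_t$ is $\sigma^+$-trivial, $D\neq F_t$, so $F_t$ must be reducible. But $F_t$ is the transform of the fibre of $\pi$, which is irreducible because $\rho(\Y/\T)=1$, so only the strict transform of $\pi^{-1}(t)$ can be a component. Hence $F_t=D$, a contradiction.
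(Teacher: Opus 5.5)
Your route is the paper's. You use the same two-ray analysis of $\overline{\mathrm{NE}}(\X^+/\T)$ and the same $\E^+$-positivity on the $\sigma^+$-ray, imported from Proposition \ref{proposition:conic-bundle}. The decisive input is also the same: every fibre of $\X^+\to\T$ has irreducible support, because it is the strict transform of a fibre of $\pi$ (irreducible since $\rho(\Y/\T)=1$) and $\Xi$ is horizontal, so $\E$ contributes no component.

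What is wrong as written is the order in Step 1. Mori's classification does \emph{not} say that $\Y^+$ is smooth whenever $\dim\Y^+=3$. The contractions of a divisor to a point of types E3, E4, E5 have singular targets: an ordinary double point, a $cA_1$ point, and a $\frac12(1,1,1)$ point respectively. Smoothness in (1) therefore depends on first excluding an exceptional divisor $D$ of $\sigma^+$ that lies in a fibre. That is exactly your closing ``main obstacle'' argument: $D=\Supp F_t$ would give $F_t=mD$, which is $\sigma^+$-trivial while $D$ is $\sigma^+$-negative. The paper makes this observation at the very start. It then applies Mori to get $\Y^+$ smooth and, in the divisorial case, $\sigma^+$ the blowup of a horizontal curve.

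To fix this, move that paragraph into Step 1. Drop the two ``alternative'' justifications before it (two fibre components of $\Y^+\to\T$, a miscount of $\rho(\tilde{\mathbb{S}})$); they are not arguments. Also check $\pi^+$-ampleness of $-K_{\Y^+}$ on a general fibre, i.e.\ the image of the del Pezzo surface $\tilde{\mathbb{S}}$. $K_{\X^+}$-negativity of the contracted ray alone does not give it.

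With this reordering the proof is complete. Your way of descending $-K_{\X^+}-u\E^+$ to a $\pi^+$-ample class, via positivity on the complementary ray (the flopped ray, or the $\sigma$-ray if $\beta$ is an isomorphism), is a valid alternative. The paper instead uses that $K_{\tilde{\mathbb{S}}}+u\E|_{\tilde{\mathbb{S}}}$ is not pseudo-effective.
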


\begin{proof}
Since $\rho(\Y/\T)=1$ and $\Xi$ is horizontal with respects to $\pi$, any closed 
fiber of $\pi\circ\sigma\colon\X\to\T$ is irreducible. Thus the morphism $\sigma^+$ 
does not have a $2$-dimensional fiber. By the classification of three-dimensional 
elementary $K_{\X^+}$-negative contraction morphisms \cite[\S 3]{Mo82}, 
the variety $\Y^+$ is smooth, and either 
\begin{itemize}
\item
the dimension of $\Y^+$ is equal to $2$ and the morphism $\sigma^+$ 
is a conic bundle, or
\item
the dimension of $\Y^+$ is equal to $3$ 
and there exists a smooth irreducible curve $\Theta\subset\Y^+$ such that 
$\sigma^+$ is the blowup of $\Y^+$ along $\Theta\subset\Y^+$, and 
the curve $\Theta$ is horizontal with respects to $\pi^+$ (since $\pi^+\circ\sigma^+$ 
has no $2$-dimensional fibers).
\end{itemize}
As in the proof of Proposition \ref{proposition:conic-bundle}, the morphism $\sigma^+$ is 
$\E^+$-positive. Moreover, since $\sigma^+$ is an elementary contraction morphism 
and $\sigma^+$ is $(-K_{\X^+})$-positive, there uniquely exists $u\in\Q_{>0}$ such that 
$\sigma^+$ is $(-K_{\X^+}-u \E^+)$-trivial. 

From the assumption, $\tilde{\mathbb{S}}$ is a del Pezzo surface and $\chi$ is an 
isomorphism around a neighborhood of $\tilde{\mathbb{S}}$. Moreover, from the 
possibility of the structure of $\sigma^+$, the restriction morphism 
$\sigma^+|_{\tilde{\mathbb{S}}}\colon\tilde{\mathbb{S}}\twoheadrightarrow
\sigma^+(\tilde{\mathbb{S}})$ is not an isomorphism. 
Note that $K_{\tilde{\mathbb{S}}}+u(\E|_{\tilde{\mathbb{S}}})
=\left(\sigma|_{\tilde{\mathbb{S}}}\right)^*K_{\mathbb{S}}+(1+u)\E|_{\tilde{\mathbb{S}}}$ is 
not pseudo-effective 
on $\tilde{\mathbb{S}}$. 
Since $\rho(\Y^+/\T)=1$, this implies that $-K_{\X^+}-u \E^+$ is obtained by 
the pullback of a $\pi^+$-ample $\Q$-divisor on $\Y^+$. 
In particular, the $\Q$-divisor $-K_{\tilde{\mathbb{S}}}-u(\E|_{\tilde{\mathbb{S}}})$ is 
the pullback of an ample $\Q$-divisor on $\sigma^+(\tilde{\mathbb{S}})$
and we get the assertion. 
\end{proof}

\begin{example}\label{example:dP-fibration}
Under the assertion of Proposition \ref{proposition:dP-fibration}, assume moreover that 
there exists a rank $3$ vector bundle $\sE$ on $\T$ such that $\pi$ is obtained by 
$\Y=\pr_\T(\sE)\to\T$, and assume moreover that $\deg(\pi|_\Xi)=4$. 
Then $\mathbb{S}=\pr^2$ and $\tilde{\mathbb{S}}$ is a smooth del Pezzo surface of 
anti-canonical degree $5$. Let us set $\be_1+\cdots+\be_4:=\E|_{\tilde{\mathbb{S}}}$. 
The value $u\in\Q_{>0}$ satisfies that $-K_{\tilde{\mathbb{S}}}-u(\be_1+\cdots+\be_4)$ is 
nef but not ample. This implies that $u=\frac{1}{2}$. 
Moreover, the semiample $\Q$-divisor 
$-K_{\tilde{\mathbb{S}}}-\frac{1}{2}(\be_1+\cdots+\be_4)$ induces a conic bundle 
$\tilde{\mathbb{S}}\to\pr^1$ such that there are exactly $3$ numbers of 
reducible fibers. Therefore, $\sigma^+\colon\X^+\to\Y^+$ is a conic bundle, 
$\pi^+\colon\Y^+\to\T$ is a geometrically ruled surface, 
and $\left(\Delta_{\sigma^+}\cdot l^+\right)=3$, where $l^+\subset \Y^+$ is a general 
fiber of $\pi^+$. 
\end{example}

\section{On several smooth Fano threefolds}\label{section:fano3}

We recall the definition and basic properties of several important 
smooth Fano threefolds. 

\begin{proposition}[{\cite{MM, MM85}}]\label{proposition:2-21}
Let $Q_1$ be the $3$-dimensional smooth hyperquadric in $\pr^4$, let 
$\Gamma_1\subset Q_1$ be a twisted quartic curve (under the natural embedding 
$\Gamma_1\subset Q_1\subset \pr^4$), and let $\rho_1\colon\hat{Q}\to Q_1$ be the 
blowup of $Q_1$ along $\Gamma_1$ with the exceptional divisor $S_1\subset\hat{Q}$. 
\begin{enumerate}
\renewcommand{\theenumi}{\arabic{enumi}}
\renewcommand{\labelenumi}{(\theenumi)}
\item\label{proposition:2-211}
The threefold $\hat{Q}$ is a smooth Fano threefold in Mori--Mukai's list type 2.21 
\cite[Table 2]{MM}. 
\item\label{proposition:2-212}
The other elementary contraction of $\hat{Q}$ is also the blowup 
$\rho_2\colon\hat{Q}\to Q_2$ of the $3$-dimensional smooth hyperquadric $Q_2$ 
in $\pr^4$ along a twisted quartic curve $\Gamma_2\subset Q_2$. 
Let $S_2\subset\hat{Q}$ be the exceptional divisor of $\rho_2$. 
Then we have 
\[
\rho_2^*\sO_{Q_2}(1)\sim\rho_1^*\sO_{Q_1}(2)-S_1, \quad
\rho_1^*\sO_{Q_1}(1)\sim\rho_2^*\sO_{Q_2}(2)-S_2, \quad 
-K_{\hat{Q}}\sim\rho_1^*\sO_{Q_1}(1)+\rho_2^*\sO_{Q_2}(1), 
\]
where $\sO_{Q_i}(1)$ is the ample generator of $\Pic Q_i$. 
\end{enumerate}
\end{proposition}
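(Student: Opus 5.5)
Part (1) comes down to recognizing $\hat{Q}$ in the Mori--Mukai list. Write $H_1:=\rho_1^*\sO_{Q_1}(1)$. The blowup formula gives $-K_{\hat{Q}}\sim 3H_1-S_1$. To check that $-K_{\hat{Q}}$ is ample, I would go through Mori's cone theorem: $\rho(\hat{Q})=2$, so it suffices to test on the two extremal rays. One ray is spanned by a fiber $f_1$ of $S_1\to\Gamma_1$, and $(-K_{\hat{Q}}\cdot f_1)=1$. For the other ray, the candidate is the strict transform of a bisecant line of $\Gamma_1$, which has $-K$-degree $3-2=1$. Since $\Gamma_1$ is cut out by quadrics, I would take the other ray to be spanned by the strict transforms of the bisecants, which live in the base locus of nothing. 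It is cleaner to show that $2H_1-S_1$ is base point free: the twisted quartic is scheme-theoretically an intersection of quadrics in $Q_1$. Granting that, $-K_{\hat{Q}}=H_1+(2H_1-S_1)$ is a sum of a nef divisor and a base point free divisor, and it is positive on both rays, hence ample. The standard invariants $(-K)^3=26$, $\rho=2$, $h^{1,2}=0$ then match type 2.21.

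For part (2) I would study the morphism $\phi$ defined by $|2H_1-S_1|$. Using $h^0(\sO_{Q_1}(2))=14$ and $h^0(\sO_{\Gamma_1}(8))=9$, and the projective normality of $\Gamma_1$, I get $h^0(2H_1-S_1)=5$, so $\phi\colon\hat{Q}\to\pr^4$. Next I compute degrees with $H_1^3=2$, $H_1^2S_1=0$, $H_1S_1^2=-\deg\Gamma_1=-4$, and $S_1^3=-\deg N_{\Gamma_1/Q_1}=-(3\cdot4-2+2\cdot0)=-(10+2\cdot0)$. Here $\deg N=-K_{Q_1}\cdot\Gamma_1+2g-2=12-2=10$, so $S_1^3=-10$. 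This gives
\[(2H_1-S_1)^3=8\cdot2-0+6\cdot(-4)\cdot(-1)\cdot\ldots\]
Expanding carefully, $(2H_1-S_1)^3=8H_1^3-12H_1^2S_1+6H_1S_1^2-S_1^3=16-24+10=2$. Thus the image is a quadric $Q_2$ and $\phi$ is birational onto it.

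Since $\rho=2$, $\phi$ is the other extremal contraction. It is $K$-negative of birational type, so by Mori's classification, and because $Q_2$ has Picard number one, it is the blowup of a smooth curve $\Gamma_2$ in the smooth threefold $Q_2$. Smoothness of $Q_2$ follows from Mori's result that the target is smooth. The exceptional divisor $S_2$ satisfies $-K_{\hat{Q}}=3H_2-S_2$ with $H_2=2H_1-S_1$. This gives $S_2=3H_2+K=6H_1-3S_1-3H_1+S_1=3H_1-2S_1$, and therefore $2H_2-S_2=4H_1-2S_1-3H_1+2S_1=H_1$, which is the second relation. The third relation $-K=H_1+H_2$ is immediate. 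The degree of $\Gamma_2$ is $-H_2S_2^2$, and by symmetry of the numerics this comes out to $4$. To see that $\Gamma_2$ is rational I would use $S_2$ being birational to a ruled surface over $\Gamma_2$, or the invariance of $h^{1,2}=g(\Gamma_1)=0=g(\Gamma_2)$. So $\Gamma_2$ is a smooth rational quartic spanning $\pr^4$, i.e.\ a twisted quartic.

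I expect the main obstacle to be base-point-freeness of $|2H_1-S_1|$, i.e.\ that the ideal of $\Gamma_1$ in $Q_1$ is generated by quadrics. I would prove this from the fact that the rational normal quartic in $\pr^4$ is cut out by quadrics: its ideal is generated by the $2\times2$ minors.
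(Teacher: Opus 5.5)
The paper gives no argument here; it just refers to Mori--Mukai. Your route is the standard one: base-point-freeness of $|2H_1-S_1|$ from the quadrics cutting out $\Gamma_1$, and ampleness of $-K_{\hat{Q}}=H_1+(2H_1-S_1)$ because both summands are nef and linearly independent while $\rho(\hat{Q})=2$. Then come $(2H_1-S_1)^3=2$ and the divisor relations. Those parts are fine.

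The genuine gap is the step ``by Mori's classification, and because $Q_2$ has Picard number one, it is the blowup of a smooth curve \dots\ Smoothness of $Q_2$ follows from Mori's result that the target is smooth.'' Mori's theorem gives a smooth target only for types E1 and E2. In types E3--E5 the target is singular, and in E2--E5 the exceptional divisor is contracted to a point. The Picard-number remark cannot tell these apart: every divisorial extremal contraction of a threefold with $\rho=2$ lands on a variety of Picard number one (for instance, the blowup of a point on a smooth quadric).

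What you must show is that the exceptional divisor $E$ of $\phi$ maps onto a curve, and your own intersection numbers do it. You have $(2H_1-S_1)^2\cdot H_1=4$ and $(2H_1-S_1)^2\cdot S_1=6$. So $(2H_1-S_1)^2\cdot E=0$ forces $E\sim c(3H_1-2S_1)$ with $c\geq 1$. Then $H_1\cdot(2H_1-S_1)\cdot E=4c\neq 0$, so $\phi(E)$ is not a point. Hence $\phi$ is of type E1, and only at this point does Mori's theorem give that $Q_2$ is smooth and $\phi$ is the blowup of a smooth curve. Geometrically, $E$ is the strict transform of the surface swept out by the bisecant lines of $\Gamma_1$ that lie in $Q_1$.

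There are also smaller slips.
\begin{itemize}
\item With your intersection numbers, $(-K_{\hat{Q}})^3=(3H_1-S_1)^3=54-36+10=28$, not $26$; $26$ is the degree of type 2.20, so your invariant check as written points to the wrong family. In fact no invariant matching is needed: type 2.21 is defined as this blowup, and (1) only asks for ampleness.
\item The claim that $\Gamma_2$ spans $\pr^4$ needs an argument. If it lay in a hyperplane, then $h^0(\hat{Q},H_2-S_2)\neq 0$. But $H_2-S_2=S_1-H_1$ and $h^0(\hat{Q},S_1-H_1)=h^0(Q_1,\sO_{Q_1}(-1))=0$.
\item To conclude that $\phi$ is birational onto a quadric, note that the image is nondegenerate because it comes from a complete linear system. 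So its degree is at least $2$, which together with $(2H_1-S_1)^3=2$ forces $\deg\phi=1$. Normality of the image (a quadric of rank at least $3$) then makes $\phi$ a contraction.
\end{itemize}
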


\begin{proof}
This is easy and well-known. See \cite[(7.4)]{MM85} for example. 
\end{proof}

As in Proposition \ref{proposition:2-21}, in this article, whenever the 
$3$-dimensional smooth hyperquadric $Q_i$ appears, 
the ample generator of $\Pic Q_i$ is denoted by $\sO_{Q_i}(1)$. 

\begin{definition}\label{definition:2-21a}
\begin{enumerate}
\renewcommand{\theenumi}{\arabic{enumi}}
\renewcommand{\labelenumi}{(\theenumi)}
\item\label{definition:2-21a1}
A $3$-dimensional smooth projective variety $\hat{Q}$ is said to be 
\emph{a Fano threefold of type 2.21} 
if there is a smooth $3$-dimensional hyperquadric $Q_1\subset\pr^4$ and 
a twisted quartic $\Gamma_1^{Q_1}\subset Q_1\subset\pr^4$ such that 
$\hat{Q}$ is obtained 
by the blowup $\rho_1\colon\hat{Q}\to Q_1$ along $\Gamma_1^{Q_1}\subset Q_1$. 
\item\label{definition:2-21a2}
Let $\hat{Q}$ be a Fano threefold of type 2.21, and let $\rho_1\colon\hat{Q}\to Q_1$ 
and $\rho_2\colon\hat{Q}\to Q_2$ be the two distinct contractions 
as in Proposition \ref{proposition:2-21}. 
An irreducible curve $\hat{C}\subset\hat{Q}$ is said to be 
\emph{a bi-cubic curve in $\hat{Q}$} if 
$\left(\rho_i^*\sO_{Q_i}(1)\cdot \hat{C}\right)=3$ hold for $i=1$ and $2$. 
Similarly, an irreducible curve $\sC^+\subset\hat{Q}$ is said to be a 
\emph{bi-line in $\hat{Q}$} if  
$\left(\rho_i^*\sO_{Q_i}(1)\cdot \sC^+\right)=1$ hold for $i=1$ and $2$. 
\end{enumerate}
\end{definition}

\begin{lemma}\label{lemma:2-21-bi-cubic}
Let $\hat{Q}$ be a Fano threefold of type 2.21, let $\rho_1\colon\hat{Q}\to Q_1$ 
and $\rho_2\colon\hat{Q}\to Q_2$ be the two distinct elementary contractions. 
\begin{enumerate}
\renewcommand{\theenumi}{\arabic{enumi}}
\renewcommand{\labelenumi}{(\theenumi)}
\item\label{lemma:2-21-bi-cubic1}
Let $\hat{C}\subset\hat{Q}$ be a bi-cubic curve in $\hat{Q}$. 
Then both $(\rho_1)_*\hat{C}\subset Q_1\subset\pr^4$ and 
$(\rho_2)_*\hat{C}\subset Q_2\subset\pr^4$ are twisted cubic curves. 
In particular, the curve $\hat{C}$ is a smooth rational curve. 
Moreover, we have 
\[
\operatorname{length}\left(\sO_{\left((\rho_i)_*\hat{C}\right)
\cap\Gamma_i^{Q_i}}\right)=3,
\]
where $\Gamma_i^{Q_i}\subset Q_i$ is 
the center of the blowup $\rho_i$. 
\item\label{lemma:2-21-bi-cubic2}
Conversely, for any twisted cubic curve $C_1\subset Q_1$ with 
$\operatorname{length}\left(\sO_{\Gamma_1^{Q_1}\cap C_1}\right)=3$, 
the strict transform $(\rho_1)^{-1}_*C_1$ is a bi-cubic curve in $\hat{Q}$. 
\end{enumerate}
\end{lemma}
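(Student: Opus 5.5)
The plan is to work entirely with intersection numbers on $\hat{Q}$, using the relations of Proposition \ref{proposition:2-21} \eqref{proposition:2-212}. Write $H_i:=\rho_i^*\sO_{Q_i}(1)$. From $H_2\sim 2H_1-S_1$ and $H_1\sim 2H_2-S_2$ we get $S_1\sim 2H_1-H_2$ and $S_2\sim 2H_2-H_1$.

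For a bi-cubic curve $\hat{C}$ this gives
\[
\left(S_1\cdot\hat{C}\right)=\left(S_2\cdot\hat{C}\right)=6-3=3.
\]

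\emph{Step 1: the images are twisted cubics.} The curve $\hat{C}$ is not contracted by $\rho_1$, since $(H_1\cdot\hat{C})=3>0$. So $C_1:=(\rho_1)_*\hat{C}$ is an irreducible curve in $Q_1$ of degree $3$, and the map $\hat{C}\to C_1$ is birational (the degree $3$ cannot split as degree times multiplicity except trivially; if the map had degree $3$, then $C_1$ would be a line). I would rule that out as follows. Suppose $C_1$ is a line and $\hat{C}\to C_1$ has degree $3$. The strict transform $\tilde{C}_1$ of $C_1$ satisfies $\hat{C}=\tilde{C}_1$ as sets, because $\hat{C}$ is irreducible and not contained in $S_1$ (otherwise it would lie in a fiber or map onto $\Gamma_1$, which has degree $4$). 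Hence $\hat{C}=\tilde{C}_1$ and the degree is $1$, a contradiction.

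It remains to exclude a plane cubic (which has arithmetic genus $1$) and a non-linearly-normal cubic. Irreducible degree-$3$ curves in $\pr^4$ are either twisted cubics spanning a $\pr^3$ or plane cubics. In the plane case, $C_1\subset\Pi\cap Q_1$; since $Q_1$ is smooth of dimension $3$ it contains no planes, so $\Pi\cap Q_1$ is a conic and cannot contain a cubic. Hence $C_1$ is a twisted cubic, so $\hat{C}\cong C_1\cong\pr^1$ is smooth rational. By symmetry the same holds for $(\rho_2)_*\hat{C}$.

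\emph{Step 2: the length.} Apply Lemma \ref{lemma:lengths} \eqref{lemma:length1} with $\U=Q_1$, $\Delta=\Gamma_1$ and $\Xi=C_1$; here $\Xi^\nu=C_1$ since $C_1$ is smooth, and $C_1\not\subset\Gamma_1$ by comparing degrees. Summing over all points gives
\[
\operatorname{length}\left(\sO_{C_1\cap\Gamma_1}\right)=\left(S_1\cdot\hat{C}\right)=3,
\]
and likewise for $i=2$.

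\emph{Step 3: the converse.} Let $C_1$ be a twisted cubic with length of $C_1\cap\Gamma_1$ equal to $3$. Then $C_1\neq\Gamma_1$, and Lemma \ref{lemma:lengths} gives $(S_1\cdot\hat{C})=3$ for $\hat{C}:=(\rho_1)^{-1}_*C_1$. Also $(H_1\cdot\hat{C})=3$, so
\[
\left(H_2\cdot\hat{C}\right)=2\cdot 3-3=3.
\]
Thus $\hat{C}$ is bi-cubic.

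The main obstacle is the birationality and degree bookkeeping in Step 1, that is, making sure $\hat{C}\to C_1$ is birational so that the push-forward has degree exactly $3$. This reduces to the observation that the push-forward of an irreducible curve not contracted by a blowup is birational onto its image. Concretely, $\rho_1$ is an isomorphism off $S_1$, and $\hat{C}\not\subset S_1$ because curves in $S_1$ either are contracted or satisfy $(H_1\cdot -)\in 4\Z$ when they map onto $\Gamma_1$. This last point is the one to check carefully.
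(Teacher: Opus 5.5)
Your proof is correct and takes essentially the same route as the paper. In both arguments the relations of Proposition \ref{proposition:2-21} give $(S_i\cdot\hat{C})=3$, degree-$3$ curves on a smooth quadric threefold are twisted cubics, and Lemma \ref{lemma:lengths} converts $(S_i\cdot\hat{C})$ into the length of $(\rho_i)_*\hat{C}\cap\Gamma_i^{Q_i}$ in both directions. You additionally justify two points the paper leaves implicit: that $\hat{C}\to(\rho_i)_*\hat{C}$ is birational (via $\hat{C}\not\subset S_i$), and that plane cubics are excluded.
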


\begin{proof}
\eqref{lemma:2-21-bi-cubic1}
Since both $(\rho_1)_*\hat{C}\subset Q_1\subset\pr^4$ and 
$(\rho_2)_*\hat{C}\subset Q_2\subset\pr^4$ are curves of degree $3$ inside 
$3$-dimensional smooth hyperquadrics, they must be twisted cubic curves. 
The rest of the assertion follows from Lemma \ref{lemma:lengths}. 

\eqref{lemma:2-21-bi-cubic2}
Follows immediately from Proposition \ref{proposition:2-21} and Lemma 
\ref{lemma:lengths}. 
\end{proof}

\begin{lemma}\label{lemma:2-31}
Let $Q$ be the $3$-dimensional smooth hyperquadric and let $Z\subset Q$ be a line, 
i.e., a curve with $\left(\sO_Q(1)\cdot Z\right)=1$. 
Let $\phi\colon Y\to Q$ be the blowup of $Q$ along $Z$ and let $F\subset Y$ be the 
exceptional divisor of $\phi$. Then $Y$ is a smooth Fano threefold with 
$(-K_Y)^{\cdot 3}=46$, which is in Mori--Mukai's list type 2.31 \cite[Table 2]{MM}. 
The complete linear system $|\phi^*\sO_Q(1)-F|$ on $Y$ gives the morphism 
$\psi\colon Y\to \pr^2$. We summarize the diagram: 
\begin{equation}\label{equation:2-31}
\xymatrix{
 & F \ar@{}[r]|{\subset} & Y 
 \ar@{->}[ld]_{\phi} \ar@{->}[rd]^{\psi}
&  \\
 Z \ar@{}[r]|{\subset} & Q && 
 \pr^2.
}
\end{equation}
Moreover, there exists a closed point $p\in\pr^2$ corresponds to a coherent 
ideal sheaf $\dm_p\subset\sO_{\pr^2}$ and a non-split exact sequence 
\[
0\to\sO_{\pr^2}\to\sE\to\dm_p\to 0
\]
of coherent sheaves on $\pr^2$ such that $\sE$ is a vector bundle of rank $2$ on 
$\pr^2$ and the morphism $\psi$ is isomorphic to the projective space bundle 
$\pr_{\pr^2}(\sE)\to\pr^2$. 
\end{lemma}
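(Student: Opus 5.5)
We will prove the statement in three steps, using projection from the line $Z$ and an explicit description of $\psi$.

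\textbf{Step 1: $Y$ is Fano of degree $46$.} Write $H:=\phi^*\sO_Q(1)$. Since $-K_Q=3\sO_Q(1)$, we have $-K_Y=3H-F=2H+(H-F)$. The linear system $|\sO_Q(1)\otimes I_Z|$ consists of hyperplane sections containing $Z$. Its base scheme is exactly $Z$, because $Z$ is cut out by the linear forms vanishing on it. Hence $|H-F|$ is base point free on $Y$ and $H-F$ is nef. As $H$ is nef, $-K_Y$ is nef. It is positive on every curve: a curve with $H\cdot C=0$ is a fiber of $F\to Z$, and there $(H-F)\cdot C=1$. So $-K_Y$ is ample by Kleiman's criterion (or by the cone theorem, since $\rho(Y)=2$ and both extremal rays are $K_Y$-negative).

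For the degree we use $H^3=2$, $H^2F=0$, $HF^2=-\deg Z=-1$ and $F^3=-\deg N_{Z/Q}+2-2g(Z)$. The normal bundle is $N_{Z/Q}\cong\sO\oplus\sO(1)$, of degree $1$, so $F^3=-1+2=1$. Then
\[
(3H-F)^3=27\cdot 2-9\cdot 0+3\cdot 3\cdot(-1)-1=54-9-1=44?
\]
This does not give $46$, so the signs must be rechecked. With $F^3=-\deg N+2-2g=-1+2=1$, the term $-F^3$ contributes $-1$, and the term $3\cdot 3H\cdot F^2=9\cdot(-1)=-9$; together with $54$ this gives $44$. The error is in $\deg N_{Z/Q}$: we have $\deg N_{Z/Q}=-K_Q\cdot Z-2=3-2=1$, and the standard convention is $F^3=-\deg N_{Z/Q}$ for rational $Z$... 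I would redo this carefully against the formula $(-K_Y)^3=(-K_Q)^3-2(-K_Q\cdot Z)+2g-2=54-6-2=46$ from \cite[Lemma 2.1]{MM85}, and use that formula directly. Then $\rho(Y)=2$ together with degree $46$ identifies $Y$ as type 2.31 by \cite[Table 2]{MM}.

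\textbf{Step 2: the morphism $\psi$ is a $\pr^1$-bundle.} Since $h^0(\sO_Q(1)\otimes I_Z)=5-2=3$, the system $|H-F|$ defines $\psi\colon Y\to\pr^2$, the linear projection from $Z$. We have $(H-F)^2\cdot H=2-0+(-1)=1$ and $(H-F)^3=2-3\cdot 0+3\cdot(-1)-F^3$, which equals $0$. So $\psi$ is a fibration with fibers of $H$-degree $1$. A fiber is the residual of $Z$ in $Q\cap\Pi$, where $\Pi\supset Z$ is a $2$-plane. Since $Q$ contains no planes, $Q\cap\Pi$ is a conic containing $Z$, and the residual is a line meeting $Z$. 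All fibers are therefore lines, $\psi$ is equidimensional with smooth target, hence flat, and every fiber is a $\pr^1$. Thus $\psi=\pr(\sE')$ with $\sE':=\psi_*\sO_Y(H)$ of rank $2$.

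\textbf{Step 3: identify $\sE'$.} Sections of $\sE'$ correspond to $H^0(Y,H)=H^0(Q,\sO_Q(1))$, which is $5$-dimensional. The divisor $F$ is a section-type surface of degree one over $\pr^2$, namely $F\cong\pr(N^\vee_{Z/Q})\cong\F_1$, and $\psi|_F\colon F\to\pr^2$ is the blowup of a point $p$. The plane $\Pi_p$ corresponds to the tangent direction along which the conic degenerates to $2Z$. From the inclusion $\sO_Y(H-F)\subset\sO_Y(H)$ we get $\psi_*\sO_Y(H-F)=\sO_{\pr^2}(1)$. Twisting by $\sO(-1)$ then gives a sequence $0\to\sO_{\pr^2}\to\sE\to\mathcal{Q}\to 0$ with $\sE:=\sE'(-1)$ and $\mathcal{Q}=\psi_*(\sO_F(H))(-1)$. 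Since $H|_F$ is the fiber class of $F\to Z$ and $F\to\pr^2$ is the blowup at $p$, the sheaf $\psi_*\sO_F(\text{fiber})=\psi_*\sO_{\F_1}(f)$ equals $\dm_p(1)$. Hence $\mathcal{Q}\cong\dm_p$, as required. The sequence is non-split because $\dm_p$ is not locally free and $\sE$ is. The main obstacle is this identification $\psi_*\sO_F(H)\cong\dm_p(1)$ together with the vanishing $R^1\psi_*\sO_Y(H-F)=0$. The latter follows from Proposition~\ref{proposition:CM}\eqref{proposition:CM1}, since $-K_Y$ is $\psi$-ample.
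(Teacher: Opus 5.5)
Your argument is correct in substance, but it takes a genuinely different route from the paper. The paper computes nothing. It quotes Szurek--Wi\'sniewski's theorem for the fact that $\psi$ is $\pr_{\pr^2}(\sE)$ with $\sE$ a non-split extension of $\dm_p$ by $\sO_{\pr^2}$. It then observes, via Horrocks' lemma, that $\Ext^1(\dm_p,\sO_{\pr^2})\cong\Bbbk$. Hence the non-split extension, and with it $\sE$ and $Y$, is unique up to isomorphism. The paper needs this uniqueness later to run the lemma backwards, in the genus-$10$ converse, where Yasutake's classification produces such an $\sE$ and one wants to conclude that $\pr(\sE)$ is the blowup of $Q$ along a line.

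You instead push forward $0\to\sO_Y(H-F)\to\sO_Y(H)\to\sO_F(H)\to 0$ along $\psi$. This uses three facts: $H-F=\psi^*\sO_{\pr^2}(1)$; $\pi:=\psi|_F\colon F\cong\F_1\to\pr^2$ is the blowup of a point $p$; and $H|_F\sim f\sim\pi^*\ell-e$. Together they give $\psi_*\sO_F(H)\cong\dm_p(1)$. This is self-contained and identifies $p$ geometrically, as the unique plane through $Z$ tangent to $Q$ along $Z$. However, it proves only the stated existence, not the uniqueness of the extension that the paper records.

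Several local points need repair.

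(i) The correct value is $F^3=-\deg N_{Z/Q}=-1$; there is no extra $2-2g$ term. Then $(3H-F)^3=54-9+1=46$ directly. This is also the value you tacitly use in Step 2 to get $(H-F)^3=2-3+1=0$, so as written Steps 1 and 2 contradict each other.

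(ii) The invariants $\rho(Y)=2$ and $(-K_Y)^3=46$ do not single out type 2.31: type 2.30, the blowup of $\pr^3$ along a conic, has the same invariants. No identification is needed, since 2.31 is by definition the blowup of $Q$ along a line.

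(iii) Positivity on every curve is not Kleiman's criterion. Say instead that $-K_Y=2H+(H-F)$ is a positive combination of two non-proportional nef classes with $\rho(Y)=2$, so it lies in the interior of the nef cone.

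(iv) In Step 2 the fiber over $p$ is not a residual line of $Q\cap\Pi$, since $Q\cap\Pi_p=2Z$. It is the negative section $e\subset F$. Moreover, ``every fiber is a $\pr^1$'' needs a reason beyond flatness. For instance, $\psi$ is a conic bundle, and for every $\psi$-contracted curve $l$ one has $-K_Y\cdot l=2(H\cdot l)$, which is even. So no fiber has a component of anticanonical degree $1$, i.e.\ no fiber is singular.

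(v) What you call the main obstacle is easy. First, $R^1\psi_*\sO_Y(H-F)=\sO_{\pr^2}(1)\otimes R^1\psi_*\sO_Y=0$. Second, $(H-F)\cdot f=1$ and $(H-F)^2\cdot F=1$ give $(H-F)|_F\equiv e+f$. Hence $\psi|_F$ is a degree-one morphism $\F_1\to\pr^2$ contracting exactly $e$, i.e.\ the blowup of a point.
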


\begin{proof}
See \cite[Theorem]{SW}. Note that 
\[
\Ext^1\left(\dm_p,\sO_{\pr^2}\right)\cong 
H^0\left(\pr^2, \mathscr{E}xt^1\left(\dm_p,\sO_{\pr^2}\right)\right)\cong
H^0\left(\pr^2, \mathscr{E}xt^2\left(\Bbbk(p),\sO_{\pr^2}\right)\right)\cong\Bbbk
\]
by \cite[Lemma A]{horrocks}. 
\end{proof}

\begin{definition}[{\cite{Fuj81, Isk77}}]\label{definition:dP5}
A smooth Fano threefold $V$ is said to be \emph{the del Pezzo threefold of degree $5$} 
if its Picard group $\Pic V$ is generated by an ample invertible sheaf $\sO_V(1)$, 
$-K_V\sim\sO_V(2)$ and $\left(\sO_V(1)\right)^{\cdot 3}=5$ holds. 
In this article, for a del Pezzo threefold $V_k$ of degree $5$, the ample generator 
of $\Pic V_k$ is always denoted by $\sO_{V_k}(1)$. 
(In this article, we will consider several numbers of del Pezzo threefolds of degree $5$ 
at once.) 
An irreducible curve $Z\subset V$ is said to be a \emph{line in $V$} if 
$\left(\sO_V(1)\cdot Z\right)=1$ holds; a smooth rational irreducible curve 
$\Gamma\subset V$ 
is said to be a \emph{smooth rational quintic curve in $V$} 
if $\left(\sO_V(1)\cdot \Gamma\right)=5$ holds. 
\end{definition}

\begin{thm}[{\cite{Fuj81, Isk77}}]\label{thm:dP5}
Let $V$ be a del Pezzo threefold of degree $5$. 
\begin{enumerate}
\renewcommand{\theenumi}{\arabic{enumi}}
\renewcommand{\labelenumi}{(\theenumi)}
\item\label{thm:dP51}
The isomorphism class of del Pezzo threefolds of degree $5$ is unique. 
In other words, for any del Pezzo threefold $V'$ of degree $5$, we have $V\cong V'$. 
\item\label{thm:dP52}
The complete linear system $|\sO_V(1)|$ is very ample, and gives an embedding 
$V\hookrightarrow\pr^6$. 
\item\label{thm:dP53}
For any line $Z\subset V$, its normal bundle $\sN_{Z/V}$ is isomorphic to 
either 
\[
\sO_{\pr^1}^{\oplus 2} \quad \text{or} \quad \sO_{\pr^1}(1)\oplus\sO_{\pr^1}(-1).
\] 
\item\label{thm:dP54}
Let $Z\subset V$ be a line in $V$, 
let $\phi\colon Y\to V$ be the blowup along $Z$, and let $F\subset Y$ be 
the exceptional divisor of $\phi$. Then $Y$ is a smooth Fano threefold 
in Mori--Mukai's list type 2.26 \cite[Table 2]{MM}. The other elementary contraction 
$\psi\colon Y\to Q$ is birational, the image $Q$ is the smooth hyperquadric 
in $\pr^4$, and the morphism $\psi$ is the blowup of $Q$ along a twisted cubic curve 
$C\subset Q\subset\pr^4$. Let $E\subset Y$ be the exceptional divisor of $\psi$. 
Then we have 
\[
E\sim \phi^*\sO_V(1)-2F,\quad F\sim\psi^*\sO_Q(1)-E,\quad 
-K_Y\sim\phi^*\sO_V(1)+\psi^*\sO_Q(1).
\]
We summarize the diagram: 
\begin{equation}\label{equation:fujita}
\xymatrix{
 & F \ar@{}[r]|{\subset} & Y 
\ar@{}[r]|{\supset} \ar@{->}[ld]_{\phi} \ar@{->}[rd]^{\psi}
& E & \\
 Z \ar@{}[r]|{\subset} & V \ar@{-->}[rr]_-{\psi\circ\phi^{-1}} && 
 Q \ar@{}[r]|{\supset} & C.
}
\end{equation}
\item\label{thm:dP55}
Under the assumption and notation in \eqref{thm:dP54}, 
the restriction morphism $\phi|_E\colon E\to \phi(E)$ is the normalization 
morphism of the hyperplane section $\phi(E)\subset V$ singular along $Z$, and 
\[
E\cong\begin{cases}
\pr_{\pr^1}\left(\sO\oplus\sO(1)\right) & 
\text{if } \sN_{Z/V}\cong\sO_{\pr^1}^{\oplus 2},\\
\pr_{\pr^1}\left(\sO\oplus\sO(3)\right) & 
\text{if } \sN_{Z/V}\cong\sO_{\pr^1}(1)\oplus\sO_{\pr^1}(-1). 
\end{cases}
\]
Moreover, the image $\psi(F)\subset Q$ is a smooth hyperplane section of $Q$ 
if $\sN_{Z/V}\cong\sO_{\pr^1}^{\oplus 2}$; a singular hyperplane section of $Q$ 
if $\sN_{Z/V}\cong\sO_{\pr^1}(1)\oplus\sO_{\pr^1}(-1)$. 
\item\label{thm:dP56}
Under the assumption and notation in \eqref{thm:dP54}, there is a commutative 
diagram 
\[\xymatrix{
V \ar@{-->}[r]^-{\psi\circ\phi^{-1}} \ar@{^{(}->}[d] & Q  \ar@{^{(}->}[d] \\
\pr^6 \ar@{-->}[r]& \pr^4,
}\]
where the vertical inclusions are the natural embeddings and the rational map 
$\pr^6\dashrightarrow\pr^4$ is the linear projection from the line $Z\subset\pr^6$. 
\end{enumerate}
\end{thm}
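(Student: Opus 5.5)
The statement collects classical results of Fujita and Iskovskikh, so my plan is to reconstruct them from the Mori--Mukai classification together with the lemmas above, rather than reprove uniqueness from scratch.

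I would begin with \eqref{thm:dP52}. Since $-K_V\sim 2H$ with $H$ the ample generator, $V$ is a del Pezzo threefold. Riemann--Roch and Kawamata--Viehweg vanishing give $h^0(\sO_V(1))=7$. A general member of $|\sO_V(1)|$ is a smooth del Pezzo surface of degree $5$, via Fujita's ladder argument. Very ampleness of $\sO_V(1)$ on that surface, together with $H^1(V,\sO_V)=0$, then lifts to $V$ and gives the embedding $V\hookrightarrow\pr^6$ as a linear section of $\mathrm{Gr}(2,5)\subset\pr^9$ of codimension $3$.

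Next I would establish \eqref{thm:dP53}. A line $Z$ has $\deg\sN_{Z/V}=(-K_V\cdot Z)-2=0$. The normal bundle is a subbundle of $\sN_{Z/\pr^6}=\sO(1)^{\oplus 5}$, and lines in $V$ sweep out all of $V$ through a $2$-dimensional family, which forces $\sN_{Z/V}\cong\sO(a)\oplus\sO(-a)$ with $a\le 1$.

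For \eqref{thm:dP54} and \eqref{thm:dP56}, I would use the linear projection from $Z$. Its resolution is $\phi$, and $|\phi^*\sO_V(1)-F|$ is base point free because $V$ is cut out by quadrics and contains no planes. Computing $-K_Y=\phi^*\sO_V(2)-F$ and $(\phi^*H-F)^3=5-3+0=2$ shows that the image is a quadric $Q\subset\pr^4$. Since $Y$ is Fano of Picard rank $2$, Mori's classification \cite{Mo82} together with the value $(-K_Y)^3=40-2\cdot 6+\dots$ places $Y$ in type 2.26. The other contraction $\psi$ is birational of degree $1$ onto $Q$. Its exceptional divisor is $E\sim\phi^*H-2F$, namely the hyperplane sections singular along $Z$, and its center $C$ has degree $(\psi^*\sO_Q(1)^2\cdot E)=3$ and genus $0$, so $C$ is a twisted cubic. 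The linear equivalences then follow by solving in $\Pic Y=\Z\phi^*H\oplus\Z F$. Finally, the uniqueness in \eqref{thm:dP51} follows by reversing the construction: every pair $(Q,C)$ with $C$ a twisted cubic is projectively unique, and blowing up $C$ and contracting $F$ recovers $V$.

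For \eqref{thm:dP55}, $E$ is the $\pr^1$-bundle over $C$. Its splitting type is determined by $\sN_{C/Q}$, and it equals $\mathbb{F}_1$ or $\mathbb{F}_3$ according to the type of $\sN_{Z/V}$, because $E|_F$ and $F\cong\pr(\sN_{Z/V})$ are linked through the computation $E\cdot F$. The claim about $\psi(F)$ follows from $F\cong\mathbb{F}_0$ or $\mathbb{F}_2$.

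I expect the main obstacle to be this last dichotomy matching, which needs a careful local computation. In practice I would cite \cite{Fuj81, Isk77} for it.
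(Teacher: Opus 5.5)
The paper proves nothing here: each of (1)--(6) is simply referred to \cite{Isk77} and \cite{Fuj81}. Your reconstruction of (2), (3), (4), (6) is the standard one and is fine in outline. For (3), the inclusion $\sO(a)\hookrightarrow\sN_{Z/V}\hookrightarrow\sN_{Z/\pr^6}\cong\sO(1)^{\oplus 5}$ alone gives $a\le 1$.

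The genuine gap is in (1). It is false that every pair $(Q,C)$, with $C$ a twisted cubic on a smooth quadric threefold, is projectively unique. The surface $Q\cap\langle C\rangle$ is an $\Aut(Q)$-invariant of the pair. It is either a smooth quadric, with $C$ of bidegree $(1,2)$, or a quadric cone, with $C$ through the vertex. Both occur, and by your own item (5) they correspond to lines with $\sN_{Z/V}\cong\sO^{\oplus 2}$ and with $\sN_{Z/V}\cong\sO(1)\oplus\sO(-1)$, respectively. So reversing the construction only bounds the number of isomorphism classes of $V$ by two. You also never show that $V$ contains a line.

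Both points can be repaired as follows.
\begin{enumerate}
\item A smooth $S\in|\sO_V(1)|$ is a quintic del Pezzo surface with $\sO_V(1)|_S\cong\sO_S(-K_S)$, so its $(-1)$-curves are lines of $V$.
\item Since $h^0(\sN_{Z/V})=2$ and $h^1(\sN_{Z/V})=0$ for every line, the Hilbert scheme of lines is smooth of dimension $2$.
\item These lines cover $V$. A surface carrying a $2$-dimensional family of lines is a plane, whereas every surface $D\subset V$ has $(\sO_V(1)^{\cdot 2}\cdot D)\in 5\Z_{>0}$.
\item Generic smoothness of the universal line over $V$ makes $H^0(\sN_{Z/V})\to\sN_{Z/V}\otimes\Bbbk(p)$ surjective for a general line, which rules out $\sO(1)\oplus\sO(-1)$.
\item Starting (4) from such a line lands you in the smooth case, which is a single orbit. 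Smooth bidegree $(1,2)$ curves on $\pr^1\times\pr^1$ are graphs of double covers $\pr^1\to\pr^1$, all equivalent under $\Aut(\pr^1\times\pr^1)$. The stabilizer in $\Aut(Q)$ of a smooth hyperplane section surjects onto $\Aut(\pr^1\times\pr^1)$.
\end{enumerate}
This is where your remark that lines sweep out $V$ is really needed, rather than in (3).

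There are also slips in (4).
\begin{itemize}
\item Since $E$ maps onto a curve, $(\psi^*\sO_Q(1)^{\cdot 2}\cdot E)=0$, not $3$. The degree of $C$ is $-(\psi^*\sO_Q(1)\cdot E^{\cdot 2})=3$, computed from $\psi^*\sO_Q(1)=\phi^*\sO_V(1)-F$, $E=\phi^*\sO_V(1)-2F$, $(\phi^*\sO_V(1)\cdot F^{\cdot 2})=-1$ and $(F^{\cdot 3})=0$. Its nonvanishing is what shows $E$ is not contracted to a point, so that Mori's classification makes $\psi$ the blowup of a smooth curve in a smooth $Q$.
\item The volume is $(-K_Y)^{\cdot 3}=(2\phi^*\sO_V(1)-F)^{\cdot 3}=40-6=34$. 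Comparing it with $54-18+2g(C)-2$ gives $g(C)=0$, which you left unjustified.
\item Base point freeness of $|\phi^*\sO_V(1)-F|$ needs neither quadrics nor the absence of planes: it is restricted from the projection $\operatorname{Bl}_Z\pr^6\to\pr^4$.
\item The description of $V$ as a linear section of $\mathrm{Gr}(2,5)$ is not needed for (2) and is essentially as deep as (1), so it should not be asserted in passing.
\end{itemize}

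For (5), the normalization statement is not addressed. Your ``linking'' can be made precise. $F|_E$ is a section $\Sigma$ of $E\to C$ plus the fibres of $E\to C$ lying in $F$, and $(E\cdot F^{\cdot 2})=-1$. If $F\cong\pr^1\times\pr^1$, then $(\phi^*\sO_V(1)-F)|_F$ is ample, so there are no such fibres, $\Sigma^2=-1$ and $E\cong\mathbb{F}_1$. If $F\cong\mathbb{F}_2$, the $(-2)$-curve is the one such fibre, and a short computation on $F$ gives $\Sigma^2=-3$, i.e.\ $E\cong\mathbb{F}_3$.
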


\begin{proof}
For \eqref{thm:dP51}, see \cite[I, Theorem 4.2]{Isk77} 
(see also \cite[Theorem 9.12]{Fuj81}). 
For \eqref{thm:dP52}, see \cite[I, Proposition 4.4]{Isk77} or \cite[\S 9]{Fuj81}. 
For \eqref{thm:dP53}, see \cite[I, Proposition 5.2]{Isk77} or \cite[Corollary 8.2]{Fuj81}. 
For \eqref{thm:dP54}, see \cite[Proposition 9.11]{Fuj81} (see also \cite[I, (6.5)]{Isk77}). 
For \eqref{thm:dP55}, see \cite[(7.4), (7.5) and \S 9]{Fuj81}. 
For \eqref{thm:dP56}, see \cite[Proposition 8.3]{Fuj81}. 
\end{proof}

We can consider the converse of the link \eqref{equation:fujita}.

\begin{proposition}[{\cite{Fuj81}}]\label{proposition:fujita}
Let $Q$ be the $3$-dimensional smooth hyperquadric in $\pr^4$ and let $C\subset Q$ 
be a twisted cubic curve under the natural embedding $C\subset Q\subset \pr^4$. 
Let $\psi\colon Y\to Q$ be the blowup of $Q$ along $C$ with the exceptional divisor 
$E\subset Y$. Then the variety $Y$ is a smooth Fano threefold in Mori--Mukai's list type 
2.26 \cite[Table 2]{MM}. The other elementary contraction $\phi\colon Y\to V$ 
is birational, the image $V$ is the del Pezzo threefold of degree $5$ and the morphism 
$\phi$ is the blowup of $V$ along a line $Z$ in $V$. Thus, the diagram 
\begin{equation}\label{equation:fujita-converse}
\xymatrix{
 & E \ar@{}[r]|{\subset} & Y 
\ar@{}[r]|{\supset} \ar@{->}[ld]_{\psi} \ar@{->}[rd]^{\phi}
& F & \\
 C \ar@{}[r]|{\subset} & Q \ar@{-->}[rr]_-{\phi\circ\psi^{-1}} && 
 V \ar@{}[r]|{\supset} & Z
}
\end{equation}
is nothing but the converse of the diagram \eqref{equation:fujita}. 
\end{proposition}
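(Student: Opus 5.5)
We first show that $Y$ is a smooth Fano threefold. Set $H:=\psi^*\sO_Q(1)$. Then $-K_Y\sim 3H-E$. The linear system $|H-E|$ is the pullback of the quadrics... more precisely of the hyperplanes containing the span $\pr^3=\langle C\rangle$. A twisted cubic $C$ is cut out scheme-theoretically in $\pr^4$ by $\langle C\rangle$ together with quadrics. Hence $2H-E$ is base point free, and $-K_Y=H+(2H-E)$ is a sum of a nef divisor and an ample-over-$Q$ divisor. We check ampleness by testing curves. Take an irreducible curve $B\subset Y$ with $(-K_Y\cdot B)\le 0$. It must lie in $E$, since otherwise $(H\cdot B)>0$. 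On $E\cong\pr_C(\sN^\vee)$, with $\sN_{C/Q}$ of degree $3\cdot 3-2=7$ on $\pr^1$, a direct computation gives $(-K_Y\cdot B)>0$ using $(-K_Y)|_E=-K_E+\ldots$. So $Y$ is Fano. Its invariants are $\rho(Y)=2$ and $(-K_Y)^3=54-2\cdot 3\cdot\ldots$, which we compute by the standard formula $(-K_Y)^3=(-K_Q)^3-2(-K_Q\cdot C)+2g(C)-2=54-18-2=34$. Comparing with Mori--Mukai's table \cite[Table 2]{MM} identifies $Y$ as type 2.26.

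Next we identify the other extremal contraction. The class $H-E$ is effective: its member is the strict transform $F$ of the hyperplane section $\langle C\rangle\cap Q$. Consider $D:=2H-E$, which is base point free. We have $D^3=8\cdot 2-3\cdot 4\cdot(H\cdot C)\cdot\ldots$. We compute via the standard relations $H^2E=0$, $HE^2=-\deg C=-3$, $E^3=-\deg\sN_{C/Q}=-7$. This gives
\[
D^3=8H^3-12H^2E+6HE^2-E^3=16-0-18+7=5.
\]
Also $D\cdot F\cdot F$ restricted to $F$ is small. So $|D|$ maps $Y$ birationally onto a threefold $V\subset\pr^6$ of degree $5$, since $h^0(D)=7$ by Riemann--Roch and Kawamata--Viehweg. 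We then compute $D^2F=D^2(H-E)=4\cdot2-0-(-3)\cdot\ldots$. Carrying this out gives $D^2F=0$ and $DF^2=-1$, so $D$ contracts $F$ onto a curve of $D$-degree $1$. By Mori's classification \cite[\S 3]{Mo82}, the contraction $\phi$ is the blowup of a smooth threefold $V$ along a smooth curve $Z$ with exceptional divisor $F$. Since $\rho(V)=1$, we get $-K_V=\phi_*(-K_Y)=\phi_*(H+D)$, and $\phi_*H\sim 2\phi_*D$-type relations follow from $F=H-E$ and $E\sim 2H-D$... Concretely $H\sim D-F$ gives $\phi_*H=\phi_*D$, hence $-K_V=2\phi_*D$. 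With $(\phi_*D)^3=D^3=5$, this shows $V$ is a del Pezzo threefold of degree $5$ by Definition \ref{definition:dP5}. Then $(\sO_V(1)\cdot Z)=1$ follows from $DF^2=-(\phi^*\sO_V(1)\cdot\ldots)$, i.e.\ $-\deg Z=-1$, so $Z$ is a line.

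Finally, since $\rho(Y)=2$ has exactly two extremal rays, the contractions of $Y$ are $\psi$ and $\phi$. Comparing with Theorem \ref{thm:dP5} \eqref{thm:dP54} shows the diagram is the converse of \eqref{equation:fujita}, with the relations $E\sim\phi^*\sO_V(1)-2F$ and $F\sim H-E$ matching. The main obstacle is the intersection-theoretic bookkeeping needed to confirm ampleness of $-K_Y$ along $E$ and that $\phi$ contracts exactly $F$. For this we would rely on \cite[\S 9]{Fuj81} and the Mori--Mukai classification as the fallback, citing \cite{Fuj81} directly as the paper does.
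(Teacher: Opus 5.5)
Your argument is correct, and it takes a genuinely different route from the paper. The paper gives no argument of its own and refers to Fujita's classical analysis \cite[(7.4) and (7.5)]{Fuj81}. You instead verify everything directly from the intersection numbers $H^3=2$, $H^2E=0$, $HE^2=-3$, $E^3=-7$, where $H=\psi^*\sO_Q(1)$.

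These numbers give $(-K_Y)^3=34$, which singles out type 2.26 in Table 2. For $D=2H-E$ and $F\sim H-E$ they give $D^3=5$, $D^2F=0$ and $DF^2=-1$; I checked all of these. Together with $H\sim D-F$ they yield $-K_V=2\phi_*D$ and $(\phi_*D\cdot Z)=1$. They also recover the relations $E\sim\phi^*\sO_V(1)-2F$ and $F\sim\psi^*\sO_Q(1)-E$ of Theorem \ref{thm:dP5} \eqref{thm:dP54}, which is what ``converse of \eqref{equation:fujita}'' amounts to.

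What your route buys is a self-contained check that uses only Mori's contraction theorem and classification plus the Mori--Mukai table. Fujita's route yields finer geometric information that the paper relies on elsewhere, for instance the structure of $E$ and of $\psi(F)$ in Theorem \ref{thm:dP5} \eqref{thm:dP55}.

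A few steps should be tightened.
\begin{itemize}
\item The ampleness argument is misphrased. A curve in $E$ that is not a $\psi$-fiber still has $(H\cdot B)>0$, and testing irreducible curves alone is not Kleiman's criterion. Instead, note that $H$ and $D$ are non-proportional nef classes in the $2$-dimensional space $\ND(Y)$. Hence $H+D$ lies in the interior of $\Nef(Y)$ and is ample, with no computation on $E$ needed.
\item Birationality of $\phi$ already follows from $D^3>0$. The claims that $|D|$ is birational onto a quintic in $\pr^6$ and that $h^0(D)=7$ are not needed.
\item To identify $\Exc(\phi)$ with $F$, you need two facts. First, $F$ is irreducible, since a hyperplane section of a smooth quadric threefold is irreducible. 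Second, a birational extremal contraction of a smooth threefold has an irreducible divisor as its exceptional locus.
\item To conclude that $V$ is the del Pezzo threefold of degree $5$ in the sense of Definition \ref{definition:dP5}, you must say that $\phi_*D$ generates $\Pic V$. This holds because $(\phi_*D)^3=5$ is cube-free and $\Pic V$ is torsion free, $V$ being a smooth Fano threefold.
\end{itemize}
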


\begin{proof}
See \cite[(7.4) and (7.5)]{Fuj81}. 
\end{proof}

The following lemma is probably well-known. 

\begin{lemma}\label{lemma:dP5quintic}
Let $V$ be the del Pezzo threefold of degree $5$ and let $\Gamma\subset V$ 
is a smooth rational quintic curve in $V$. Then, under the half-anti-canonical embedding 
$V\hookrightarrow\pr^6$, the linear span $\langle\Gamma\rangle$ 
of the curve $\Gamma\subset\pr^6$ is a hyperplane in $\pr^6$. In particular, 
the curve $\Gamma$ is a twisted quintic curve in $\pr^6$. 
\end{lemma}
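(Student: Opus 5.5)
Since $\Gamma\cong\pr^1$ has degree $5$ under the embedding $V\hookrightarrow\pr^6$ given by $|\sO_V(1)|$ (Theorem \ref{thm:dP5} \eqref{thm:dP52}), it suffices to show that $\Gamma$ lies in some hyperplane of $\pr^6$. Indeed, a nondegenerate irreducible curve of degree $d$ in $\pr^N$ satisfies $d\geq N$, so $\dim\langle\Gamma\rangle\leq 5$ forces $\dim\langle\Gamma\rangle=5$. A smooth rational curve of degree $5$ spanning a $\pr^5$ is then a rational normal curve, i.e.\ a twisted quintic. The whole statement therefore reduces to the inequality
\[
h^0\left(V,\ \mathcal{I}_\Gamma\otimes\sO_V(1)\right)\geq 1 .
\]

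To get this, I would compare $h^0(V,\sO_V(1))=7$ with $h^0(\Gamma,\sO_V(1)|_\Gamma)=h^0(\pr^1,\sO_{\pr^1}(5))=6$. The restriction map
\[
H^0(V,\sO_V(1))\to H^0(\Gamma,\sO_\Gamma(1))
\]
goes from a $7$-dimensional space to a $6$-dimensional one, so its kernel has dimension at least $1$. Any nonzero element of the kernel is a hyperplane section of $V$ containing $\Gamma$. Here $h^0(V,\sO_V(1))=7$ is exactly the statement that $|\sO_V(1)|$ embeds $V$ into $\pr^6$ linearly normally, which is Theorem \ref{thm:dP5} \eqref{thm:dP52}; alternatively it follows from Riemann--Roch together with Kodaira vanishing on the Fano threefold $V$.

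I expect no real obstacle here. The only point to check is that the embedding is by the complete linear system, so that the count of $7$ is correct, and this is part of the cited theorem. For the final claim I would spell out the standard fact: an irreducible nondegenerate curve in $\pr^5$ of degree $5$ is the rational normal quintic. One way to see it is to project from $4$ general points of the curve, which lands in a line in $\pr^1$ and pins down the degree.
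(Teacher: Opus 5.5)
\textbf{There is a genuine gap.} Your reduction in the first paragraph runs the inequality the wrong way.

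For a nondegenerate irreducible curve of degree $d$ in $\pr^N$ one has $d\geq N$. Applied to $\Gamma$ inside its span, this gives only the upper bound $\dim\langle\Gamma\rangle\leq 5$. Your count $h^0(V,\sO_V(1))=7>6$ gives the same upper bound. Neither fact forces $\dim\langle\Gamma\rangle\geq 5$.

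A lower bound is not automatic. Smooth rational quintics spanning only a $\pr^4$ or a $\pr^3$ exist, for instance general projections of the rational normal quintic, or curves of bidegree $(1,4)$ on a smooth quadric surface. So the real content of the lemma is that $\Gamma$ lies in no codimension-two linear subspace of $\pr^6$. Equivalently, you need $h^0(V,\mathcal{I}_\Gamma\otimes\sO_V(1))$ to be exactly $1$, i.e.\ the restriction map to $H^0(\Gamma,\sO_\Gamma(1))$ must be surjective. You only showed that its kernel is nonzero. This cannot come from numerics on $\pr^1$ alone; it has to use the geometry of $V$.

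\textbf{The missing step, as in the paper.} Suppose $\Gamma\subset H_1\cap H_2$ for distinct hyperplanes $H_1,H_2$.
\begin{itemize}
\item Since $\Pic V=\Z[\sO_V(1)]$, every member of $|\sO_V(1)|$ is irreducible and reduced. Hence $V\cap H_1$ is an integral surface not contained in $H_2$.
\item Therefore $V\cap H_1\cap H_2$ is a purely one-dimensional complete intersection. It is Cohen--Macaulay, so it has no embedded points, and its degree is $(\sO_V(1))^{\cdot 3}=5$.
\item It contains $\Gamma$, which also has degree $5$. So it is generically reduced, hence reduced, and equals $\Gamma$ scheme-theoretically.
\item Adjunction with $-K_V\sim\sO_V(2)$ gives $\omega_\Gamma\cong(\omega_V\otimes\sO_V(2))|_\Gamma\cong\sO_\Gamma$. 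Thus $p_a(\Gamma)=1$, contradicting $\Gamma\cong\pr^1$.
\end{itemize}
With this step added, your degree bound gives $\dim\langle\Gamma\rangle=5$. Your identification of a nondegenerate degree-$5$ curve in $\pr^5$ with the rational normal quintic then completes the proof.
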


\begin{proof}
Assume that $\langle\Gamma\rangle$ is not a hyperplane. 
Since $\left(\sO_V(1)\right)^{\cdot 3}=5$ and any member of $|\sO_V(1)|$ is 
irreducible and reduced, the curve $\Gamma$ must be a complete intersection of $V$ 
and codimension $2$ linear subspace of $\pr^6$. Then the arithmetic genus of 
$\Gamma$ must be equal to $1$. This leads to a contradiction. 
\end{proof}

\begin{definition}\label{definition:dP6}
\emph{The del Pezzo threefold of degree $6$ and rank $2$} is defined to be 
the variety $U$ defined by the effective divisor 
$U\subset\pr^2_{x_{10}x_{11}x_{12}}\times\pr^2_{x_{20}x_{21}x_{22}}$ defined by the 
equation $\sum_{i=0}^2x_{1i}x_{2i}=0$. The $U$ is a smooth Fano threefold. Let 
$\rho_1\colon U\to\pr^2_{x_{10}x_{11}x_{12}}$ 
and $\rho_2\colon U\to\pr^2_{x_{20}x_{21}x_{22}}$ be the projections, and set 
$\sO_U(a_1,a_2):=\rho_1^*\sO_{\pr^2}(a_1)\otimes\rho_2^*\sO_{\pr^2}(a_2)$
for any $a_1,a_2\in\Z$. Then $\sO_U(-K_U)\cong\sO_U(2,2)$ and 
$\left(\sO_U(1,1)\right)^{\cdot 3}=6$. We have $\Pic U=\Z\left[\sO_U(1,0)\right]\oplus
\Z\left[\sO_U(0,1)\right]$. 
In particular, the Picard rank of $U$ is equal to $2$. 
Moreover, for each $i\in\{1,2\}$, the morphism 
$\rho_i$ is isomorphic to the projective space bundle $\pr_{\pr^2}(T_{\pr^2})\to\pr^2$
of the tangent bundle $T_{\pr^2}$ of $\pr^2$. An irreducible curve $\Gamma\subset U$ 
is said to be a \emph{bi-quintic curve} in $U$ if $\left(\sO_U(1,0)\cdot \Gamma\right)
=\left(\sO_U(0,1)\cdot \Gamma\right)=5$ holds. 
Similarly, an irreducible curve $\sC^+\subset U$ is said to be a 
\emph{bi-line in $U$} if  
$\left(\sO_U(1,0)\cdot \sC^+\right)=\left(\sO_U(0,1)\cdot\sC^+\right)=1$ holds. 
\end{definition}

\begin{lemma}\label{lemma:dP6-multiplicity}
Let $U$ be the del Pezzo threefold of degree $6$ and rank $2$, let $l\subset U$ 
be a closed fiber of the morphism $\rho_2\colon U\to\pr^2$, and let us take 
any $m\in\Z_{\geq 1}$. Take any irreducible $S\in|\sO_U(1,m)|$. Then the 
multiplicity $\mult_l S$ of $S$ along $l$ satisfies that $\mult_l S\leq m$. 
\end{lemma}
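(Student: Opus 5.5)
The idea is to test $S$ against curves that meet $l$ and are not contained in $S$. The fiber $l=\rho_2^{-1}(p)$ for $p\in\pr^2_{x_{20}x_{21}x_{22}}$ is the line in $\pr^2_{x_{10}x_{11}x_{12}}\times\{p\}$ cut out by $\sum_i x_{1i}p_i=0$. We have $(\sO_U(1,0)\cdot l)=1$ and $(\sO_U(0,1)\cdot l)=0$. The plan is to find, through a general point $x\in l$, a curve $\Theta\subset U$ not contained in $S$ with $(S\cdot\Theta)$ small. Then $\mult_l S\leq \mult_x S\cdot\mult_x\Theta\leq (S\cdot\Theta)$ bounds the multiplicity.

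The natural candidates are the fibers of $\rho_1$. Through $x=(q,p)\in l$ passes the fiber $l'=\rho_1^{-1}(q)$, a line in $\{q\}\times\pr^2$. It satisfies $(\sO_U(1,0)\cdot l')=0$ and $(\sO_U(0,1)\cdot l')=1$, so $(S\cdot l')=m$. These fibers $l'$ with $q\in l$ sweep out the surface $\rho_1^{-1}(\rho_1(l))$, which is a divisor in $|\sO_U(1,0)|$. If $S$ contained $l'$ for general $q\in l$, then $S$ would contain this whole surface. Since $S$ is irreducible and lies in $|\sO_U(1,m)|$ with $m\geq 1$, it cannot equal a member of $|\sO_U(1,0)|$, so this cannot happen.

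Hence for general $x\in l$ the fiber $l'$ through $x$ is not contained in $S$. Local intersection at $x$ then gives $\mult_x S\leq (S\cdot l')_x\leq (S\cdot l')=m$. Since $\mult_l S$ equals $\mult_x S$ for general $x\in l$, we obtain $\mult_l S\leq m$.

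The only step needing care is the claim that $S$ does not contain $l'$ for general $x\in l$. This follows from the irreducibility of $S$ together with the fact that the union of those $l'$ is an irreducible divisor of class $(1,0)$, which differs from $(1,m)$ because $m\geq 1$. I expect no real obstacle beyond checking that $\rho_1^{-1}(\rho_1(l))$ is indeed irreducible of class $\sO_U(1,0)$. This holds because $\rho_1(l)$ is a line in $\pr^2$ and $\rho_1$ is a $\pr^1$-bundle.
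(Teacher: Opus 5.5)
Your argument is correct and is essentially the paper's: both test $S$ against the $\rho_1$-fibers sweeping out $\rho_1^{-1}(\rho_1(l))\in|\sO_U(1,0)|$, which cannot lie in $S$ because $S$ is irreducible of class $(1,m)$ with $m\geq 1$. The only difference is packaging. The paper blows up $l$ and uses $0\leq(\tilde{S}\cdot l')=m-\mult_l S$ with $(\E\cdot l')=1$, whereas you bound $\mult_x S\leq(S\cdot l')_x\leq m$ directly at a general point $x\in l$.
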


\begin{proof}
Let $\sigma\colon\tilde{U}\to U$ be the blowup of $U$ along $l$, let 
$\E\subset \tilde{U}$ be the exceptional divisor of $\sigma$, and set 
$\F_U:=\rho_1^{-1}\left(\rho_1(l)\right)$, $\F:=\sigma^{-1}_*\F_U$. A general fiber 
$l'$ of $\rho_1\circ\sigma|_\F\colon \F\to \rho_1(l)$ satisfies that 
$\left(\E\cdot l'\right)=1$. Let us set $m':=\mult_l S$, and set 
$\tilde{S}:=\sigma^{-1}_*S\sim\sigma^*\sO_U(1,m)-m'\E$. Since 
$\F\not\subset\tilde{S}$, we have 
$0\leq\left(\tilde{S}\cdot l'\right)=m-m'$. 
The we get the assertion. 
\end{proof}

\begin{lemma}\label{lemma:dP6}
Let $U$ be the del Pezzo threefold of degree $6$ and rank $2$. Take any irreducible 
$S\in|\sO_U(1,1)|$. 
\begin{enumerate}
\renewcommand{\theenumi}{\arabic{enumi}}
\renewcommand{\labelenumi}{(\theenumi)}
\item\label{lemma:dP61}
The surface $S$ is normal, and has at worst du Val singularities. 
The canonical divisor 
$-K_S$ of $S$ is Cartier with $(-K_S)^{\cdot 2}=6$. 
\item\label{lemma:dP62}
Let $\nu\colon\tilde{S}\to S$ be the minimal resolution of $S$. 
Consider the dual graph of the configuration of all negative curves on $\tilde{S}$. 
We represent $(-1)$-curves by $\bullet$ and $(-2)$-curves by $\circ$. 
Then one of the following cases occurs: 
\begin{enumerate}
\renewcommand{\theenumii}{\roman{enumii}}
\renewcommand{\labelenumii}{(\theenumii)}
\item\label{lemma:dP621}
\[\begin{tikzpicture}
\draw (2,1) node[right]{$\be_1$} --(1.5, 1.5) node[above]{$\Bf_2$}-- 
(.5, 1.5) node[above]{$\be_3$}--(0,1) node[left]{$\Bf_1$} -- 
(.5,.5) node[below]{$\be_2$}--(1.5,.5) node[below]{$\Bf_3$}--cycle; 
\filldraw [fill=black] (2,1) circle [radius=.1];
\filldraw [fill=black] (1.5,1.5) circle [radius=.1];
\filldraw [fill=black] (.5,1.5) circle [radius=.1];
\filldraw [fill=black] (0,1) circle [radius=.1];
\filldraw [fill=black] (.5,.5) circle [radius=.1];
\filldraw [fill=black] (1.5,.5) circle [radius=.1];
\end{tikzpicture}\]
(in other words, $S$ is smooth), 
\item\label{lemma:dP622}
\[\begin{tikzpicture}
\draw (0,0) node[above]{$\be_1$}--(1,0) node[above]{$\be_2$}--
(2,0) node[above]{$\Bc_0$}--(3,0) node[above]{$\be_3$}--(4,0) node[above]{$\be_4$}; 
\filldraw [fill=black] (0,0) circle [radius=.1];
\filldraw [fill=black] (1,0) circle [radius=.1];
\filldraw [fill=white] (2,0) circle [radius=.1];
\filldraw [fill=black] (3,0) circle [radius=.1];
\filldraw [fill=black] (4,0) circle [radius=.1];
\end{tikzpicture}\]
(in this case, $S$ has exactly one $A_1$ singular point), 
\item\label{lemma:dP623}
\[\begin{tikzpicture}
\draw (0,1) node[above]{$\Bc_0$}--(1,1) node[above]{$\Bc_1$}--(2,1.5) 
node[right]{$\be_1$};
\draw (1,1) --(2,.5) node[right]{$\be_2$};
\filldraw [fill=white] (0,1) circle [radius=.1];
\filldraw [fill=white] (1,1) circle [radius=.1];
\filldraw [fill=black] (2,1.5) circle [radius=.1];
\filldraw [fill=black] (2,.5) circle [radius=.1];
\end{tikzpicture}\]
(in this case, $S$ has exactly one $A_2$ singular point).
\end{enumerate}
\end{enumerate}
\end{lemma}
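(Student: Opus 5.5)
Since $-K_U\sim\sO_U(2,2)$, a member $S\in|\sO_U(1,1)|$ is an anticanonical-type divisor of half degree: $\sO_U(1,1)$ is $-\frac12K_U$. The plan has three parts: identify $S$ with a degeneration of a sextic del Pezzo surface, establish normality and du Val singularities, then classify the possible negative-curve graphs on the minimal resolution.

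\textbf{Adjunction.} By adjunction, $K_S=(K_U+S)|_S=\sO_U(-1,-1)|_S$. This is Cartier (where $S$ is Gorenstein, which holds since $S$ is a Cartier divisor on smooth $U$), and $(-K_S)^{\cdot 2}=(\sO_U(1,1))^{\cdot 3}=6$.

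\textbf{Normality.} Since $S$ is Cohen--Macaulay, by Serre's criterion it suffices to show $S$ is regular in codimension one, i.e.\ that $S$ is not singular along a curve. I would argue by contradiction. Suppose $S$ is singular along an irreducible curve $B$, and restrict to fibres. Take a general fibre $l$ of $\rho_2$ meeting $B$, or more concretely the surfaces $\rho_2^{-1}(\text{line})\in|\sO_U(0,1)|$, which are Hirzebruch surfaces $\F_1$. The restriction of $S$ to such a surface is a curve of class $\sO(1,1)$, i.e.\ a section-type curve of low degree. Singularity of $S$ along $B$ forces the restriction to be singular at $B\cap$(surface), and I expect low-degree curves on $\F_1$ in that class to be too simple to be singular unless they are reducible in a way that forces $S$ reducible. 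Alternatively, apply Lemma \ref{lemma:dP6-multiplicity} with $m=1$: it gives $\mult_l S\le 1$ along fibres, which rules out $B$ being a fibre of either $\rho_i$ (by symmetry of the two projections); other curves $B$ I would handle by intersecting with a general member of $|\sO_U(1,0)|$ or $|\sO_U(0,1)|$ through $B$.

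\textbf{Du Val singularities.} Once $S$ is normal, Gorenstein and has $-K_S$ ample (as the restriction of an ample class), it is a Gorenstein log del Pezzo surface only if its singularities are rational. A normal Gorenstein surface with $-K_S$ ample is either du Val or a cone over an elliptic curve (Hidaka--Watanabe). The elliptic-cone case has degree $6$ only if $S$ is a cone over an elliptic curve of degree $6$ in $\pr^6$; but $S$ spans the hyperplane of $\pr^7$ containing $U\subset\pr^7$ via $\sO(1,1)$. I would exclude the cone because its vertex would have multiplicity $6$, whereas $U$ is an intersection of quadrics, and a line through the vertex inside $S$ would then give excessive multiplicity. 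So $S$ is a du Val del Pezzo surface of degree $6$.

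\textbf{Classification.} Du Val del Pezzo surfaces of degree $6$ are classical: the singularity types are $\emptyset$, $A_1$ (with either $4$ or $3$ lines), $A_2$, $2A_1$, $A_2+A_1$. The bi-projection $S\to\pr^2\times\pr^2$ gives two birational morphisms $S\to\pr^2$ (each $\rho_i|_S$ is birational, since $\sO(1,0)^2\cdot\sO(1,1)=1$), both contractions of $\tilde S$ to $\pr^2$ with $-K$ pulled back accordingly. This rank-$2$ symmetric structure excludes the types whose resolutions do not admit two such "complementary'' blow-downs with $H_1+H_2=-K$: namely $2A_1$ and $A_2+A_1$, and the $A_1$ type with $3$ lines. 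Those types correspond to hyperplane sections of the cone-type degenerations rather than of $U$; I would check this directly by computing the classes $H_1,H_2$ on $\tilde S$. Matching the surviving types with the stated dual graphs yields (i)--(iii).

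The main obstacle is this last exclusion step; I would carry it out by direct lattice computation on $\tilde S$ for each candidate configuration.
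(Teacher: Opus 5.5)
Your overall plan works, and in its second half it is the paper's argument. You use Hidaka--Watanabe to reduce to ``du Val or elliptic cone'', then the Coray--Tsfasman list of six degree-$6$ types, and exclude $A_1$ (three lines), $2A_1$ and $A_2+A_1$ because each $\rho_i|_S$ is birational onto $\pr^2$. Your lattice check succeeds. In each of those three types no birational morphism $\tilde S\to\pr^2$ contracts all $(-2)$-curves. So a single $\rho_i$ suffices, and the remark about ``cone-type degenerations'' can be dropped. The paper phrases the same check via Picard numbers:
\begin{itemize}
\item for $A_2+A_1$, $\rho(S)=1$;
\item for $2A_1$, one extremal contraction goes to $\pr(1,1,2)$;
\item for the three-line $A_1$, every elementary contraction lands on $\pr^1\times\pr^1$.
\end{itemize}

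For normality your route differs from the paper's and is more elementary. The paper quotes Mori's description of the conductor of a non-normal Gorenstein surface with ample $\omega_S^\vee$: it is a curve of $\omega_S^\vee$-degree $1$, hence a fibre of some $\rho_i$. It then applies Lemma~\ref{lemma:dP6-multiplicity}. Your restriction argument works once you do the computation you only ``expect''. On $D=\rho_2^{-1}(L)\cong\F_1$ one has $\sO_U(1,1)|_D\sim e+2f$, where $e$ is a $\rho_1$-fibre and $f$ a $\rho_2$-fibre, and $p_a(e+2f)=0$. Hence a singular member of $|e+2f|$ is reducible or non-reduced, and every such member contains a fibre $f$. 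If $\rho_2(B)$ is a curve, every $L$ meets it, so $S$ contains $\rho_2$-fibres over a curve in $\pr^2_2$. Since $S$ is irreducible, it is then the preimage of that curve, which lies in $|\sO_U(0,d)|$, not in $|\sO_U(1,1)|$. If $B$ is a $\rho_2$-fibre, swap the roles of $\rho_1$ and $\rho_2$. This route needs neither Mori's result nor Lemma~\ref{lemma:dP6-multiplicity}. Your fallback, intersecting with a member of $|\sO_U(1,0)|$ or $|\sO_U(0,1)|$ containing $B$, is not available in general (such a member exists only when $\rho_i(B)$ is a line), but you do not need it.

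The step that fails as written is the exclusion of the elliptic cone. Being cut out by quadrics does not bound the multiplicity of a hyperplane section. For example, the Segre threefold $(\pr^1)^3\subset\pr^7$ is cut out by quadrics, yet its hyperplane section $abc=0$ has a triple point. Your sentence about a line through the vertex also does not name an actual contradiction. The correct reason, used in the paper, is this. Each ruling of the cone has $-K_S$-degree $1$, i.e.\ bidegree $(1,0)$ or $(0,1)$, so it is a fibre of $\rho_1$ or $\rho_2$. Only two such curves pass through any point of $U$, whereas the cone has infinitely many rulings through its vertex. A multiplicity argument can also be made to work, but only with input specific to $U$. Through the vertex $p$ and a general $q\in U\setminus S$ there is a curve $C$ of bidegree $(1,1)$, so $\mult_pS\le(S\cdot C)=2<6$.
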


\begin{proof}
\eqref{lemma:dP61}
Note that $S$ is Gorenstein, irreducible, reduced 
and projective surface such that $\omega_S^\vee\cong\sO_U(1,1)|_S$ is ample. 
Assume that $S$ is not normal. 
Let $l\subset S$ be the conductor for the normalization of $S$. 
By \cite[Lemma 3.9, (3.34), Lemma 3.35]{Mo82}, $l\subset S$ is an irreducible and 
reduced curve satisfying $\left(\omega_S^\vee\cdot l\right)=1$. 
Thus $l$ is a fiber of $\rho_1$ or $\rho_2$. On the other hand, since 
$l$ is the conductor, we have $\mult_l S\geq 2$. This contradicts with 
Lemma \ref{lemma:dP6-multiplicity}. 
Thus $S$ must be normal. 
Moreover, since $S$ is inside $U$, for any point $p\in S$, the number of curves 
$l''$ in $S$ with $p\in l''$ and $\left(\omega_S^\vee\cdot l''\right)=1$ is at most two. 
By \cite[Proofs of Proposition 1.2 and Theorem 2.2]{HW}, 
our $S$ has only du Val singularities. 
(Indeed, $S$ cannot be the cone of an elliptic curve from the above observation.)
Thus we get the assertion \eqref{lemma:dP61}. 

\eqref{lemma:dP62}
Obviously, $S$ is not isomorphic to $\pr^2$. Moreover, the morphisms 
$\rho_1|_S$, $\rho_2|_S$ are birational. Thus the morphisms 
$\rho_1|_S$, $\rho_2|_S$ give nontrivial pairwise distinct birational contraction 
morphisms. In particular, the Picard number $\rho(S)$ of $S$ is bigger than $1$. 
By \cite[Proposition 8.3]{CT}, the possibilities for the dual graphs are as follows: 
\begin{enumerate}
\renewcommand{\theenumi}{\roman{enumi}}
\renewcommand{\labelenumi}{(\theenumi)}
\item\label{lemma:dP6-proof1}
\[\begin{tikzpicture}
\draw (2,1) node[right]{$\be_1$} --(1.5, 1.5) node[above]{$\Bf_2$}-- 
(.5, 1.5) node[above]{$\be_3$}--(0,1) node[left]{$\Bf_1$} -- 
(.5,.5) node[below]{$\be_2$}--(1.5,.5) node[below]{$\Bf_3$}--cycle; 
\filldraw [fill=black] (2,1) circle [radius=.1];
\filldraw [fill=black] (1.5,1.5) circle [radius=.1];
\filldraw [fill=black] (.5,1.5) circle [radius=.1];
\filldraw [fill=black] (0,1) circle [radius=.1];
\filldraw [fill=black] (.5,.5) circle [radius=.1];
\filldraw [fill=black] (1.5,.5) circle [radius=.1];
\end{tikzpicture}\]
\item\label{lemma:dP6-proof2}
\[\begin{tikzpicture}
\draw (0,0) node[above]{$\be_1$}--(1,0) node[above]{$\be_2$}--
(2,0) node[above]{$\Bc_0$}--(3,0) node[above]{$\be_3$}--(4,0) node[above]{$\be_4$}; 
\filldraw [fill=black] (0,0) circle [radius=.1];
\filldraw [fill=black] (1,0) circle [radius=.1];
\filldraw [fill=white] (2,0) circle [radius=.1];
\filldraw [fill=black] (3,0) circle [radius=.1];
\filldraw [fill=black] (4,0) circle [radius=.1];
\end{tikzpicture}\]
\item\label{lemma:dP6-proof3}
\[\begin{tikzpicture}
\draw (0,1) node[above]{$\Bc_0$}--(1,1) node[above]{$\Bc_1$}--(2,1.5) 
node[right]{$\be_1$};
\draw (1,1) --(2,.5) node[right]{$\be_2$};
\filldraw [fill=white] (0,1) circle [radius=.1];
\filldraw [fill=white] (1,1) circle [radius=.1];
\filldraw [fill=black] (2,1.5) circle [radius=.1];
\filldraw [fill=black] (2,.5) circle [radius=.1];
\end{tikzpicture}\]
\item\label{lemma:dP6-proof4}
\[\begin{tikzpicture}
\draw (0,1) node[above]{$\be_1$}--(1,1) node[above]{$\Bc_0$}--(2,1.5) 
node[right]{$\be_2$};
\draw (1,1) --(2,.5) node[right]{$\be_3$};
\filldraw [fill=black] (0,1) circle [radius=.1];
\filldraw [fill=white] (1,1) circle [radius=.1];
\filldraw [fill=black] (2,1.5) circle [radius=.1];
\filldraw [fill=black] (2,.5) circle [radius=.1];
\end{tikzpicture}\]
\item\label{lemma:dP6-proof5}
\[\begin{tikzpicture}
\draw (0,0) node[above]{$\Bc_0$}--(1,0) node[above]{$\be_1$}--
(2,0) node[above]{$\Bc_1$}--(3,0) node[above]{$\be_2$}; 
\filldraw [fill=white] (0,0) circle [radius=.1];
\filldraw [fill=black] (1,0) circle [radius=.1];
\filldraw [fill=white] (2,0) circle [radius=.1];
\filldraw [fill=black] (3,0) circle [radius=.1];
\end{tikzpicture}\]
\item\label{lemma:dP6-proof6}
\[\begin{tikzpicture}
\draw (0,0) node[above]{$\Bc_0$}--(1,0) node[above]{$\Bc_1$}--
(2,0) node[above]{$\be_1$}--(3,0) node[above]{$\Bc_2$}; 
\filldraw [fill=white] (0,0) circle [radius=.1];
\filldraw [fill=white] (1,0) circle [radius=.1];
\filldraw [fill=black] (2,0) circle [radius=.1];
\filldraw [fill=white] (3,0) circle [radius=.1];
\end{tikzpicture}\]
\end{enumerate}

Since $\rho(S)>1$, the case \eqref{lemma:dP6-proof6} does not occur. 
Assume the case \eqref{lemma:dP6-proof5}. Then $\rho(S)=2$ holds. 
In particular, any elementary contraction morphism from $S$ must be onto $\pr^2$. 
However, if we contract the curves $\Bc_0$, $\Bc_1$, $\be_2$, then we get a 
contraction morphism $S\to\pr(1,1,2)$. This leads to a contradiction. 
Finally, assume the case \eqref{lemma:dP6-proof4}. Take any elementary contraction 
morphism from $S$. Then the image must be isomorphic to $\pr^1\times\pr^1$. 
(For example, if we contract the curves $\be_1$ and $\Bc_0$, then we get a birational 
morphism $\tilde{S}\to\pr^1\times\pr^1$.)
Thus $S$ does not admit a contraction morphism onto $\pr^2$, a contradiction. 
Therefore we get the assertion \eqref{lemma:dP62}. 
\end{proof}

\section{Prime Fano threefolds and lines}\label{section:prime3}

We recall several fundamental properties of prime Fano threefolds and see 
basic properties of totally disjoint pairs of lines in them.

\begin{definition}\label{definition:X22}
\begin{enumerate}
\renewcommand{\theenumi}{\arabic{enumi}}
\renewcommand{\labelenumi}{(\theenumi)}
\item\label{definition:X221}
A \emph{prime Fano threefold $X$} is a smooth Fano threefold 
with $\Pic X=\Z\left[\sO_X(-K_X)\right]$. 
The \emph{degree} of $X$ is defined to be the anti-canonical volume $(-K_X)^{\cdot 3}$, 
and the \emph{genus} of $X$ is defined to be the value $1+\frac{1}{2}(-K_X)^{\cdot 3}$. 
A \emph{line} $Z$ (resp., a \emph{conic} $\mathcal{C}$) in $X$ is an 
irreducible curve on $X$ such that $\left(-K_X\cdot Z\right)=1$ (resp., 
$\left(-K_X\cdot \mathcal{C}\right)=2$) holds. 
The \emph{Hilbert scheme of lines in $X$} is denoted by $\Sigma(X)$. 
More precisely, the scheme $\Sigma(X)$ is the Hilbert scheme of $X$ whose 
Hilbert polynomial is equal to $t+1$ with respects to the anti-canonical divisor $-K_X$. 
\item\label{definition:X222}
Let $X$ be a prime Fano threefold and let $Z_1$, $Z_2$ be lines in 
$X$. We say that $Z_1$, $Z_2$ is \emph{a totally disjoint} (resp., 
\emph{an absolutely disjoint}) \emph{pair of lines in $X$} if 
$Z_1\cap Z_2=\emptyset$ and there is no line $Z$ in $X$ satisfying both 
$Z\cap Z_1\neq\emptyset$ and $Z\cap Z_2\neq\emptyset$ (resp., 
if $Z_1\cap Z_2=\emptyset$ and there are no lines 
$Z'_1$, $Z'_2$ in $X$ satisfying $Z_1\cap Z'_1\neq \emptyset$, 
$Z_2\cap Z'_2\neq \emptyset$ and $Z'_1\cap Z'_2\neq\emptyset$). 
Clearly, if $Z_1$, $Z_2$ is an absolutely disjoint pair of lines in $X$, then 
the pair is totally disjoint. 
\end{enumerate}
\end{definition}

\begin{thm}[{see \cite{Isk79, Isk89}}]\label{thm:iskovskikh}
Let $X$ be a prime Fano threefold of genus $g\geq 5$. 
\begin{enumerate}
\renewcommand{\theenumi}{\arabic{enumi}}
\renewcommand{\labelenumi}{(\theenumi)}
\item\label{thm:iskovskikh1}
The value $g$ satisfies that $g\in\Z$, $g\leq 12$ and $g\neq 11$. 
\item\label{thm:iskovskikh2}
The complete linear system $|-K_X|$ is very ample and 
gives an embedding $X\hookrightarrow\pr^{g+1}$. Moreover, under the anti-canonical 
embedding $X\subset\pr^{g+1}$, the variety $X$ is the scheme-theoretic intersection of 
hyperquadrics in $\pr^{g+1}$ containing $X$. 
\item\label{thm:iskovskikh3}
The Hilbert scheme $\Sigma(X)$ of lines in $X$ is non-empty and purely 
$1$-dimensional projective scheme. (We only use the result for the case $g\geq 8$ 
in this article.)
\item\label{thm:iskovskikh4}
A closed point $[Z]\in\Sigma(X)$ is a smooth point if and only if 
$\sN_{Z/X}\cong\sO_{\pr^1}\oplus\sO_{\pr^1}(-1)$; a singular point if and only if 
$\sN_{Z/X}\cong\sO_{\pr^1}(1)\oplus\sO_{\pr^1}(-2)$. 
\end{enumerate}
\end{thm}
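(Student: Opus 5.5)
The statement collects classical results, so the plan is to derive each item from standard facts about anticanonically embedded Fano threefolds rather than give a self-contained proof.

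For \eqref{thm:iskovskikh1} I will invoke the Iskovskikh classification of prime Fano threefolds. The bound $g\leq 12$ and the gap $g\neq 11$ come from the double projection from a line (equivalently, from Mukai's vector-bundle description), and I will simply cite \cite{Isk79, Isk89}. Integrality of $g$ is immediate, since $(-K_X)^{\cdot 3}$ is even by Riemann--Roch: $h^0(-K_X)=g+2$.

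For \eqref{thm:iskovskikh2} I will follow Iskovskikh's trichotomy.
\begin{enumerate}
\renewcommand{\theenumi}{\roman{enumi}}
\renewcommand{\labelenumi}{(\theenumi)}
\item $|-K_X|$ is base point free.
\item $|-K_X|$ is either hyperelliptic or very ample, and the hyperelliptic case only occurs for $g\leq 3$.
\item For an anticanonically embedded $X$, the homogeneous ideal is generated by quadrics except in the trigonal case. Under Picard rank one this case only occurs for $g\leq 4$.
\end{enumerate}
Hence for $g\geq 5$ the variety $X$ is an intersection of quadrics. These points reduce, via a smooth member of $|-K_X|$ (a K3 surface, by Shokurov's existence theorem for smooth elephants), to Saint-Donat's results on linear systems on K3 surfaces.

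For \eqref{thm:iskovskikh3}, non-emptiness of $\Sigma(X)$ is Shokurov's theorem on the existence of lines. For the dimension count, take a line $Z$. Since $\deg\sN_{Z/X}=-K_X\cdot Z-2=-1$ and $\sN_{Z/X}$ has rank $2$, we have $\chi(\sN_{Z/X})=1$, so every component of $\Sigma(X)$ has dimension at least $1$. It has dimension at most $1$ because otherwise lines would cover $X$ with a positive-dimensional family through a general point. That would force a general line to have normal bundle with nonnegative summands, i.e.\ $\sN\cong\sO\oplus\sO$ up to degree, which contradicts degree $-1$. Projectivity holds since $\Sigma(X)$ is a Hilbert scheme.

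For \eqref{thm:iskovskikh4}, write $\sN_{Z/X}\cong\sO(a)\oplus\sO(-1-a)$ with $a\geq 0$. The inclusion $\sN_{Z/X}\subset\sN_{Z/\pr^{g+1}}\cong\sO(1)^{\oplus g}$ gives $a\leq 1$. Then $h^0(\sN)=1$ or $2$ according as $a=0$ or $a=1$. Since $\dim\Sigma(X)=1$ and $T_{[Z]}\Sigma(X)=H^0(\sN_{Z/X})$, the point $[Z]$ is smooth exactly when $a=0$.

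The main obstacle is \eqref{thm:iskovskikh1}: the genus bound is a genuinely deep classification result that I will cite rather than reprove. The secondary subtle point is the upper bound $\dim\Sigma(X)\leq 1$, where I will cite Iskovskikh/Shokurov directly.
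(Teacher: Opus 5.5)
Your proposal is correct and matches the paper, whose proof of this theorem consists only of citations to Iskovskikh, Shokurov and Prokhorov. Your added derivations are the standard arguments behind those references: integrality of $g$ from $h^0(-K_X)=g+2$, the bound $\dim_{[Z]}\Sigma(X)\geq\chi(\sN_{Z/X})=1$, and item (4) from $\sN_{Z/X}\subset\sO_{\pr^1}(1)^{\oplus g}$ together with $T_{[Z]}\Sigma(X)=H^0(\sN_{Z/X})$.

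The only imprecise point is your heuristic for $\dim\Sigma(X)\leq 1$, which is harmless since you cite it anyway. A $2$-dimensional family of lines covering $X$ gives only finitely many lines through a general point; the contradiction comes instead from the fact that a general member of a covering family of rational curves is free, which would force $\deg\sN_{Z/X}\geq 0$. You would also need to exclude a $2$-dimensional family sweeping out a surface: such a surface is a plane, which is impossible because every surface in $X$ has anticanonical degree divisible by $2g-2$.
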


\begin{proof}
For \eqref{thm:iskovskikh1}, see \cite[IV, Theorem 3.1]{Isk79}. See also 
\cite[Theorem 8.3]{Pr25}. 
For \eqref{thm:iskovskikh2}, see 
\cite[I, Propositions 4.9 and 6.1, II, Theorem 3.4 and IV, Proposition 1.3]{Isk79}. 
See also \cite[Theorem 6.7]{Pr25}. 
For \eqref{thm:iskovskikh3}, see \cite{shokurov, reid} 
and \cite[III, Proposition 2.1]{Isk79}, see also 
\cite[Theorem 0.2]{Tak89} and \cite[Theorem 8.2]{Pr25} for the case $g\geq 8$. 
For \eqref{thm:iskovskikh4}, see \cite[\S 1, Lemma 1 and Proposition 1]{Isk89}. 
See also \cite[Proposition 8.9]{Pr25}. 
\end{proof}

\begin{thm}[{see \cite{Isk89}}]\label{thm:double-projection-from-line}
Let $X$ be a prime Fano threefold of genus $g\geq 7$, let $Z\subset X$ be 
a line in $X$, let $\sigma\colon X'\to X$ be the blowup of $X$ along $Z$, and let 
$F'\subset X'$ be the exceptional divisor of $\sigma$. 
\begin{enumerate}
\renewcommand{\theenumi}{\arabic{enumi}}
\renewcommand{\labelenumi}{(\theenumi)}
\item\label{thm:double-projection-from-line1}
The variety $X'$ is not a smooth Fano threefold but a smooth weak Fano threefold. 
\item\label{thm:double-projection-from-line2}
The anti-canonical model $\beta\colon X'\to \bar{X}$ of $X'$ is small. 
The set of $\beta$-exceptional irreducible curves in $X'$ is equal to the set consisting 
of the strict transforms $\sigma^{-1}_*Z'$ of lines $Z'\subset X$ apart from $Z$ with 
$Z\cap Z'\neq\emptyset$, and the $(-3)$-curve in $F'$ 
(if $\sN_{Z/X}\cong\sO_{\pr^1}(1)\oplus\sO_{\pr^1}(-2)$). 
\item\label{thm:double-projection-from-line3}
Let $\beta^+\colon X^+\to \bar{X}$ be the flop of the elementary flopping contraction 
$\beta$, let us set $\chi_{X' X^+}:=(\beta^+)^{-1}\circ \beta\colon X'\dashrightarrow 
X^+$, and let $\tau\colon X^+\to \X$ be the unique 
$K_{X^+}$-negative elementary contraction morphism. 
(We note that $X^+$ is a smooth weak Fano threefold by Proposition 
\ref{proposition:MDS}.) We summarize the diagram: 
\begin{equation}\label{equation:iskovskikh}
\xymatrix{
 & F' \ar@{}[r]|{\subset}  & X'  \ar@{-->}[rr]^-{\chi_{X'X^+}} 
\ar@{->}[ld]_-{\sigma} \ar@{->}[rd]_-{\beta} 
& & X^+ \ar@{->}[rd]^-{\tau} \ar@{->}[ld]^-{\beta^+}&  \\
 Z \ar@{}[r]|{\subset} & X && \bar{X} && \X 
}
\end{equation}
Then $\tau$ is determined by the ample model of the 
semiample divisor 
\[
(\chi_{X' X^+})_*\left(\sigma^*(-K_X)-2F'\right)
\]
on $X^+$. Moreover, we have the following: 
\begin{enumerate}
\renewcommand{\theenumii}{\roman{enumii}}
\renewcommand{\labelenumii}{(\theenumii)}
\item\label{thm:double-projection-from-line31}
If $g=12$, then $\tau$ is birational, $\X$ is equal to the del Pezzo threefold $V$ 
of degree $5$, and $\tau$ is the blowup of $V$ along a smooth rational quintic curve 
$\Gamma\subset V$ with the exceptional divisor $S^+\subset X^+$. Moreover, 
we have 
\[
S^+\sim(\chi_{X' X^+})_*\left(\sigma^*(-K_X)-3F'\right).
\]
\item\label{thm:double-projection-from-line32}
If $g=10$, then $\tau$ is birational, $\X$ is equal to the $3$-dimensional smooth 
hyperquadric $Q$, and $\tau$ is the blowup of $Q$ along a smooth curve 
$\Gamma\subset Q$ of genus $2$ and $\left(\sO_Q(1)\cdot \Gamma\right)=7$ 
with the exceptional divisor $S^+\subset X^+$. Moreover, 
we have 
\[
S^+\sim(\chi_{X' X^+})_*\left(\sigma^*(-2K_X)-5F'\right).
\]
\item\label{thm:double-projection-from-line33}
If $g=9$, then $\tau$ is birational, $\X$ is equal to the $3$-dimensional projective 
space $\pr^3$, and $\tau$ is the blowup of $\pr^3$ along a smooth non-hyperelliptic 
curve $\Gamma\subset \pr^3$ of genus $3$ and 
$\left(\sO_{\pr^3}(1)\cdot \Gamma\right)=7$ 
with the exceptional divisor $S^+\subset X^+$. Moreover, 
we have 
\[
S^+\sim(\chi_{X' X^+})_*\left(\sigma^*(-3K_X)-7F'\right).
\]
\item\label{thm:double-projection-from-line34}
If $g=8$, then $\X=\pr^2$ and $\tau$ is a conic bundle with 
$\Delta_\tau\in\left|\sO_{\pr^2}(5)\right|$. 
\item\label{thm:double-projection-from-line35}
If $g=7$, then $\X=\pr^1$ and a general fiber of $\tau$ is a smooth del Pezzo 
surface of anti-canonical degree $5$. 
\end{enumerate}
\end{enumerate}
\end{thm}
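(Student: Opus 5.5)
This theorem is essentially Iskovskikh's double projection from a line, so the plan is to assemble it from the classical sources \cite{Isk89, Isk79, Tak89, Pr25} while checking each step with the tools of \S\ref{section:3flops}. We set $H:=\sigma^*(-K_X)$, so that $-K_{X'}=H-F'$.

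For \eqref{thm:double-projection-from-line1}, the linear system $|-K_{X'}|$ is the strict transform of the hyperplanes through $Z$. Since $X\subset\pr^{g+1}$ is cut out by quadrics (Theorem \ref{thm:iskovskikh}\eqref{thm:iskovskikh2}), the base locus of $|-K_X-Z|$ is $Z$ itself, so $|-K_{X'}|$ is base point free and gives the projection from $Z$. Bigness follows from
\[
(-K_{X'})^{\cdot 3}=(-K_X)^{\cdot 3}-3\deg Z+2-2-\ldots=2g-2-6>0
\]
(computed via $(H\cdot F'^2)=-1$ and $(F'^3)=-\deg\sN_{Z/X}=1$). Curves $\Xi$ with $(-K_{X'}\cdot\Xi)=0$ are exactly the curves lying in the linear span of $Z$ after projection. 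These are the strict transforms of lines meeting $Z$, which satisfy $(H\cdot\Xi)=1=(F'\cdot\Xi)$ by Lemma \ref{lemma:lengths}, together with the negative section of $F'\cong\pr(\sN^\vee)$ in the $(1,-2)$ case. Since lines meeting $Z$ exist by Theorem \ref{thm:iskovskikh}\eqref{thm:iskovskikh3}, $X'$ is not Fano. This also yields \eqref{thm:double-projection-from-line2}, once we know there are only finitely many such curves and no divisor is contracted. Both facts follow from the one-dimensionality of $\Sigma(X)$: the lines meeting $Z$ form a proper closed subfamily, and $F'$ is not contracted because the image of $F'$ under projection is a surface. The contraction is elementary because $\rho(X')=2$.

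For \eqref{thm:double-projection-from-line3}, Theorem \ref{thm:kollar} and Proposition \ref{proposition:MDS} give a smooth weak Fano $X^+$ with $\rho=2$, and the other extremal ray defines $\tau$. The key step is to identify the ray. First I determine the class $D=H-aF'$ spanning the boundary of $\Nef(X^+)$. For this I compute the intersection numbers $D^3$, $D^2\cdot(-K)$ and $D\cdot(-K)^2$, which are invariant under flops, as polynomials in $a$. The candidate is $a=2$: then $D^3$ and the other invariants match $\tau^*$ of the generator in each genus, or $D^2\cdot(-K)$ vanishes for the conic bundle and $D^2=0$ for the del Pezzo fibration. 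I then check semiampleness using the base point free theorem, since $D-\epsilon K_{X^+}$ is nef. To pin down the exceptional divisor class $S^+=H-bF'$ (with $b=3,5/2\cdot$scaling, $7$, as stated), I use the Mori classification \cite{Mo82} of the extremal contraction. Solving the numerical equations $(-K)^2S^+$, $(-K)S^{+2}$ and $S^{+3}$ for the type E1 blowup gives the degree and genus of $\Gamma$. The base $\X$ is then recognized by its Fano index and degree: $V_5$ by Theorem \ref{thm:dP5}, $Q$, $\pr^3$, $\pr^2$ with $\Delta$ of degree $5$ from $-K^2\cdot D$-type formulas, and $\pr^1$ with a degree $5$ del Pezzo fiber.

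The main obstacle I expect is excluding other types of extremal contraction on the $\tau$ side. For $g\ge9$, bigness of $D$ rules out fiber type. Among the birational types, contractions to a point are excluded by checking that the E2–E5 numerics are inconsistent with the computed invariants. Non-rationality of $\Gamma$ for $g=10,9$ is forced by the genus computation. For $g=9$, non-hyperellipticity follows because $\Gamma$ lies on no quadric, since $h^0(X^+,2D-S^+)=0$.
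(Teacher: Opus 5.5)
The paper does not prove this statement itself; it quotes \cite{Isk89} and \cite[Lemma 8.13, Corollary 8.16, Theorem 8.3]{Pr25}. Your sketch therefore has to stand on its own, and its central step in (3) fails as written.

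\textbf{The main gap: flop invariance of cubes.} You locate the second boundary ray of $\Nef(X^+)$ by matching $D^3$, $D^2\cdot(-K)$ and $D\cdot(-K)^2$ for $D=H-aF'$, asserting that all three are flop invariant. Only intersection numbers containing a factor $K$ are invariant. This is because $K_{X'}$ and $K_{X^+}$ are both pullbacks of the Cartier divisor $K_{\bar X}$, and a general member of $|-mK_{\bar X}|$ misses the finitely many images of flopping curves. Cubes of other divisors are not invariant; for instance an Atiyah flop of a curve $B$ changes $D^3$ by $-(D\cdot B)^3$, and here $(H-2F')\cdot B=-1$ for every flopping curve. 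Concretely, $(H-2F')^3=2g-22$ on $X'$. This equals $2$ for $g=12$, whereas $(\tau^*\sO_V(1))^3=5$, and it is negative for $g=10,9$. So:
\begin{itemize}
\item the proposed matching fails;
\item bigness of $D$ cannot be read off this way;
\item you cannot equate $S^{+3}$ with $(H-bF')^3$ computed on $X'$ either;
\item for a conic bundle it is $D^3$, not $D^2\cdot(-K_{X^+})$, that vanishes; in fact $D^2\cdot(-K_{X^+})=2$.
\end{itemize}
The guess-and-check is also circular: applying the base point free theorem to $D$ requires $D$ to be nef on $X^+$, which is exactly the claim that $\R_{\geq 0}[D]$ is the boundary ray.

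\textbf{What is needed instead.} The missing argument is Iskovskikh's numerical method, run from the $X^+$ side.
\begin{itemize}
\item Let $\tau$ be the $K_{X^+}$-negative extremal contraction, which exists since $\rho(X^+)=2$.
\item Since $F'\cdot B=1>0$ for every flopping curve $B$ (including the $(-3)$-curve), $F^+:=\chi_*F'$ is $\beta^+$-negative, hence $\tau$-positive. So the supporting class of $\tau$ is $H^+-aF^+$ with an unknown $a>1$.
\item For each type in Mori's list \cite{Mo82}, write $-K_{X^+}$, the exceptional divisor, and the pullback of the ample generator of the target in the lattice $\Z H^+\oplus\Z F^+$.
\item Solve the resulting Diophantine system using only flop-invariant data. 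On $X'$ these are $(-K)^3=2g-6$, $(-K)^2\cdot H=2g-3$, $(-K)^2\cdot F'=3$, $(-K)\cdot H^2=2g-2$, $(-K)\cdot H\cdot F'=1$ and $(-K)\cdot F'^2=-2$. On $X^+$ you add intrinsic formulas such as $(-K_{X^+})^2\cdot\tau^*A=r^2A^3-\deg\Gamma$ and $(-K_{X^+})\cdot S^{+2}=2g(\Gamma)-2$.
\end{itemize}
This is what rules out the other types and forces $a=2$ and the listed $(\X,\Gamma)$.

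\textbf{Smaller points.}
\begin{itemize}
\item Theorem~\ref{thm:iskovskikh}\,(3) only says $\Sigma(X)$ is one-dimensional. It does not produce a line meeting $Z$, so your proof that $X'$ is not Fano is incomplete. Non-Fano-ness does follow afterwards from the cube discrepancy above, which shows $\chi\neq\mathrm{id}$.
\item The degree is $(-K_{X'})^{\cdot 3}=2g-6$, not $2g-8$.
\item For $g=9$, ``$\Gamma$ lies on no quadric'' does not imply non-hyperelliptic. A general projection to $\pr^3$ of a smooth curve in $|2\xi+f|$ on the cubic scroll $\pr_{\pr^1}(\sO(1)\oplus\sO(2))\subset\pr^4$ is a hyperelliptic septic of genus $3$ lying on no quadric. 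The real obstruction is that such a curve has a $5$-secant line, namely the double line of the projected scroll. This contradicts nefness of $-K_{X^+}=\tau^*\sO_{\pr^3}(4)-S^+$.
\end{itemize}
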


\begin{proof}
See \cite[\S 1, Proposition 3 and Main Theorem]{Isk89}. See also 
\cite[Lemma 8.13, Corollary 8.16 and Theorem 8.3]{Pr25}. 
\end{proof}

If $g\geq 9$, then we have the converse of the above link \eqref{equation:iskovskikh}. 

\begin{thm}[{\cite{Pr92, KPS18, Pr25}}]\label{thm:double-projection-from-line-converse}
Let $\X$ be a smooth Fano threefold and let $\Gamma\subset\X$ be an 
irreducible curve in $\X$ such that one of: 
\begin{enumerate}
\renewcommand{\theenumi}{\roman{enumi}}
\renewcommand{\labelenumi}{(\theenumi)}
\item\label{thm:double-projection-from-line-converse-1}
$\X$ is the del Pezzo threefold $V$ of degree $5$ and $\Gamma$ is a smooth 
rational quintic curve, 
\item\label{thm:double-projection-from-line-converse-2}
$\X$ is the smooth $3$-dimensional hyperquadric $Q$ and $\Gamma$ is a smooth 
curve of genus $2$ and $\left(\sO_Q(1)\cdot \Gamma\right)=7$, or 
\item\label{thm:double-projection-from-line-converse-3}
$\X=\pr^3$ and $\Gamma$ is a smooth non-hyperelliptic curve of genus $3$ and 
$\left(\sO_{\pr^3}(1)\cdot\Gamma\right)=7$. 
\end{enumerate}
Let $\tau\colon X^+\to \X$ be the blowup of $\X$ along $\Gamma$ with the 
exceptional divisor $S^+\subset X^+$. Then $X^+$ is not a smooth Fano threefold 
but a smooth weak Fano threefold such that the anti-canonical model 
$\beta^+\colon X^+\to\bar{X}$ of $X^+$ is small. Let $\beta\colon X'\to\bar{X}$ be 
the flop of the elementary flopping contraction $\beta^+$ and let us set 
$\chi_{X^+ X'}:=\beta^{-1}\circ\beta^+\colon X^+\dashrightarrow X'$. 
(We note that $X'$ is a smooth weak Fano threefold by Proposition 
\ref{proposition:MDS}.) The unique $K_{X'}$-negative elementary contraction morphism 
$\sigma\colon X'\to X$ is the blowup of a prime Fano threefold $X$ along 
a line $Z\subset X$ with the exceptional divisor $F'\subset X'$. We summarize the 
diagram: 
\begin{equation}\label{equation:iskovskikh-converse}
\xymatrix{
& S^+ \ar@{}[r]|{\subset}  & X^+  \ar@{-->}[rr]^{\chi_{X^+ X'}} 
\ar@{->}[ld]_-{\tau} \ar@{->}[rd]_-{\beta^+} 
& & X' \ar@{->}[rd]^-{\sigma} \ar@{->}[ld]^-{\beta} \ar@{}[r]|{\supset} & F' & \\
\Gamma \ar@{}[r]|{\subset} & \X && \bar{X} && X \ar@{}[r]|{\supset}& Z
}
\end{equation}
Moreover, we have the following: 
\begin{enumerate}
\renewcommand{\theenumi}{\arabic{enumi}}
\renewcommand{\labelenumi}{(\theenumi)}
\item\label{thm:double-projection-from-line-converse1}
If $(\X, \Gamma)$ satisfies \eqref{thm:double-projection-from-line-converse-1}, 
then the genus of $X$ is equal to $12$ and the diagram 
\eqref{equation:iskovskikh-converse} is the converse of the diagram 
\eqref{equation:iskovskikh} for the case $g=12$. 
Moreover, the set of $(\beta^+)$-exceptional irreducible curves in $X^+$ is equal to 
the set consisting of the strict transforms $\tau^{-1}_*Z''$ of lines $Z''\subset V$ with 
$\operatorname{length}\left(\sO_{\Gamma\cap Z''}\right)=2$. 
The prime divisor $F^V:=\tau_*(\chi_{X^+ X'})^{-1}_*F'$ in $V$ is 
the hyperplane section in $V$ containing $\Gamma$. The divisor $F^V$ is normal 
if and only if $\sN_{Z/X}\cong\sO_{\pr^1}\oplus\sO_{\pr^1}(-1)$ holds. 
\item\label{thm:double-projection-from-line-converse2}
If $(\X, \Gamma)$ satisfies \eqref{thm:double-projection-from-line-converse-2}, 
then the genus of $X$ is equal to $10$ and the diagram 
\eqref{equation:iskovskikh-converse} is the converse of the diagram 
\eqref{equation:iskovskikh} for the case $g=10$. 
\item\label{thm:double-projection-from-line-converse3}
If $(\X, \Gamma)$ satisfies \eqref{thm:double-projection-from-line-converse-3}, 
then the genus of $X$ is equal to $9$ and the diagram 
\eqref{equation:iskovskikh-converse} is the converse of the diagram 
\eqref{equation:iskovskikh} for the case $g=9$. 
\end{enumerate}
\end{thm}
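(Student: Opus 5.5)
This result is the converse of Iskovskikh's double projection, and the paper cites it from \cite{Pr92, KPS18, Pr25}. I would follow the standard route: establish weak Fano-ness of $X^+$, flop, identify the other extremal contraction, and then check numerics.

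\emph{Weak Fano-ness of $X^+$.} In each case \eqref{thm:double-projection-from-line-converse-1}--\eqref{thm:double-projection-from-line-converse-3}, write $-K_{X^+}=\tau^*(-K_{\X})-S^+$. For nefness I use that $\Gamma$ is cut out scheme-theoretically by the relevant linear system: hyperplane sections of $V\subset\pr^6$ together with quadrics containing the twisted quintic (Lemma \ref{lemma:dP5quintic}), quadric sections of $Q$ containing $\Gamma$, and cubics containing the non-hyperelliptic genus $3$ septic. This should give that $|\tau^*(-K_{\X})-S^+|$ is base point free.

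Bigness follows from computing $(-K_{X^+})^3=(-K_{\X})^3-2(-K_{\X}\cdot\Gamma)+2g(\Gamma)-2$. This gives $40-40-2=-2$? The sign needs care, since for $V$ we have $-K_V=2H$, so $(-K_V)^3=40$ and $-K\cdot\Gamma=10$. Using the standard formula $(-K_{X^+})^3=(-K_{\X})^3-2(-K_{\X}\cdot\Gamma)+2p_a(\Gamma)-2$, we get $40-20-2=18$ for $V$, $54-42+2=14$ for $Q$, and $64-28+4=40$? This should equal $2g-2-\cdots$, i.e. $(-K_X)^3-2\cdot1\cdot$ correction, consistent with $2g-2$ minus the line-blowup drop of $4$ ($22\to18$, $18\to14$, $16\to 12$). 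I will recheck the $\pr^3$ computation with the correct genus; these volume checks are routine.

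\emph{Non-ampleness and smallness of the anticanonical model.} The curves that are $K$-trivial are the strict transforms of $(-K_{\X})$-minimal curves meeting $\Gamma$ with the maximal length. For $V$, these are the lines $Z''$ with $\operatorname{length}(\sO_{\Gamma\cap Z''})=2$, computed via Lemma \ref{lemma:lengths}. Smallness holds because no surface is covered by such curves; a $2$-dimensional family would contradict the trisecant or quadric-generation property. With $\rho(X^+)=2$, the morphism $\beta^+$ is an elementary flopping contraction. Theorem \ref{thm:kollar} then gives a smooth flop $X'$, which is weak Fano by Proposition \ref{proposition:MDS}.

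\emph{The other contraction.} On $X'$ the cone of curves has two rays, one of them $\beta$. Let $\sigma$ be the other, $K_{X'}$-negative, ray. I find its type by computing intersection numbers of $-K_{X'}$ and $F':=$ the strict transform of the unique member of $|{-\tfrac{1}{?}}K_{\X}-\Gamma|$ of minimal degree, namely the hyperplane section through $\Gamma$, the quadric through $\Gamma$, or the cubic through $\Gamma$; these exist by Riemann--Roch counts. Flops preserve the intersection numbers $(-K)^2\cdot D$ and $(-K)\cdot D^2$. I will show $(-K_{X'})^2F'=1$ and $(-K_{X'})F'^2=-?$, matching the numerics of the exceptional divisor of the blowup of a line: $F'\cong$ a ruled surface with $-K\cdot F'^2=-2$, so $(-K_{X'})\cdot F'\cdot F'=\ldots$. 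With Mori's classification \cite{Mo82}, $\sigma$ is then the blowup of a smooth curve on a Fano threefold $X$ with $\rho(X)=1$, and the numbers force $(-K_X)\cdot Z=1$ and $(-K_X)^3=2g-2$. Primeness, i.e. index $1$, comes from $-K_X$ being primitive, since $-K_X$ restricted to the image of the pullback of the ample generator on $\X$ has degree $1$ on $Z$.

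The converse statement follows by comparing with Theorem \ref{thm:double-projection-from-line}: the composite divisor classes match, so running that link from $(X,Z)$ reproduces $(\X,\Gamma)$. The extra description of $F^V$ follows from Theorem \ref{thm:dP5}\eqref{thm:dP55}-type considerations on the normal bundle.

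The main obstacle is the step identifying $\sigma$ as the blowup of a line rather than, say, a point blowup or a fibration. My first approach is the numerical discrimination in Mori's list using the class of $F'$ computed before the flop.
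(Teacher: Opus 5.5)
Your outline follows the standard strategy (the paper itself only cites \cite{Pr25}, \cite{KPS18}, \cite{Pr92}), but two load-bearing steps are wrong or missing as written.

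\textbf{Nefness of $-K_{X^+}$.} Since $-K_Q=\sO_Q(3)$ and $-K_{\pr^3}=\sO_{\pr^3}(4)$, what you need is global generation of $\sO_Q(3)\otimes I_\Gamma$, resp.\ $\sO_{\pr^3}(4)\otimes I_\Gamma$, or a direct analysis of curves that are too secant to $\Gamma$. The systems you name, quadric sections of $Q$ through $\Gamma$ and cubic surfaces through $\Gamma$, each consist of a single surface (the one that becomes $F'$), so they cannot cut out $\Gamma$.

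More to the point, your sketch never uses the hypotheses on $\Gamma$, and nefness depends on exactly these. A $5$-secant line $\ell$ of $\Gamma\subset\pr^3$ gives $(-K_{X^+}\cdot\tau^{-1}_*\ell)=4-5<0$. Projecting from such an $\ell$ maps $\Gamma$ two-to-one onto $\pr^1$, so it is non-hyperellipticity that excludes it. Similarly, a $4$-secant line of $\Gamma\subset Q$ would give a $g^2_3$ on a genus $2$ curve. For $V$ your argument is fine: the hyperplane $\langle\Gamma\rangle$ together with the quadrics of the rational normal quintic cut out $\Gamma$. Your volume computations should read $40-20-2=18$, $54-42+2=14$, $64-56+4=12$.

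\textbf{Identification of $\sigma$.} This step, which you call the main obstacle, is not set up correctly. Your target numbers are wrong. For the exceptional divisor of the blowup of a smooth curve $Z$ one has $(-K_{X'})^2\cdot F'=(-K_X\cdot Z)+2-2g(Z)$ and $(-K_{X'})\cdot F'^2=2g(Z)-2$. For a line these are $3$ and $-2$, and indeed $F:=\tau^*\sO_{\X}(k)-S^+$ (with $k=1,2,3$ in the three cases) has exactly these values.

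More importantly, numerics alone do not show that the $K_{X'}$-negative ray $R$ contracts $F'$. The missing step is a sign argument:
\begin{itemize}
\item Since $-K_{X^+}$ is nef and $(-K_{X^+}\cdot F^2)=-2<0$, $F$ is not nef.
\item $F$ is positive on $\tau$-fibres, hence negative on the flopping ray.
\item So $F'$ is positive on the flopped ray; it is still not nef, hence $(F'\cdot R)<0$.
\end{itemize}
Then $\sigma$ is birational with $\Exc(\sigma)=F'$, because no effective divisor is negative on a fibre-type ray. The value $3$ excludes the point contractions E2--E5, which give $4,2,2,1$. The E1 formulas then force $g(Z)=0$ and $(-K_X\cdot Z)=1$. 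Hence $X$ is Fano of Picard rank one and index one, with $(-K_X)^3=(-K_{X'})^3+4$.

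\textbf{Remaining assertions.}
\begin{itemize}
\item The same sign argument on $X^+$ shows $X^+$ is not Fano. Otherwise $F$ would be contracted on $X^+$, giving a Fano blowup of a line, contrary to Theorem \ref{thm:double-projection-from-line}.
\item Your smallness claim needs a real argument. For $V$, for instance, a prime divisor $D\sim a\tau^*\sO_V(1)-bS^+$ other than $F$ has $(-K_{X^+}\cdot F\cdot D)=5a-7b\geq 0$. Hence $(-K_{X^+})^2\cdot D=15a-12b>0$, while $(-K_{X^+})^2\cdot F=3$.
\item The description of the flopping curves as bisecant lines \cite[Lemma 5.2.5]{KPS18} is only asserted in your sketch.
\item The normality criterion for $F^V$ \cite{Pr92} is also only asserted, and it does not follow from Theorem \ref{thm:dP5}\eqref{thm:dP55}.
\end{itemize}
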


\begin{proof}
The possibility of flopping curves in 
\eqref{thm:double-projection-from-line-converse1}
is proved in \cite[Lemma 5.2.5]{KPS18}. 
For the normality of $F^V$, see \cite[Theorem 1.2 and Proposition 2.1]{Pr92} or 
\cite[Theorem 18]{DFK}. 
The other assertions can be found in \cite[Theorem 11.5 and Proposition 11.9]{Pr25}. 
\end{proof}

We will prove the following proposition in \S \ref{section:flop}. 

\begin{proposition}\label{proposition:disjoint-lines}
Under the assumption in Theorem 
\ref{thm:double-projection-from-line}, assume moreover that $g=12$ (resp., $g=10$). 
Then the number of the flopping curve of $\beta$ is at most $3$ (resp., 
at most $4$), 
and all two of flopping curves of $\beta$ in $X'$ are disjoint. 
\end{proposition}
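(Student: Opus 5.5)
The plan is to work on the other side of the link, using the converse links in Theorem \ref{thm:double-projection-from-line} and Theorem \ref{thm:double-projection-from-line-converse}. Both properties are easier to control there, and they transfer back to $X'$ through the fibrewise isomorphism of Theorem \ref{thm:kollar}.

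\textbf{Step 1: transfer to $X^+$.} By Theorem \ref{thm:kollar}, for each $p\in\bar X$ the fibre $\beta^{-1}(p)$ is isomorphic to $(\beta^+)^{-1}(p)$, component by component. So it suffices to prove two things on $X^+$:
\begin{itemize}
\item $\beta^+$ has at most $3$ (resp.\ $4$) exceptional curves;
\item every positive-dimensional fibre of $\beta^+$ is irreducible.
\end{itemize}
Irreducibility of fibres is exactly the disjointness statement: two flopping curves that meet lie in one connected fibre. The flopped curves are identified as follows. For $g=12$, Theorem \ref{thm:double-projection-from-line-converse} \eqref{thm:double-projection-from-line-converse1} says they are the strict transforms of lines $Z''\subset V$ with $\operatorname{length}(\sO_{\Gamma\cap Z''})=2$. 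For $g=10$, I expect the same description with bisecant lines of $\Gamma$ in $Q$. To justify that, I would take a flopping curve $B$ with $(F'\cdot B)=1$ and $(\sigma^*(-K_X)\cdot B)=1$ (or the $(-3)$-curve, with the analogous numbers). The sign rule in Theorem \ref{thm:kollar} together with the class of $S^+$ then gives $(S^+\cdot B^+)=2$ and $\tau$-degree $1$. Lemma \ref{lemma:lengths} converts $(S^+\cdot B^+)=2$ into "$\tau(B^+)$ is a line with length-$2$ intersection with $\Gamma$".

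\textbf{Step 2: disjointness via linear span.} Suppose two flopped curves meet. Then their images $L_1,L_2$ are two intersecting bisecant lines of $\Gamma$, and they span a plane $\Pi$. By Lemma \ref{lemma:lengths}, the scheme $\Gamma\cap(L_1\cup L_2)$ has length at least $3$, and at least $4$ unless the common point lies on $\Gamma$.
\begin{itemize}
\item For $g=12$: by Lemma \ref{lemma:dP5quintic}, $\Gamma$ is a rational normal quintic in a hyperplane $\pr^5$. A plane meets such a curve in a scheme of length at most $3$, and only when the length is exactly $3$ does it look like three points spanning the plane. I would rule out the length-$3$ configuration directly: the two lines would then pass through a common point of $\Gamma$ with their other points distinct, so the three points of $\Gamma\cap\Pi$ span $\Pi$. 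I would then use that $\Pi\cap V$ is cut out by quadrics (indeed $V$ is an intersection of quadrics) to force a conic or a third line in $\Pi\cap V$. That would give $\operatorname{length}(\Pi\cap\Gamma)\ge4$, a contradiction.
\item For $g=10$: $\Gamma$ has degree $7$ and genus $2$, so it is linearly normal in $\pr^4$ with $h^0=6$. A plane then meets $\Gamma$ in length at most $3$, by Riemann--Roch: $\deg-2$ points impose independent conditions. The same argument applies, using that $\Pi\cap Q$ is a conic containing $L_1\cup L_2$, hence equal to $L_1\cup L_2$.
\end{itemize}

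\textbf{Step 3: the bound via a cube computation.} I would compute $(F'^{\cdot3})$ on $X'$ directly: it equals $-\deg\sN_{Z/X}=1$, up to sign conventions. I would then compute $((F'^+)^{\cdot3})$ on $X^+$ by writing $F'^+$ in terms of $\tau^*H$ and $S^+$. This is possible from the linear equivalences in Theorem \ref{thm:double-projection-from-line}: for instance, when $g=12$, $F'^+\sim\tau^*\sO_V(1)-S^+$, coming from $F^V$ being the hyperplane section through $\Gamma$. The cube on $X^+$ follows from $\deg\Gamma$, $g(\Gamma)$ and $-K$. By Step 2 every flop is along an irreducible fibre, and for each such curve the cube of a divisor $D$ jumps by a positive integer multiple of $(D\cdot B)^3$: the multiple is $1$ for an Atiyah flop and larger for non-reduced or higher-length flops. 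Taking $D=F'$ with $(F'\cdot B)=\pm1$, the difference of the two cubes bounds the number of flopping curves. I expect this difference to come out as $3$ (resp.\ $4$).

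\textbf{Main obstacle.} I expect the hardest step to be controlling the flop contribution to the cube when the flopping curve is not an Atiyah curve. This includes the $(-3)$-curve case and curves with normal bundle $\sO(1)\oplus\sO(-3)$. Here I would use the weighted formula with weights given by the length of the flopping scheme, which is at least $1$, so that the difference still bounds the count from above. Secondarily, the tangential cases in Step 2 need care.
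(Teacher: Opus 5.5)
Your route differs from the paper's. The paper never studies $\beta$ in isolation. It chooses an auxiliary line $Z_2$ with $Z,Z_2$ absolutely disjoint (Lemma~\ref{lemma:totally-disjoint}) and works in the diagram \eqref{equation:big2}. There it shows (Corollary~\ref{corollary:12goback}) that the flopping curves of $\beta$, lifted to $X_0$, are left untouched by $\chi_{i1}$ and $\chi_{i2}$ for the other index $i$, and so become flopping curves of $\chi_{i3}$. That flop is explicit (Example~\ref{example:3flop}, resp.\ Remark~\ref{remark:conic-bundle}). Its flopping curves are fibres over the at most $3$ points of $(C_i\cap\Gamma_i^{Q_i})_{\mathrm{red}}$, resp.\ over the at most $4$ singular points of a plane quintic of geometric genus $2$. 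So the bound and the disjointness come together, with no secant geometry and no flop formula.

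Your Step~3 is a legitimate alternative for the bound, and your numbers are right. Every flopping curve, the $(-3)$-curve included, has $(F'\cdot B)=1$, and $(F'^{\cdot 3})=1$. For $g=12$ you get $(F'^+)^{\cdot 3}=(\tau^*\sO_V(1)-S^+)^{\cdot 3}=5-15+8=-2$. For $g=10$, $F'^+\sim 2\tau^*\sO_Q(1)-S^+$ has cube $16-42+23=-3$. The differences are exactly $3$ and $4$. However, you must import the fact that a flop of a smooth threefold changes $D^{\cdot 3}$ by $\sum_B w_B(D\cdot B)^3$ with integers $w_B\geq 1$ (for instance via deformation to Atiyah flops, or Gopakumar--Vafa invariants). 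The weight is not the length of the flopping scheme in general: Reid's pagoda of width $n$ has length $1$ but weight $n$. Still, $w_B\ge 1$ is all you need, and this part does not use Step~2.

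\textbf{The genuine gap is the disjointness argument.}

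First, for $g=10$ the flopped curves are \emph{trisecant} lines, not bisecant ones. From $S^+\sim\chi_*(2\sigma^*(-K_X)-5F')$ and $(\sigma^*(-K_X)\cdot B^+)=(F'\cdot B^+)=-1$ you get $(S^+\cdot B^+)=3$. This is forced anyway by $(-K_Q\cdot L)=3$.

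Second, your claim that a plane meets $\Gamma\subset\pr^4$ in length at most $3$ is false. Here $h^0(\sO_\Gamma(1))=6$, so $\Gamma\subset\pr^4$ is not linearly normal. Riemann--Roch only gives $h^0(\sO_\Gamma(1)(-D))\geq 2$ for a length-$d$ subscheme $D\subset\Gamma\cap\Pi$. This rules out $d=6$ but not $d\le 5$, so with bisecants you get no contradiction at all.

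Third, in both genera the case $p:=L_1\cap L_2\in\Gamma$ is not handled. Since $V$ and $Q$ are cut out by quadrics, $\Pi\cap V=L_1\cup L_2$ (resp.\ $\Pi\cap Q=L_1\cup L_2$). So no conic or third line can appear. For $g=12$, three points $p,a,b$ of $\Gamma$ spanning $\Pi$ is a perfectly consistent configuration.

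What is missing is the observation that the strict transforms in the blowup along $\Gamma$ can meet over $p$ only if they pass through the same point of the fibre $\tau^{-1}(p)$. For lines transversal to $\Gamma$ at $p$, this forces $T_p\Gamma\subset\Pi$; in the tangential subcase it forces $\Pi$ to be the osculating plane at $p$. Either way the length of $\Gamma\cap\Pi$ rises:
\begin{itemize}
\item to $4$ for $g=12$, contradicting that $\Gamma$ is a rational normal quintic in $\langle\Gamma\rangle\cong\pr^5$;
\item to $6$ for $g=10$, contradicting the Riemann--Roch bound above.
\end{itemize}
With these corrections your approach goes through, but as written Step~2 does not prove that the flopping curves are pairwise disjoint.
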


We will use the following Takeuchi's link only in \S \ref{section:parameter}. 

\begin{thm}[{\cite{Tak89, KP18}}]\label{thm:takeuchi}
\begin{enumerate}
\renewcommand{\theenumi}{\arabic{enumi}}
\renewcommand{\labelenumi}{(\theenumi)}
\item\label{thm:takeuchi1}
Let $X$ be a prime Fano threefold of genus $12$ and let $\sC\subset X$ 
be a conic in $X$. 
Let $\sigma_{\sC}\colon \tilde{Q}^+\to X$ be the blowup of $X$ along $\sC$ 
with the exceptional divisor $\mathbb{S}\subset \tilde{Q}^+$. 
Then the variety $\tilde{Q}^+$ is not a smooth Fano threefold 
but a smooth weak Fano threefold, 
its anti-canonical model $\beta^+\colon\tilde{Q}^+\to \bar{Q}$ is small. 
The flop $\beta\colon\tilde{Q}\to \bar{Q}$ of $\beta^+$ satisfies that, the other 
elementary $K_{\tilde{Q}}$-negative contraction morphism $\sigma_\Lambda\colon
\tilde{Q}\to Q$ is birational, its image $Q$ is the $3$-dimensional smooth hyperquadric 
in $\pr^4$, and the morphism $\sigma_\Lambda$ is the blowup of $Q$ along 
a smooth rational sextic curve $\Lambda\subset Q$ satisfying that 
the restriction homomorphism 
\begin{equation}\label{equation:QN}
H^0\left(\pr^4,\sO_{\pr^4}(2)\right)\to H^0\left(\Lambda, \sO_{\pr^4}(2)|_\Lambda\right)
\end{equation}
is surjective. Let $\E\subset\tilde{Q}$ be the exceptional divisor of 
$\sigma_\Lambda$, and let us set 
$\E^+:=(\beta^{-1}\circ\beta^+)^{-1}_*\E\subset\tilde{Q}^+$ and 
$\mathbb{S}':=(\beta^{-1}\circ\beta^+)_*\mathbb{S}\subset\tilde{Q}$. Then we have 
\[
\mathbb{S}'\sim \sigma_\Lambda^*\sO_Q(2)-\E, \quad
\mathbb{E}^+\sim\sigma_{\sC}^*\sO_X(-2K_X)-5\mathbb{S}.
\]
We summarize the diagram: 
\begin{equation}\label{equation:takeuchi}
\xymatrix{
& \mathbb{S} \ar@{}[r]|{\subset}  & \tilde{Q}^+  \ar@{-->}[rr]^-{\beta^{-1}\circ\beta^+} 
\ar@{->}[ld]_-{\sigma_{\sC}} \ar@{->}[rd]_-{\beta^+} 
& & \tilde{Q} \ar@{->}[rd]^-{\sigma_{\Lambda}} 
\ar@{->}[ld]^-{\beta} \ar@{}[r]|{\supset} 
& \E & \\
\sC \ar@{}[r]|{\subset} & X && \bar{Q} && 
Q \ar@{}[r]|{\supset}& \Lambda
}
\end{equation}
\item\label{thm:takeuchi2}
Let $Q$ be the $3$-dimensional smooth hyperquadric in $\pr^4$ and let $\Lambda
\subset Q$ be a smooth sextic rational curve such that the restriction homomorphism 
\eqref{equation:QN} is surjective. Let $\sigma_\Lambda\colon\tilde{Q}\to Q$ be 
the blowup of $Q$ along $\Lambda$. Then we can run the Sarkisov link of $\tilde{Q}$ 
which is the inverse of the link \eqref{equation:takeuchi}. 
\end{enumerate}
\end{thm}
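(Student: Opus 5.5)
Since Theorem \ref{thm:takeuchi} is quoted from \cite{Tak89, KP18}, I would reprove it by the same numerical strategy as Theorem \ref{thm:double-projection-from-line}. Set $H:=\sigma_{\sC}^*(-K_X)$ on $\tilde{Q}^+$. The first step is to check that $-K_{\tilde{Q}^+}=H-\mathbb{S}$ is nef and big. By Theorem \ref{thm:iskovskikh}\eqref{thm:iskovskikh2}, $X$ is cut out by quadrics, so the base locus of $|-K_X\otimes I_{\sC}|$ on $X$ is $X\cap\langle\sC\rangle$. The span $\langle\sC\rangle$ is a plane, and $X\cap\langle\sC\rangle$ is $\sC$ together with possibly finitely many lines meeting $\sC$ in length $2$. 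Hence the only curves with $(-K_{\tilde{Q}^+}\cdot \Xi)\le 0$ are strict transforms of such lines, for which the intersection is $0$ by Lemma \ref{lemma:lengths}. For bigness I would compute
\[
(-K_{\tilde{Q}^+})^{\cdot 3}=22-2\cdot 2+2\cdot 0-2=16>0 .
\]
There are only finitely many such lines, so $\beta^+$ is small. Since $\rho(\tilde{Q}^+)=2$, $\beta^+$ is an elementary flopping contraction, and Theorem \ref{thm:kollar} gives a smooth flop $\tilde{Q}$. By Proposition \ref{proposition:MDS}, $\tilde{Q}$ is again weak Fano, so its other extremal ray $\sigma_\Lambda$ is $K_{\tilde{Q}}$-negative.

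The second step identifies $\sigma_\Lambda$. I would use the intersection numbers
\[
H^3=22,\quad H^2\mathbb{S}=0,\quad H\mathbb{S}^2=-2,\quad \mathbb{S}^3=-\deg\sN_{\sC/X}=0 .
\]
Transport them to $\tilde{Q}$ via $\chi^{-1}$; cubic forms change only by the contributions of the flopped curves, and those are controlled by Theorem \ref{thm:kollar}. On $\tilde{Q}$ I look for the boundary ray of the effective cone spanned by $2H-5\mathbb{S}$. Checking $(2H-5\mathbb{S})\cdot(H-\mathbb{S})^2$ and the related numbers should show that $2H-5\mathbb{S}$ is effective, namely the strict transform of the divisor swept out by conics meeting $\sC$. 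It should also show that $2H-5\mathbb{S}$ is negative on the other ray. By Mori's classification \cite[\S 3]{Mo82}, $\sigma_\Lambda$ is then a divisorial contraction to a curve. A conic bundle or del Pezzo fibration is excluded because $-K_{\tilde{Q}}$ is big and the numbers do not fit.

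Next I determine the target. Writing $\E\sim 2H-5\mathbb{S}$ and solving the linear algebra, I expect to find $\sigma_\Lambda^*\sO_Q(1)\sim H-2\mathbb{S}$, whose cube is $22-12\cdot 2\cdot\tfrac12\cdots=2$. So the target has Picard rank one, index $3$ and degree $2$, which makes it the smooth quadric $Q$. From $-K_{\tilde{Q}}=\sigma_\Lambda^*\sO_Q(3)-\E$, the standard formulas then give $\deg\Lambda=6$ and $g(\Lambda)=0$. Inverting the basis yields $\mathbb{S}'\sim\sigma_\Lambda^*\sO_Q(2)-\E$.

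For the surjectivity of \eqref{equation:QN}, $h^0(\sO_\Lambda(2))=13$ while $h^0(\sO_{\pr^4}(2))=15$, so the claim is $h^0(I_\Lambda(2))=2$ on $\pr^4$, equivalently $h^1(I_\Lambda(2))=0$. This amounts to $|\sigma_\Lambda^*\sO_Q(2)-\E|$ being exactly the pencil spanned by $\mathbb{S}'$ and $Q$'s equation, with the expected dimension. I would get it by Kawamata--Viehweg vanishing, since $\sigma_\Lambda^*\sO_Q(2)-\E-K_{\tilde{Q}}$ is nef and big, together with $h^0(\tilde{Q},\mathbb{S}')=1$.

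For \eqref{thm:takeuchi2}, I run the same argument backwards. I would use the surjectivity of \eqref{equation:QN} to show that $Q$ is cut out near $\Lambda$ by quadrics, so the only curves negative on $-K_{\tilde{Q}}$ are trisecant lines and they have degree $0$. This gives weak Fano with small anti-canonical model, and the same intersection computation identifies the other contraction as the blowup of a prime Fano threefold of genus $12$ along a conic. I expect the main obstacle to be the identification of the second contraction on the flopped side: the flop changes triple intersection numbers, so one has to control the flopping locus well enough, via Theorem \ref{thm:kollar} and Lemma \ref{lemma:negativity}, to compute on $\tilde{Q}$ rather than $\tilde{Q}^+$.
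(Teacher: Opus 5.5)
The paper gives no proof of this theorem; it only cites \cite{Tak89} and \cite{KP18}. Your reproof therefore has to stand on its own, and as written it has genuine gaps, starting in Step 1. You conflate the base locus of $|-K_{\tilde{Q}^+}|$ with the exceptional locus of $\beta^+$. Since $X$ is cut out by quadrics and $\sC$ is an irreducible plane conic, every quadric through $X$ restricts to $\langle\sC\rangle$ either as $0$ or as $\sC$ itself. Hence $X\cap\langle\sC\rangle=\sC$ scheme-theoretically, and there are no lines in that plane at all. (A line meeting $\sC$ in length $2$ would have $-K$-degree $1-2=-1$, not $0$.) This correctly yields global generation and nefness of $-K_{\tilde{Q}^+}=H-\mathbb{S}$, and $(-K_{\tilde{Q}^+})^{\cdot 3}=16$ is right. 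However, $\beta^+$ is the projection from $\langle\sC\rangle$. It contracts the strict transforms of lines meeting $\sC$ and of conics meeting $\sC$ in a length-$2$ subscheme, none of which lie in the plane. Read literally, your argument shows there are no $K$-trivial curves, i.e.\ that $\tilde{Q}^+$ is Fano. So neither the existence of flopping curves nor smallness is established. A contracted divisor $D\sim aH-b\mathbb{S}$ would need $(-K)^{\cdot 2}\cdot D=20a-4b=0$, i.e.\ $b=5a$. Excluding this requires actual input, for instance finiteness of the lines and bisecant conics meeting $\sC$.

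Steps 2--3 do not work as proposed either. Under a flop, $D^{\cdot 3}$ jumps by an amount depending on the number and type of the flopping curves, and Theorem \ref{thm:kollar} does not supply this. Your own test case shows the problem: $(H-2\mathbb{S})^{\cdot 3}=22-24=-2$ on $\tilde{Q}^+$, whereas it must be $+2$ on $\tilde{Q}$. Likewise, effectivity of $2H-5\mathbb{S}$ cannot be read off from intersection numbers; Riemann--Roch even gives $\chi(\tilde{Q}^+,2H-5\mathbb{S})=-15$. The workable route is Takeuchi's method. Use only the flop-invariant numbers $(-K)^{\cdot 2}D$ and $(-K)DD'$, namely $(-K)^{\cdot 2}H=20$, $(-K)^{\cdot 2}\mathbb{S}=4$, $(-K)H^{\cdot 2}=22$, $(-K)H\mathbb{S}=2$ and $(-K)\mathbb{S}^{\cdot 2}=-2$. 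Then run through Mori's list for $\sigma_\Lambda$, including the non-birational and point-contraction cases that you only wave away, and solve the resulting Diophantine system. The relations $\E\sim 2H-5\mathbb{S}$ and $\sigma_\Lambda^*\sO_Q(1)\sim H-2\mathbb{S}$, $\deg\Lambda=6$ and $g(\Lambda)=0$ then come out as outputs rather than inputs.

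Your vanishing step also fails. Here $\sigma_\Lambda^*\sO_Q(2)-\E-K_{\tilde{Q}}=\sigma_\Lambda^*\sO_Q(5)-2\E$ is the strict transform of $H$, which is negative on the flopped curves, so Kawamata--Viehweg does not apply. It is also unnecessary: $h^0(Q,I_\Lambda(2))=h^0(\tilde{Q}^+,\mathbb{S})=1$ together with $h^0(\sO_Q(2))=14$ and $h^0(\sO_\Lambda(2))=13$ already gives \eqref{equation:QN}.

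For part (2), quadrics through $\Lambda$ form only a pencil in $\pr^4$ and cut out a quartic surface, so $\Lambda$ is not cut out by quadrics. What \eqref{equation:QN} actually gives is $3$-regularity of $I_\Lambda$ in the Castelnuovo--Mumford sense. Hence $\Lambda$ is cut out by cubics and $|-K_{\tilde{Q}}|=|\sigma_\Lambda^*\sO_Q(3)-\E|$ is base point free. Smallness of the anti-canonical model and identification of the far side as a conic in a genus-$12$ prime Fano threefold then again require the numerical analysis above.
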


\begin{proof}
See \cite[(2.8.2), (2.13.2)]{Tak89} and \cite[Theorems 2.2 and 2.6]{KP18}. 
\end{proof}

We can observe that the locus 
\[
\left\{\left(\left[Z_1\right],\left[Z_2\right]\right)\in \Sigma(X) \times \Sigma(X)
\mid Z_1, Z_2\text{ are totally disjoint}\right\}
\subset \Sigma(X) \times \Sigma(X)
\]
is dense whenever $X$ is a prime Fano threefold of genus $g\geq 7$. 
In fact, we can see more: 

\begin{lemma}\label{lemma:totally-disjoint}
Let $X$ be a prime Fano threefold of genus $g\geq 7$. 
Take any line $\left[Z_1\right]\in \Sigma(X)$ 
in $X$. Then there exists a $0$-dimensional closed subset $\D_{Z_1}
\subset\Sigma(X)$ such that any line $\left[Z_2\right]\in\Sigma(X)
\setminus\D_{Z_1}$ satisfies that 
$Z_1$, $Z_2$ is an absolutely disjoint pair of lines in $X$. 
(In particular, totally disjoint.) 
\end{lemma}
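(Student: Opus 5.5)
The plan is to use that $\Sigma(X)$ is a projective scheme of pure dimension $1$ (Theorem \ref{thm:iskovskikh} \eqref{thm:iskovskikh3}), so it has finitely many irreducible components, each a curve. The condition that $Z_2$ fails to be absolutely disjoint from $Z_1$ is either $Z_2\cap Z_1\neq\emptyset$, or the existence of lines $Z'_1$, $Z'_2$ with $Z'_1\cap Z_1\neq\emptyset$, $Z'_2\cap Z_2\neq\emptyset$ and $Z'_1\cap Z'_2\neq\emptyset$. It therefore suffices to show that the set of $[Z_2]$ meeting some line which meets some line meeting $Z_1$ is finite. The case $Z_2=Z_1$ or $Z_2\cap Z_1\neq\emptyset$ is also covered by this, taking $Z'_1=Z_1$, $Z'_2=Z_2$ as lines meeting themselves; more precisely, we include the degenerate choices in the incidence sets.

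The key step is the following claim: \emph{for any line $Z\subset X$, only finitely many lines in $X$ meet $Z$.} I would derive this from Theorem \ref{thm:double-projection-from-line} \eqref{thm:double-projection-from-line2}. On the blowup $X'$ along $Z$, the strict transforms of lines $Z'\neq Z$ with $Z'\cap Z\neq\emptyset$ are exactly $\beta$-exceptional curves for the small contraction $\beta$. Since $\beta$ is small, its exceptional locus is a closed subset of dimension at most $1$ in the projective variety $X'$, so it has finitely many irreducible components. Hence there are only finitely many such $Z'$. This is where $g\geq 7$ enters.

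Now I iterate. Let $\mathcal{L}_1$ be the finite set consisting of $Z_1$ together with the lines meeting $Z_1$. Let $\mathcal{L}_2$ be the set of lines meeting some member of $\mathcal{L}_1$, including the members themselves; it is finite by the claim applied to each member of $\mathcal{L}_1$. Finally let $\D_{Z_1}$ be the set of $[Z_2]$ such that $Z_2$ meets some member of $\mathcal{L}_2$ or equals one. This is again finite, hence a $0$-dimensional closed subset of $\Sigma(X)$; if it happens to be empty, we may add any point. If $[Z_2]\notin\D_{Z_1}$, then $Z_2\cap Z_1=\emptyset$, since $Z_1\in\mathcal{L}_2$. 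Moreover, given lines $Z'_1$ meeting $Z_1$ and $Z'_2$ meeting $Z'_1$, we have $Z'_1\in\mathcal{L}_1$ and $Z'_2\in\mathcal{L}_2$, so $Z_2\cap Z'_2=\emptyset$. This is the absolute disjointness. Total disjointness then follows by the remark in Definition \ref{definition:X22}.

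The only real content is the finiteness claim. The one point to check carefully is that Theorem \ref{thm:double-projection-from-line} \eqref{thm:double-projection-from-line2} indeed identifies all lines meeting $Z$, including those tangent to $Z$ or meeting it at several points, with $\beta$-exceptional curves. Should this need supplementing, there is an alternative. A line $Z'$ meeting $Z$ satisfies $(\sigma^*(-K_X)-F'\cdot\sigma^{-1}_*Z')\leq 0$, so its strict transform has $(-K_{X'}\cdot\sigma^{-1}_*Z')=1-(F'\cdot\sigma^{-1}_*Z')\leq 0$. Because $-K_{X'}$ is nef, such a curve is contracted by the anti-canonical model, and again finiteness follows from smallness of $\beta$.
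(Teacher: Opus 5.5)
Your proof is correct and follows the same route as the paper. Both arguments get finiteness of the lines meeting a given line from Theorem \ref{thm:double-projection-from-line}~\eqref{thm:double-projection-from-line2}, and iterate it three times to obtain the finite set $\D_{Z_1}$ of lines reachable from $Z_1$ by at most three meeting steps. Your explicit handling of the degenerate choices $Z'_1=Z_1$ or $Z'_2=Z'_1$ only spells out what the paper's union $\{[Z_1]\}\cup\{[Z_{1i}]\}\cup\{[Z_{1ij}]\}\cup\{[Z_{1ijk}]\}$ covers implicitly. Your remark about emptiness is vacuous, since $[Z_1]\in\D_{Z_1}$.
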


\begin{proof}
By Theorem \ref{thm:double-projection-from-line}, 
there are at most finite numbers of lines $Z_{11},\dots,Z_{1l}$ in $X$ 
intersecting with $Z_1$. Moreover, for any $1\leq i\leq l$, there are at most 
finite numbers of lines $Z_{1i1},\dots,Z_{1im_i}$ intersecting with $Z_{1i}$; 
for any $1\leq i\leq l$ and $1\leq j\leq m_i$, there are at most finite numbers of lines 
$Z_{1ij1},\dots,Z_{1ijn_{ij}}$ intersecting with $Z_{1ij}$. 
If we take
\[
\D_{Z_1}:=\left\{\left[Z_1\right]\right\}
\cup\left\{\left[Z_{1i}\right]\right\}_{1\leq i\leq l}\cup
\left\{\left[Z_{1ij}\right]\right\}_{\substack{1\leq i\leq l\\ 1\leq j\leq m_i}}\cup
\left\{\left[Z_{1ijk}\right]\right\}_{\substack{1\leq i\leq l\\ 1\leq j\leq m_i\\ 
1\leq k\leq n_{ij}}}, 
\]
then the $\D_{Z_1}$ satisfies the desired property. 
\end{proof}

We consider the blowups of prime Fano threefolds along totally disjoint pairs of lines 
throughout the article. 

\begin{proposition}\label{proposition:totally-blowup}
Let $X$ be a prime Fano threefold of genus $g\geq 5$ and let $Z_1,Z_2\subset X$ 
be a totally disjoint pair of lines in $X$. 
Let $\sigma\colon X_0\to X$ be the blowup of $X$ along $Z_1\cup Z_2$, and let $F_1$ 
and $F_2$ be the exceptional divisors of $\sigma$ with $\sigma(F_1)=Z_1$ and 
$\sigma(F_2)=Z_2$. 
\begin{enumerate}
\renewcommand{\theenumi}{\arabic{enumi}}
\renewcommand{\labelenumi}{(\theenumi)}
\item\label{proposition:totally-blowup1}
Under the anti-canonical embedding $X\subset\pr^{g+1}$, the linear span 
$\langle Z_1\cup Z_2\rangle$ of the skew lines $Z_1\cup Z_2$ satisfies 
\[
\langle Z_1\cup Z_2\rangle\cap X=Z_1\cup Z_2
\]
scheme-theoretically. In particular, the anti-canonical divisor $-K_{X_0}$ is 
globally generated. 
\item\label{proposition:totally-blowup2}
Assume that $g\geq 7$. Then $X_0$ is not a smooth Fano threefold but a smooth 
weak Fano threefold with $(-K_{X_0})^{\cdot 3}=2g-10$. 
Moreover, the anti-canonical model 
$\alpha\colon X_0\to \bar{X}_0$ satisfies that the Picard number $\rho(\bar{X}_0)$ 
of $\bar{X}_0$ is equal to $1$.
\item\label{proposition:totally-blowup3}
Assume either $g\geq 9$, or $g=8$ and 
$h^0\left(X_0,\sigma^*(-K_X)-2F_1-2F_2\right)=0$. 
Then the above $\alpha$ is a small morphism. 
\end{enumerate}
\end{proposition}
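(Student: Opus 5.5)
For \eqref{proposition:totally-blowup1} I would work inside the anti-canonical embedding $X\subset\pr^{g+1}$ (Theorem \ref{thm:iskovskikh} \eqref{thm:iskovskikh2}). Set $L:=\langle Z_1\cup Z_2\rangle\cong\pr^3$. Since $X$ is cut out scheme-theoretically by quadrics, $L\cap X$ is cut out in $L$ by quadrics containing $Z_1\cup Z_2$. The restrictions of these quadrics to $L$ form a linear subsystem $W$ of $|\sO_{\pr^3}(2)\otimes I_{Z_1\cup Z_2}|$, which is the system of $(1,1)$-forms on $\pr^1\times\pr^1$ (the quadrics containing two skew lines). The base scheme of any subsystem $W$ is one of a small number of types.

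\begin{enumerate}
\item If $\dim W\ge 2$, the base scheme of $W$ is $Z_1\cup Z_2$ together with, possibly, a finite set of lines meeting both $Z_1$ and $Z_2$.
\item If $\dim W=1$, the base scheme is $Z_1\cup Z_2$ plus two further transversal lines, or a degeneration of this, which again yields a line meeting both $Z_i$.
\item If $\dim W=0$ (a single quadric), then $L\cap X$ contains a surface, which contradicts $X$ being a scheme-theoretic intersection of quadrics with Picard rank one of degree $2g-2>4$.
\end{enumerate}

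Any residual line $\ell\subset L\cap X$ meeting both $Z_1$ and $Z_2$ would be a line of $X$ meeting both, contradicting total disjointness. So $L\cap X=Z_1\cup Z_2$ set-theoretically. Scheme-theoretic equality would follow by checking that $W$ generates $I_{Z_1\cup Z_2}$ locally, i.e.\ that no embedded points remain. This check is the delicate part; I would argue via the base scheme of pencils of $(1,1)$ forms on the smooth quadric containing both lines. Once this holds, the linear system of hyperplanes through $L$ pulls back to $|\sigma^*(-K_X)-F_1-F_2|=|-K_{X_0}|$, and it is base point free on $X_0$ because blowing up $Z_1\cup Z_2=L\cap X$ resolves the projection from $L$.

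For \eqref{proposition:totally-blowup2}, nefness and bigness come from \eqref{proposition:totally-blowup1}. The volume is a standard computation with $(-K_X)^3=2g-2$, $(-K_X\cdot Z_i)=1$ and $\deg\sN_{Z_i/X}=-1$. For each blown-up line $\sigma^*(-K_X)\cdot F_i^2=-1$ and $F_i^3=-\deg\sN+2-2\cdot0$ type terms contribute $-4$, which gives $(-K_{X_0})^3=2g-2-2\cdot 4=2g-10$. This is positive for $g\ge7$, so $-K_{X_0}$ is big. The threefold $X_0$ is not Fano: by Theorem \ref{thm:double-projection-from-line} there is a line $Z'$ meeting $Z_1$, and its strict transform has $-K_{X_0}$-degree $1-1=0$. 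This uses that $Z'$ misses $Z_2$ by total disjointness; if $\sN_{Z_1}$ is of type $(1,-2)$, the $(-3)$-curve in $F_1$ serves instead.

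For $\rho(\bar X_0)=1$, I would show that the anti-canonical morphism $X_0\to\pr^{g-3}$ (projection from $L$) has image with Picard rank one. The contracted curves include the strict transforms of the lines meeting $Z_1$ and of those meeting $Z_2$. Together with $-K_{X_0}$-trivial curves in the two $F_i$ (or the family of conics), these span a 2-dimensional subspace of $N_1$. If they did not, the classes would be proportional; I would compare them via intersection with $F_1$ and $F_2$, which give $(1,0)$ and $(0,1)$.

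For \eqref{proposition:totally-blowup3}, I would rule out a divisor contracted by $\alpha$. Such a divisor $D$ is covered by $-K_{X_0}$-trivial curves, so its image in $X$ is a surface swept by curves $C$ with $(-K_X\cdot C)=(F_1+F_2)\cdot\tilde C$. By Lemma \ref{lemma:lengths}, these are curves of degree $d$ meeting $Z_1\cup Z_2$ with total length $d$, and they lie in spans meeting $L$. Degree bounds on surfaces of small degree then force $D\in|\sigma^*(-K_X)-2F_1-2F_2|$ or similar. For $g\ge9$ I would exclude this via $h^0=0$, computed by Riemann--Roch vanishing on $X_0$ and a comparison with the double projection from a line (Theorem \ref{thm:double-projection-from-line}, where $\sigma^*(-K_X)-2F'$ is nef). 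I expect this classification of possible exceptional divisors to be the main obstacle.
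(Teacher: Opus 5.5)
Your parts (1) and (2) follow the paper's route. In (1), quadrics through $Z_1\cup Z_2$ restricted to $L$ are $(1,1)$-forms on $\pr^1\times\pr^1$. In (2), two $\alpha$-contracted curves, one over each $Z_i$, with $(F_1,F_2)$-degrees $(1,0)$ and $(0,1)$, force $\rho(\bar X_0)\le 1$.

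The scheme-theoretic step you leave open in (1) closes as in the paper. On the blowup $\tilde P\cong\pr_{\pr^1\times\pr^1}(\sO(1,0)\oplus\sO(0,1))$ of $L$ along $Z_1\cup Z_2$, the fibres are exactly the transversals. The strict transforms of quadrics through $Z_1\cup Z_2$ are pullbacks of $(1,1)$-curves. If $W$ has no base point, then at each point of $Z_i$ the elements of $W$ span $I_{Z_i}/\mathfrak{m}I_{Z_i}$, and Nakayama gives equality of ideals. A base point, by contrast, puts a whole transversal line into $L\cap X$. Separately, your expression for $F_i^3$ is garbled: $F_i^3=-\deg\sN_{Z_i/X}=1$. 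Your per-line contribution $3(\sigma^*(-K_X)\cdot F_i^2)-F_i^3=-4$ is nevertheless correct.

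The genuine gap is in (3), which is the substantive part. Your sketch gives no mechanism that determines the class of an $\alpha$-exceptional divisor; ``degree bounds on surfaces of small degree'' is not an argument. The target you name is also wrong, for two reasons:
\begin{itemize}
\item For $g=12$ the system $|\sigma^*(-K_X)-2F_1-2F_2|$ is \emph{not} empty. The hyperplane section of $V_1$ singular along $Z_2^{V_1}$ (Theorem \ref{thm:dP5}) gives the member $E$. It is simply not $\alpha$-exceptional, since $(-K_{X_0})^2\cdot E=8$.
\item For $g=9$ the borderline class is $3\sigma^*(-K_X)-7F_1-7F_2$, not $\sigma^*(-K_X)-2F_1-2F_2$.
\end{itemize}
Also, $\sigma^*(-K_X)-2F'$ is nef on $X^+$, not on $X'$, where it is negative on the flopping curves.

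The missing idea is numerical. Write $D\sim a\sigma^*(-K_X)-b_1F_1-b_2F_2$.
\begin{itemize}
\item Since $-K_{X_0}$ is pulled back from $\bar X_0$ and $\dim\alpha(D)\le 1$, we have $(-K_{X_0})^2\cdot D=0$.
\item Since $(-K_{X_0})^2\cdot F_i=3$, $D\neq F_i$. Hence $a\ge 1$ and $b_i=(\sigma^*(-K_X)\cdot F_i\cdot D)\ge 0$.
\item Push $D$ down to the one-line blowup $X_i$. Its pseudo-effective cone, read off from Iskovskikh's link (Theorem \ref{thm:double-projection-from-line}), is bounded by $F'_i$ and the class of $S_i^+$ (by $\tau_i^*\sO_{\pr^2}(1)$ if $g=8$). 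This gives $b_i\le ra$ with $r=3,\tfrac52,\tfrac73,2$ for $g=12,10,9,8$.
\end{itemize}
Then
\[
0=(-K_{X_0})^2\cdot D=(2g-4)a-3(b_1+b_2)\ge (2g-4-6r)a,
\]
which is a contradiction for $g\in\{10,12\}$.

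For $g=9$ and $g=8$, equality forces $b_1=b_2=ra$, and geometry is still needed:
\begin{itemize}
\item For $g=9$, $D$ must be the strict transform of $S_1^+$, so it is contracted onto the genus-$3$ curve $\Gamma_1\subset\pr^3$. Yet $D$ contains $Z_2$, whose image in $\pr^3$ is a line, which is impossible.
\item For $g=8$, $D=\tau_1^*l_2$ with $l_2$ the line image of $Z_2$. So $\sigma_*D\in|-K_X|$ is double along both lines, contradicting $h^0(\sigma^*(-K_X)-2F_1-2F_2)=0$.
\end{itemize}
Without the bound $b_i\le ra$ and the identity above, nothing gets you from ``covered by $-K_{X_0}$-trivial curves'' to a specific divisor class.
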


\begin{proof}
\eqref{proposition:totally-blowup1}
Set $P:=\langle Z_1\cup Z_2\rangle\cong\pr^3$. The subscheme $P\cap X\subset P$ 
contains $Z_1\cup Z_2$ and is defined by the intersections of some quadric surfaces 
in $P$ by Theorem \ref{thm:iskovskikh} \eqref{thm:iskovskikh2}. 
Let $\tilde{P}\to P$ be the blowup of $P$ along $Z_1\cup Z_2$. 
Observe that there is an isomorphism 
\[
\tilde{P}\cong\pr_{\pr^1\times\pr^1}\left(\sO(1,0)\oplus\sO(0,1)\right)
\]
and there is a natural bijection between the set of quadrics in $P$ containing $Z_1\cup 
Z_2$ and the complete linear system $|\sO_{\pr^1\times\pr^1}(1,1)|$; 
for any quadric $Q$ in $P$ with $Z_1\cup Z_2\subset Q$ satisfies that its strict 
transform in $\tilde{P}$ is given by the pullback of an member of 
$|\sO_{\pr^1\times\pr^1}(1,1)|$. 
Thus, if $P\cap X\subset P$ is not scheme-theoretically equal to $Z_1\cup Z_2$, 
then $P\cap X$ must contains the image of a fiber of the $\pr^1$-bundle 
$\pr_{\pr^1\times\pr^1}\left(\sO(1,0)\oplus\sO(0,1)\right)\to \pr^1\times\pr^1$. 
The image is a line in $X$ intersecting with both $Z_1$ and $Z_2$. 
%Take any quadric surface $Q\subset P$ containing $Z_1\cup Z_2$. Then 
%there is an isomorphism $Q\cong\pr^1\times\pr^1$ such that 
%$Z_1$, $Z_2\in|\sO(1,0)|$ under the isomorphism. For any other quadric 
%surface $Q'\subset P$ containing $Z_1\cup Z_2$, the intersection $Q'|_Q$ can be 
%written as $Z_1+Z_2+Z'_1+Z'_2$, where $Z'_1$, $Z'_2\in|\sO(0,1)|$. 
%Obviously, any irreducible 
%component of $P\cap X$ is of dimension $\leq 1$. 
%Assume that $Z_1\cup Z_2\subsetneq P\cap X$. Then there must be a line 
%$Z'\subset X$ with $Z'\cap Z_1\neq \emptyset$ and $Z'\cap Z_2\neq \emptyset$. 
It contradicts with the total disconnectedness. Thus we get the equality
$P\cap X=Z_1\cup Z_2$. 
Let us consider the 
blowup of $\pr^{g+1}$ along the linear subspace $P=\langle Z_1\cup Z_2\rangle$. 
As above, $X_0$ is obtained as the strict transform of the blowup 
and 
$-K_{X_0}=\sigma^*(-K_X)-F_1-F_2$ is the restriction of a base point free divisor. 

\eqref{proposition:totally-blowup2}
We can check that $(F_i)^{\cdot 3}=1$, $(\sigma^*(-K_X)\cdot F_i^{\cdot 2})=-1$ 
and $(\sigma^*(-K_X)^{\cdot 2}\cdot F_i)=0$, which implies that 
$(-K_{X_0})^{\cdot 3}=2g-10>0$. By Theorem \ref{thm:double-projection-from-line} and 
$Z_1$, $Z_2$ are totally disjoint, there exist curves $l_1, l_2\subset X_0$ such that 
both are contracted by $\alpha$ but $(F_i\cdot l_j)=-\delta_{ij}$ holds. 
Thus the morphism $\alpha$ is not elementary. This implies that $\rho(\bar{X}_0)=1$. 

\eqref{proposition:totally-blowup3}
Assume that $\alpha$ contracts a prime divisor $D$ on $X_0$. We can write 
\[
D\sim a\sigma^*(-K_X)-b_1 F_1-b_2 F_2
\]
for some $a$, $b_1$, $b_2\in\mathbb{Z}$. Note that $((-K_{X_0})^{\cdot 2}\cdot D)=0$. 
This immediately implies that $D\neq F_1$ and $D\neq F_2$. In particular, we have 
$\sigma_*D\neq 0$, i.e., $a\geq 1$. Moreover, we have 
\[
0\leq \big{(} \sigma^*(-K_X)\cdot F_i\cdot D \big{)} = b_i. 
\]
By Theorem \ref{thm:iskovskikh}, 
we must have $b_i\leq r a$ for $i=1$, $2$, where 
\[
r:=\begin{cases}
3 & \text{if }g=12,\\
\frac{5}{2} & \text{if }g=10, \\
\frac{7}{3} & \text{if }g=9, \\
2 & \text{if }g=8. 
\end{cases}
\] 
Observe that
\[
0=\big{(} (\sigma^*(-K_X)-F_1-F_2)^{\cdot 2}\cdot D \big{)}
=(2g-4)a-3 b_1-3 b_2\geq 0, 
\]
and the equality holds only when $g=9$ and $b_1=b_2=\frac{7}{3}a$, 
or, $g=8$ and $b_1=b_2=2a$. 

Assume that $g=9$. Then $\sigma_*D\subset X$ must be $\sigma_*D\in|-3K_X|$ and 
of multiplicity $7$ along both $Z_1$ and $Z_2$. 
Consider the Sarkisov link 
\[\xymatrix{
 & F'_1 \ar@{}[r]|{\subset}  & X'_1  \ar@{-->}[rr]^-{\chi_1} 
\ar@{->}[ld]_-{\sigma_1} \ar@{->}[rd]_-{\beta_1} 
& & X^+_1 \ar@{->}[rd]^-{\tau_1} \ar@{->}[ld]^-{\beta^+_1}& \\
 Z_1 \ar@{}[r]|{\subset} & X && \bar{X}_1 && \pr^3 
}\]
from the blowup of $X$ along $Z_1$ as in \eqref{equation:iskovskikh} for the case $g=10$. 
Since $Z_1$ and $Z_2$ are totally disjoint, the map $\chi_1$ is an isomorphism around 
the neighborhood of curve $(\sigma_1)^{-1}_*Z_2$. Thus the image of 
$(\sigma_1)^{-1}_*Z_2$ to $\pr^3$ is a line since 
$\left(\sigma_1^*(-K_X)\cdot(\sigma_1)^{-1}_*Z_2\right)=1$ and $\left(F'_1\cdot
(\sigma_1)^{-1}_*Z_2\right)=0$. On the other hand, the center of the 
prime divisor $D$ on $\pr^3$ is a smooth curve of genus $3$. Since $\sigma_*D$ 
contains $Z_2$, this leads to a contradiction. 

Assume the case $g=8$. 
Let us consider the Sarkisov link 
\begin{equation}\label{equation:g8-proof}
\xymatrix{
 & F'_1 \ar@{}[r]|{\subset}  & X'_1  \ar@{-->}[rr]^-{\chi_1} 
\ar@{->}[ld]_-{\sigma_1} \ar@{->}[rd]_-{\beta_1} 
& & X^+_1 \ar@{->}[rd]^-{\tau_1} \ar@{->}[ld]^-{\beta^+_1}& \\
 Z_1 \ar@{}[r]|{\subset} & X && \bar{X}_1 && \pr^2 
}
\end{equation}
from the blowup of $X$ along $Z_1$ as in \eqref{equation:iskovskikh} for the case $g=8$. 
Since $Z_1$ and $Z_2$ are totally disjoint, the map $\chi_1$ is an isomorphism around 
the neighborhood of curve $(\sigma_1)^{-1}_*Z_2$. Thus the image of 
$(\sigma_1)^{-1}_*Z_2$ to $\pr^2$ is a line $l_2\subset\pr^2$ since 
$\left(\sigma_1^*(-K_X)\cdot(\sigma_1)^{-1}_*Z_2\right)=1$ and $\left(F'_1\cdot
(\sigma_1)^{-1}_*Z_2\right)=0$. Since the struct transform of the prime divisor 
$D$ to $X_1^+$ must be obtained by the pullback of an effective divisor in $\pr^2$, 
it must be equal to $\tau_1^*l_2$. Thus, $\sigma_*D\in|-K_X|$ and of multiplicity $2$ 
along $Z_1$. By considering the same argument from $Z_2$, we can also show that 
the multiplicity of $\sigma_*D$ along $Z_2$ is also $2$. 
However, this implies that $h^0\left(X_0,\sigma^*(-K_X)-2F_1-2F_2\right)\geq 1$, 
a contradiction. 

As a consequence, the morphism $\alpha$ is a small morphism. 
\end{proof}

\begin{remark}\label{remark:g8}
\begin{enumerate}
\renewcommand{\theenumi}{\arabic{enumi}}
\renewcommand{\labelenumi}{(\theenumi)}
\item\label{remark:g81}
The assumption ``$g \geq 7$'' in Proposition \ref{proposition:totally-blowup} 
\eqref{proposition:totally-blowup2} can be weakened to ``$g\geq 6$''. 
See \cite[Lemma 8.13]{Pr25} and \cite[Proposition 4.10]{MM83}.
\item\label{remark:g82}
Under the assumption in Proposition \ref{proposition:totally-blowup}, assume 
moreover that 
$g=8$ and 
\[
h^0\left(X_0,\sigma^*(-K_X)-2F_1-2F_2\right)\neq 0.
\] 
Then, from the proof of Proposition \ref{proposition:totally-blowup} 
\eqref{proposition:totally-blowup3}, we have 
$h^0\left(X_0,\sigma^*(-K_X)-2F_1-2F_2\right)=1$. Let us set 
$\{D\}:=|\sigma^*(-K_X)-2F_1-2F_2|$. The image $D_1\subset X_1^+$ of $D$ to $X_1^+$ 
under the link \eqref{equation:g8-proof} is singular along $(\chi_1)_*(\sigma_1)^{-1}_*Z_2$
and is equal to $\tau_1^*l_2$. Therefore, we have $l_2\subset\Delta_{\tau_1}$ and 
the curve $(\chi_1)_*(\sigma_1)^{-1}_*Z_2$ is the closure of the set of singular points of 
the fibers of $\tau_1$ at general points in $l_2$. 
\end{enumerate}
\end{remark}

\part{Construction of links}\label{part:construction-links}

\section{Links from the blowups of prime Fano threefolds along two 
lines}\label{section:go}

The purpose of this section is to show Theorem \ref{thm:main}. 
In \S \ref{section:go}, 
we assume that $X$ is a prime Fano threefold of genus $g\in\{12,10,9\}$ and 
$Z_1$, $Z_2\subset X$ is a totally disjoint pair of lines in $X$. 
For $\{i,j\}=\{1,2\}$, consider 
the Sarkisov link
\[
\xymatrix{
 & F'_i \ar@{}[r]|{\subset}  & X_i  \ar@{-->}[rr]^{\chi_i} 
\ar@{->}[ld]_-{\sigma_i} \ar@{->}[rd]_-{\beta_i} 
& & X^+_i \ar@{->}[rd]^-{\tau_i} \ar@{->}[ld]^-{\beta_i^+} \ar@{}[r]|{\supset} & S^+_i & \\
 Z_i \ar@{}[r]|{\subset} & X && \bar{X}_i && \X_i \ar@{}[r]|{\supset}& \Gamma_i
}
\]
from the blowup of $X$ along $Z_i$ as in \eqref{equation:iskovskikh}. 
In particular, the morphism $\tau_i$ is the blowup of $\X_i$ along a smooth curve 
$\Gamma_i\subset\X_i$ with the exceptional divisor $S_i^+\subset X_i^+$. We set 
\[
\X_i=:\begin{cases}
V_i & \text{if }g=12, \\
Q_i & \text{if }g=10, \\
\pr^3_i & \text{if }g=9.
\end{cases}
\]
Moreover, let $\sigma\colon X_0\to X$ be the blowup along $Z_1\cup Z_2$ and 
let $\sigma'_j\colon X_0\to X_i$ be the naturally induced morphism. The exceptional 
divisor $F_j\subset X_0$ of $\sigma'_j$ is the strict transform of $F'_j\subset X_j$. 
Let $S_1$, $S_2\subset X_0$ be the strict transform of $S_1^+\subset X_1^+$, 
$S_2^+\subset X_2^+$, respectively. 
By Proposition \ref{proposition:totally-blowup}, $X_0$ is a smooth weak Fano threefold 
of $(-K_{X_0})^{\cdot 3}=2g-10$ and the anti-canonical model $\alpha\colon X_0\to
\bar{X}_0$ of $X_0$ satisfies that $\alpha$ is small and  $\rho(\bar{X}_0)=1$. 

\begin{lemma}\label{lemma:1st-flop}
The flop $\chi_i$ is an isomorphism around a neighborhood of 
$Z_j^{X_i}:=(\sigma_i)^{-1}_*Z_j\subset X_i$. In particular, we 
can define the strict transform $Z_j^{X^+_i}\subset X^+_i$ 
(resp., $Z_j^{\bar{X}_i}\subset \bar{X}_i$, $Z_j^{\X_i}\subset \X_i$) of 
$Z_j\subset X$ to $X^+_i$ (resp., to $\bar{X}_i$, to $\X_i$). Moreover, 
the curve $Z_j^{\X_i}$ in $\X_i$ is a line (i.e., $\left(\sO_{\X_i}(1)\cdot Z_j^{\X_j}\right)=1$
holds) with 
\[
\operatorname{length}\left(\sO_{\Gamma_i\cap Z_j^{\X_i}}\right)=\begin{cases}
1 & \text{if }g=12, \\
2 & \text{if }g=10, \\
3 & \text{if }g=9.
\end{cases}
\]
Moreover, the strict transform $F_i^{\X_i}\subset\X_i$ of $F_i\subset X_0$ satisfies that 
$Z_j\not\subset F_i^{\X_i}$. 
(We remark that, if $g=12$, then the prime divisor $F_i^{V_i}\subset V_i$ is the 
intersection $V_i\cap\langle\Gamma_i\rangle\subset V_i$ of the linear span 
$\langle \Gamma_i\rangle\subset\pr^6$ by Theorem 
\ref{thm:double-projection-from-line-converse} 
\eqref{thm:double-projection-from-line-converse1}.)
\end{lemma}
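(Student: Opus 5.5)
First I show that $\chi_i$ is an isomorphism near $Z_j^{X_i}$. By Theorem \ref{thm:double-projection-from-line} \eqref{thm:double-projection-from-line2}, the exceptional locus of $\beta_i$ is the union of the strict transforms of lines $Z'\neq Z_i$ meeting $Z_i$, together with possibly the $(-3)$-curve in $F'_i$. By Lemma \ref{lemma:negativity} \eqref{lemma:negativity1}, this union is $\Exc(\chi_i)$. The curve $Z_j^{X_i}$ is disjoint from $F'_i$, because $Z_1\cap Z_2=\emptyset$. It is also disjoint from each $\sigma^{-1}_{i*}Z'$: if $Z'=Z_j$ this is impossible since $Z_j\cap Z_i=\emptyset$, and otherwise $Z'$ would meet both $Z_1$ and $Z_2$, contradicting total disjointness. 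Hence $\chi_i$ is an isomorphism near $Z_j^{X_i}$, and the strict transforms $Z_j^{X_i^+}$, $Z_j^{\bar X_i}$ and $Z_j^{\X_i}:=\tau_i(Z_j^{X_i^+})$ are defined.

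Next I compute intersection numbers on $X_i^+$. Since $\chi_i$ is an isomorphism near the curve, intersection numbers are preserved. We have $\left(\sigma_i^*(-K_X)\cdot Z_j^{X_i}\right)=1$ and $\left(F'_i\cdot Z_j^{X_i}\right)=0$. Write $H_i$ for the pullback of $\sO_{\X_i}(1)$ to $X_i^+$. From Theorem \ref{thm:double-projection-from-line} \eqref{thm:double-projection-from-line3}, $\tau_i$ is given by $\chi_{i*}(\sigma_i^*(-K_X)-2F'_i)$, and the classes of $-K_{X_i^+}$ and $S_i^+$ are known in terms of $\sigma_i^*(-K_X)$ and $F'_i$. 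I would solve for $H_i$ using $-K_{X_i^+}=\tau_i^*(-K_{\X_i})-S_i^+$ together with the index of $\X_i$ (which is $2$, $3$, $4$ respectively):
\begin{itemize}
\item for $g=12$: $-K=2H-S$ with $S=L-3F$ and $-K=L-F$, where $L=\sigma_i^*(-K_X)$, giving $H=L-2F$;
\item for $g=10$: $-K=3H-S$ with $S=2L-5F$, giving $3H=3L-6F$, so $H=L-2F$;
\item for $g=9$: $-K=4H-S$ with $S=3L-7F$, giving $H=L-2F$.
\end{itemize}
Therefore $(H_i\cdot Z_j^{X_i^+})=1$, so $Z_j^{\X_i}$ is a line, provided it is not contracted by $\tau_i$. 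It is not contracted: the degree is positive, and otherwise $Z_j^{X_i^+}$ would be a fiber of $S_i^+$ with $(H_i\cdot\,\cdot)=0$.

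Similarly, $(S_i^+\cdot Z_j^{X_i^+})$ equals $1$, $2$, $3$ respectively. Since $Z_j^{X_i^+}\not\subset S_i^+$ (the value $(H_i\cdot\,\cdot)=1\neq0$ already excludes fibers; a component mapping onto $\Gamma_i$ would give $\Gamma_i$ a line, which is ruled out by the genus/degree), Lemma \ref{lemma:lengths} \eqref{lemma:length1} converts this intersection number into $\operatorname{length}\left(\sO_{\Gamma_i\cap Z_j^{\X_i}}\right)$, using that $Z_j^{\X_i}$ is a smooth line, hence equal to its normalization.

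Finally, $F_i^{\X_i}$ is the image of $\chi_{i*}F'_i$. If $Z_j^{\X_i}\subset F_i^{\X_i}$, then $Z_j^{X_i^+}\subset \chi_{i*}F'_i\cup S_i^+$. It is not in $S_i^+$ by the above, so $Z_j^{X_i^+}\subset\chi_{i*}F'_i$. Transporting back through the local isomorphism $\chi_i$ gives $Z_j^{X_i}\subset F'_i$, which contradicts $Z_i\cap Z_j=\emptyset$.

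The main obstacle is the bookkeeping of divisor classes for $H_i$ in each genus. I expect every other step to be routine once the neighborhood isomorphism is established.
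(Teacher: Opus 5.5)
Your proposal is correct and follows essentially the same route as the paper. Total disjointness gives that $X\dashrightarrow X_i^+$ is an isomorphism near $Z_j$; the intersection numbers $(-K_{X_i^+}\cdot Z_j^{X_i^+})=1$ and $(S_i^+\cdot Z_j^{X_i^+})\in\{1,2,3\}$ then give the degree (equivalently, your computation of the class of $\tau_i^*\sO_{\X_i}(1)$ as $\chi_{i*}(\sigma_i^*(-K_X)-2F'_i)$); and Lemma~\ref{lemma:lengths} converts the intersection number into the length. You are more explicit than the paper on two points it leaves implicit: the check that $Z_j^{X_i^+}\not\subset S_i^+$, and the argument for $Z_j^{\X_i}\not\subset F_i^{\X_i}$ obtained by pulling back to $\chi_{i*}F'_i\cup S_i^+$.
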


\begin{proof}
From our assumption of the total disconnectedness, the rational map 
$X\dashrightarrow X_i^+$ is an isomorphism 
around a neighborhood of $Z_j$. Thus we have 
$\left(-K_{X_i^+}\cdot Z_j^{X_i^+}\right)=\left(-K_{X_i}\cdot Z_j^{X_i}\right)=1$ and 
\[
\left(S_i^+\cdot Z_j^{X_i^+}\right)=\left((\chi_i)_*^{-1}S_i^+\cdot Z_j^{X_i}\right)=
\begin{cases}
1 & \text{if }g=12, \\
2 & \text{if }g=10, \\
3 & \text{if }g=9
\end{cases}
\] 
by Theorem \ref{thm:double-projection-from-line}. 
In particular, 
we get $\left(\sO_{\X_i}(1)\cdot Z_j^{\X_i}\right)=1$. 
The remaining assertion follows from Lemma \ref{lemma:lengths}. 
\end{proof}

Let $\phi_i^+\colon W_i\to X_i^+$ (resp., $\gamma_{i1}\colon \bar{X}'_i\to \bar{X}_i$) 
be the blowup along $Z_j^{X_i^+}\subset X_i^+$ (resp., along $Z_j^{\bar{X}_i}\subset
\bar{X}_i$). Then the small elementary contractions $\beta_i$ and $\beta_i^+$ naturally 
lift to the small elementary flopping contraction morphisms 
$\beta_{i1}\colon X_0\to\bar{X}'_i$ and $\beta_{i1}^+\colon W_i\to\bar{X}'_i$. 
Set $\chi_{i1}:=(\beta_{i1}^+)^{-1}\circ\beta_{i1}\colon X_0\dashrightarrow W_i$. 
The rational map $\chi_{i1}$ is a small $\Q$-factorial modification of $X_0$ and is 
not an isomorphism. This immediately implies that $\beta_{i1}^+$ is the flop of 
$\beta_{i1}$. (Indeed, under the identification $\ND(X_0)=\ND(W_i)$, the nef cones 
$\Nef(X_0)$ and $\Nef(W_i)$ share the facet $\Nef(\bar{X}'_i)$, and the interiors of 
those cones are disjoint.)
We get the following commutative diagram: 
\begin{equation}\label{equation:1st-flop}
\xymatrix{
& X_0 \ar@{-->}[rr]^-{\chi_{i1}} \ar[dr]_-{\beta_{i1}} \ar[dd]^(.6){\sigma'_j} 
\ar[dddl]_-{\sigma}& & 
W_i \ar[dd]^(.6){\phi^+_i} 
\ar[dl]^-{\beta_{i1}^+} & \\
&&\bar{X}'_i \ar[dd]^(.25){\gamma_{i1}} &&\\
& X_i \ar@{-->}[rr]^(.3){\chi_i} \ar[dl]^-{\sigma_i} \ar[dr]_-{\beta_i} & & 
X_i^+ \ar[dl]^-{\beta_i^+} \ar[dr]_-{\tau_i} & \\
X&&\bar{X}_i&&\X_i
}
\end{equation}

Let $\phi_i\colon Y_i\to\X_i$ be the blowup of $\X_i$ along $Z_j^{\X_i}\subset \X_i$ 
with the exceptional divisor $F_j^{Y_i}\subset Y_i$, and let us set 
$\Gamma_i^{Y_i}:=(\phi_i)^{-1}_*\Gamma_i\subset Y_i$. Moreover, let 
$\tau_i^+\colon W_i^+\to Y_i$ be the blowup of $Y_i$ along $\Gamma_i^{Y_i}\subset Y_i$ 
with the exceptional divisor $S_i^{W_i^+}\subset W_i^+$. As in Example 
\ref{example:3flop}, we get an elementary flop 
\begin{equation}\label{equation:2nd-flop}
\xymatrix{
W_i \ar@{-->}[rr]^-{\chi_{i2}} \ar[dr]^-{\beta_{i2}} \ar[d]_-{\phi_i^+}&& 
W_i^+ \ar[dl]_-{\beta_{i2}^+} \ar[d]^-{\tau_i^+}\\
X_i^+ \ar[dr]_-{\tau_i} & \X'_i \ar[d]_(.4){\gamma_{i2}} & Y_i \ar[dl]^-{\phi_i} \\
 &\X_i &
}
\end{equation}
We set 
\[
\X'_i=:\begin{cases}
V'_i & \text{if } g=12, \\
Q'_i & \text{if } g=10, \\
P'_i & \text{if } g=9. \\
\end{cases}
\]
In any case, $Y_i$ is a smooth Fano threefold. Let $\psi_i\colon Y_i\to\Y_i$ 
be the elementary contraction morphism other than $\phi_i$. 
The morphism $\psi_i$ can be described as follows: 
\begin{enumerate}
\renewcommand{\theenumi}{\arabic{enumi}}
\renewcommand{\labelenumi}{(\theenumi)}
\item\label{construction-2nd1}
If $g=12$, then we set $Q_i:=\Y_i$; the $3$-dimensional smooth hyperquadric 
(see Theorem \ref{thm:dP5}). Let $C_i\subset Q_i$ be the center of the 
blowup $\psi_i$ and let $E^{Y_i}\subset Y_i$ be the exceptional divisor of $\psi_i$. 
We summarize the diagram: 
\[\xymatrix{
 & F_j^{Y_i} \ar@{}[r]|{\subset} & Y_i 
\ar@{}[r]|{\supset} \ar@{->}[ld]_{\phi_i} \ar@{->}[rd]^{\psi_i}
& E^{Y_i} & \\
 Z_j^{V_i} \ar@{}[r]|{\subset} & V_i && Q_i \ar@{}[r]|{\supset} & C_i
}
\]
\item\label{construction-2nd2}
If $g=10$, then we set $\pr^2_i:=\Y_i$; the projective plane (see Lemma 
\ref{lemma:2-31}). The morphism $\psi_i\colon Y_i\to \pr^2_i$ is a $\pr^1$-bundle 
on $\pr^2_i$. 
\item\label{construction-2nd3}
If $g=9$, then we set $\pr^1_i:=\Y_i$; the projective line. The morphism 
$\psi_i\colon Y_i\to\pr^1_i$ is isomorphic to the projective space bundle 
$\pr_{\pr^1_i}\left(\sO^{\oplus 2}\oplus\sO(1)\right)\to\pr^1_i$. 
\end{enumerate}

\begin{lemma}\label{lemma:3rd-flop}
\begin{enumerate}
\renewcommand{\theenumi}{\arabic{enumi}}
\renewcommand{\labelenumi}{(\theenumi)}
\item\label{lemma:3rd-flop1}
Assume that $g=12$. We can define the strict transform $\Gamma_i^{Q_i}\subset Q_i$ 
of $\Gamma_i\subset V_i$ to $Q_i$, and 
the curve $\Gamma_i^{Q_i}\subset Q_i$ is a twisted quartic curve with 
$\operatorname{length}\left(\sO_{C_i\cap
\Gamma_i^{Q_i}}\right)=3$. 
\item\label{lemma:3rd-flop2}
Assume that $g=10$. Then the curve $\Gamma_i^{Y_i}\subset Y_i$ is a smooth curve 
of genus $2$ with 
\[
\left(\phi_i^*\sO_{Q_i}(1)\cdot \Gamma_i^{Y_i}\right)=7, \quad
\left(\psi_i^*\sO_{\pr^2_i}(1)\cdot \Gamma_i^{Y_i}\right)=5. 
\]
The restriction $\psi_i|_{\Gamma_i^{Y_i}}\colon\Gamma_i^{Y_i}
\to\psi_i\left(\Gamma_i^{Y_i}\right)$ is birational onto a plane quintic curve. 
Moreover, for any point $p_i\in\psi_i\left(\Gamma_i^{Y_i}\right)$, we have 
$\mult_{p_i}\left(\psi_i\left(\Gamma_i^{Y_i}\right)\right)\leq 2$. 
\item\label{lemma:3rd-flop3}
Assume that $g=9$. Then the curve  $\Gamma_i^{Y_i}\subset Y_i$ is a smooth 
non-hyperelliptic curve of genus $3$ with 
$\left(\psi_i^*\sO_{\pr^1_i}(1)\cdot \Gamma_i^{Y_i}\right)=4$. Moreover, 
the composition $\psi_i\circ\tau_i^+\colon W_i^+\to\pr^1_i$ satisfies that, 
the anti-canonical divisor $-K_{W_i^+}$ is $(\psi_i\circ\tau_i^+)$-nef and 
$(\psi_i\circ\tau_i^+)$-big, and the anti-canonical model of $W_i^+$ 
over $\pr^1_i$ is small. 
\end{enumerate}
\end{lemma}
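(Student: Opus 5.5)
The plan is to read off the numerics of $\Gamma_i^{Y_i}$ from the blowup formulas on $Y_i$ and Lemma \ref{lemma:1st-flop}. The finer geometric conditions will come from two sources: the rigidity of the divisor $F_i^{\X_i}$, and the fact that $W_i$ and $W_i^+$ are small $\Q$-factorial modifications of the smooth weak Fano threefold $X_0$. Write $H$ for $\phi_i^*\sO_{\X_i}(1)$. Since $\phi_i$ is an isomorphism away from $Z_j^{\X_i}$, Lemma \ref{lemma:lengths} \eqref{lemma:length1} and Lemma \ref{lemma:1st-flop} give
\[
\left(F_j^{Y_i}\cdot\Gamma_i^{Y_i}\right)=\operatorname{length}\left(\sO_{\Gamma_i\cap Z_j^{\X_i}}\right)=1,\ 2,\ 3
\]
for $g=12,10,9$, respectively. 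In all three cases $\Gamma_i^{Y_i}\cong\Gamma_i$ is smooth of the stated genus.

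\textbf{Case $g=12$.} By Theorem \ref{thm:dP5} \eqref{thm:dP54}, $\psi_i^*\sO_{Q_i}(1)\sim H-F_j^{Y_i}$ and $E^{Y_i}\sim H-2F_j^{Y_i}$. Hence $\psi_i^*\sO_{Q_i}(1)\cdot\Gamma_i^{Y_i}=5-1=4$ and $E^{Y_i}\cdot\Gamma_i^{Y_i}=5-2=3$.

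The first step is to show $\Gamma_i^{Y_i}\not\subset E^{Y_i}$. Suppose the contrary. Then $\Gamma_i\subset\phi_i(E^{Y_i})$, which is a hyperplane section of $V_i$ singular along $Z_j^{V_i}$. By Lemma \ref{lemma:dP5quintic}, $\langle\Gamma_i\rangle$ is a hyperplane, so this section equals $V_i\cap\langle\Gamma_i\rangle=F_i^{V_i}$. That divisor contains $Z_j^{V_i}$, contradicting Lemma \ref{lemma:1st-flop}.

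Consequently $\psi_i$ maps $\Gamma_i^{Y_i}$ birationally onto a rational quartic curve $\Gamma_i^{Q_i}$. Next I show this curve is nondegenerate in $\pr^4$. By Theorem \ref{thm:dP5} \eqref{thm:dP56}, the map $V_i\dashrightarrow Q_i$ is the projection of $\pr^6$ from the line $Z_j^{V_i}$. The hyperplane $\langle\Gamma_i\rangle$ does not contain $Z_j^{V_i}$, again because $Z_j\not\subset F_i^{V_i}$. So the projection sends $\langle\Gamma_i\rangle$ onto all of $\pr^4$, and the image of $\Gamma_i$ spans $\pr^4$. A nondegenerate rational quartic in $\pr^4$ has arithmetic genus $0$, so it is smooth: a twisted quartic. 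Finally, applying Lemma \ref{lemma:lengths} \eqref{lemma:length1} with $\Delta=C_i$ gives $\operatorname{length}(\sO_{C_i\cap\Gamma_i^{Q_i}})=E^{Y_i}\cdot\Gamma_i^{Y_i}=3$.

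\textbf{Case $g=10$.} Here $\psi_i^*\sO_{\pr^2_i}(1)\sim H-F_j^{Y_i}$, which gives degree $7-2=5$. So $\psi_i|_{\Gamma_i^{Y_i}}$ is either birational onto a plane quintic or maps onto a line.

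To exclude a line, I use the relations in Theorem \ref{thm:double-projection-from-line} \eqref{thm:double-projection-from-line32}: $\tau_i^*\sO(1)\sim\sigma^*(-K_X)-2F'$ on $X_i^+$, and $S^+\sim 2\sigma^*(-K_X)-5F'$. Together these give $F_i'\sim\tau_i^*\sO(2)-S_i^+$, so $F_i^{Q_i}$ is a quadric section containing $\Gamma_i$. Since it is the image of an exceptional divisor, it is rigid. If $\psi_i(\Gamma_i^{Y_i})$ were a line, then $\Gamma_i$ would lie in a hyperplane section $H_0$ of $Q_i$. The members $H_0+H'$, for $H'$ moving in $|\sO_{Q_i}(1)|$, would then be a positive-dimensional family of quadric sections through $\Gamma_i$. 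That contradicts $h^0(\tau_i^*\sO(2)-S_i^+)=h^0(F_i')=1$.

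For the bound on multiplicities, note that $W_i^+$ is a small $\Q$-factorial modification of $X_0$, hence a smooth weak Fano threefold by Proposition \ref{proposition:MDS}. In particular $-K_{W_i^+}$ is nef over $\pr^2_i$, and Lemma \ref{lemma:conic-bundle} \eqref{lemma:conic-bundle2} then gives $\mult_{p_i}\le 2$ at every point of the plane quintic.

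\textbf{Case $g=9$.} The fibration $\psi_i$ is induced by the pencil of planes through $Z_j^{\pr^3_i}$. So its degree on $\Gamma_i^{Y_i}$ is $7-3=4$, and genus and non-hyperellipticity are inherited from $\Gamma_i$. Again $W_i^+$ is smooth weak Fano, so $-K_{W_i^+}$ is nef and big, in particular relatively over $\pr^1_i$. The global anticanonical model of $W_i^+$ is $\bar{X}_0$, which is small over $W_i^+$ by Proposition \ref{proposition:totally-blowup} \eqref{proposition:totally-blowup3}. The relative model over $\pr^1_i$ is dominated by the global one, so it is small as well.

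I expect the main obstacle to be the non-degeneracy statements, namely that $\Gamma_i^{Q_i}$ spans $\pr^4$ when $g=12$ and that $\psi_i(\Gamma_i^{Y_i})$ is not a line when $g=10$. Both rest on the condition $Z_j\not\subset F_i^{\X_i}$, or on the rigidity of $F_i$; for the case $g=12$ I would try the projection argument first.
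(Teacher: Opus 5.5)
Your proof is correct, and its overall structure matches the paper's. Both use the numerics from Lemma \ref{lemma:1st-flop} and Theorem \ref{thm:dP5}. Both use the projection from $Z_j^{V_i}$ for non-degeneracy when $g=12$. When $g=10,9$, both use the weak Fano property of $W_i^+$ (Proposition \ref{proposition:MDS}), combined with Lemma \ref{lemma:conic-bundle} or with smallness of $W_i^+\to\bar{X}_0$.

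You depart from the paper at two local steps.

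\textbf{The case $g=12$.} The paper gets $\Gamma_i^{Y_i}\not\subset E^{Y_i}$ numerically: $(\psi_i^*\sO_{Q_i}(1)\cdot\Gamma_i^{Y_i})=4$ is not a multiple of $\deg C_i=3$. Your route through $\phi_i(E^{Y_i})=V_i\cap\langle\Gamma_i\rangle=F_i^{V_i}$ is valid but unnecessary. Your own non-degeneracy step already excludes containment, since $C_i$ spans only a $\pr^3$.

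\textbf{The case $g=10$.} The paper draws both the birationality of $\psi_i|_{\Gamma_i^{Y_i}}$ and the bound $\mult_{p_i}\le 2$ from the single fact that $-K_{W_i^+}$ is nef. If the image were a line, every fibre $l$ of $\psi_i$ over it would meet $\Gamma_i^{Y_i}$ in length $5$. Then $(-K_{W_i^+}\cdot(\tau_i^+)^{-1}_*l)=2-5<0$ by the intersection formula in the proof of Lemma \ref{lemma:conic-bundle}. You instead exclude the line through the rigidity $h^0(\sO_{Q_i}(2)\otimes I_{\Gamma_i})=h^0(X_i,F'_i)=1$. This costs the extra identification $F'_i\sim\tau_i^*\sO_{Q_i}(2)-S_i^+$. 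In exchange, it makes explicit a step the paper only gestures at, since Lemma \ref{lemma:conic-bundle} as stated presupposes birationality.
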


\begin{proof}
\eqref{lemma:3rd-flop1}
By Theorem \ref{thm:dP5} \eqref{thm:dP54} and Lemma \ref{lemma:1st-flop}, we have 
$(E^{Y_i}\cdot \Gamma_i^{Y_i})=3$ and 
$(\psi_i^*\sO_{Q_i}(1)\cdot \Gamma_i^{Y_i})=4$. Thus 
$\Gamma_i^{Q_i}$ is a rational curve of degree $4$ in $Q_i$ with 
$\Gamma_i^{Q_i}\neq C_i$. In particular, we have $\Gamma_i^{Y_i}\not\subset E^{Y_i}$. 
Note that, by Theorem \ref{thm:dP5} \eqref{thm:dP56}, there is a commutative diagram 
\[\xymatrix{
V_i \ar@{-->}[r]^-{\psi_i\circ\phi_i^{-1}} \ar@{^{(}->}[d] & Q_i  \ar@{^{(}->}[d] \\
\pr^6 \ar@{-->}[r]& \pr^4,
}\]
where $\pr^6\dashrightarrow\pr^4$ is the linear projection from the line 
$Z_j^{V_i}\subset\pr^6$. 
By Lemma \ref{lemma:1st-flop}, the linear span $\langle\Gamma_i\rangle$ of 
$\Gamma_i\subset\pr^6$ is of codimension $1$ and does not contain 
$Z_j^{V_i}$. This implies that the linear span $\langle\Gamma_i^{Q_i}
\rangle\subset\pr^4$ of $\Gamma_i^{Q_i}$ is equal to $\pr^4$. 
Thus, together with Lemma \ref{lemma:lengths}, the curve is a (smooth) twisted quartic 
curve with $\operatorname{length}\left(\sO_{C_i\cap
\Gamma_i^{Q_i}}\right)=3$. 

\eqref{lemma:3rd-flop2}
By Proposition \ref{proposition:MDS}, the variety $W_i^+$ is a smooth weak Fano 
threefold with the small anti-canonical model. 
Thus $\psi_i|_{\Gamma_i^{Y_i}}$ maps birationally onto a plane curve with the 
multiplicity condition by Lemma \ref{lemma:conic-bundle}. 
The other assertions follow immediately from Lemma \ref{lemma:1st-flop}. 

\eqref{lemma:3rd-flop3}
Again by By Proposition \ref{proposition:MDS}, the variety $W_i^+$ is a smooth weak Fano 
threefold with the small anti-canonical model. 
Thus the assertion follows from Lemma \ref{lemma:1st-flop}. 
\end{proof}

From now on, we will construct the following elementary flop: 
\begin{equation}\label{equation:3rd-flop}
\xymatrix{
W_i^+ \ar@{-->}[rr]^-{\chi_{i3}} \ar[dr]^-{\beta_{i3}} \ar[d]_-{\tau_i^+}&& 
X_0^{+,i} \ar[dl]_-{\beta_{i3}^+} \ar[d]^-{\tau^i}\\
Y_i \ar[dr]_-{\psi_i} & \Y''_i \ar[d]_(.4){\gamma_{i3}} & \Y^i \ar[dl]^-{\rho_i} \\
 &\Y_i &
}
\end{equation}
We set 
\[
\Y''_i=:\begin{cases}
Q''_i & \text{if } g=12, \\
P''_i & \text{if } g=10 \text{ or }9. \\
\end{cases}
\]
(We will also see later that $(\Y:=)\Y^1=\Y^2$, $(X_0^+:=)X_0^{+,1}=X_0^{+,2}$ and 
$(\tau:=)\tau^1=\tau^2$.)

\begin{enumerate}
\renewcommand{\theenumi}{\arabic{enumi}}
\renewcommand{\labelenumi}{(\theenumi)}
\item\label{lemma:3rd-flop1}
Assume that $g=12$. By Lemma \ref{lemma:3rd-flop} \eqref{lemma:3rd-flop1}, 
the curve $\Gamma_i^{Q_i}\subset Q_i$ is a smooth twisted quartic curve. Let 
$\rho_i\colon \Y^i\to Q_i(=\Y_i)$ be the blowup of $Q_i$ along $\Gamma_i^{Q_i}$, 
and let $\tau^i\colon X_0^{+,i}\to \Y^i$ be the blowup of $\Y^i$ along 
$\hat{C}^i:=(\rho_i)^{-1}_*C_i\subset \Y^i$. The diagram \eqref{equation:3rd-flop} 
is taken to be as in Example \ref{example:3flop}. Note that the rational map $\chi_{i3}$ 
is an elementary flop since 
$\operatorname{length}\left(\sO_{C_i\cap\Gamma_i^{Q_i}}\right)=3$. 
\item\label{lemma:3rd-flop2}
Assume that $g=10$. By Lemma \ref{lemma:3rd-flop} \eqref{lemma:3rd-flop2}, 
we can take the diagram \eqref{equation:3rd-flop} from the blowup of the $\pr^1$-bundle 
$Y_i\to \pr^2_i(=\Y_i)$ along $\Gamma_i^{Y_i}\subset Y_i$ 
as in Proposition \ref{proposition:conic-bundle}. 
(By Lemma \ref{lemma:conic-bundle}, the anti-canonical divisor $-K_{W_i^+}$ of $W_i^+$ 
is $(\psi_i\circ\tau_i^+)$-nef and $(\psi_i\circ\tau_i^+)$-big.)
In particular, the morphism $\rho_i\colon\Y^i\to\pr^2_i$ is a $\pr^1$-bundle and 
the morphism $\tau^i\colon X_0^{+,i}\to \Y^i$ is the blowup of $\Y^i$ along 
a smooth curve $\Gamma^i\subset\Y^i$ of genus $2$. Moreover, the restriction 
$\rho_i|_{\Gamma^i}\colon\Gamma^i\twoheadrightarrow 
\psi_i\left(\Gamma_i^{Y_i}\right)$ is birational onto the plane curve 
$\psi_i\left(\Gamma_i^{Y_i}\right)$. Note that the arithmetic genus of the plane 
quintic curve $\psi_i\left(\Gamma_i^{Y_i}\right)$ is $6$. Thus the curve is a singular 
curve since the genus of $\Gamma^i$ is $2$. Therefore, the rational map 
$\chi_{i3}$ is an elementary flop by Remark \ref{remark:conic-bundle}. 
\item\label{lemma:3rd-flop3}
Assume that $g=9$. By Lemma \ref{lemma:3rd-flop} \eqref{lemma:3rd-flop3}, 
we can take the diagram \eqref{equation:3rd-flop} as in Proposition 
\ref{proposition:dP-fibration}. As in Example \ref{example:dP-fibration}, 
the morphism $\rho_i\colon \Y^i\to\pr^1_i(=\Y_i)$ is a Hirzebruch surface, 
the morphism $\tau^i$ is 
$(\chi_{i3})_*\left(-K_{W_i^+}-\frac{1}{2}S_i^{W_i^+}\right)$-trivial, and the morphism 
$\tau^i\colon X_0^{+,i}\to \Y^i$ is a conic bundle with 
$\left(\Delta_{\tau^i}\cdot\rho_i^*\sO_{\pr^1_i}(1)\right)=3$. 
A priori, the rational map $\chi_{i3}$ may be an isomorphism. We will see later that 
$\chi_{i3}$ is in fact an elementary flop. 
\end{enumerate}

The following proposition is important \S \ref{section:go}. 

\begin{proposition}\label{proposition:merge}
\begin{enumerate}
\renewcommand{\theenumi}{\arabic{enumi}}
\renewcommand{\labelenumi}{(\theenumi)}
\item\label{proposition:merge1}
The rational maps 
\begin{eqnarray*}
\tau^1\circ\chi_{13}\circ\chi_{12}\circ\chi_{11}\colon X_0 &\dashrightarrow& \Y^1, \\
\tau^2\circ\chi_{23}\circ\chi_{22}\circ\chi_{21}\colon X_0 &\dashrightarrow& \Y^2
\end{eqnarray*}
give the same rational map as rational contraction maps from $X_0$. 
(We set $\Y:=\Y^1=\Y^2$.)
\item\label{proposition:merge2}
The birational maps 
\begin{eqnarray*}
\chi_{13}\circ\chi_{12}\circ\chi_{11}\colon X_0 &\dashrightarrow& X_0^{+,1}, \\
\chi_{23}\circ\chi_{22}\circ\chi_{21}\colon X_0 &\dashrightarrow& X_0^{+,2}
\end{eqnarray*}
give the same birational map as birational contraction maps from $X_0$. 
(We set $X_0^+:=X_0^{+,1}=X_0^{+,2}$.)
In particular, the morphisms $\tau^1$ and $\tau^2$ give the same contraction 
morphism $\tau\colon X_0^+\to \Y$. 
\item\label{proposition:merge3}
We have the following: 
\begin{enumerate}
\renewcommand{\theenumii}{\roman{enumii}}
\renewcommand{\labelenumii}{(\theenumii)}
\item\label{proposition:merge31}
Assume that $g=12$. Then $\hat{Q}:=\Y$ is a Fano threefold of type 2.21, and 
$\rho_i\colon\hat{Q}\to Q_i(=\Y_i)$ $(i=1,2)$ are pairwise distinct elementary 
contraction morphisms. Moreover, we have $(\hat{C}:=)\hat{C}^1=\hat{C}^2
\subset\hat{Q}$, and the curve $\hat{C}$ is a bi-cubic curve in $\hat{Q}$. 
We remark that $\tau\colon X_0^+\to\hat{Q}$ is the blowup along $\hat{C}$. 
Let $E^+\subset X_0^+$ be the exceptional divisor of the blowup $\tau$. 
\item\label{proposition:merge32}
Assume that $g=10$. Then $U:=\Y$ is the del Pezzo threefold of degree $6$ and 
rank $2$, and $\rho_i\colon U\to \pr^2_i(=\Y_i)$ $(i=1,2)$ are pairwise distinct 
elementary contraction morphisms. Moreover, we have $(\Gamma:=)
\Gamma^1=\Gamma^2$, and the curve $\Gamma\subset U$ is a smooth 
bi-quintic curve of genus $2$ satisfying that $\rho_i(\Gamma)\subset\pr^2_i$ is a 
plane quintic curve with $\mult_{p_i}\left(\rho_i(\Gamma)\right)\leq 2$ for any 
$p_i\in\rho_i(\Gamma)$ and for any $i=1,2$. 
We remark that $\tau\colon X_0^+\to U$ is the blowup along $\Gamma$. 
Let $E^+\subset X_0^+$ be the exceptional divisor of the blowup $\tau$. 
\item\label{proposition:merge33}
Assume that $g=9$. Then $\Y=\pr^1_1\times\pr^1_2$ and the morphisms 
$\rho_i\colon \pr^1_1\times\pr^1_2\to\pr^1_i(=\Y_i)$ are the projections. 
Moreover, the morphism $\tau\colon X_0^+\to\pr^1_1\times\pr^1_2$ is a 
conic bundle with $\Delta_\tau\in\left|\sO(3,3)\right|$, and $\chi_{13}$ and 
$\chi_{23}$ are elementary flops. 
\end{enumerate}
\end{enumerate}
\end{proposition}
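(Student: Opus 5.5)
The plan is to work entirely in the space $\ND(X_0)$. It has rank $3$ and basis $H:=\sigma^*(-K_X)$, $F_1$, $F_2$. By Proposition \ref{proposition:totally-blowup} and Proposition \ref{proposition:MDS}, $X_0$ is a weak Fano Mori dream space whose small $\Q$-factorial modifications are all smooth weak Fano. Each composite $\chi_{i3}\circ\chi_{i2}\circ\chi_{i1}$ is a small $\Q$-factorial modification of $X_0$. Each $\tau^i\circ\chi_{i3}\circ\chi_{i2}\circ\chi_{i1}$ is a rational contraction whose image $\Y^i$ has Picard rank $2$. By \cite{HK}, a rational contraction from $X_0$ is determined by the cone $\Nef(\Y^i)$ pulled back to $\ND(X_0)$: it is $\Proj$ of the section ring of any class in the relative interior of that cone. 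So for \eqref{proposition:merge1} it suffices to show that the two pulled-back cones coincide.

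To compute them I would track classes through the diagrams \eqref{equation:1st-flop}, \eqref{equation:2nd-flop} and \eqref{equation:3rd-flop}, using that small modifications preserve classes. We have $-K=H-F_1-F_2$ on $X_0$. The relations in Theorem \ref{thm:double-projection-from-line} give $\tau_i^*\sO_{\X_i}(1)$ in terms of $H,F_i$; for $g=12$, $S_i^+\sim H-3F_i$ and $-K_{X_i^+}=\tau_i^*\sO(2)-S_i^+$ yield $\tau_i^*\sO_{V_i}(1)=H-2F_i$, and analogously for $g=10,9$. Blowing up the line $Z_j^{\X_i}$ subtracts $F_j$, since its exceptional divisor is the strict transform of $F_j$. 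For $g=12$ this gives
\[
\rho_i^*\sO_{Q_i}(1)=\psi_i^*\sO_{Q_i}(1)=\phi_i^*\sO_{V_i}(1)-F_j^{Y_i}=H-2F_i-F_j .
\]
The other extremal ray of $\Nef(\Y^i)$ is computed via Proposition \ref{proposition:2-21}, namely $2\rho_i^*\sO_{Q_i}(1)$ minus the exceptional divisor over $\Gamma_i^{Q_i}$, whose class is the strict transform of $S_i$. I expect this to give $H-F_i-2F_j$. For $g=10$ ($\pr^1$-bundle classes over $\pr^2_i$) and $g=9$ (fibers over $\pr^1_i$) the analogous computation should give the same symmetric pair, suitably rescaled. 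Since the resulting cone is symmetric in $i\leftrightarrow j$, the two cones agree, which proves \eqref{proposition:merge1}. As a consistency check, the class $2H-3F_1-3F_2$ from Theorem \ref{thm:main} is the sum of the two generators.

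For \eqref{proposition:merge2}, the cone $\Nef(\Y)$ is a $2$-dimensional face of $\Nef(X_0^{+,i})$ that lies on the boundary of $\Mov(X_0)$. When $\tau$ is divisorial ($g=12,10$), its pullbacks are not in the interior of the movable cone. When $\tau$ is a conic bundle ($g=9$), they are not big. A codimension-one face on $\partial\Mov(X_0)$ is adjacent to exactly one chamber of the Mori chamber decomposition. Hence $\Nef(X_0^{+,1})=\Nef(X_0^{+,2})$, so the two modifications coincide as birational maps from $X_0$, and $\tau^1=\tau^2$.

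For \eqref{proposition:merge3}, the two generators above are distinct, so $\Y$ has two distinct elementary contractions $\rho_1,\rho_2$.
\begin{enumerate}
\item If $g=12$, $\rho_i$ is the blowup of $Q_i$ along the twisted quartic $\Gamma_i^{Q_i}$, so $\hat{Q}$ is of type 2.21. The exceptional locus of the single morphism $\tau$ is unique, so $\hat C^1=\hat C^2$. By Lemma \ref{lemma:3rd-flop} and Lemma \ref{lemma:2-21-bi-cubic}\eqref{lemma:2-21-bi-cubic2}, this curve is bi-cubic.
\item If $g=10$, $\Y$ carries two $\pr^1$-bundle structures over $\pr^2$ and is Fano of Picard rank $2$. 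I would identify it with $U$ by recognising $\rho_i$ as $\pr(T_{\pr^2})$, for instance via $(-K_\Y)^3=48$. The bi-quintic, genus $2$ and multiplicity statements then transfer from Lemma \ref{lemma:3rd-flop}\eqref{lemma:3rd-flop2} applied to both $i$.
\item If $g=9$, $\Y$ is a surface with two distinct maps to $\pr^1$ and Picard rank $2$, hence $\Y=\pr^1\times\pr^1$. Example \ref{example:dP-fibration} gives $(\Delta_\tau\cdot\text{fiber of }\rho_i)=3$ for $i=1,2$, so $\Delta_\tau\in|\sO(3,3)|$. For non-triviality of $\chi_{i3}$, if it were an isomorphism then $W_i^+$ would itself admit $\tau$. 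I would try to contradict this by counting chambers, or by exhibiting a $\tau$-vertical curve on $W_i^+$ with negative intersection against the class computed above.
\end{enumerate}

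I expect the main obstacle to be the exact class bookkeeping of the second generator of $\Nef(\Y^i)$ through the flops, in particular for $g=10,9$, where I must use $\Delta$ and singular-point data rather than a clean blowup formula. The nontriviality of $\chi_{i3}$ for $g=9$ is the second delicate point.
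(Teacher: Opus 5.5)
\textbf{There is a genuine gap in part (1) for $g=10$ and $g=9$.} Your $g=12$ argument is correct and is essentially the paper's. The paper reads off $(\tau^1)^*(-K_{\Y^1})\sim[2,-3,-3]$ from $\sO_{Q_1}(1)\mapsto[1,-2,-1]$ and $S_1\mapsto[1,-3,0]$. That class is ample on the 2.21 threefold $\Y^1$, so both rational contractions are the ample model of the same symmetric class.

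For $g=10$ and $g=9$, the ``other extremal ray'' of $\Nef(\Y^i)$ is not bookkeeping. A priori you only know that $\rho_i\colon\Y^i\to\pr^2_i$ is \emph{some} $\pr^1$-bundle $\pr(\sE^i)$ (resp.\ that $\Y^i\to\pr^1_i$ is \emph{some} Hirzebruch surface $\F_m$). Its nef cone depends on which one it is.

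It does not suffice that $[2,-3,-3]=\frac12(-K_{\Y^i})$ is nef because $\Y^i$ is weak Fano. If it spanned a boundary ray of $\Nef(\Y^i)$, your construction would give two chambers meeting only along the face spanned by $[-K_{X_0}]$ and $[2,-3,-3]$. Nothing would then force $X_0^{+,1}=X_0^{+,2}$.

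Your part (3) cannot fill this in. ``$\Y$ carries two $\pr^1$-bundle structures'' presupposes (1), and ``$\Y$ is Fano'' is exactly what is missing. Nor does $(-K_\Y)^3=48$ suffice. The non-split extension $0\to\sO\to\sG\to I_p(-1)\to0$ on $\pr^2$ has the same $c_1=-1$, $c_2=1$ as $T_{\pr^2}(-2)$ and the same $h^0(\sG(1))=3$. It gives a weak Fano $\pr(\sG)$ on which $\xi+2h$ is nef but not ample, since $\sG|_\ell\cong\sO(1)\oplus\sO(-2)$ on lines $\ell\ni p$.

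\textbf{What is missing is an identification of $\Y^i$ before merging.}
\begin{enumerate}
\item[(a)] For $g=10$, Proposition \ref{proposition:conic-bundle} gives $E^{+,1}\sim[3,-5,-5]$. Hence $-K_{\Y^1}=2L^1$ with $(\tau^1)^*L^1\sim[2,-3,-3]$.
\item[(b)] $\Y^1$ is weak Fano by Proposition \ref{proposition:MDS} and Lemma \ref{lemma:image-weak}. Normalizing $c_1(\sE^1)=-1$ gives $L^1=\xi_1+\rho_1^*\sO(2)$.
\item[(c)] One computes $h^0(\sE^1(1))=h^0(X_0,[1,-1,-2])=3$ and $h^0(\sE^1)=h^0(X_0,F_1-F_2)=0$. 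The vanishing is what excludes $\sG$ above. Langer's classification then forces $\sE^1\cong T_{\pr^2}(-2)$, so $L^1$ is ample.
\item[(d)] For $g=9$, Proposition \ref{proposition:dP-fibration} with $u=\frac12$ gives $(\tau^1)^*\Pic\Y^1=\Z[1,-2,-1]\oplus\Z[1,-1,-2]$. Now compare $h^0(X_0,[1,-1,-2]+k[1,-2,-1])$, which is $2$ for $k=0$ and $0$ for $k\le-1$, with $h^0(\sO(k+b))+h^0(\sO(k+b+m))$ on $\F_m$. This forces $m=b=0$, i.e.\ $\Y^1\cong\pr^1\times\pr^1$ with $[1,-1,-2]$ the second ruling.
\end{enumerate}
Only after this is $[2,-3,-3]$ known to be ample on both $\Y^i$. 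Then your Hu--Keel argument, and your $\partial\Mov$ argument for (2), go through.

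\textbf{The non-triviality of $\chi_{i3}$ for $g=9$ is also left open.} A one-line argument suffices. On $X_0^+$, $[1,-1,-2]^{3}=0$ since it is pulled back from $\pr^1$. On $W_1^+$, the blowup formula gives $\bigl((\tau_1^+)^*(\phi_1^*\sO_{\pr^3_1}(2)+\psi_1^*\sO_{\pr^1_1}(2))-S_1^{W_1^+}\bigr)^{3}=-5$. So $\beta_{13}$ is not an isomorphism.
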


\begin{proof}
The proof is divided into 4 numbers of steps. 

\noindent\underline{\textbf{Step 1}}\\
We firstly remark that the assertion \eqref{proposition:merge2} follows from 
the assertion \eqref{proposition:merge1}. Indeed, assume that 
\eqref{proposition:merge1} is true. Under the identifications 
$\ND(X_0)=\ND(X_0^{+,1})=\ND(X_0^{+,2})$, both the nef cones $\Nef(X_0^{+,1})$ and 
$\Nef(X_0^{+,2})$ contain $\R_{\geq 0}\left[-K_{X_0}\right]$ and $\Nef(\Y)$ 
from the assertion \eqref{proposition:merge1}. 
Since $\tau^i$ is a $K_{X_0^{+,i}}$-negative elementary contraction ($i=1,2$), 
we must have 
\[
\left[-K_{X_0}\right]\not\in\R\Nef(\Y). 
\]
Thus $\R_{\geq 0}\left[-K_{X_0}\right]+\Nef(\Y)$ is a $3$-dimensional cone in 
$\ND(X_0)$. This implies that the interiors of $\Nef(X_0^{+,1})$ and 
$\Nef(X_0^{+,2})$ intersect. Thus we get the assertion \eqref{proposition:merge2} 
from \eqref{proposition:merge1}. Therefore, it is enough to show the assertions 
\eqref{proposition:merge1} and \eqref{proposition:merge3}. 

\noindent\underline{\textbf{Step 2}}\\
Assume that $g=12$. 
Under the natural isomorphism $\Pic X_0\cong \Pic X_0^{+,1}$, 
we set 
\[
\left[a_0,a_1,a_2\right]:=(\chi_{13}\circ\chi_{12}\circ\chi_{11})_*
\left(a_0\sigma^*(-K_X)+a_1F_1+a_2F_2\right)\in\Pic X_0^{+,1}
\]
for any $a_0,a_1,a_2\in\Z$. 
By Theorem \ref{thm:double-projection-from-line}, the pullback of 
$\sO_{V_1}(1)$ (resp., the pullback of $\sO_{Q_1}(1)$, 
the strict transform of $S_1$) corresponds to 
\[
\left[1,-2,0\right] \quad(\text{resp.,}\quad
\left[1,-2,-1\right], \quad\left[1,-3,0\right]).
\]
Thus the pullback of $-K_{\Y^1}$ on $X_0^{+,1}$ corresponds to 
$\left[2,-3,-3\right]$, and the exceptional divisor 
$E^{+,1}$ of $\tau^1$ corresponds to 
$\left[1,-2,-2\right]$. Recall that the anti-canonical divisor $-K_{\Y^1}$ of $\Y^1$
is ample by Proposition \ref{proposition:2-21}. Thus the birational contraction map 
$X_0\dashrightarrow \Y^1$ is the ample model of $\left[2,-3,-3\right]$. 
This implies that $\tau^1\circ\chi_{13}\circ\chi_{12}\circ\chi_{11}=\tau^2\circ\chi_{23}
\circ\chi_{22}\circ\chi_{21}$. The morphisms 
$\rho_1\circ\tau$ and $\rho_2\circ\tau$ are the ample models of 
$\left[1,-2,-1\right]$ and 
$\left[1,-1,-2\right]$, so that $\rho_1$ and $\rho_2$ 
are mutually distinct contraction morphisms. (We remark that the Picard rank of 
$\Y^i=Q_i$ is equal to one.)
Moreover, the center $\hat{C}$ of the blowup $\tau=\tau^1=\tau^2$ is 
the image of $E^{+,1}=E^{+,2}$, and it must be a bi-cubic curve since $\rho_i(\hat{C})=C_i$
holds.

\noindent\underline{\textbf{Step 3}}\\
Assume that $g=10$. 
Under the natural isomorphism $\Pic X_0\cong \Pic X_0^{+,1}$, 
we set 
\[
\left[a_0,a_1,a_2\right]:=(\chi_{13}\circ\chi_{12}\circ\chi_{11})_*
\left(a_0\sigma^*(-K_X)+a_1F_1+a_2F_2\right)\in\Pic X_0^{+,1}
\]
for any $a_0,a_1,a_2\in\Z$. The strict transform of $S_1^+$ corresponds to 
$\left[2,-5,0\right]$ and 
$(\rho_1\circ\tau^1)^*\sO_{\pr^2_1}(1)\sim\left[1,-2,-1\right]$ by 
Theorem \ref{thm:double-projection-from-line}. 
Thus we have $E^{+,1}\sim\left[3,-5,-5\right]$ by Proposition 
\ref{proposition:conic-bundle}, where $E^{+,1}$ is the exceptional divisor of $\tau^1$. 
Since we have $(\tau^1)^*(-K_{\Y^1})=-K_{X_0^{+,1}}+E^{+,1}\sim\left[4,-6,-6\right]$ and 
$\Pic X_0^{+,1}=(\tau^1)^*\Pic \Y^1\oplus\Z\left[E^{+,1}\right]$, 
there exists $L^1\in\Pic \Y^1$ such that 
\[(\tau^1)^*L^1\sim\left[2,-3,-3\right], \quad
-K_{\Y^1}\sim 2L^1,\quad 
\Pic \Y^1\cong\Z\left[\rho_1^*\sO_{\pr^2_1}(1)\right]\oplus\Z\left[L^1\right].
\]
Note that 
\begin{eqnarray*}
h^0\left(\Y^1,L^1-\rho_1^*\sO_{\pr^2_1}(1)\right)&=&h^0\left(X_0^{+,1},
\left[1,-1,-2\right]\right)
=h^0\left(X_0, \sigma^*(-K_X)-F_1-2F_2\right)\\
&=&h^0\left(\pr_2^2, \sO_{\pr_2^2}(1)\right)=3, \\
h^0\left(\Y^1,L^1-\rho_1^*\sO_{\pr^2_1}(2)\right)&=&
h^0\left(X_0, F_1-F_2\right)=0. 
\end{eqnarray*}
By Proposition \ref{proposition:MDS} and Lemma \ref{lemma:image-weak}, 
the variety $\Y^1$ is a weak Fano threefold. 
Since $\rho_1\colon \Y^1\to\pr^2_1$ is a $\pr^1$-bundle over $\pr^2_1$, 
there exists a vector bundle $\sE^1$ of rank $2$ on $\pr^2_1$ with 
$c_1(\sE^1)=0$ or $-1$ such that $\rho_1$ is equal to the projective space bundle 
$\pr_{\pr_1^2}(\sE^1)\to\pr^2_1$. Let $\xi_1\in\Pic \Y^1$ be the tautological 
line bundle with respects to the projective space bundle. Observe that 
\[
2L^1\sim -K_{\Y^1}\sim 2 \xi_1+\rho_1^*\sO_{\pr^2_1}(3-c_1(\sE^1)). 
\]
This immediately implies that $c_1(\sE^1)=-1$ and 
$L^1\sim \xi_1+\rho_1^*\sO_{\pr^2_1}(2)$. Moreover, from above, we have 
\begin{equation}\label{equation:langer}
h^0\left(\pr^2_1,\sE^1\otimes\sO_{\pr^2_1}(1)\right)=3, \quad
h^0\left(\pr^2_1,\sE^1\right)=0.
\end{equation}
Together with Langer's classification result \cite[Theorem 3.2]{langer}, the vector bundle 
$\sE^1$ must be isomorphic to $T_{\pr^2_1}\otimes\sO_{\pr^2_1}(-2)$. 
In particular, $L^1$ is ample. Moreover, the composition 
\[
\tau^1\circ\chi_{13}\circ\chi_{12}\circ\chi_{11}\colon X_0\dashrightarrow \Y^1
\]
is the ample model of $2\sigma^*(-K_X)-3F_1-3F_2$. This implies that 
$\tau^1\circ\chi_{13}\circ\chi_{12}\circ\chi_{11}=\tau^2\circ\chi_{23}
\circ\chi_{22}\circ\chi_{21}$. 
Moreover, under the the identification $U:=\Y^1=\Y^2$, the morphism 
$\rho_2\colon U\to \pr^2_2$ is given by the semiample divisor 
$L^1-\rho_1^*\sO_{\pr^2_1}(1)$. Thus two morphisms $\rho_1$, $\rho_2$ are 
mutually distinct. The remaining assertion can be proved as in Step 2. 

\noindent\underline{\textbf{Step 4}}\\
Assume that $g=9$. 
There exists $m\in\Z_{\geq 0}$ such that $\rho_1$ is equal to 
$\pr_{\pr^1_1}\left(\sO\oplus\sO(m)\right)\to\pr^1_1$. Let $\xi_1\in\Pic \Y^1$ be the 
tautological line bundle with respects to the projective bundle. 
Under the natural isomorphism $\Pic X_0\cong \Pic X_0^{+,1}$, 
we set 
\[
\left[a_0,a_1,a_2\right]:=(\chi_{13}\circ\chi_{12}\circ\chi_{11})_*
\left(a_0\sigma^*(-K_X)+a_1F_1+a_2F_2\right)\in\Pic X_0^{+,1}
\]
for any $a_0,a_1,a_2\in\Z$. The strict transform of $S_1^+$ corresponds to 
$\left[3,-7,0\right]$ and 
$(\rho_1\circ\tau^1)^*\sO_{\pr^1_1}(1)\sim\left[1,-2,-1\right]$ by 
Theorem \ref{thm:double-projection-from-line}. 
By the contraction theorem \cite[\S 3]{Mo82}, there exists an exact sequence 
\[
0\to \Pic \Y^1 \xrightarrow{(\tau^1)^*}\Pic X_0^{+,1}\to \Z\to 0. 
\]
Moreover, $(\tau^1)^*\Pic \Y^1\otimes\Q$ contains 
\[
\left[-\frac{1}{2},\frac{5}{2}, -1\right]\sim_\Q-K_{X_0^{+,1}}-\frac{1}{2}S_1^+,\quad
\left[1,-2,-1\right]\sim(\tau^1)^*\rho_1^*\sO_{\pr^1_1}(1)
\]
by Proposition \ref{proposition:dP-fibration}. This implies that 
\[
(\tau^1)^*\Pic \Y^1=\Z\left[1,-2,-1\right]\oplus\Z\left[1,-1,-2\right]. 
\]
Since $h^0\left(X_0^+,\left[1,-1,-2\right]\right)
=h^0\left(\pr^1_2,\sO_{\pr^1_2}(1)\right)=2>0$, 
there uniquely exists $b\in\Z$ such that 
$(\tau^1)^*\left(\xi_1+\rho_1^*\sO_{\pr^1_1}(b)\right)\sim\left[1,-1,-2\right]$. 
For any $k\in\Z$, we have 
\begin{eqnarray*}
&&H^0\left(\pr^1_1,\sO(k+b)\right)\oplus H^0\left(\pr^1_1,\sO(k+b+m)\right)\\
&\cong& H^0\left(\Y^1,\xi_1+\rho_1^*\sO_{\pr^1_1}(k+b)\right)\\
&=&H^0\left(X_0^{+,1},\left[1,-1,-2\right]+k\left[1,-2,-1\right]\right)\\
&=&H^0\left(X_0, (k+1)\sigma^*(-K_X)-(2k+1)F_1-(k+2)F_2\right). 
\end{eqnarray*}
In particular, we get 
\[
h^0\left(\pr^1_1,\sO(k+b)\right)+h^0\left(\pr^1_1,\sO(k+b+m)\right)
=\begin{cases}
2 &\text{if }k=0, \\
0 &\text{if }k\leq -1.
\end{cases}
\]
This immediately implies that $(m,b)=(0,0)$. 
Thus $\Y^1$ is isomorphic to $\pr^1\times\pr^1$. Moreover, the rational map 
$\tau^1\circ\chi_{13}\circ\chi_{12}\circ\chi_{11}\colon X_0\dashrightarrow \Y^1$ 
is the ample model of $\left[2,-3,-3\right]$. 
This implies that $\tau^1\circ\chi_{13}\circ\chi_{12}\circ\chi_{11}=
\tau^2\circ\chi_{23}\circ\chi_{22}\circ\chi_{21}$ and 
$(\Y:=)\Y^1=\Y^2=\pr^1\times\pr^1$. Moreover, 
$\rho_1$ and $\rho_2$ are mutually distinct projections since $\left[1,-2,-1\right]$ and 
$\left[1,-1,-2\right]$ are not proportional. 

In order to show that $\chi_{13}$ is an elementary flop, note that 
\begin{eqnarray*}
\left(\sO_{X_0^+}(\left[1,-1,-2\right])\right)^{\cdot 3}&=&0, \\ 
\left(\sO_{W_1^+}(\left[1,-1,-2\right])\right)^{\cdot 3}&=&
\left((\tau_1^+)^*\left(\phi_1^*\sO_{\pr^3_1}(2)+\psi_1^*\sO_{\pr^1_1}(2)\right)
-S_1^{W_1^+}\right)^{\cdot 3}=-5 
\end{eqnarray*}
holds (see \cite[Lemma 2.1]{MM85}). Thus $\beta_{13}$ is not an isomorphism. 
\end{proof}

By construction, $X_0$ itself, together with $W_1$, $W_1^+$, $X_0^+$, 
$W_2^+$, $W_2$ are small $\Q$-factorial modifications of $X_0$. 
By Proposition \ref{proposition:MDS}, all of them are smooth weak Fano threefolds. 
Let $\chi\colon X_0\dashrightarrow X_0^+$ be the natural small birational map. 

We summarize the diagram: 
\begin{equation}\label{equation:big}
\xymatrix{
&&\X_1&&\\
&\bar{X}_1&X_1^+ \ar[l]_-{\beta_1^+} \ar[u]^{\tau_1}&Y_1\ar[lu]_{\phi_1} \ar[rd]^{\psi_1}&\\
&X_1\ar[u]^-{\beta_1} \ar@{-->}[ru]^{\chi_1} \ar[ld]_{\sigma_1}&W_1\ar[u]^-{\phi_1^+}
\ar@{-->}[r] ^-{\chi_{12}}&W_1^+\ar[u]^{\tau_1^+} \ar@{-->}[d]^-{\chi_{13}} &\Y_1\\
X&&X_0\ar[lu]^{\sigma_2'} \ar[ll]_{\sigma} \ar[ld]_{\sigma_1'}
\ar@{-->}[r]^{\chi} \ar@{-->}[u]_-{\chi_{11}} \ar@{-->}[d]^-{\chi_{21}} &X_0^+
\ar[r]^{\tau}
&\Y \ar[u]_{\rho_1}\ar[d]^{\rho_2}\\
&X_2\ar[d]_-{\beta_2} 
\ar[lu]^{\sigma_2} \ar@{-->}[rd]_{\chi_2}&W_2 \ar@{-->}[r]_-{\chi_{22}} \ar[d]_-{\phi_2^+}
&W_2^+\ar[d]_{\tau_2^+} \ar@{-->}[u]_-{\chi_{23}} &\Y_2\\
&\bar{X}_2&X_2^+ \ar[l]^-{\beta_2^+} \ar[d]_{\tau_2}&Y_2\ar[ru]_{\psi_2} \ar[ld]^{\phi_2}&\\
&&\X_2&&
}
\end{equation}
where, 
\begin{enumerate}
\renewcommand{\theenumi}{\arabic{enumi}}
\renewcommand{\labelenumi}{(\theenumi)}
\item\label{X-Y:big1}
if $g=12$, then $\X_i=V_i$, $\Y_i=Q_i$, $\Y=\hat{Q}$ and $\tau$ is the blowup of 
$\hat{Q}$ along a bi-cubic curve $\hat{C}\subset\hat{Q}$ with the exceptional divisor 
$E^+\subset X_0^+$, 
\item\label{X-Y:big2}
if $g=10$, then $\X_i=Q_i$, $\Y_i=\pr^2_i$, $\Y=U$ and $\tau$ is the blowup of 
$U$ along a smooth bi-quintic curve $\Gamma$ of genus $2$ such that 
$\mult_{p_i}\left(\rho_i(\Gamma)\right)\leq 2$ for any $p_i\in\rho_i(\Gamma)$ 
with the exceptional divisor $E^+\subset X_0^+$, and 
\item\label{X-Y:big1}
if $g=9$, then $\X_i=\pr^3_i$, $\Y_i=\pr^1_i$, $\Y=\pr^1_1\times\pr^1_2$ 
and $\tau$ is a conic bundle with $\Delta_\tau\in\left|\sO(3,3)\right|$.
\end{enumerate}

The detail of the diagram \eqref{equation:big} for $\{i,j\}=\{1,2\}$, which is 
the combination of the diagrams \eqref{equation:1st-flop}, \eqref{equation:2nd-flop} 
and \eqref{equation:3rd-flop}, 
is the following: 
\begin{equation}\label{equation:big2}
\xymatrix{
& X_0 \ar@{-->}[rr]^-{\chi_{i1}} \ar[dr]_-{\beta_{i1}} \ar[dd]^(.6){\sigma'_j} 
\ar[dddl]_-{\sigma}& & 
W_i \ar@{-->}[rr]^-{\chi_{i2}} \ar[dr]_-{\beta_{i2}} \ar[dd]^(.6){\phi^+_i} 
\ar[dl]^-{\beta_{i1}^+} & & 
W_i^+ \ar@{-->}[rr]^-{\chi_{i3}} \ar[dr]_-{\beta_{i3}} \ar[dd]^(.6){\tau^+_i} 
\ar[dl]^-{\beta_{i2}^+} & & 
X_0^+ \ar[dd]^(.6){\tau} \ar[dl]^-{\beta_{i3}^+}\\
&&\bar{X}'_i \ar[dd]^(.25){\gamma_{i1}} &&\X'_i \ar[dd]^(.4){\gamma_{i2}} &&
\Y''_i \ar[dd]^(.4){\gamma_{i3}} &\\
& X_i \ar@{-->}[rr]^(.3){\chi_i} \ar[dl]^-{\sigma_i} \ar[dr]_-{\beta_i} & & 
X_i^+ \ar[dl]^-{\beta_i^+} \ar[dr]_-{\tau_i} & & Y_i \ar[dl]^-{\phi_i} \ar[dr]_-{\psi_i} 
& & \Y \ar[dl]^-{\rho_i}\\
X&&\bar{X}_i&&\X_i&&\Y_i&
}
\end{equation}

For $g\in\{10, 12\}$, let $E\subset X_0$ be the strict transform of $E^+\subset X_0^+$. 
Moreover, for any $\W\in\{X_0,W_1,W_1^+, X_0^+, W_2^+, W_2\}$, 
we set the following: 
\begin{itemize}
\item
For any $\E\in\left\{F_1, F_2, S_1, S_2\right\}$ 
(resp., for any $\E\in\left\{E, F_1, F_2, S_1, S_2\right\}$ when $g\in\{12, 10\}$), 
let $\E^\W\subset \W$ be the strict transform of $\E$ on $\X$. 
For example, the prime divisor 
$S_1^{W_1^+}\subset W_1^+$ coincides with the previous definition. 
\item
For any $\V\in\{\X_1,\X_2, \Y_1,\Y_2\}$, let $\sO_{\V}^{\W}(1)\in\Pic \W$ be the 
strict transform to $\W$ of the pullback of $\sO_{\V}(1)$ to some 
$\W'\in\{X_0,W_1,W_1^+, X_0^+, W_2^+, W_2\}$ with $\W'\to \V$ a morphism. 
This definition does not depend on the choice of $\W'$. 
For example, $\sO_{\X_1}^{X_0}(1)$ is the strict transform of 
$(\tau_1\circ\phi_1^+)^*\sO_{\X_1}(1)$ to $X_0$. 
\item
Similarly, for any $\V\in\{X, \bar{X}_0, \bar{X}_1, \bar{X}_2\}$, 
let $\sO_{\V}^{\W}(1)\in\Pic \W$ be the 
strict transform to $\W$ of the pullback of $\sO_{\V}(-K_{\V})$ to some 
$\W'\in\{X_0,W_1,W_1^+, X_0^+, W_2^+, W_2\}$ with $\W'\to \V$ a morphism. 
Recall that $\bar{X}_0$ is the anti-canonical model of $X_0$, (hence, of $W_1, W_1^+, 
X_0^+, W_2^+, W_2$ also). 
\end{itemize}

The following is trivial from the construction. 

\begin{lemma}\label{lemma:pic}
The classes $\sO_X^{X_0}(1)\left(=\sigma^*\sO_X(-K_X)\right)$, $F_1$, $F_2$ 
form a basis of $\Pic X_0$. 
Let us set 
\[
\left[a_0,a_1,a_2\right]:=a_0\sO_X^{X_0}(1)+a_1F_1+a_2F_2. 
\]
\begin{enumerate}
\renewcommand{\theenumi}{\arabic{enumi}}
\renewcommand{\labelenumi}{(\theenumi)}
\item\label{lemma:pic1}
If $g=12$, then we have
\begin{align*}
\sO_{\bar{X}_1}^{X_0}(1)&\sim\left[1,-1,0\right], \quad
&\sO_{\bar{X}_2}^{X_0}(1)&\sim\left[1,0,-1\right], \\
\sO_{V_1}^{X_0}(1)&\sim\left[1,-2,0\right], \quad
&\sO_{V_2}^{X_0}(1)&\sim\left[1,0,-2\right], \\
\sO_{Q_1}^{X_0}(1)&\sim\left[1,-2,-1\right], \quad
&\sO_{Q_2}^{X_0}(1)&\sim\left[1,-1,-2\right], \\
S_1&\sim\left[1,-3,0\right], \quad
&S_2&\sim\left[1,0,-3\right], \\
E&\sim\left[1,-2,-2\right], \quad
&\sO_{\bar{X}_0}^{X_0}(1)&\sim\left[1,-1,-1\right].
\end{align*}
\item\label{lemma:pic2}
If $g=10$, then we have
\begin{align*}
\sO_{\bar{X}_1}^{X_0}(1)&\sim\left[1,-1,0\right], \quad
&\sO_{\bar{X}_2}^{X_0}(1)&\sim\left[1,0,-1\right], \\
\sO_{Q_1}^{X_0}(1)&\sim\left[1,-2,0\right], \quad
&\sO_{Q_2}^{X_0}(1)&\sim\left[1,0,-2\right], \\
\sO_{\pr^2_1}^{X_0}(1)&\sim\left[1,-2,-1\right], \quad
&\sO_{\pr^2_2}^{X_0}(1)&\sim\left[1,-1,-2\right], \\
S_1&\sim\left[2,-5,0\right], \quad
&S_2&\sim\left[2,0,-5\right], \\
E&\sim\left[3,-5,-5\right], \quad
&\sO_{\bar{X}_0}^{X_0}(1)&\sim\left[1,-1,-1\right].
\end{align*}
\item\label{lemma:pic3}
If $g=9$, then we have
\begin{align*}
\sO_{\bar{X}_1}^{X_0}(1)&\sim\left[1,-1,0\right], \quad
&\sO_{\bar{X}_2}^{X_0}(1)&\sim\left[1,0,-1\right], \\
\sO_{\pr^3_1}^{X_0}(1)&\sim\left[1,-2,0\right], \quad
&\sO_{\pr^3_2}^{X_0}(1)&\sim\left[1,0,-2\right], \\
\sO_{\pr^1_1}^{X_0}(1)&\sim\left[1,-2,-1\right], \quad
&\sO_{\pr^1_2}^{X_0}(1)&\sim\left[1,-1,-2\right], \\
S_1&\sim\left[3,-7,0\right], \quad
&S_2&\sim\left[3,0,-7\right], \\
\sO_{\bar{X}_0}^{X_0}(1)&\sim\left[1,-1,-1\right]. &&
\end{align*}
\end{enumerate}
\end{lemma}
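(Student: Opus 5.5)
The lemma is a bookkeeping statement, so the plan is to compute each class in the basis $\sO_X^{X_0}(1)=\sigma^*(-K_X)$, $F_1$, $F_2$ by transporting pullbacks through the maps in \eqref{equation:big2}. All these maps are isomorphisms in codimension one, and linear equivalence classes of strict transforms are preserved under small $\Q$-factorial modifications. First, $\Pic X_0=\Z[\sigma^*(-K_X)]\oplus\Z F_1\oplus\Z F_2$ holds because $\Pic X=\Z[-K_X]$ and $\sigma$ is the blowup along two disjoint smooth curves. By symmetry in $\{i,j\}=\{1,2\}$ it suffices to treat $i=1$.

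Next I identify each class through a single link. The morphism $\sigma'_2\colon X_0\to X_1$ is the blowup along $Z_2^{X_1}$ with exceptional divisor $F_2$. The map $\chi_{11}$ is an isomorphism near $F_2$, since it is induced from $\chi_1$, which is an isomorphism near $Z_2^{X_1}$ by Lemma~\ref{lemma:1st-flop}. Hence every divisor class on $X_1$ or $X_1^+$ pulls back to $X_0$ with coefficient $0$ on $F_2$, provided its support does not contain $Z_2$. Theorem~\ref{thm:double-projection-from-line} gives the following on $X_1^+$. For $\sO_{\bar{X}_1}$, pulling back $-K_{X_1}=\sigma_1^*(-K_X)-F'_1$ gives $[1,-1,0]$. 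The ample model divisor $\sigma_1^*(-K_X)-2F'_1$ is $\tau_1^*\sO_{\X_1}(\cdot)$, which gives $\sO_{V_1}(1)$ for $g=12$, $\sO_{Q_1}(1)$ for $g=10$ and $\sO_{\pr^3_1}(1)$ for $g=9$. Indeed $-K_{X_1^+}=\tau_1^*(-K_{\X_1})-S_1^+$ with index $2,3,4$ respectively, and combining this with the formula for $S_1^+$ forces $\tau_1^*\sO_{\X_1}(1)=[1,-2,0]$. The formulas $S_1=[1,-3,0]$, $[2,-5,0]$ and $[3,-7,0]$ are quoted directly from that theorem, because $Z_2\not\subset S_1^+$: by Lemma~\ref{lemma:1st-flop}, $Z_2^{\X_1}$ is a line meeting $\Gamma_1$ in finitely many points. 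For $\sO_{\bar{X}_0}(1)$, we use $-K_{X_0}=[1,-1,-1]$.

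For the $\Y_1$-classes, $\psi_1$ is determined by $Y_1\to\X_1$ being the blowup along the line $Z_2^{\X_1}$ with exceptional divisor $F_2^{Y_1}$. Lemma~\ref{lemma:2-31}, Theorem~\ref{thm:dP5}(4), and the analogous description of the blowup of $\pr^3$ along a line give $\psi_1^*\sO_{\Y_1}(1)=\phi_1^*\sO_{\X_1}(1)-F_2^{Y_1}$. Pulled back to $X_0$, where $F_2^{Y_1}$ corresponds to $F_2$, this equals $[1,-2,0]-[0,0,1]=[1,-2,-1]$. Finally, $E$ is the $\tau$-exceptional divisor, and its class was already computed in Step~2 of the proof of Proposition~\ref{proposition:merge} for $g=12$, namely $[1,-2,-2]$, and in Step~3 for $g=10$, namely $[3,-5,-5]$. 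The $i=2$ formulas follow by swapping indices.

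The only point needing care is that pulling back along $\tau_1\circ\phi_1^+$ and transporting by $\chi_{11}^{-1}$ adds no $F_2$ correction beyond the one stated. This holds because $\phi_1^+$ is exactly the blowup along $Z_2^{X_1^+}$ and $\chi_{11}$ is an isomorphism near its exceptional divisor, so I expect the lemma to be routine.
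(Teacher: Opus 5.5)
Your proposal is correct and is essentially the computation the paper has in mind when it calls this lemma ``trivial from the construction'': transporting $\tau_1^*\sO_{\X_1}(1)$, $S_1^+$, $\psi_1^*\sO_{\Y_1}(1)=\phi_1^*\sO_{\X_1}(1)-F_2^{Y_1}$ and $E^{+,1}$ back to $X_0$ through $\chi_1$, $\chi_{11}$ and $\chi_{12}$ is exactly what Steps 2--4 of the proof of Proposition~\ref{proposition:merge} do. Your caveat ``provided its support does not contain $Z_2$'' matters only when you identify strict transforms such as $S_1$ and the transform of $F_2^{Y_1}$ with pullbacks, and you verify (or it is immediate) that this condition holds.
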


Now we are ready to consider the movable cone $\Mov(X_0)$ and 
the pseudo-effective cone $\Psef(X_0)$ of $X_0$. 

\begin{lemma}\label{lemma:coneX0}
The Picard rank of $\bar{X}_0$ is 
equal to one. The nef cone $\Nef(X_0)$ of $X_0$ is spanned by 
exactly $4$ numbers of rays 
\[
\R_{\geq 0}\left[\sO_X^{X_0}(1)\right], \quad
\R_{\geq 0}\left[\sO_{\bar{X}_1}^{X_0}(1)\right], \quad
\R_{\geq 0}\left[\sO_{\bar{X}_0}^{X_0}(1)\right], \quad
\R_{\geq 0}\left[\sO_{\bar{X}_2}^{X_0}(1)\right].
\] 
The facets of $\Nef(X_0)$ can be described as follows: 
\begin{eqnarray*}
\Nef(X_1)
&=&\R_{\geq 0}\left[\sO_X^{X_0}(1)\right]
+\R_{\geq 0}\left[\sO_{\bar{X}_1}^{X_0}(1)\right], \\
\Nef(\bar{X}'_1)=\Nef(X_0)\cap\Nef(W_1)
&=&\R_{\geq 0}\left[\sO_{\bar{X}_1}^{X_0}(1)\right]
+\R_{\geq 0}\left[\sO_{\bar{X}_0}^{X_0}(1)\right], \\
\Nef(\bar{X}'_2)=\Nef(X_0)\cap\Nef(W_2)
&=&\R_{\geq 0}\left[\sO_{\bar{X}_0}^{X_0}(1)\right]
+\R_{\geq 0}\left[\sO_{\bar{X}_2}^{X_0}(1)\right], \\
\Nef(X_2)
&=&\R_{\geq 0}\left[\sO_{\bar{X}_2}^{X_0}(1)\right]
+\R_{\geq 0}\left[\sO_X^{X_0}(1)\right].
\end{eqnarray*}
\end{lemma}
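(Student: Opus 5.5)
The plan is to identify four explicit contractions of $X_0$, check that their defining classes are nef, and then use $\rho(X_0)=3$ to conclude that these four rays span the whole nef cone, with the four listed facets between them.

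\textbf{Step 1: the four rays are nef.} Each listed class is the pullback of an ample divisor under a contraction morphism from $X_0$:
\begin{itemize}
\item $\sO_X^{X_0}(1)=\sigma^*(-K_X)$ via $\sigma\colon X_0\to X$;
\item $\sO_{\bar{X}_i}^{X_0}(1)$ via $\beta_i\circ\sigma'_j\colon X_0\to\bar{X}_i$, where $\{i,j\}=\{1,2\}$;
\item $\sO_{\bar{X}_0}^{X_0}(1)=-K_{X_0}$ via the anti-canonical model $\alpha$.
\end{itemize}
Since $\rho(\bar{X}_0)=1$ by Proposition \ref{proposition:totally-blowup}, the first assertion holds, and all four classes are nef on $X_0$.

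\textbf{Step 2: the four two-dimensional faces.} We show that each pair of adjacent rays spans the nef cone of a contraction of relative Picard rank one.
\begin{itemize}
\item $\sigma'_2\colon X_0\to X_1$ is the blowup of $X_1$ along $Z_2^{X_1}$, so it has relative Picard rank one. Its target satisfies $\Nef(X_1)=\R_{\geq 0}[\sigma_1^*(-K_X)]+\R_{\geq 0}[-K_{X_1}]$. Pulling back gives the rays $[1,0,0]$ and $[1,-1,0]$, so $\Nef(X_1)$ is a face of $\Nef(X_0)$.
\item By construction in \eqref{equation:1st-flop}, $\beta_{i1}\colon X_0\to\bar{X}'_i$ is an elementary flopping contraction. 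Hence $\Nef(\bar{X}'_i)$ is a two-dimensional face of $\Nef(X_0)$. It is also the common facet with $\Nef(W_i)$, as observed before \eqref{equation:1st-flop}.
\end{itemize}
To identify the rays of $\Nef(\bar{X}'_i)$:
\begin{itemize}
\item It contains $[-K_{X_0}]$, because $\beta_{i1}$ is $K$-trivial.
\item It contains $\sO_{\bar{X}_i}^{X_0}(1)$, because $\beta_i\circ\sigma'_j$ factors as $\gamma_{i1}\circ\beta_{i1}$.
\item $\bar{X}'_i$ has Picard rank two, since $\gamma_{i1}$ is a blowup of the rank-one variety $\bar{X}_i$.
\item The two contractions $\alpha$ and $\gamma_{i1}$ of $\bar{X}'_i$ give its two extremal rays, so the classes $[1,-1,-1]$ and $[1,-1,0]$ span $\Nef(\bar{X}'_i)$.
\end{itemize}

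\textbf{Step 3: these faces exhaust the boundary.} The four two-dimensional cones found in Step 2 fit together cyclically:
\[
[1,0,0]\to[1,-1,0]\to[1,-1,-1]\to[1,0,-1]\to[1,0,0].
\]
They bound a convex cone $C$ contained in $\Nef(X_0)$. To show $\Nef(X_0)=C$, note that every facet of $C$ is a genuine face of $\Nef(X_0)$, i.e.\ supported by a curve class. The curve classes are:
\begin{itemize}
\item a fiber of $F_j$, for the faces $\Nef(X_i)$;
\item the flopping curves of $\beta_{i1}$, which are nonempty because $\chi_{i1}$ is not an isomorphism, for the faces $\Nef(\bar{X}'_i)$.
\end{itemize}
A closed convex cone in $\R^3$ whose full boundary circuit consists of faces of a larger closed convex cone $D$ must equal $D$. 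Otherwise some point of $D$ lies outside $C$, and the segment joining it to an interior point of $C$ crosses a facet of $C$, contradicting that facet being a face of $D$.

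\textbf{Main obstacle.} The delicate point is Step 2 for $\bar{X}'_i$: confirming that $\beta_{i1}$ really has relative Picard rank one with nonempty exceptional set, and that its face is exactly bounded by $-K_{X_0}$ and $\sO_{\bar{X}_i}^{X_0}(1)$ rather than a larger cone. I would verify this by intersecting with explicit curves:
\begin{itemize}
\item the flopping curves of $\beta_i$, pulled back to $X_0$, which meet $F_i$ positively;
\item fibers of $F_1$ and $F_2$.
\end{itemize}
This checks that the four rays are pairwise non-proportional and lie in the claimed cyclic order.
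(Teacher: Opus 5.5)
Your proof is correct and follows the paper's argument. Both identify the four two-dimensional faces, namely $\Nef(X_i)$ via $\sigma'_j$ and $\Nef(\bar{X}'_i)$ via the flopping contraction $\beta_{i1}$, and conclude that these bound $\Nef(X_0)$. You additionally make explicit two points the paper leaves implicit: the rays of $\Nef(\bar{X}'_i)$ are pinned down by the two contractions of $\bar{X}'_i$ to $\bar{X}_0$ and $\bar{X}_i$, and the closing convexity argument shows the cyclic boundary forces equality.
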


\begin{proof}
Firstly, note that the Picard rank of $X_0$ is equal to three and the Picard rank of 
$\bar{X}_0$ is equal to one by Proposition \ref{proposition:totally-blowup} 
\eqref{proposition:totally-blowup2}. 
Obviously, 
\[
\Nef(X_i)=\R_{\geq 0}\left[\sO_X^{X_0}(1)\right]
+\R_{\geq 0}\left[\sO_{\bar{X}_i}^{X_0}(1)\right]
\] 
is a two dimensional face 
of $\Nef(X_0)$. Moreover, since both $3$-dimensional cones 
$\Nef(X_0)$ and $\Nef(W_i)$ contain distinct rays 
\[
\R_{\geq 0}\left[\sO_{\bar{X}_0}^{X_0}(1)\right]
\quad\text{and}\quad
\R_{\geq 0}\left[\sO_{\bar{X}_i}^{X_0}(1)\right],
\] 
the intersection $\Nef(X_0)\cap\Nef(W_i)$ 
must be equal to the $2$-dimensional face 
\[
\R_{\geq 0}\left[\sO_{\bar{X}_0}^{X_0}(1)\right]
+\R_{\geq 0}\left[\sO_{\bar{X}_i}^{X_0}(1)\right],
\] 
which must be equal to the $2$-dimensional cone $\Nef(\bar{X}'_i)$. 
Thus we get the assertion. 
\end{proof}

Similarly, we get the following. We omit the proof since the strategy of the proof 
is completely same as the proof of Lemma \ref{lemma:coneX0}. 

\begin{lemma}\label{lemma:cone-hoka}
\begin{enumerate}
\renewcommand{\theenumi}{\arabic{enumi}}
\renewcommand{\labelenumi}{(\theenumi)}
\item\label{lemma:cone-hoka1}
The nef cone $\Nef(W_i)$ of $W_i$ is spanned by exactly three rays 
\[
\R_{\geq 0}\left[\sO_{\bar{X}_0}^{W_i}(1)\right], \quad 
\R_{\geq 0}\left[\sO_{\bar{X}_i}^{W_i}(1)\right], \quad 
\R_{\geq 0}\left[\sO_{\X_i}^{W_i}(1)\right].
\]
The facets of $\Nef(W_i)$ can be described as follows: 
\begin{eqnarray*}
\Nef(X_i^+)&=&
\R_{\geq 0}\left[\sO_{\X_i}^{W_i}(1)\right]
+\R_{\geq 0}\left[\sO_{\bar{X}_i}^{W_i}(1)\right], \\
\Nef(\bar{X}'_i)=\Nef(W_i)\cap\Nef(X_0)&=&
\R_{\geq 0}\left[\sO_{\bar{X}_i}^{W_i}(1)\right]
+\R_{\geq 0}\left[\sO_{\bar{X}_0}^{W_i}(1)\right], \\
\Nef(\X'_i)=\Nef(W_i)\cap\Nef(W_i^+)&=&
\R_{\geq 0}\left[\sO_{\bar{X}_0}^{W_i}(1)\right]
+\R_{\geq 0}\left[\sO_{\X_i}^{W_i}(1)\right].
\end{eqnarray*}
\item\label{lemma:cone-hoka2}
The nef cone $\Nef(W_i^+)$ of $W_i^+$ is spanned by exactly three rays 
\[
\R_{\geq 0}\left[\sO_{\bar{X}_0}^{W_i^+}(1)\right], \quad
\R_{\geq 0}\left[\sO_{\Y_i}^{W_i^+}(1)\right],\quad 
\R_{\geq 0}\left[\sO_{\X_i}^{W_i^+}(1)\right].
\]
The facets of $\Nef(W_i^+)$ can be described as follows: 
\begin{eqnarray*}
\Nef(Y_i)&=&
\R_{\geq 0}\left[\sO_{\X_i}^{W_i^+}(1)\right]
+\R_{\geq 0}\left[\sO_{\Y_i}^{W_i^+}(1)\right], \\
\Nef(\Y''_i)=\Nef(W_i^+)\cap\Nef(X_0^+)&=&
\R_{\geq 0}\left[\sO_{\Y_i}^{W_i^+}(1)\right]
+\R_{\geq 0}\left[\sO_{\bar{X}_0}^{W_i^+}(1)\right], \\
\Nef(\X'_i)=\Nef(W_i^+)\cap\Nef(W_i)&=&
\R_{\geq 0}\left[\sO_{\bar{X}_0}^{W_i^+}(1)\right]
+\R_{\geq 0}\left[\sO_{\X_i}^{W_i^+}(1)\right].
\end{eqnarray*}
\item\label{lemma:cone-hoka3}
The nef cone $\Nef(X_0^+)$ of $X_0^+$ is spanned by exactly three rays 
\[
\R_{\geq 0}\left[\sO_{\bar{X}_0}^{X_0^+}(1)\right], \quad 
\R_{\geq 0}\left[\sO_{\Y_1}^{X_0^+}(1)\right], \quad 
\R_{\geq 0}\left[\sO_{\Y_2}^{X_0^+}(1)\right].
\]
The facets of $\Nef(X_0^+)$ can be described as follows: 
\begin{eqnarray*}
\Nef(\Y)&=&
\R_{\geq 0}\left[\sO_{\Y_1}^{X_0^+}(1)\right]
+\R_{\geq 0}\left[\sO_{\Y_2}^{X_0^+}(1)\right], \\
\Nef(\Y''_1)=\Nef(X_0^+)\cap\Nef(W_1^+)&=&
\R_{\geq 0}\left[\sO_{\Y_1}^{X_0^+}(1)\right]
+\R_{\geq 0}\left[\sO_{\bar{X}_0}^{X_0^+}(1)\right], \\
\Nef(\Y''_2)=\Nef(X_0^+)\cap\Nef(W_2^+)&=&
\R_{\geq 0}\left[\sO_{\Y_2}^{X_0^+}(1)\right]
+\R_{\geq 0}\left[\sO_{\bar{X}_0}^{X_0^+}(1)\right].
\end{eqnarray*}
\end{enumerate}
\end{lemma}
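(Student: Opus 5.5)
Following the proof of Lemma \ref{lemma:coneX0}, we work inside the three-dimensional space $\ND(X_0)$ and use the identifications $\ND(X_0)=\ND(W_i)=\ND(W_i^+)=\ND(X_0^+)$. Each of $W_i$, $W_i^+$, $X_0^+$ is a smooth projective threefold of Picard rank $3$ that is a small $\Q$-factorial modification of $X_0$. So each nef cone is a full-dimensional cone, and every facet of it corresponds to an elementary contraction.

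\textbf{Listing the elementary contractions.} For each variety we write down the elementary contractions already constructed.
\begin{itemize}
\item For $W_i$: the flopping contraction $\beta_{i1}^+\colon W_i\to\bar{X}'_i$, the flopping contraction $\beta_{i2}\colon W_i\to\X'_i$, and the blowup $\phi_i^+\colon W_i\to X_i^+$.
\item For $W_i^+$: the flopping contractions $\beta_{i2}^+$ and $\beta_{i3}$ (the latter is a genuine flop by Proposition \ref{proposition:merge}), and $\tau_i^+\colon W_i^+\to Y_i$.
\item For $X_0^+$: $\beta_{13}^+$, $\beta_{23}^+$ and $\tau$.
\end{itemize}
Each target has Picard rank $2$; for $\Y$ this is Proposition \ref{proposition:merge}, and for $\Y''_i$ it follows because $\gamma_{i3}$ is a small contraction over $\Y_i$. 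Hence each target gives a two-dimensional face of the nef cone of the source.

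\textbf{Rays of the faces.} We identify the rays spanning these faces via the pulled-back classes.
\begin{itemize}
\item $\sO_{\bar{X}_0}(1)$ is nef and is trivial on every flopping curve, since all these varieties share the anti-canonical model $\bar{X}_0$. So it lies on every face coming from a flop.
\item $\sO_{\bar{X}_i}(1)$ is pulled back from $\bar{X}_i$ through both $\bar{X}'_i$ and $X_i^+$.
\item $\sO_{\X_i}(1)$ is pulled back from $\X_i$ through both $X_i^+$ and $\X'_i$ (and through $Y_i$ on the $W_i^+$ side).
\item $\sO_{\Y_i}(1)$ is pulled back through $Y_i$, $\Y''_i$ and $\Y$.
\end{itemize}
Using the explicit classes in Lemma \ref{lemma:pic}, we check that the relevant rays are pairwise distinct. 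For example, $[1,-1,0]$, $[1,-1,-1]$ and $[1,-2,0]$ are pairwise non-proportional, and similarly for the other triples. Two distinct rays in a two-dimensional face must span that face. So, for instance, $\Nef(X_i^+)=\R_{\geq 0}[\sO_{\X_i}(1)]+\R_{\geq 0}[\sO_{\bar{X}_i}(1)]$, and the other faces are obtained in the same way.

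\textbf{Main obstacle.} The point needing care is showing that these three faces exhaust the facets, so that each nef cone is exactly simplicial on the three listed rays. The three faces pairwise share a ray and form a closed triangle of rays. Their rays are linearly independent, which is again a $3\times 3$ determinant computation from Lemma \ref{lemma:pic}. The cone spanned by the three rays is therefore contained in the nef cone, and its boundary is covered by three faces of the nef cone. A convex three-dimensional cone whose boundary consists of faces of a larger convex cone $C$ equals $C$. This is so because any further extremal ray of $C$ would lie strictly beyond one of those supporting facets, contradicting that the facet supports $C$. Hence the nef cone is exactly the stated simplicial cone. Finally, the descriptions of the intersections, such as $\Nef(W_i)\cap\Nef(X_0)=\Nef(\bar{X}'_i)$, follow as in Lemma \ref{lemma:coneX0}: two nef cones of distinct small $\Q$-factorial modifications have disjoint interiors and meet along the common facet.
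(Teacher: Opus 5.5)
Your proposal is correct and follows the paper's route. The paper omits this proof as being the same as that of Lemma~\ref{lemma:coneX0}: each facet comes from one of the elementary contractions $\beta_{i1}^+,\beta_{i2},\phi_i^+$ (resp.\ $\beta_{i2}^+,\beta_{i3},\tau_i^+$; $\beta_{13}^+,\beta_{23}^+,\tau$), the pulled-back classes of Lemma~\ref{lemma:pic} are located on these facets, and the resulting cycle of faces closes up; you make this last step explicit with the determinant and supporting-hyperplane argument.

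There are two harmless slips. First, two distinct rays in a $2$-dimensional face need not span it, but your final step only needs that the cone on two of the rays lies in the corresponding face, and it then gives the face descriptions anyway. Second, each facet is $2$-dimensional because the contraction is elementary, not because $\gamma_{i3}$ is small, which it is not.
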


As a consequence, we get the following result: 

\begin{thm}\label{thm:diamond}
The set of small $\Q$-factorial modifications of $X_0$ is equal to the set: 
\[
\left\{X_0, W_1, W_1^+, X_0^+, W_2^+, W_2\right\}.
\]
Moreover, we have
\begin{eqnarray*}
\Mov(X_0)&=&
\R_{\geq 0}\left[\sO_X^{X_0}(1)\right]
+\R_{\geq 0}\left[\sO_{\X_1}^{X_0}(1)\right]
+\R_{\geq 0}\left[\sO_{\Y_1}^{X_0}(1)\right]\\
&&+\R_{\geq 0}\left[\sO_{\Y_2}^{X_0}(1)\right]
+\R_{\geq 0}\left[\sO_{\X_2}^{X_0}(1)\right].
\end{eqnarray*}
\begin{enumerate}
\renewcommand{\theenumi}{\arabic{enumi}}
\renewcommand{\labelenumi}{(\theenumi)}
\item\label{lemma:pic1}
If $g\in\{12,10\}$, then we have
\[
\Psef(X_0)=\R_{\geq 0}[E]+\R_{\geq 0}[S_1]+\R_{\geq 0}[F_2]
+\R_{\geq 0}[F_1]+\R_{\geq 0}[S_2].
\]
\item\label{lemma:pic2}
If $g=9$, then we have
\[
\Psef(X_0)=\R_{\geq 0}[\sO_{\pr^1_2}^{X_0}(1)]+\R_{\geq 0}[\sO_{\pr^1_1}^{X_0}(1)]+
\R_{\geq 0}[S_1]+\R_{\geq 0}[F_2]
+\R_{\geq 0}[F_1]+\R_{\geq 0}[S_2].
\]
\end{enumerate}
\end{thm}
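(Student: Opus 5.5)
The plan is to glue the six nef cones computed in Lemmas \ref{lemma:coneX0} and \ref{lemma:cone-hoka} into a closed chamber decomposition. By Proposition \ref{proposition:MDS}, $X_0$ is a Mori dream space and $\Mov(X_0)$ is the union of the nef cones of its small $\Q$-factorial modifications, each of them smooth weak Fano. All the cones below live in $\ND(X_0)\cong\R^3$ with coordinates $[a_0,a_1,a_2]$, and the classes are those of Lemma \ref{lemma:pic}.

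\textbf{Movable cone and the set of modifications.} In the $2$-dimensional picture obtained by slicing through $[-K_{X_0}]=[1,-1,-1]$, the six cones $\Nef(X_0),\Nef(W_1),\Nef(W_1^+),\Nef(X_0^+),\Nef(W_2^+),\Nef(W_2)$ all contain the ray $\R_{\geq 0}[\sO_{\bar X_0}(1)]$. By Lemmas \ref{lemma:coneX0} and \ref{lemma:cone-hoka}, consecutive cones share exactly the facets $\Nef(\bar X'_i)$, $\Nef(\X'_i)$ and $\Nef(\Y''_i)$, which form a cyclic chain $X_0,W_1,W_1^+,X_0^+,W_2^+,W_2,X_0$. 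Their union is therefore a star-shaped neighbourhood of that ray, i.e.\ a closed polyhedral cone whose boundary consists of the remaining facets $\Nef(X_1),\Nef(X_i^+),\Nef(Y_i),\Nef(\Y)$. Each of these facets corresponds to a divisorial contraction or a fibre-type contraction ($\sigma_i$, $\tau_i$, $\phi_i$ or $\psi_i$, $\tau$), so no further small modification lies across it. Hence the union is exactly $\Mov(X_0)$, and every chamber of $\Mov(X_0)$ is one of the six. This proves the first claim. The extremal rays of the union are then $[\sO_X]$, $[\sO_{\X_i}]$ and $[\sO_{\Y_i}]$; the rays $[\sO_{\bar X_i}]$ are not extremal, since they lie in the interior of the segment between $\sO_X$ and $\sO_{\X_i}$ via the facets $\Nef(X_i)$ and $\Nef(X_i^+)$. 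The main technical point is checking that the union closes up, i.e.\ that the slice of the six chambers is a full disc around $[-K_{X_0}]$. This is where Proposition \ref{proposition:MDS}\eqref{proposition:MDS2} enters: $[-K_{X_0}]$ lies in the interior of $\Mov(X_0)$, and the cyclic sequence of walls then forces the chambers to cover a neighbourhood of it.

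\textbf{Pseudo-effective cone.} Each boundary facet of $\Mov(X_0)$ is crossed by a contraction, which produces a generator of $\Psef(X_0)$. Across $\Nef(X_i)$ the divisor is $F_j$ (from $\sigma'_j$); across $\Nef(X_i^+)$ it is $F_i$ (from $\phi_i^+$, whose exceptional divisor is the strict transform of $F_j$, or $F_i$ depending on indexing); across $\Nef(Y_i)$ it is $S_i$ (from $\tau_i^+$). Across $\Nef(\Y)$, when $g\in\{12,10\}$, it is $E$ (from $\tau$). For a Mori dream space, $\Psef$ is generated by $\Mov$ together with these exceptional prime divisors. To see that these divisors are extremal, I would check that each facet of $\Mov$ meets the corresponding divisor's cone, i.e.\ that each divisor is negative on the curves contracted across its facet.

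When $g=9$, the facet $\Nef(\Y)$ instead gives the fibre-type contraction $\tau$ onto $\pr^1\times\pr^1$. In that case the facets $\Nef(Y_i)$ belong to $\psi_i$, which is fibre type over $\pr^1_i$, so the classes $[\sO_{\pr^1_i}(1)]$ lie on the boundary of $\Psef$. The pseudo-effective cone is then generated by $S_1,F_2,F_1,S_2$ together with the two pullbacks from $\pr^1_i$. This matches the statement.

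Finally, I would verify consistency numerically with the coordinates. For $g=12$, for instance, $E=[1,-2,-2]$, $S_1=[1,-3,0]$, $F_1=[0,1,0]$, $F_2=[0,0,1]$ and $S_2=[1,0,-3]$, and each consecutive pair of these spans a face containing the appropriate movable facet.
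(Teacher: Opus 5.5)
Your proof is correct in substance, but it classifies the small $\Q$-factorial modifications by a different route from the paper.

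\textbf{Comparison with the paper.} The paper uses Proposition~\ref{proposition:MDS}\eqref{proposition:MDS1}: every small $\Q$-factorial modification is weak Fano, so \emph{every} chamber of $\Mov(X_0)$ contains $[-K_{X_0}]=[\sO_{\bar X_0}^{X_0}(1)]$. The six chambers of Lemmas~\ref{lemma:coneX0} and~\ref{lemma:cone-hoka} already cover a neighbourhood of this class, so a seventh chamber would share interior points with one of them, which is impossible. You instead argue by wall-crossing. Every facet of the six chambers is either shared among them or is the wall of a non-small contraction, and such walls lie on $\partial\Mov(X_0)$, so the union is all of $\Mov(X_0)$. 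The paper's argument is shorter, because the weak Fano property forces all chambers to meet at one point. Yours is the general Mori-dream-space argument: it uses nothing about unknown modifications, and it feeds directly into the $\Psef$ computation, since the same outer walls produce $F_1,F_2,S_1,S_2$ (and $E$).

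Two remarks on your write-up of this part:
\begin{itemize}
\item The closing-up around $[-K_{X_0}]$ follows from the cyclic adjacency $X_0,W_1,W_1^+,X_0^+,W_2^+,W_2$ together with disjointness of chamber interiors. It does not come from Proposition~\ref{proposition:MDS}\eqref{proposition:MDS2}.
\item The fact you quote for $\Psef$ plays the role of \cite[Proposition 1.11 (2)]{HK} in the paper, and it deserves a line of justification. An effective divisor is a movable class plus prime divisors $D$ with $\kappa(D)=0$. Such a $D$ is nef on no small modification, so the $D$-MMP ends with an elementary divisorial contraction of $D$, necessarily across an outer wall.
\end{itemize}

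\textbf{Bookkeeping to fix.}
\begin{itemize}
\item The contractions attached to the outer walls are $\sigma'_j\colon X_0\to X_i$, $\phi_i^+\colon W_i\to X_i^+$, $\tau_i^+\colon W_i^+\to Y_i$ and $\tau$. They are not $\sigma_i,\tau_i,\phi_i,\psi_i$; those contract the wall varieties and correspond to rays of the chambers.
\item Both $\Nef(X_i)$ and $\Nef(X_i^+)$ give $F_j$. This is consistent with the two of them forming a single facet of $\Mov(X_0)$.
\item For $g=9$, the wall $\Nef(Y_i)$ is still the divisorial wall of $\tau_i^+$; that is where $S_i$ comes from. The classes $\sO_{\pr^1_i}^{X_0}(1)$ appear because they span the fibre-type wall $\Nef(\Y)$ and are not big.
\item Your closing numerical check is misstated. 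For $g\in\{12,10\}$ no facet of $\Mov(X_0)$ lies in a face of $\Psef(X_0)$, since every outer wall is divisorial and so has big relative interior. What you actually need is that the five generators of $\Mov(X_0)$ lie in the cone spanned by $E,S_1,S_2,F_1,F_2$. For $g=12$, for example, $\sO_X^{X_0}(1)=S_1+3F_1$, $\sO_{V_1}^{X_0}(1)=S_1+F_1$ and $\sO_{Q_1}^{X_0}(1)=E+F_2$.
\end{itemize}
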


\begin{proof}
We can directly check that the $5$ rays 
\[
\R_{\geq 0}[S_i],\quad 
\R_{\geq 0}\left[\sO_{\X_i}^{X_0}(1)\right], \quad
\R_{\geq 0}\left[\sO_{\bar{X}_i}^{X_0}(1)\right],\quad
\R_{\geq 0}\left[\sO_X^{X_0}(1)\right],\quad 
\R_{\geq 0}[F_i]
\] 
lie in a same $2$-dimensional subspace; 
the $4$ rays  
\[
\R_{\geq 0}\left[\sO_{\Y_i}^{X_0}(1)\right], \quad 
\R_{\geq 0}\left[\sO_{\bar{X}_0}^{X_0}(1)\right], \quad
\R_{\geq 0}\left[\sO_{\bar{X}_j}^{X_0}(1)\right], \quad 
\R_{\geq 0}[F_i]
\]
lie in a same $2$-dimensional subspace; 
the $3$ rays  
\[
\R_{\geq 0}\left[\sO_{\X_1}^{X_0}(1)\right],\quad
\R_{\geq 0}\left[\sO_{\bar{X}_0}^{X_0}(1)\right],\quad
\R_{\geq 0}\left[\sO_{\X_2}^{X_0}(1)\right]
\]
lie in a same $2$-dimensional subspace. 
If $g=12$, then the $4$ rays 
\[
\R_{\geq 0}[S_1],\quad
\R_{\geq 0}\left[\sO_{Q_1}^{X_0}(1)\right],\quad 
\R_{\geq 0}\left[\sO_{Q_2}^{X_0}(1)\right], \quad 
\R_{\geq 0}[S_2]
\]
lie in a same $2$-dimensional subspace;
the $4$ rays 
\[
\R_{\geq 0}[E],\quad 
\R_{\geq 0}\left[\sO_{Q_i}^{X_0}(1)\right], \quad
\R_{\geq 0}\left[\sO_{V_i}^{X_0}(1)\right], \quad
\R_{\geq 0}[F_j]
\]
lie in a same $2$-dimensional subspace. If $g\in\{10,9\}$, then the $3$ rays 
\[
\R_{\geq 0}\left[\sO_{\Y_i}^{X_0}(1)\right], \quad
\R_{\geq 0}\left[\sO_{\X_i}^{X_0}(1)\right], \quad
\R_{\geq 0}[F_j]
\]
lie in a same $2$-dimensional subspace.
Thus the assertion follows from Proposition \ref{proposition:MDS} and 
Lemmas \ref{lemma:coneX0} and \ref{lemma:cone-hoka}. 
Indeed, if there is another small $\Q$-factorial modification $X_0^-$ of $X_0$ 
other than $X_0$, $W_1$, $W_1^+$, $X_0^+$, $W_2^+$, $W_2$, then the nef cone 
$\Nef(X_0^-)$ must contains point $[-K_{X_0}]$ and 
the interior of $\Nef(X_0^-)$ must be disjoint from the union 
$\Nef(X_0)\cup \Nef(W_1)\cup \Nef(W_1^+)\cup \Nef(X_0^+)\cup \Nef(W_2^+)\cup 
\Nef(W_2)$. However, the point $[-K_{X_0}]$ lies in the interior of the union from the 
above observation, a contradiction. 
(Note that the description of $\Psef(X_0)$ follows from 
\cite[Proposition 1.11 (2)]{HK}.)
\end{proof}

We illustrate a slice of the chamber decomposition of the cone 
$\Mov(X_0)$. For $\V\in\left\{\bar{X}_0, X,\bar{X}_1, \X_1, \Y_1, \Y_2, 
\X_2, \bar{X}_2\right\}$, we write 
$\R_{\geq 0}\sO_{\V}(1):=\R_{\geq 0}\left[\sO_{\V}^{X_0}(1)\right]$
for simplicity. 

\[  \begin{tikzpicture}
\path[pattern=vertical lines] (0,0) -- (0.75,0.75) -- (2,0) --(0.75,-0.75)--(0,0);
\fill[lightgray] (0.75,0.75) -- (2,0) --(2,2)--(0.75,0.75);
\path[pattern=horizontal lines gray]  (0.75,-0.75) -- (2,0) --(2,-2)--(0.75,-0.75);
\path[pattern=north west lines]  (4.5,1.5) -- (2,0) --(2,2)--(4.5,1.5);
\path[pattern=north east lines]  (4.5,-1.5) -- (2,0) --(2,-2)--(4.5,-1.5);
\path[pattern=horizontal lines light gray]  (4.5,1.5) -- (2,0) --(4.5,-1.5)--(4.5,1.5);
\draw (0,0) -- (0.75,0.75) -- (2,0) --(0.75,-0.75)--(0,0);
\draw (0.75,0.75) -- (2,0) --(2,2)--(0.75,0.75);
\draw  (4.5,1.5) -- (2,0) --(2,2)--(4.5,1.5);
\draw  (4.5,1.5) -- (2,0) --(4.5,-1.5)--(4.5,1.5);
\draw  (4.5,-1.5) -- (2,0) --(2,-2)--(4.5,-1.5);
\draw  (0.75,-0.75) -- (2,0) --(2,-2)--(0.75,-0.75);
    \draw (0,0)node[left]{{\tiny $\R_{\geq 0}\sO_X(1)$}};
    \draw (0.75,0.75)node[above ]{{\tiny $\R_{\geq 0}\sO_{\bar{X}_1}(1)$}};
    \draw (0.75,-0.75)node[below]{{\tiny $\R_{\geq 0}\sO_{\bar{X}_2}(1)$}};
    \draw (2,2)node[above]{{\tiny $\R_{\geq 0}\sO_{\X_1}(1)$}};
    \draw (2,-2)node[below]{{\tiny $\R_{\geq 0}\sO_{\X_2}(1)$}};
    \draw (2,0)node[right]{{\tiny $\R_{\geq 0}\sO_{\bar{X}_0}(1)$}};
    \draw (4.5,1.5)node[above  right]{{\tiny $\R_{\geq 0}\sO_{\Y_1}(1)$}};
    \draw (4.5,-1.5)node[below  right]{{\tiny $\R_{\geq 0}\sO_{\Y_2}(1)$}};
\end{tikzpicture}\]
where 
\begin{tikzpicture}
\path[pattern=vertical lines] (0,0) -- (0.75,0.75) -- (2,0) --(0.75,-0.75)--(0,0);
\draw (0,0) -- (0.75,0.75) -- (2,0) --(0.75,-0.75)--(0,0);
\end{tikzpicture}
is $\operatorname{Nef}(X_0)$, 
\begin{tikzpicture}
\fill[lightgray] (0.75,0.75) -- (2,0) --(2,2)--(0.75,0.75);
%\path[pattern=horizontal lines gray] (0.75,0.75) -- (2,0) --(2,2)--(0.75,0.75);
\draw (0.75,0.75) -- (2,0) --(2,2)--(0.75,0.75);
\end{tikzpicture}
is $\operatorname{Nef}(W_1)$, 
\begin{tikzpicture}
\path[pattern=north west lines]  (4.5,1.5) -- (2,0) --(2,2)--(4.5,1.5);
\draw  (4.5,1.5) -- (2,0) --(2,2)--(4.5,1.5);
\end{tikzpicture}
is $\operatorname{Nef}(W_1^+)$, 
\begin{tikzpicture}
\path[pattern=horizontal lines light gray]  (4.5,1.5) -- (2,0) --(4.5,-1.5)--(4.5,1.5);
\draw  (4.5,1.5) -- (2,0) --(4.5,-1.5)--(4.5,1.5);
\end{tikzpicture}
is $\operatorname{Nef}(X_0^+)$, 
\begin{tikzpicture}
\path[pattern=north east lines]  (4.5,-1.5) -- (2,0) --(2,-2)--(4.5,-1.5);
\draw  (4.5,-1.5) -- (2,0) --(2,-2)--(4.5,-1.5);
\end{tikzpicture}
is $\operatorname{Nef}(W_2^+)$, 
\begin{tikzpicture}
\path[pattern=horizontal lines gray] (0.75,-0.75) -- (2,0) --(2,-2)--(0.75,-0.75);
\draw  (0.75,-0.75) -- (2,0) --(2,-2)--(0.75,-0.75);
\end{tikzpicture}
is $\operatorname{Nef}(W_2)$, and the  union of those shaded cones are equal to 
$\Mov(X_0)$. 

We also illustrate a slice of the chamber decomposition of the cone $\Psef(X_0)$. 
If $g=12$, then we have: 

\[  \begin{tikzpicture}
\path[pattern=vertical lines] (0,0) -- (0.75,0.75) -- (2,0) --(0.75,-0.75)--(0,0);
\fill[lightgray] (0.75,0.75) -- (2,0) --(2,2)--(0.75,0.75);
%\path[pattern=horizontal lines gray]  (0.75,-0.75) -- (2,0) --(2,-2)--(0.75,-0.75);
\path[pattern=horizontal lines gray]  (0.75,-0.75) -- (2,0) --(2,-2)--(0.75,-0.75);
\path[pattern=north west lines]  (4.5,1.5) -- (2,0) --(2,2)--(4.5,1.5);
\path[pattern=north east lines]  (4.5,-1.5) -- (2,0) --(2,-2)--(4.5,-1.5);
\path[pattern=horizontal lines light gray]  (4.5,1.5) -- (2,0) --(4.5,-1.5)--(4.5,1.5);
    \draw (-3,3) -- (4.5,4.5) -- (12,0)--(4.5,-4.5)--(-3,-3)--(-3,3);
    \draw (-3,3) -- (12,0)--(-3,-3);
    \draw (-3,3) --(4.5,-1.5); 
    \draw (-3,-3) --(4.5,1.5); 
    \draw (-3,3) --(4.5,-4.5); 
    \draw (-3,-3) --(4.5,4.5); 
    \draw (4.5,4.5) --(4.5,-4.5); 
    \draw (2,2) --(2,-2); 
    \draw (0,0)node[left]{{\tiny $\R_{\geq 0}\sO_X(1)$}};
    \draw (-3,3)node[above  left]{{\tiny $\mathbb{R}_{\geq 0}[F_2]$}};
    \draw (-3,-3)node[below  left]{{\tiny $\mathbb{R}_{\geq 0}[F_1]$}};
    \draw (4.5,4.5)node[above]{{\tiny $\mathbb{R}_{\geq 0}[S_1]$}};
    \draw (4.5,-4.5)node[below]{{\tiny $\mathbb{R}_{\geq 0}[S_2]$}};
    \draw (12,0)node[right]{{\tiny $\mathbb{R}_{\geq 0}[E]$}};
    \draw (0.75,0.75)node[above ]{{\tiny $\R_{\geq 0}\sO_{\bar{X}_1}(1)$}};
    \draw (0.75,-0.75)node[below]{{\tiny $\R_{\geq 0}\sO_{\bar{X}_2}(1)$}};
    \draw (2,2)node[above]{{\tiny $\R_{\geq 0}\sO_{V_1}(1)$}};
    \draw (2,-2)node[below]{{\tiny $\R_{\geq 0}\sO_{V_2}(1)$}};
    \draw (2,0)node[right]{{\tiny $\R_{\geq 0}\sO_{\bar{X}_0}(1)$}};
    \draw (4.5,1.5)node[above  right]{{\tiny $\R_{\geq 0}\sO_{Q_1}(1)$}};
    \draw (4.5,-1.5)node[below  right]{{\tiny $\R_{\geq 0}\sO_{Q_2}(1)$}};
\end{tikzpicture}\]

If $g=10$, then we have: 

\[  \begin{tikzpicture}
\path[pattern=vertical lines] (0,0) -- (0.75,0.75) -- (2,0) --(0.75,-0.75)--(0,0);
\fill[lightgray] (0.75,0.75) -- (2,0) --(2,2)--(0.75,0.75);
%\path[pattern=horizontal lines gray]  (0.75,-0.75) -- (2,0) --(2,-2)--(0.75,-0.75);
\path[pattern=horizontal lines gray]  (0.75,-0.75) -- (2,0) --(2,-2)--(0.75,-0.75);
\path[pattern=north west lines]  (4.5,1.5) -- (2,0) --(2,2)--(4.5,1.5);
\path[pattern=north east lines]  (4.5,-1.5) -- (2,0) --(2,-2)--(4.5,-1.5);
\path[pattern=horizontal lines light gray]  (4.5,1.5) -- (2,0) --(4.5,-1.5)--(4.5,1.5);
    \draw (-3,3) -- (3,3) -- (6,0)--(3,-3)--(-3,-3);
    \draw (-3,3) -- (4.5,-1.5)--(4.5,1.5)--(-3,-3);
    \draw (-3,3) --(4.5,1.5); 
    \draw (-3,-3) --(4.5,-1.5); 
    \draw (-3,3) --(3,-3); 
    \draw (-3,-3) --(3,3); 
    \draw (2,2) --(2,-2); 
    \draw (-3,-3)--(-3,3);
    \draw (0,0)node[left]{{\tiny $\R_{\geq 0}\sO_X(1)$}};
    \draw (-3,3)node[above  left]{{\tiny $\mathbb{R}_{\geq 0}[F_2]$}};
    \draw (-3,-3)node[below  left]{{\tiny $\mathbb{R}_{\geq 0}[F_1]$}};
    \draw (3,3)node[above]{{\tiny $\mathbb{R}_{\geq 0}[S_1]$}};
    \draw (3,-3)node[below]{{\tiny $\mathbb{R}_{\geq 0}[S_2]$}};
    \draw (6,0)node[right]{{\tiny $\mathbb{R}_{\geq 0}[E]$}};
    \draw (0.75,0.75)node[above ]{{\tiny $\R_{\geq 0}\sO_{\bar{X}_1}(1)$}};
    \draw (0.75,-0.75)node[below]{{\tiny $\R_{\geq 0}\sO_{\bar{X}_2}(1)$}};
    \draw (2,2)node[above]{{\tiny $\R_{\geq 0}\sO_{Q_1}(1)$}};
    \draw (2,-2)node[below]{{\tiny $\R_{\geq 0}\sO_{Q_2}(1)$}};
    \draw (2,0)node[right]{{\tiny $\R_{\geq 0}\sO_{\bar{X}_0}(1)$}};
    \draw (4.5,1.5)node[above  right]{{\tiny $\R_{\geq 0}\sO_{\pr^2_1}(1)$}};
    \draw (4.5,-1.5)node[below  right]{{\tiny $\R_{\geq 0}\sO_{\pr^2_2}(1)$}};
\end{tikzpicture}\]

If $g=9$, then we have: 

\[  \begin{tikzpicture}
\path[pattern=vertical lines] (0,0) -- (0.75,0.75) -- (2,0) --(0.75,-0.75)--(0,0);
\fill[lightgray] (0.75,0.75) -- (2,0) --(2,2)--(0.75,0.75);
%\path[pattern=horizontal lines gray]  (0.75,-0.75) -- (2,0) --(2,-2)--(0.75,-0.75);
\path[pattern=horizontal lines gray]  (0.75,-0.75) -- (2,0) --(2,-2)--(0.75,-0.75);
\path[pattern=north west lines]  (4.5,1.5) -- (2,0) --(2,2)--(4.5,1.5);
\path[pattern=north east lines]  (4.5,-1.5) -- (2,0) --(2,-2)--(4.5,-1.5);
\path[pattern=horizontal lines light gray]  (4.5,1.5) -- (2,0) --(4.5,-1.5)--(4.5,1.5);
    \draw (-3,3) -- (2.625,2.625) -- (4.5,1.5)--(4.5,-1.5)--(2.625,-2.625)--(-3,-3);
    \draw (-3,3) -- (4.5,-1.5)--(4.5,1.5)--(-3,-3);
    \draw (-3,3) --(4.5,1.5); 
    \draw (-3,-3) --(4.5,-1.5); 
    \draw (-3,3) --(2.625,-2.625); 
    \draw (-3,-3) --(2.625,2.625); 
    \draw (2,2) --(2,-2); 
    \draw (-3,-3)--(-3,3);
    \draw (0,0)node[left]{{\tiny $\R_{\geq 0}\sO_X(1)$}};
    \draw (-3,3)node[above  left]{{\tiny $\mathbb{R}_{\geq 0}[F_2]$}};
    \draw (-3,-3)node[below  left]{{\tiny $\mathbb{R}_{\geq 0}[F_1]$}};
    \draw (2.625,2.625)node[above]{{\tiny $\mathbb{R}_{\geq 0}[S_1]$}};
    \draw (2.625,-2.625)node[below]{{\tiny $\mathbb{R}_{\geq 0}[S_2]$}};
    \draw (0.75,0.75)node[above ]{{\tiny $\R_{\geq 0}\sO_{\bar{X}_1}(1)$}};
    \draw (0.75,-0.75)node[below]{{\tiny $\R_{\geq 0}\sO_{\bar{X}_2}(1)$}};
    \draw (2,2)node[above]{{\tiny $\R_{\geq 0}\sO_{\pr^3_1}(1)$}};
    \draw (2,-2)node[below]{{\tiny $\R_{\geq 0}\sO_{\pr^3_2}(1)$}};
    \draw (2,0)node[right]{{\tiny $\R_{\geq 0}\sO_{\bar{X}_0}(1)$}};
    \draw (4.5,1.5)node[above  right]{{\tiny $\R_{\geq 0}\sO_{\pr^1_1}(1)$}};
    \draw (4.5,-1.5)node[below  right]{{\tiny $\R_{\geq 0}\sO_{\pr^1_2}(1)$}};
\end{tikzpicture}\]

As an immediate corollary of Theorem \ref{thm:diamond}, we get the following 
concluding result. 

\begin{corollary}\label{corollary:go}
Let $X$ be a prime Fano threefold of genus $g\in\{12,10,9\}$, 
let $Z_1$, $Z_2$ be a pair of 
totally disjoint lines in $X$, let $\sigma\colon X_0\to X$ be the blowup along 
$Z_1\cup Z_2$, and let $F_1$, $F_2\subset X_0$ be the exceptional divisor of $\sigma$ 
over $Z_1$, $Z_2$, respectively. Then 
$X_0$ is a smooth weak Fano threefold and the anti-canonical model 
$\alpha\colon X_0\to \bar{X}_0$ of $X_0$ is small with $\rho(\bar{X}_0)=1$. 
Moreover, we have the following link: 
\begin{equation}\label{equation:go-conclusion}
\xymatrix{
&F_1\cup F_2 \ar@{}[r]|-{\subset} &
X_0 \ar[dl]_-{\sigma} \ar@{-->}[rr]^{\chi} \ar[dr]_-{\alpha} &&
X_0^+  \ar[dl]^-{\alpha^+} \ar[dr]^-{\tau} &  \\
Z_1\cup Z_2 \ar@{}[r]|-{\subset}  &X && \bar{X}_0 && 
\Y, 
}
\end{equation}
where $X_0^+$ is the $(\sigma^*K_X)$-flop of $\alpha$, i.e., 
\[
X_0^+=\Proj_{\bar{X}_0}\bigoplus_{m\in\Z_{\geq 0}}\alpha_*\sO_{X_0}
(\sigma^*(mK_X)),
\]
together with the structure morphism $\alpha^+\colon X_0^+\to \bar{X}_0$, 
$\chi:=(\alpha^+)^{-1}\circ\alpha$, and 
\[
\Y=\Proj\bigoplus_{m\in\Z_{\geq 0}}H^0\left(X_0, 
m\left(\sigma^*(-2K_X)-3(F_1+F_2)\right)\right).
\]
Moreover, we have the following: 
\begin{enumerate}
\renewcommand{\theenumi}{\arabic{enumi}}
\renewcommand{\labelenumi}{(\theenumi)}
\item\label{corollary:go1}
If $g=12$, then the variety $\hat{Q}=\Y$ is a Fano threefold of type 2.21, 
the morphism $\tau$ 
is obtained by the blowup along a bi-cubic curve $\hat{C}$ in $\hat{Q}$ and 
the exceptional divisor $E^+$ of $\tau$ is the strict transform of the unique member 
of $|\sigma^*(-K_X)-2(F_1+F_2)|$. 
\item\label{corollary:go2}
If $g=10$, then the variety $U=\Y$ is the del Pezzo threefold of degree $6$ and 
rank $2$, the morphism $\tau$ 
is obtained by the blowup along a smooth bi-quintic curve $\Gamma$ in $U$ of 
genus $2$ 
such that the multiplicity of $\rho_i(\Gamma)$ at any point is of multiplicity at most $2$ 
for $i\in\{1,2\}$, and 
the exceptional divisor $E^+$ of $\tau$ is the strict transform of the unique member 
of $|3\sigma^*(-K_X)-5(F_1+F_2)|$. 
\item\label{corollary:go3}
If $g=9$, then the variety $\Y$ is equal to $\pr^1\times\pr^1$, and the morphism 
$\tau$ is a conic bundle with $\Delta_\tau\in\left|\sO(3,3)\right|$. 
\end{enumerate}
\end{corollary}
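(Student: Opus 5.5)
This is mostly a repackaging of what has been built. The plan is to read everything off the chamber decomposition of $\Mov(X_0)$ in Theorem \ref{thm:diamond}, together with the explicit classes in Lemma \ref{lemma:pic}.

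\textbf{The link.} The smooth weak Fano property, smallness of $\alpha$ and $\rho(\bar{X}_0)=1$ come from Proposition \ref{proposition:totally-blowup}. Next, identify $X_0^+$ with the $(\sigma^*K_X)$-flop of $\alpha$. By Lemmas \ref{lemma:coneX0} and \ref{lemma:cone-hoka}, the ray $\R_{\geq 0}[\sO_{\bar{X}_0}^{X_0}(1)]=\R_{\geq0}[-K_{X_0}]$ is a common vertex of all six chambers. The class $\sigma^*(-K_X)=[1,0,0]$ lies in $\Nef(X_0)$, so $[-K_{X_0}]-\epsilon\sigma^*(-K_X)=[1-\epsilon,-1,-1]$ moves toward the opposite side of the diagram, which is the interior of $\Nef(X_0^+)$. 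Concretely, its coefficients lie strictly between those of $\sO_{\Y_1}^{X_0}(1)=[1,-2,-1]$, $\sO_{\Y_2}^{X_0}(1)=[1,-1,-2]$ and $[1,-1,-1]$. Hence the $\alpha$-relative ample model of $\sigma^*K_X$ is $X_0^+$ over $\bar{X}_0$, i.e.\ $X_0^+=\Proj_{\bar{X}_0}\bigoplus_m\alpha_*\sO(m\sigma^*K_X)$.

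\textbf{The contraction $\tau$.} The divisor $2\sigma^*(-K_X)-3(F_1+F_2)=[2,-3,-3]$ is the sum $\sO_{\Y_1}^{X_0}(1)+\sO_{\Y_2}^{X_0}(1)$. It therefore lies in the relative interior of the facet $\Nef(\Y)$ of $\Nef(X_0^+)$, so on $X_0^+$ it is semiample with ample model $\Y$. Since $X_0$ and $X_0^+$ are isomorphic in codimension one, the section ring over $X_0$ equals the one over $X_0^+$. This gives the stated Proj description of $\Y$, and the identifications (1)--(3) follow from Proposition \ref{proposition:merge}.

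\textbf{Uniqueness of $E^+$ ($g\in\{12,10\}$).} Here $E\sim[1,-2,-2]$, respectively $[3,-5,-5]$, by Lemma \ref{lemma:pic}, so $E$ is a member of the stated linear system. To see that it is the only member, note that $E$ spans an extremal ray of $\Psef(X_0)$ (Theorem \ref{thm:diamond}) lying outside $\Mov(X_0)$. Its stable base locus contains $E$, so every member contains $E$. Subtracting $E$ leaves zero, so $h^0=1$.

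\textbf{Main obstacle.} The only delicate point is checking the numerical position of the rays, namely that $[1,-1,-1]-\epsilon[1,0,0]$ and $[2,-3,-3]$ land where claimed. This is a short computation in the coordinates $[a_0,a_1,a_2]$. Everything else is bookkeeping with the previously established cones.
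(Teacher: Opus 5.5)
Your proposal is correct and follows the paper's route. The paper states this corollary as an immediate consequence of Theorem \ref{thm:diamond}, combined with Proposition \ref{proposition:totally-blowup}, Proposition \ref{proposition:merge} and the classes in Lemma \ref{lemma:pic}, and your computations check out: $[1-\epsilon,-1,-1]=(1-3\epsilon)[1,-1,-1]+\epsilon[1,-2,-1]+\epsilon[1,-1,-2]$ lies in the interior of $\Nef(X_0^+)$, and $[2,-3,-3]=\sO_{\Y_1}^{X_0}(1)+\sO_{\Y_2}^{X_0}(1)$.

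For the uniqueness of $E$, the step ``its stable base locus contains $E$'' needs one more line. A more direct argument: $E^+$ is $\tau$-exceptional, so $h^0(X_0,E)=h^0(X_0^+,E^+)=h^0(\Y,\tau_*\sO_{X_0^+}(E^+))=1$.
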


\begin{remark}\label{remark:equivariant} 
The above link \eqref{equation:go-conclusion} 
is $\Aut(X; Z_1\cup Z_2)$-equivariant. In particular, 
if $Z_1\cup Z_2\subset X$ is $\operatorname{Aut}(X)$-invariant, then 
the above link is $\Aut(X)$-equivariant. 
See \S \ref{section:special} for a special case. 
\end{remark}

\section{Links to the blowups of 
prime Fano threefolds along two lines}\label{section:back}

The purpose of this section is to prove Theorem \ref{thm:main-back}. 
More precisely, we see the converse of the link constructed in \S 
\ref{section:go} when $g\in\{12,10\}$. 
In \S \ref{subsection:back-g12}, we consider the inverse link for the case $g=12$. 
In \S \ref{subsection:back-g10}, we consider the inverse link for the case $g=10$.

\subsection{The case $g=12$}\label{subsection:back-g12}

In \S \ref{subsection:back-g12}, we assume that 
$\hat{Q}$ is a Fano threefold of type 2.21, let $\rho_i\colon\hat{Q}\to Q_i$ be 
the distinct contractions $(i=1,2)$, and let $\Gamma_i^{Q_i}\subset Q_i$ be 
the center of the blowup $\rho_i$. 
We also assume that $\hat{C}\subset\hat{Q}$ is a bi-cubic curve in $\hat{Q}$, 
let us set $C_i:=(\rho_i)_*\hat{C}\subset Q_i$, and let 
$\tau\colon X_0^+\to \hat{Q}$ be the blowup along $\hat{C}\subset\hat{Q}$ 
with the exceptional divisor $E^+\subset X_0^+$. 

\begin{lemma}\label{lemma:nefnef}
The variety $X_0^+$ is a smooth weak Fano threefold with 
$(-K_{X_0^+})^{\cdot 3}=14$. 
\end{lemma}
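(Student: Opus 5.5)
Since $\hat{C}$ is a smooth rational curve (Lemma \ref{lemma:2-21-bi-cubic}), the anti-canonical volume follows from the standard formula for the blowup of a smooth threefold along a smooth curve:
\[
(-K_{X_0^+})^{\cdot 3}=(-K_{\hat{Q}})^{\cdot 3}-2\left(-K_{\hat{Q}}\cdot\hat{C}\right)+2p_a(\hat{C})-2
\]
(see \cite[Lemma 2.1]{MM85}). Here $(-K_{\hat{Q}})^{\cdot 3}=28$ for type 2.21. By Proposition \ref{proposition:2-21}, $-K_{\hat{Q}}\sim\rho_1^*\sO_{Q_1}(1)+\rho_2^*\sO_{Q_2}(1)$, so $(-K_{\hat{Q}}\cdot\hat{C})=3+3=6$. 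Hence the volume is $28-12-2=14>0$. Bigness is then automatic once nefness is known.

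For nefness I would avoid a direct curve-by-curve analysis and instead find a base point free representative. We have
\[
-K_{X_0^+}=\tau^*\rho_1^*\sO_{Q_1}(1)+\tau^*\rho_2^*\sO_{Q_2}(1)-E^+,
\]
and I would write this as the sum of $D_1:=\tau^*\rho_1^*\sO_{Q_1}(1)$ and $D_2:=\tau^*\rho_2^*\sO_{Q_2}(1)-E^+$. Here $D_1$ is nef. For $D_2$, note that $C_2=(\rho_2)_*\hat{C}$ is a twisted cubic in $Q_2\subset\pr^4$ (Lemma \ref{lemma:2-21-bi-cubic}). A twisted cubic is cut out scheme-theoretically by the hyperplane (its span $\pr^3$) and quadrics, but it is not cut out by hyperplanes alone. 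So $\tau^*\rho_2^*\sO(1)-E^+$ need not be nef, and this naive splitting is the wrong one.

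Instead, I would split symmetrically:
\[
-K_{X_0^+}=\tfrac12\left(\tau^*\rho_1^*\sO_{Q_1}(2)-E^+\right)+\tfrac12\left(\tau^*\rho_2^*\sO_{Q_2}(2)-E^+\right).
\]
Write $\hat{C}_1$ for the strict transform of $C_1$ under the blowup $\rho_1$ along $\Gamma_1^{Q_1}$. Since $C_1$ is cut out scheme-theoretically by quadrics on $Q_1$, the system $|\rho_1^*\sO_{Q_1}(2)-E|$ on the blowup of $\hat{Q}$ along $\hat{C}$ is base point free away from the part lying over $\Gamma_1^{Q_1}$. That leaves curves in the exceptional locus $S_1$ and the fibres of $E^+$ over $\hat{C}\cap S_1$.

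A cleaner route, which I would try first, is a direct check of nefness on irreducible curves $\Theta\subset X_0^+$. If $\Theta\not\subset E^+$, then $(-K_{X_0^+}\cdot\Theta)=(-K_{\hat Q}\cdot\tau_*\Theta)-(E^+\cdot\Theta)$, and I would bound $(E^+\cdot\Theta)$ by a multiplicity estimate using the quadric-cut-out property on each side. If $\Theta\subset E^+$, then $E^+\cong\pr(\sN^\vee)$ with $\sN=\sN_{\hat C/\hat Q}$ of degree $6-2=4$. Negativity of $-K$ on the negative section would require $\sN$ to be very unbalanced, i.e. $\sO(a)\oplus\sO(4-a)$ with $|a-2|\geq 3$. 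I would exclude this via $\sN_{C_1/Q_1}\cong\sO(5)^{\oplus 2}$ for a twisted cubic in $Q$, twisted by the three points of $C_1\cap\Gamma_1^{Q_1}$.

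The main obstacle is curves $\Theta\not\subset E^+$ meeting $\hat C$ with high multiplicity, specifically $\tau$-images $B$ with $(-K_{\hat Q}\cdot B)<(E^+\cdot\Theta)$. These are presumably exactly the bi-lines $\sC^+$ meeting $\hat C$ in length $2$, which become flopping curves. I would handle them by showing that the secant lines of $C_i$ lying in $Q_i$ give $\rho_i^*\sO(1)$-degree $1$ on each side, so $(-K_{\hat Q}\cdot B)\geq 2\geq(E^+\cdot\Theta)$. The key input is that any $B$ with $(E^+\cdot\Theta)=k$ forces $(\rho_i^*\sO(1)\cdot B)\geq k/2$ on each side, because $C_i$ is cut out by quadrics. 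Summing over $i=1,2$ gives $(-K_{\hat Q}\cdot B)\geq k$, which is exactly nefness.
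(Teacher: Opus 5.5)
Your volume computation is correct and agrees with the paper's.

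\textbf{The gap is in the nefness argument.} Your key claim is that $(E^+\cdot\Theta)=k$ forces $2(\rho_i^*\sO_{Q_i}(1)\cdot B)\geq k$ for \emph{each} $i$. This is false for one specific family of curves: $B=\hat{B}_{p'}:=\rho_i^{-1}(p')$ with $p'\in C_i\cap\Gamma_i^{Q_i}$. This family is nonempty, since the intersection has length $3$.
\begin{itemize}
\item Here $\rho_i(B)$ is a point, so no quadric through $C_i$ avoids it.
\item We have $(\rho_i^*\sO_{Q_i}(1)\cdot\hat{B}_{p'})=0$, while $\hat{B}_{p'}$ meets $\hat{C}$, so $(E^+\cdot\tau^{-1}_*\hat{B}_{p'})\geq 1$.
\item The quadric argument only works when $\rho_i(B)$ is a curve other than $C_i$. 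These fibres are precisely the base curves of $|(\rho_i\circ\tau)^*\sO_{Q_i}(2)-E^+|$, on which that divisor has degree $-1$.
\end{itemize}
They have to be checked by hand, and that check is essentially the paper's proof. From $\rho_j^*\sO_{Q_j}(1)\sim\rho_i^*\sO_{Q_i}(2)-S_i$ one gets $(-K_{\hat{Q}}\cdot\hat{B}_{p'})=0+1=1$. Lemma \ref{lemma:lengths} gives $(E^+\cdot\tau^{-1}_*\hat{B}_{p'})=\operatorname{length}(\sO_{\hat{C}\cap\hat{B}_{p'}})=\mult_{p'}C_i=1$, because $C_i$ is smooth. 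Hence $(-K_{X_0^+}\cdot\tau^{-1}_*\hat{B}_{p'})=0$. You need this exact value, and no inequality coming from quadrics yields it. Your guess that the $K_{X_0^+}$-trivial curves are only the bi-lines meeting $\hat{C}$ in length $2$ therefore misses this family. These fibres are the flopped curves of $\chi_{i3}$ (cf. Example \ref{example:3flop}), so they are also contracted by $\alpha^+$.

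\textbf{Your first route was the right one.} It is exactly the paper's symmetric decomposition, but you stated its base locus too coarsely. Since $C_i$ is scheme-theoretically cut out by quadrics, and $\rho_i$ is an isomorphism near every point of $\hat{C}$ not lying over $C_i\cap\Gamma_i^{Q_i}$, the base locus of $|(\rho_i\circ\tau)^*\sO_{Q_i}(2)-E^+|$ is contained in $(\rho_i\circ\tau)^{-1}(C_i\cap\Gamma_i^{Q_i})$. That set consists of the curves $\tau^{-1}_*\hat{B}_{p'}$ and finitely many fibres of $E^+\to\hat{C}$, on which $-K_{X_0^+}$ has degree $1$; it is not all of $\tau^{-1}(S_i)$. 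With this, the splitting reduces nefness to the computation above, and no separate analysis of curves inside $E^+$ is needed.

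That separate analysis is also miscomputed as written:
\begin{itemize}
\item $\deg\sN_{C_1/Q_1}=(-K_{Q_1}\cdot C_1)-2=7$, so $\sN_{C_1/Q_1}$ is not $\sO(5)^{\oplus 2}$, which is the normal bundle of a twisted cubic in $\pr^3$.
\item For $\sN_{\hat{C}/\hat{Q}}\cong\sO(a)\oplus\sO(4-a)$, the section of $E^+$ given by a line subbundle $L$ has $-K_{X_0^+}$-degree $6-\deg L$. Negativity therefore requires $|a-2|\geq 5$, not $|a-2|\geq 3$.
\end{itemize}
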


\begin{proof}
Since $\hat{C}\subset\hat{Q}$ is a smooth rational curve and 
$\left(-K_{\hat{Q}}\cdot\hat{C}\right)=6$, we get
\[
(-K_{X_0^+})^{\cdot 3}=(-K_{\hat{Q}})^{\cdot 3}-14=14>0 
\]
(see \cite[Lemma 2.1]{MM85} for example). 
Thus it is enough to show that $-K_{X_0^+}$ is nef. Note that 
\[
-2K_{X_0^+}\sim \left((\rho_1\circ\tau)^*\sO_{Q_1}(2)-E^+\right)
+\left((\rho_2\circ\tau)^*\sO_{Q_2}(2)-E^+\right). 
\]
For $i\in\{1,2\}$, let us set 
\[
\left\{p'_{i1},\dots,p'_{im'_i}\right\}:=\left(\Gamma_i^{Q_i}\cap 
C_i\right)_{\operatorname{red}}, 
\]
and let us set ${\hat{B}}_{p'_{ik}}:=\rho_i^{-1}(p'_{ik})$ with the reduced structure. 
Since $C_i\subset Q_i$ is a twisted cubic curve, the base locus of the complete 
linear system 
\[
\left|(\rho_i\circ\tau)^*\sO_{Q_i}(2)-E^+\right|
\]
is contained in the subset 
\[
\bigcup_{1\leq k\leq m'_i}(\rho_i\circ\tau)^{-1}(p_{ik}). 
\]
Therefore, in order to show the nefness of $-K_{X_0^+}$, it is enough to show 
the inequality
$\left(-K_{X_0^+}\cdot\tau^{-1}_*{\hat{B}}_{p'_{ik}}\right)\geq 0$. 
By Lemma \ref{lemma:lengths}, we have 
\[
\left(-K_{X_0^+}\cdot\tau^{-1}_*{\hat{B}}_{p'_{ik}}\right)
=\left(-K_{\hat{Q}}\cdot {\hat{B}}_{p'_{ik}}\right)
-\operatorname{length}\left(\sO_{\hat{C}\cap {\hat{B}}_{p'_{ik}}}\right)=1-1=0.
\]
Thus we get the assertion. 
\end{proof}

Consider the link 
\[\xymatrix{
& E^{Y_1} \ar@{}[r]|{\subset} & Y_1 
\ar@{}[r]|{\supset} \ar@{->}[ld]_{\psi_1} \ar@{->}[rd]^{\phi_1}
& F_2^{Y_1} & \\
C_1  \ar@{}[r]|{\subset} & Q_1 && V_1 \ar@{}[r]|{\supset} & Z_2^{V_1}
}\]
from the blowup $Q_1$ along $C_1$ 
as in \eqref{equation:fujita-converse}. 
In particular, the variety $V_1$ is the del Pezzo threefold of degree $5$, 
$Z_2^{V_1}\subset V_1$ is a line and $\phi_1$ is the blowup of $V_1$ along $Z_2^{V_1}$
with the exceptional divisor $F_2^{V_1}\subset V_1$. 
By Theorem \ref{thm:dP5} \eqref{thm:dP55}, the image 
$(\psi_1)_*F_2^{Y_1}\subset Q_1$ is the hyperplane section containing $C_1$. 
Moreover, the linear span $\langle\Gamma_1^{Q_1}\rangle$
of the curve $\Gamma_1^{Q_1}\subset Q_1\subset\pr^4$ 
is the whole space $\pr^4$. 
Therefore, we can define the strict transform $\Gamma_1\subset V_1$ of 
$\Gamma_1^{Q_1}\subset Q_1$ 
since $\Gamma_1^{Q_1}\not\subset(\psi_1)_*F_2^{Y_1}$. 

\begin{lemma}\label{lemma:twisted-quintic}
\begin{enumerate}
\renewcommand{\theenumi}{\arabic{enumi}}
\renewcommand{\labelenumi}{(\theenumi)}
\item\label{lemma:twisted-quintic1}
We have 
\[
\operatorname{length}\left(\sO_{\Gamma_1\cap Z_2^{V_1}}\right)=1. 
\]
\item\label{lemma:twisted-quintic2}
Under the half-anti-canonical embedding $V_1\subset\pr^6$, the curve 
$\Gamma_1\subset V_1$ is a twisted quintic curve. 
Moreover, the linear span $\langle\Gamma_1\rangle\subset \pr^6$ does not 
contain $Z_2^{V_1}$. 
\end{enumerate}
\end{lemma}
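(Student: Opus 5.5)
We work on $Y_1$, which is at once the blowup $\psi_1\colon Y_1\to Q_1$ along $C_1$ (exceptional divisor $E^{Y_1}$) and the blowup $\phi_1\colon Y_1\to V_1$ along $Z_2^{V_1}$ (exceptional divisor $F_2^{Y_1}$). Let $\Gamma_1^{Y_1}$ be the strict transform of $\Gamma_1^{Q_1}$. By Theorem \ref{thm:dP5} \eqref{thm:dP54}, and since Proposition \ref{proposition:fujita} says this is the same link, we have
\[
F_2^{Y_1}\sim\psi_1^*\sO_{Q_1}(1)-E^{Y_1},\qquad \phi_1^*\sO_{V_1}(1)\sim\psi_1^*\sO_{Q_1}(2)-E^{Y_1}.
\]
The plan is to compute the intersection numbers of $\Gamma_1^{Y_1}$ with these classes.

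By Lemma \ref{lemma:2-21-bi-cubic} \eqref{lemma:2-21-bi-cubic1} and Lemma \ref{lemma:lengths} \eqref{lemma:length1}, applied with $\Delta=C_1$ and $\Xi=\Gamma_1^{Q_1}$ (note $\Gamma_1^{Q_1}\not\subset C_1$ because of the degrees), we get
\[
\left(E^{Y_1}\cdot\Gamma_1^{Y_1}\right)=\operatorname{length}\left(\sO_{C_1\cap\Gamma_1^{Q_1}}\right)=3.
\]
Also $\left(\psi_1^*\sO_{Q_1}(1)\cdot\Gamma_1^{Y_1}\right)=4$. Hence
\[
\left(F_2^{Y_1}\cdot\Gamma_1^{Y_1}\right)=4-3=1,\qquad \left(\phi_1^*\sO_{V_1}(1)\cdot\Gamma_1^{Y_1}\right)=8-3=5.
\]
We already know $\Gamma_1^{Q_1}\not\subset(\psi_1)_*F_2^{Y_1}$, so $\Gamma_1^{Y_1}\not\subset F_2^{Y_1}$. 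Applying Lemma \ref{lemma:lengths} \eqref{lemma:length1} once more, now to the blowup $\phi_1$ with $\Delta=Z_2^{V_1}$, gives $\operatorname{length}\left(\sO_{\Gamma_1\cap Z_2^{V_1}}\right)=1$, which is \eqref{lemma:twisted-quintic1}.

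For \eqref{lemma:twisted-quintic2}, the curve $\Gamma_1$ is smooth rational, being birational to the twisted quartic $\Gamma_1^{Q_1}$. The point needing care is smoothness of $\Gamma_1$ at the point where it meets $Z_2^{V_1}$. This is fine: $\phi_1$ maps $\Gamma_1^{Y_1}$ isomorphically onto $\Gamma_1$ away from $F_2^{Y_1}$, and the intersection with $F_2^{Y_1}$ has local length $1$, i.e. it is a single transversal point. A curve whose strict transform in a blowup along a smooth center meets the exceptional divisor with length $1$ is smooth at that point, since its multiplicity there is at most $1$. So $\Gamma_1$ is a smooth rational quintic, and Lemma \ref{lemma:dP5quintic} shows it is a twisted quintic spanning a hyperplane $H=\langle\Gamma_1\rangle$.

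It remains to show $Z_2^{V_1}\not\subset H$; this is the main obstacle. Suppose $Z_2^{V_1}\subset H$. By Theorem \ref{thm:dP5} \eqref{thm:dP56}, $\psi_1\circ\phi_1^{-1}$ is the linear projection from $Z_2^{V_1}$, so the image of $\Gamma_1$ would lie in the projection of $H$. That projection is a hyperplane of $\pr^4$. But $\Gamma_1$ maps birationally onto $\Gamma_1^{Q_1}$, which spans $\pr^4$, a contradiction.

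An equivalent divisor-theoretic check: if $Z_2^{V_1}\subset H$, then the strict transform of $H\cap V_1$ lies in $|\phi_1^*\sO_{V_1}(1)-kF_2^{Y_1}|$ with $k\ge1$. Since
\[
\phi_1^*\sO_{V_1}(1)-F_2^{Y_1}\sim\psi_1^*\sO_{Q_1}(1),
\]
this gives a hyperplane section of $Q_1$ containing $\Gamma_1^{Q_1}$ (or $\Gamma_1^{Q_1}$ lying in a component $F_2$), which is impossible.
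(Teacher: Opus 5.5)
Your proof is correct and follows the paper's argument essentially step by step: the intersection numbers $(\phi_1^*\sO_{V_1}(1)\cdot\Gamma_1^{Y_1})=5$ and $(F_2^{Y_1}\cdot\Gamma_1^{Y_1})=1$, smoothness of $\Gamma_1$ from the latter, Lemma \ref{lemma:dP5quintic}, and the projection from $Z_2^{V_1}$ for the last claim. The one fix is the order of steps. Lemma \ref{lemma:lengths} \eqref{lemma:length1} computes the length of $\Gamma_1^\nu\cap Z_2^{V_1}$ with $\Gamma_1^\nu$ the normalization, so smoothness of $\Gamma_1$ must come before you deduce \eqref{lemma:twisted-quintic1}. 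Since your smoothness argument does not use \eqref{lemma:twisted-quintic1}, you can simply move it earlier, which is the order the paper uses.
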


\begin{proof}
Since $\operatorname{length}\left(\sO_{C_1\cap \Gamma_1^{Q_1}}\right)=3$, 
by Lemma \ref{lemma:lengths}, we have 
\begin{eqnarray*}
\left(\sO_{V_1}(1)\cdot\Gamma_1\right)
&=&\left(\psi_1^*\sO_{Q_1}(2)-E^{Y_1}\cdot (\psi_1)^{-1}_*\Gamma_1^{Q_1}\right)
=8-3=5, \\
\left(F_2^{Y_1}\cdot(\phi_1)^{-1}_*\Gamma_1\right)
&=&\left(\psi_1^*\sO_{Q_1}(1)-E^{Y_1}\cdot (\psi_1)^{-1}_*\Gamma_1^{Q_1}\right)
=4-3=1.
\end{eqnarray*}
Assume that there exists a singular point $p_1\in\Gamma_1$ of $\Gamma_1$. 
Then we must have $p_1\in Z_2^{V_1}$ since $(\phi_1)^{-1}_*\Gamma_1$ is 
smooth. Set $l_{p_1}:=\phi_1^{-1}(p_1)$ with the reduced structure. 
Then, by Lemma \ref{lemma:lengths}, we must have 
\[
\operatorname{length}\left(\sO_{l_{p_1}\cap(\phi_1)^{-1}_*\Gamma_1}\right)\geq 2. 
\]
This leads to a contradiction since $\left(F_2^{Y_1}\cdot
(\phi_1)^{-1}_*\Gamma_1\right)=1$. Thus $\Gamma_1$ is a smooth rational curve. 
This implies that $\Gamma_1\subset V_1\subset\pr^6$ is a twisted quintic curve 
by Lemma \ref{lemma:dP5quintic}. 
The assertion \eqref{lemma:twisted-quintic1} follows 
from Lemma \ref{lemma:lengths}. 

Assume that $Z_2^{V_1}\subset\langle\Gamma_1\rangle$. Since the rational map 
$V_1\dashrightarrow Q_1$ is the restriction of the projection $\pr^6\dashrightarrow 
\pr^4$ from the line $Z_2^{V_1}$, the image of $\langle\Gamma_1\rangle$ in $\pr^4$ 
must be a hyperplane. 
Since the linear span of $\Gamma_1^{Q_1}$ is the whole space $\pr^4$, 
this leads to a contradiction. Thus we have proved $Z_2^{V_1}
\not\subset\langle\Gamma_1\rangle$. 
\end{proof}

Let us consider the Sarkisov link 
\[\xymatrix{
& S_1^+ \ar@{}[r]|{\subset}  & X_1^+  \ar@{-->}[rr]^{\chi_1^{-1}} 
\ar@{->}[ld]_-{\tau_1} \ar@{->}[rd]_-{\beta_1^+} 
& & X_1 \ar@{->}[rd]^-{\sigma_1} \ar@{->}[ld]^-{\beta_1} \ar@{}[r]|{\supset} & F'_1 & \\
\Gamma_1 \ar@{}[r]|{\subset} & V_1 && \bar{X}_1 && X \ar@{}[r]|{\supset}& Z_1
}\]
from the blowup of $V_1$ along $\Gamma_1$ as in \eqref{equation:iskovskikh-converse}. 
In particular, the variety $X$ is a prime Fano threefold of genus $12$ and 
$Z_1\subset X$ is a line. 

\begin{proposition}\label{proposition:goback}
\begin{enumerate}
\renewcommand{\theenumi}{\arabic{enumi}}
\renewcommand{\labelenumi}{(\theenumi)}
\item\label{proposition:goback1}
The strict transform $(\tau_1)_*^{-1}\left(Z_2^{V_1}\right)\subset X_1^+$ of $Z_2^{V_1}
\subset V_1$ is not contained in the strict transform $(\chi_1)_*F'_1$. 
Moreover, the rational map $\chi_1^{-1}$ is an isomorphism around the 
neighborhood of $(\tau_1)_*^{-1}\left(Z_2^{V_1}\right)$. 
Thus we can consider the strict transform $Z_2\subset X$ of $Z_2^{V_1}
\subset V_1$. 
\item\label{proposition:goback2}
The curve $Z_2\subset X$ is a line in $X$. Moreover, $Z_1$, $Z_2$ is 
a totally disjoint pair of lines in $X$. 
\end{enumerate}
\end{proposition}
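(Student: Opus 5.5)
We have to show that the strict transform $Z_2^{X_1^+}:=(\tau_1)^{-1}_*Z_2^{V_1}$ meets no flopped curve of $\chi_1$ and is not contained in $(\chi_1)_*F'_1$. After that, the intersection numbers transported from $V_1$ will force $Z_2$ to be a line, and the description of flopping curves will force total disjointness.

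First, by Lemma \ref{lemma:twisted-quintic} \eqref{lemma:twisted-quintic1} and Lemma \ref{lemma:lengths}, we have $(S_1^+\cdot Z_2^{X_1^+})=1$ and $(\tau_1^*\sO_{V_1}(1)\cdot Z_2^{X_1^+})=1$. Consequently $(-K_{X_1^+}\cdot Z_2^{X_1^+})=2-1=1$.

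Next we rule out containment in $(\chi_1)_*F'_1$. By Theorem \ref{thm:double-projection-from-line-converse} \eqref{thm:double-projection-from-line-converse1}, the image of $F'_1$ in $V_1$ is the hyperplane section $F^V=V_1\cap\langle\Gamma_1\rangle$. Lemma \ref{lemma:twisted-quintic} \eqref{lemma:twisted-quintic2} says $Z_2^{V_1}\not\subset\langle\Gamma_1\rangle$, so $Z_2^{V_1}\not\subset F^V$, which gives the containment claim.

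For the isomorphism claim we use the same theorem: the flopping curves of $\beta_1^+$ are exactly the strict transforms $\tau_1^{-1}{}_*Z''$ of lines $Z''\subset V_1$ with $\operatorname{length}(\sO_{\Gamma_1\cap Z''})=2$. Since $Z_2^{V_1}$ is a line with length $1$, it is not a flopping curve itself. It remains to see that it meets none of them. Suppose $Z''$ is such a line with $Z''\cap Z_2^{V_1}\neq\emptyset$ in $V_1$. Then $Z''$ and $Z_2^{V_1}$ span a plane. This plane lies in $\langle\Gamma_1\rangle$ or meets it in a line. Since $Z''$ is $2$-secant to $\Gamma_1\subset\langle\Gamma_1\rangle$, we get $Z''\subset\langle\Gamma_1\rangle$. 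Under the projection from $Z_2^{V_1}$ (Theorem \ref{thm:dP5} \eqref{thm:dP56}), $Z''$ is contracted to a point of $Q_1$, so $\phi_1^{-1}{}_*Z''$ is a fibre of $F_2^{Y_1}\to C_1$ (or is contained in $E^{Y_1}$). Its image is then a point $p\in C_1$, and the $2$-secancy forces $\Gamma_1^{Q_1}$ to pass through $p$ with a contribution that we then compare with $\operatorname{length}(\sO_{C_1\cap\Gamma_1^{Q_1}})=3$ and with Lemma \ref{lemma:twisted-quintic}. I expect this step to be the main obstacle. If the projective geometry becomes messy, I would instead use Lemma \ref{lemma:negativity}: pass to $X_0^+:=$ the blowup of $\hat Q$ along $\hat C$ and its flops, and show that $(\sO_{\bar X_1}(1)\cdot Z_2)$ cannot drop. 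Even then, an intersecting flopping curve must be excluded by an intersection-number count on $Y_1$ with $(F_2^{Y_1}\cdot \Gamma_1^{Y_1})=1$: a line $Z''$ meeting $Z_2^{V_1}$ has strict transform meeting $F_2^{Y_1}$, and then $\phi_1^{-1}{}_*Z''$ is contracted by $\psi_1$, so it is a $\psi_1$-fibre. A fibre of $\psi_1$ over $p\in C_1$ meets $\Gamma_1^{Y_1}$ in length equal to the local contribution at $p$. Summing over the at most two such points against the total length $3$ should contradict length $2$ combined with $(F_2^{Y_1}\cdot\Gamma_1^{Y_1})=1$.

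Granting this, $\chi_1^{-1}$ is an isomorphism near $Z_2^{X_1^+}$, and we define $Z_2:=\sigma_1(\chi_1^{-1}{}_*Z_2^{X_1^+})$. Because $(\chi_1)_*F'_1\sim\tau_1^*\sO_{V_1}(1)-S_1^+$ has intersection $0$ with the curve, we get $(F'_1\cdot Z_2^{X_1})=0$. Hence $(-K_X\cdot Z_2)=(-K_{X_1}\cdot Z_2^{X_1})=1$, so $Z_2$ is a line. Since $Z_2^{X_1}$ does not meet $F'_1$ (intersection $0$ and not contained), $Z_2\cap Z_1=\emptyset$. Finally, a line $Z$ meeting both $Z_1$ and $Z_2$ would have strict transform a flopping curve of $\beta_1$ by Theorem \ref{thm:double-projection-from-line} \eqref{thm:double-projection-from-line2}, and this curve would meet $Z_2^{X_1}$. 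That contradicts the isomorphism just proved, so $Z_1$, $Z_2$ is totally disjoint.
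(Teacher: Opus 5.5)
Your computation $(-K_{X_1^+}\cdot Z_2^{X_1^+})=1$, the non-containment in $(\chi_1)_*F'_1$, and all of part (2) are fine. For total disjointness you use that $Z_2^{X_1}$ avoids $\Exc(\beta_1)$, whereas the paper uses that $X_0$ is weak Fano; either works.

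\textbf{The gap.} It lies in the claim that $\chi_1^{-1}$ is an isomorphism near $Z_2^{X_1^+}$. You set out to show that no line $Z''\subset V_1$ with $\operatorname{length}(\sO_{\Gamma_1\cap Z''})=2$ meets $Z_2^{V_1}$ \emph{in $V_1$}. That is stronger than what is needed, which is only disjointness of the strict transforms in $X_1^+$. It is also false in general.

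\textbf{Why it is false.} Suppose $Z_1,Z_2$ are totally but not absolutely disjoint, i.e.\ $\bar{m}=1$ in Theorem~\ref{thm:flop-curve}. Set $r:=\Gamma_1\cap Z_2^{V_1}$. The unique flopping curve of the Atiyah flop $\chi_{12}$ is the strict transform of $\tau_1^{-1}(r)$, and it equals $B_{21}^{W_1}$. This curve meets the flopped curve $B_{11}^{+W_1}$. Pushing down, the flopped curve of $B_{11}$ in $X_1^+$ is $(\tau_1)^{-1}_*Z''$ for a $2$-secant line $Z''$ passing through $r\in Z_2^{V_1}$. So such a $Z''$ does meet $Z_2^{V_1}$, and the two strict transforms are separated only by blowing up $\Gamma_1$.

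\textbf{Why your fibre count does not reach this case.} By Lemma~\ref{lemma:lengths}~\eqref{lemma:length2}, a fibre $\psi_1^{-1}(p)$ meets $\Gamma_1^{Y_1}$ in length $\mult_p\Gamma_1^{Q_1}\le 1$. It is not the local contribution to $\operatorname{length}(\sO_{C_1\cap\Gamma_1^{Q_1}})$. This correctly excludes a $2$-secant line meeting $Z_2^{V_1}$ away from $\Gamma_1$. When $Z''$ passes through $r$, however, whether $(\tau_1)^{-1}_*Z''$ meets $Z_2^{X_1^+}$ depends on how $Z''$ and $Z_2^{V_1}$ approach $\Gamma_1$ at $r$. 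Your proposed summing over $C_1\cap\Gamma_1^{Q_1}$ yields no contradiction there.

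\textbf{The missing idea.} It is global, and is the paper's argument. Let $\phi_1^+\colon W_1\to X_1^+$ be the blowup along $Z_2^{X_1^+}$. Apply Example~\ref{example:3flop} twice: to $\Gamma_1,Z_2^{V_1}\subset V_1$, and to $C_1,\Gamma_1^{Q_1}\subset Q_1$. This shows $W_1$ is a small $\Q$-factorial modification of $X_0^+=\mathrm{Bl}_{\hat{C}}\hat{Q}$. The latter is weak Fano by Lemma~\ref{lemma:nefnef}, so $-K_{W_1}$ is nef by Proposition~\ref{proposition:MDS}.

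Now suppose a flopping curve $B$ of $\beta_1^+$ met $Z_2^{X_1^+}$. It cannot equal $Z_2^{X_1^+}$, since their anticanonical degrees are $0$ and $1$. Its strict transform would then satisfy $(-K_{W_1}\cdot B^{W_1})=-(\Exc(\phi_1^+)\cdot B^{W_1})<0$, a contradiction. Your fallback mentions $X_0^+$ and Lemma~\ref{lemma:negativity}, but the property that actually does the work is this nefness of $-K$ on the small modification $W_1$ of $X_0^+$.
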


\begin{proof}
\eqref{proposition:goback1} 
The strict transform of $F'_1$ in $V_1$ is the hyperplane section 
of $V_1$ containing $\Gamma_1$ by Theorem 
\ref{thm:double-projection-from-line-converse} 
\eqref{thm:double-projection-from-line-converse1}. Since $Z_2^{V_1}
\not\subset\langle\Gamma_1\rangle$, we get the first assertion of 
\eqref{proposition:goback1}. 

We remark that the curve $(\tau_1)_*^{-1}\left(Z_2^{V_1}\right)$ intersects 
$-K_{X_1^+}$ with the intersection number $1$ by 
Lemma \ref{lemma:twisted-quintic}. Thus the curve 
$(\tau_1)_*^{-1}\left(Z_2^{V_1}\right)$ cannot be contracted by the flopping 
contraction $\beta_1^+\colon X_1^+\to\bar{X}_1$. 
Let us consider the blowup $\phi_1^+\colon W_1\to X_1^+$ of $X_1^+$ along 
$(\tau_1)_*^{-1}\left(Z_2^{V_1}\right)$. The variety $W_1$ is 
a small $\Q$-factorial modification of $X_0^+$. The variety $X_0^+$ is 
a smooth weak Fano threefold by Lemma \ref{lemma:nefnef}. 
Thus, by Proposition \ref{proposition:MDS}, the anti-canonical divisor $-K_{W_1}$ 
of $W_1$ is nef and big. In particular, the curve $(\tau_1)_*^{-1}\left(Z_2^{V_1}\right)$
cannot intersect with any flopping curve of $\chi_1^{-1}$. 
(Indeed, if a flopping curve $B\subset X_1^+$ of $\chi_1^{-1}$ intersects with 
$(\tau_1)_*^{-1}\left(Z_2^{V_1}\right)$, then its struct transform $B^{W_1}\subset W_1$ 
on $W_1$ satisfies that $0=\left(-K_{X_1^+}\cdot B\right)>
\left(-K_{W_1}\cdot B^{W_1}\right)$, a contradiction.)
Thus we get the assertion \eqref{proposition:goback1}. 

\eqref{proposition:goback2} 
 Let $Z_2^{X_1}\subset X_1$ be the strict transform of $Z_2^{V_1}\subset V_1$. 
By (1) (and by Lemma \ref{lemma:twisted-quintic}), 
we have $(F'_1\cdot Z_2^{X_1})=0$ and $(-K_{X_1}\cdot Z_2^{X_1})=1$. 
Thus $Z_2\subset X$ is a line disjoint from $Z_1$. Let $\sigma_2'\colon X_0\to X_1$
be the blowup of $X_1$ along $Z_2^{X_1}$. Note that $X_0$ is nothing but 
the blowup along $Z_1\cup Z_2$. Moreover, the variety $X_0$ is a small $\Q$-factorial 
modification of $X_0^+$. Therefore, again by Proposition \ref{proposition:MDS}, 
the variety $X_0$ is a smooth weak Fano threefold. Therefore, there is no line 
$Z\subset X$ with $Z\cap Z_1\neq\emptyset$ and $Z\cap Z_2\neq\emptyset$. 
Thus we get the assertion \eqref{proposition:goback2}. 
\end{proof}

Therefore, from any Fano threefold $\hat{Q}$ of type 2.21 and a bi-cubic curve 
$\hat{C}$ in $\hat{Q}$, we can construct the inverse of the link in 
\S \ref{section:go} for the case $g=12$. 

\begin{corollary}\label{corollary:back-g12}
Let $\hat{Q}$ be a Fano threefold of type 2.21, let $\hat{C}\subset\hat{Q}$ be 
a bi-cubic curve in $\hat{Q}$, and let $\tau\colon X_0^+\to \hat{Q}$ be the 
blowup along $\hat{C}$ with the $\tau$-exceptional divisor $E^+\subset X_0^+$. 
Then $X_0^+$ is a smooth weak Fano threefold and the anti-canonical model 
$\alpha^+\colon X_0^+\to \bar{X}_0$ of $X_0^+$ is small with $\rho(\bar{X}_0)=1$. 
Moreover, we have the following link: 
\begin{equation}\label{equation:back-g12}
\xymatrix{
&E^+\ar@{}[r]|-{\subset} & 
X_0^+\ar[dl]_-{\tau} \ar@{-->}[rr]^{\chi^{-1}} \ar[dr]_-{\alpha^+} &&
X_0\ar@{}[r]|-{\supset} \ar[dl]^-{\alpha} \ar[dr]^-{\sigma} & F_1\cup F_2 & \\
\hat{C} \ar@{}[r]|-{\subset} &\hat{Q}  && \bar{X}_0 && X 
\ar@{}[r]|-{\supset} & Z_1\cup Z_2, 
}
\end{equation}
where $X_0$ is the $(\tau^*K_{\hat{Q}})$-flop of $\alpha^+$, i.e., 
\[
X_0=\Proj_{\bar{X}_0}\bigoplus_{m\in\Z_{\geq 0}}(\alpha^+)_*\sO_{X_0^+}
\left(\tau^*(mK_{\hat{Q}})\right),
\]
together with the structure morphism $\alpha\colon X_0\to\bar{X}_0$, 
$\chi^{-1}:=\alpha^{-1}\circ\alpha^+$, and 
\[
X=\Proj\bigoplus_{m\in\Z_{\geq 0}}H^0\left(X_0^+, 
m\left(\tau^*(-2K_{\hat{Q}})-3E^+\right)\right).
\]
Moreover, the variety $X$ is a prime Fano threefold of genus $12$, the morphism 
$\sigma$ is obtained by the blowup along a totally disjoint pair of lines $Z_1$, $Z_2$ 
in $X$ and the exceptional divisor $F_1+F_2$ of $\sigma$ is the strict transform 
of the unique member of $|\tau^*(-K_{\hat{Q}})-2E^+|$. 
\end{corollary}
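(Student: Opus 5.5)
The plan is to assemble Lemma \ref{lemma:nefnef}, Lemma \ref{lemma:twisted-quintic} and Proposition \ref{proposition:goback}, and then identify the resulting diagram with the one of \S\ref{section:go} applied to $(X;Z_1,Z_2)$.

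\textbf{Step 1: the data $(X;Z_1,Z_2)$.} Lemma \ref{lemma:nefnef} shows that $X_0^+$ is a smooth weak Fano threefold. Starting from $(\hat{Q},\hat{C})$, we first run the converse Fujita link \eqref{equation:fujita-converse} from $(Q_1,C_1)$. This produces $V_1$, the line $Z_2^{V_1}$ and the twisted quintic $\Gamma_1$. We then run the converse Iskovskikh link \eqref{equation:iskovskikh-converse} from $(V_1,\Gamma_1)$, which gives $X$ and the line $Z_1$. By Proposition \ref{proposition:goback}, $X$ is a prime Fano threefold of genus $12$ and $Z_1,Z_2$ is a totally disjoint pair of lines.

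\textbf{Step 2: identifying the two $X_0^+$'s.} Let $\sigma\colon X_0\to X$ be the blowup along $Z_1\cup Z_2$. By construction, $X_0$ is connected to the given $X_0^+$ by a chain of small modifications: the blowup of $X_1^+$ along the strict transform of $Z_2^{V_1}$, then the flop of Example \ref{example:3flop} from $\phi_1$, then the Example \ref{example:3flop} flop over $Q_1$ relating the two blowups of $Q_1$ along $\Gamma_1^{Q_1}$ and $C_1$. This chain is exactly the upper row of \eqref{equation:big2} for $i=1$, read backwards. Hence the given $X_0^+$ is a small $\Q$-factorial modification of $X_0$. Applying Theorem \ref{thm:diamond} and Corollary \ref{corollary:go} to $(X;Z_1,Z_2)$, the varieties $X_0$ and $X_0^+$ lie in the chamber decomposition. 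There the ample model of $\sigma^*(-2K_X)-3(F_1+F_2)$ is a type 2.21 threefold $\hat{Q}'$ blown up along a bi-cubic curve $\hat{C}'$. To get $\hat{Q}'=\hat{Q}$ and $\hat{C}'=\hat{C}$, we match divisor classes by running the Lemma \ref{lemma:pic} bookkeeping backwards: $\tau^*(-K_{\hat{Q}})$ corresponds to $[2,-3,-3]$ and $E^+$ to $[1,-2,-2]$.

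\textbf{Step 3: reading off the remaining claims.} Once the identification is made, the description of the chambers $\Nef(X_0)$ and $\Nef(X_0^+)$ in Lemmas \ref{lemma:coneX0} and \ref{lemma:cone-hoka} gives the rest:
\begin{itemize}
\item the anti-canonical model $\alpha^+$ is small with $\rho(\bar{X}_0)=1$;
\item $\alpha$ is the $\tau^*K_{\hat{Q}}$-flop, because $\tau^*K_{\hat{Q}}\sim[-2,3,3]$ is negative on the $\alpha$ side;
\item $X$ is the ample model of $\tau^*(-2K_{\hat{Q}})-3E^+$. Inverting the basis gives $\sigma^*(-K_X)\sim\tau^*(-2K_{\hat{Q}})-3E^+$, i.e.\ $[4-3,-6+6,-6+6]=[1,0,0]$;
\item $F_1+F_2\sim[0,1,1]$ equals $\tau^*(-K_{\hat{Q}})-2E^+$, as the computation $[2-2,-3+4,-3+4]$ shows. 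Uniqueness of this member follows from $h^0=1$, since $F_1+F_2$ is rigid: it spans extremal rays of $\Psef$.
\end{itemize}

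\textbf{Main obstacle.} The hard step is Step 2. One must check that the chain of flops built from $(\hat{Q},\hat{C})$ really reproduces \eqref{equation:big2}. This means verifying that $\Gamma_1^{Q_1}$ and $C_1$ meet in length $3$, so that the Example \ref{example:3flop} flop is the correct one, and that the strict transform of $E^+$ agrees with the divisor $E$ of \S\ref{section:go}. I would handle this by comparing nef cones in $\ND$: both descriptions contain $[-K]$, and the ample models of $[1,-2,-1]$ and $[1,-1,-2]$ are $Q_1$ and $Q_2$ in both pictures. These facts force the same chamber.
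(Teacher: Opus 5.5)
Your proposal is correct and is essentially the paper's argument: build $(X;Z_1,Z_2)$ via Lemma~\ref{lemma:twisted-quintic} and Proposition~\ref{proposition:goback}, observe that the given $X_0^+$ is reached from $X_0$ through the upper row of \eqref{equation:big2}, and read everything else off the link of \S\ref{section:go}. The paper gets smallness of $\alpha^+$ directly from Proposition~\ref{proposition:totally-blowup}, and the length-$3$ condition you list as an obstacle is automatic from Lemma~\ref{lemma:2-21-bi-cubic}.

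There are two small slips to fix. First, $\tau^*K_{\hat{Q}}\sim[-2,3,3]$ is $\alpha^+$-negative and $\alpha$-ample, and that is what makes $X_0$ its flop; it is not negative on the $\alpha$ side. Second, $h^0(X_0,F_1+F_2)=1$ holds because $F_1+F_2$ is $\sigma$-exceptional, not because it spans an extremal ray of $\Psef(X_0)$.
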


\begin{proof}
By Proposition \ref{proposition:totally-blowup} \eqref{proposition:totally-blowup3}, the anti-canonical model $\alpha\colon 
X_0\to \bar{X}_0$ of $X_0$ is small. Hence so is $\alpha^+$. 
The remaining assertions are trivial from the link \eqref{equation:big} 
in \S \ref{section:go} for the case $g=12$. 
\end{proof}

\begin{remark}\label{remark:aut}
Let $X$ be a prime Fano threefold of genus $12$ and let $Z_1$, $Z_2\subset X$ be 
a totally disjoint pair of lines on $X$. Consider the link as in \S \ref{section:go}. 
From the structure of the cone of divisors, we have 
\[
\Aut(X_0)\cong \Aut(X; Z_1\cup Z_2), \quad 
\Aut(X_0^+)\cong\Aut(\hat{Q}; \hat{C}). 
\]
Moreover, since $X_0^+$ (resp., $X_0$) is canonically defined from $X_0$ (resp., 
$X_0^+$), we have the natural isomorphism 
\[
\Aut(X_0)\cong\Aut(X_0^+). 
\]
Indeed, for any $\theta\in\Aut(X_0)$, we have $\chi\circ\theta\circ\chi^{-1}
\in\Aut(X_0^+)$ since 
\[
(\chi\circ\theta\circ\chi^{-1})_*\left(\tau^*(-K_{\hat{Q}})-(1+\varepsilon)E^+\right)
\sim_\Q\tau^*(-K_{\hat{Q}})-(1+\varepsilon)E^+
\]
for any $\varepsilon\in\Q_{>0}$. Similarly, for any $\theta^+\in\Aut(X_0^+)$, we can 
show that $\chi^{-1}\circ\theta^+\circ\chi\in\Aut(X_0)$. 
\end{remark}

\begin{proof}[Proof of Theorem \ref{thm:main-back} \eqref{thm:main-back1}]
Follows immediately from Corollaries \ref{corollary:go} and \ref{corollary:back-g12}. 
\end{proof}

\subsection{The case $g=10$}\label{subsection:back-g10}

In \S \ref{subsection:back-g10}, we assume that 
$U$ is the del Pezzo threefold of degree $6$ and rank $2$ 
together with the projections $\rho_1\colon U\to\pr^2_1$ and 
$\rho_2\colon U\to \pr^2_2$, and let $\Gamma\subset U$ be a smooth bi-quintic 
curve of genus $2$ such that the multiplicity of the plane curve $\rho_i(\Gamma)$ at any 
point is at most $2$ for each $i\in\{1,2\}$. Let $\tau\colon X_0^+\to U$ be 
the blowup of $U$ along $\Gamma$ and let $E^+\subset X_0^+$ be 
the exceptional divisor 
of $\tau$. We set $\sO_{X_0^+}(a_1,a_2):=\tau^*\sO_U(a_1,a_2)$ for any $a_1,a_2\in\Z$. 
The following lemma is trivial. 

\begin{lemma}\label{lemma:trivial-quintic}
For each $i\in\{1,2\}$, the plane curve $\rho_i(\Gamma)\subset\pr^2_i$ is a quintic 
curve. In other words, the restriction morphism 
$\rho_i|_\Gamma\colon \Gamma\to \rho_i(\Gamma)$ 
is birational. 
\end{lemma}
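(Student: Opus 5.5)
The plan is to compute the degree of the plane curve $\rho_i(\Gamma)$ and then rule out a non-birational map by a multiplicity-and-genus count. Since $\Gamma$ is a bi-quintic curve, $\left(\sO_U(1,0)\cdot\Gamma\right)=5$ and $\left(\sO_U(0,1)\cdot\Gamma\right)=5$. We argue for $i=1$; the case $i=2$ is symmetric. The map $\rho_1|_\Gamma$ cannot contract $\Gamma$, because $\left(\sO_U(1,0)\cdot\Gamma\right)=5>0$. So $\rho_1|_\Gamma\colon\Gamma\to\rho_1(\Gamma)$ is a finite morphism onto a plane curve. By the projection formula,
\[
\deg(\rho_1|_\Gamma)\cdot\deg\rho_1(\Gamma)=\left(\rho_1^*\sO_{\pr^2_1}(1)\cdot\Gamma\right)=5 .
\]
Since $5$ is prime, either $\rho_1|_\Gamma$ is birational onto a plane quintic, or it has degree $5$ onto a line.

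It remains to exclude the second case. Suppose $\rho_1(\Gamma)=\ell$ is a line. Then $\Gamma$ lies in the surface $\rho_1^{-1}(\ell)\in|\sO_U(1,0)|$. This surface is $\pr_{\ell}(T_{\pr^2}|_\ell)\cong\pr_{\pr^1}(\sO(1)\oplus\sO(2))\cong\F_1$. On this $\F_1$, $\sO_U(1,0)$ restricts to the fiber class $f$ of $\F_1\to\ell$. We need the restriction of $\sO_U(0,1)$. A fiber of $\rho_1$ maps to a line under $\rho_2$, so $\sO_U(0,1)$ has degree $1$ on $f$. Also $\sO_U(1,1)|_{\F_1}$ is $-K_U-\rho_1^{-1}(\ell)$ restricted, which by adjunction is $-K_{\F_1}=2h-\dots$. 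Concretely, $\rho_2|_{\F_1}$ is the blow-down $\F_1\to\pr^2$. So $\sO_U(0,1)|_{\F_1}=h$, the pullback of a line, with $h^2=1$, $h\cdot f=1$, $f^2=0$.

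Write $\Gamma\sim ah+bf$ on $\F_1$, taking $h$ and the exceptional curve $e=h-f$ as a basis. The degree conditions give $\Gamma\cdot f=a=5$ and $\Gamma\cdot h=a+b=5$, so $b=0$ and $\Gamma\sim 5h$. Then $\Gamma$ is the preimage of a plane quintic, and by adjunction its arithmetic genus is $6$. But $\Gamma$ is smooth of genus $2$, a contradiction.

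An alternative check avoids $\F_1$ altogether: a general fiber of $\rho_1$ would meet $\Gamma$ in $5$ points and a line would have to lie in $\rho_2$ of it, and this line of reasoning also leads to a contradiction. Either way, $\rho_i|_\Gamma$ is birational and $\rho_i(\Gamma)$ is a quintic.

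The main point is identifying $\rho_1^{-1}(\ell)$ with $\F_1$ and computing the restricted classes; everything after that is routine numerics.
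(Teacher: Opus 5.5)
Your proof is correct and follows essentially the same route as the paper. Because $5$ is prime, the only other possibility is a line $\ell$. In that case $\Gamma\subset\rho_1^{-1}(\ell)\cong\F_1$, where the bidegree forces $\Gamma\sim 5h$, so $\Gamma$ misses the $(-1)$-curve and is a smooth plane quintic of genus $6$, contradicting genus $2$.

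Two minor slips do not affect the argument. First, $-K_U-\rho_1^{-1}(\ell)\sim\sO_U(1,2)$, not $\sO_U(1,1)$; its restriction is $-K_{\F_1}=2h+f$. Second, your ``alternative check'' paragraph is not an actual argument and should be dropped.
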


\begin{proof}
Assume that $\rho_1|_\Gamma\colon \Gamma\to \rho_1(\Gamma)$ is 
not birational. Then the curve $\rho_1(\Gamma)$ must be a line. 
Then $\Gamma$ must be contained in the pullback 
$\rho_1^{-1}\left(\rho_1(\Gamma)\right)$, which is isomorphic to 
$\pr_{\pr^1}(\sO\oplus\sO(1))$. 
Since $\Gamma$ is a bi-quintic curve, $\Gamma$ does not intersects with the 
$(-1)$-curve of the surface. Thus $\Gamma$ is isomorphic to a smooth plane 
quintic curve. However, since the genus of $\Gamma$ is equal to $2$, 
this leads to a contradiction. 
\end{proof}

The following two propositions are important in this section. 

\begin{proposition}\label{proposition:non-existence-surface}
We have $H^0\left(X_0^+, \sO_{X_0^+}(1,1)-E^+\right)=0$. 
In other words, any member in $|\sO_U(1,1)|$ does not contain the curve $\Gamma$. 
\end{proposition}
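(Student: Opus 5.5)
We argue by contradiction. Suppose some $S\in|\sO_U(1,1)|$ contains $\Gamma$. First, $S$ is irreducible. Any reducible member of $|\sO_U(1,1)|$ is a sum $S'+S''$ with $S'\in|\sO_U(1,0)|$ and $S''\in|\sO_U(0,1)|$, i.e. $S'=\rho_1^{-1}(\text{line})$ and $S''=\rho_2^{-1}(\text{line})$. Since $\Gamma$ is irreducible, it would lie in one of them. Then $\rho_i(\Gamma)$ would be a line, which contradicts Lemma \ref{lemma:trivial-quintic}. Hence $S$ is irreducible, and by Lemma \ref{lemma:dP6} it is a normal Gorenstein del Pezzo surface of degree $6$ with at worst one $A_1$ or one $A_2$ point. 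Let $\nu\colon\tilde{S}\to S$ be its minimal resolution and $\tilde{\Gamma}\subset\tilde{S}$ the strict transform of $\Gamma$. Since $\Gamma$ is smooth, $\tilde{\Gamma}\cong\Gamma$ has genus $2$.

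Next we compute the arithmetic genus of $\Gamma$ on $S$, using the fact that $\tilde{S}$ is a weak del Pezzo surface of degree $6$, i.e. a blowup of $\pr^2$ in three (possibly infinitely near) points. We have $-K_{\tilde{S}}=\nu^*\sO_U(1,1)|_S$. The two contractions $\rho_i|_S$ lift to $\tilde{S}\to\pr^2$, so we may write $\nu^*\sO_U(1,0)|_S=h$ and $\nu^*\sO_U(0,1)|_S=2h-e_1-e_2-e_3$, where $h$ is the pullback of a line and the $e_k$ are the total transforms of the exceptional curves. Write $\tilde{\Gamma}\sim a h-\sum b_k e_k$. The bi-quintic condition gives $a=5$ and $10-\sum b_k=5$, so $\sum b_k=5$. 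In addition, $\mult_{p}\rho_1(\Gamma)\le 2$ forces $b_k\le 2$ whenever $e_k$ is an honest exceptional curve. In the degenerate cases, the $(-2)$-curve components need care, but we still get the bound $b_k\le 2$ via Lemma \ref{lemma:dP6-multiplicity}-type arguments, or via $\tilde{\Gamma}\cdot(\text{negative curve})\ge 0$.

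By adjunction on $\tilde{S}$,
\[
2p_a(\tilde{\Gamma})-2=\tilde{\Gamma}^2+K_{\tilde{S}}\cdot\tilde{\Gamma}=25-\textstyle\sum b_k^2-10,
\]
so $p_a(\tilde{\Gamma})=6-\frac{1}{2}\sum(b_k^2-b_k)$. With $\sum b_k=5$ and $0\le b_k\le 2$, the only possibilities are $(2,2,1)$, giving $p_a=4$, or $b_k$ containing a value outside this range, which is excluded. In every case $p_a(\tilde{\Gamma})\ge 4>2$. Since $\tilde{\Gamma}\cong\Gamma$ is smooth, $p_a(\tilde{\Gamma})=g(\Gamma)=2$, a contradiction. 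Negative $b_k$ is excluded as well, since $\tilde{\Gamma}\cdot e_k\ge 0$ when $e_k$ is irreducible. When $e_k$ is reducible, the same inequality on the $(-1)$-components gives $b_k\ge 0$ after reordering.

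The main obstacle is the bookkeeping in the singular cases (ii) and (iii) of Lemma \ref{lemma:dP6}. There the points blown up are infinitely near, so $e_k$ is no longer irreducible, and the bounds $0\le b_k\le 2$ must be rederived from $\tilde{\Gamma}\cdot\Bc\ge0$ for the $(-2)$-curves $\Bc$ and from the multiplicity hypothesis on both $\rho_1(\Gamma)$ and $\rho_2(\Gamma)$. I would handle this by using the hypothesis for both projections symmetrically. The condition $b_k\le 2$ for $\rho_2$ translates to $5-b_i-b_j\le 2$ for the Cremona-dual classes, i.e. $b_i+b_j\ge 3$. Combined with $\sum b_k=5$ and the bounds coming from $\rho_1$, this pins down $(b_k)=(2,2,1)$ up to order, hence $p_a=4$, which again contradicts genus $2$.
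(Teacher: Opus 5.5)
Your argument is correct, and it takes a different, more uniform route than the paper.

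\textbf{Comparison with the paper.} The paper splits into the three configurations of Lemma~\ref{lemma:dP6}. In each case it writes $\tilde\Gamma$ in a basis of negative curves, solves the genus-$2$ Diophantine equation, and for every solution exhibits a pulled-back exceptional divisor meeting $\tilde\Gamma$ with multiplicity $3$. You instead use total-transform coordinates $h,e_1,e_2,e_3$ for the factorization of $\rho_1\circ\nu$ into three point blowups, and make a $\delta$-invariant count. Once $b_k\le 2$ for all $k$, adjunction gives $p_a(\tilde\Gamma)=6-\sum_k\binom{b_k}{2}\ge 3$. In fact $p_a(\tilde\Gamma)=4$, since $\sum b_k=5$ forces $(2,2,1)$. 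On the other hand $\tilde\Gamma\cong\Gamma$ has genus $2$.

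What your route buys:
\begin{itemize}
\item It needs only part (1) of Lemma~\ref{lemma:dP6}: crepancy of $\nu$ gives $K_{\tilde S}^2=6$ and hence exactly three blowups. No case classification is required.
\item There is no Diophantine solving.
\item It uses only the multiplicity hypothesis for $\rho_1$, so it proves a slightly stronger statement.
\end{itemize}
The paper's computation identifies which curve carries the triple point, but that information is not needed for the proposition.

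\textbf{The step to tighten.} You should make the bound $b_k\le 2$ precise in cases (ii) and (iii). The relevant distinction is not whether $e_k$ is irreducible, but whether its center is a point of $\pr^2_1$.
\begin{itemize}
\item If the center $p_k$ lies on $\pr^2_1$, then $e_k$ is the pullback of the exceptional divisor of the ordinary blowup at $p_k$. Hence $b_k=\mult_{p_k}\rho_1(\Gamma)\le 2$.
\item If the center $p_l$ is infinitely near, lying on the exceptional curve over $p_k$, then the strict transform of that curve is a $(-2)$-curve of class $e_k-e_l$. Since $\tilde\Gamma\cdot(e_k-e_l)\ge 0$, you get $b_l\le b_k\le 2$.
\end{itemize}
Concretely, with the paper's labelling:
\begin{itemize}
\item In case (ii), where $\rho_1\circ\nu$ contracts $\be_1,\Bc_0,\be_3$, one has $e_1=\be_1$, $e_2=\Bc_0+\be_3$, $e_3=\be_3$, and $\Bc_0=e_2-e_3$.
\item In case (iii), one has $e_1=\Bc_0+\Bc_1+\be_1$, $e_2=\Bc_1+\be_1$, $e_3=\be_1$, and $\Bc_0=e_1-e_2$, $\Bc_1=e_2-e_3$. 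This gives $b_3\le b_2\le b_1\le 2$.
\end{itemize}
So your second suggested remedy, $\tilde\Gamma\cdot(\text{negative curve})\ge 0$, is the right one.

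Three further remarks on the write-up:
\begin{itemize}
\item Lemma~\ref{lemma:dP6-multiplicity} plays no role here.
\item The symmetric use of $\rho_2$ is superfluous, and it would meet the same infinitely-near issue on the other side.
\item The discussion of $b_k\ge 0$ is moot, because $b_k\le 2$ and $\sum b_k=5$ already force $b_k\ge 1$.
\end{itemize}
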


\begin{proof}
The proof is divided into 4 numbers of steps. 

\noindent\underline{\textbf{Step 1}}\\
Assume that there exists a surface $S\in|\sO_U(1,1)|$ with $\Gamma\subset S$. 
Obviously, such $S$ must be irreducible. By Lemma \ref{lemma:dP6}, 
the surface $S$ has only du Val singularities. Let $\nu\colon\tilde{S}\to S$ be 
the minimal resolution. Again by Lemma \ref{lemma:dP6}, the dual graph of the 
configuration of all negative curves of $\tilde{S}$ is one of \eqref{lemma:dP621}, 
\eqref{lemma:dP622} or \eqref{lemma:dP622} in Lemma \ref{lemma:dP6}. 

\noindent\underline{\textbf{Step 2}}\\
Let us consider the case \eqref{lemma:dP621} in Lemma \ref{lemma:dP6}. 
We may assume that the curves 
$\be_1,\be_2,\be_3$ are contracted by $\rho_1$, and the curves $\Bf_1,\Bf_2,\Bf_3$ 
are contracted by $\rho_2$. The Picard group of $S$ is generated by the classes 
of $\be_1,\Bf_3, \be_2, \Bf_1$. 
We can take $a,a',b,b'\in\Z$ such that 
$\Gamma\sim a\be_1+b'\Bf_3+a'\be_2+b\Bf_1$. 
Since $\Gamma$ is a bi-quintic curve, we have 
$a+a'=5$ and $b+b'=5$. Moreover, since $\left(\be_1\cdot \Gamma\right)\geq 0$, 
$\left(\Bf_2\cdot \Gamma\right)\geq 0$ and $\left(\be_3\cdot \Gamma\right)\geq 0$, 
we have 
$a+b\leq 5$ and $a,b\geq 0$. Moreover, since the genus of $\Gamma$ is equal to $2$, 
we have 
\[
2=\left((K_S+\Gamma)\cdot\Gamma\right)=-(a+b)^2+7(a+b)-10-a^2-b^2+3(a+b). 
\]
Therefore, 
\[
(a,b)=(0,2),\quad (0,3), \quad (2,0), \quad (3,0), \quad (2,3)\quad\text{ or } \quad (3,2). 
\]
In any case, there exists a $(-1)$-curve $\be$ in $S$ such that 
$(\Gamma\cdot\be)=3$. 
For example, if $(a,b)=(0,2)$, then we can take $\be=\be_1$. 
However, this implies that $\mult_{\rho_i(\be)}(\rho_i(\Gamma))=3$ for $i\in\{1,2\}$ such 
that $\be$ is contracted by $\rho_i$. 
This leads to a contradiction. Thus $S$ cannot be smooth. 

\noindent\underline{\textbf{Step 3}}\\
Let us consider the case \eqref{lemma:dP622} in Lemma \ref{lemma:dP6}. 
In this case,  the surface $\tilde{S}$ is 
a toric variety. We may assume that the morphism 
$\rho_1\circ\nu\colon\tilde{S}\to\pr^2_1$ 
is the contraction of the curves $\be_1$, $\Bc_0,\be_3$, and the morphism 
$\rho_2\circ\nu\colon\tilde{S}\to\pr^2_2$ 
is the contraction of the curves $\be_4$, $\Bc_0,\be_2$. 
The Picard group of $\tilde{S}$ is generated by the classes of 
$\be_1, \be_2,\Bc_0,\be_3$. 
We can take $a,a',a'',b\in\Z$ such that the strict transform $\tilde{\Gamma}
\subset\tilde{S}$ 
of $\Gamma\subset S$ is linearly equivalent to $a\be_1+a'\be_2+b\Bc_0+a''\be_3$. 
Since $\Gamma$ is a bi-quintic curve, we have $a'=5$ and $a+a''=5$. 
Moreover, since $\left(\tilde{\Gamma}\cdot\be_1\right)\geq 0$, 
$\left(\tilde{\Gamma}\cdot\be_2\right)\geq 0$ and 
$\left(\tilde{\Gamma}\cdot\Bc_0\right)\geq 0$, we have 
$a\leq 5$, $a+b\geq 5$ and $a+2b\leq 10$. 
Moreover, since the genus of $\tilde{\Gamma}$ is equal to $2$, we have 
\[
b^2+(a-10)b+a^2-10a+31=0.
\]
Therefore, we have 
\[
(a,b)=(2,3), \quad (3,2)\quad\text{or}\quad(5,2).  
\]
If $(a,b)=(2,3)$, then $\left(\tilde{\Gamma}\cdot\be_1\right)=3$ holds. This implies that 
$\mult_{\rho_1(\be_1)}(\rho_1(\Gamma))=3$, a contradiction. 
If $(a,b)=(3,2)$ or $(5,2)$, then $\left(\tilde{\Gamma}\cdot\Bc_0+\be_3\right)=3$ holds. 
This implies that $\mult_{\rho_1(\be_3)}(\rho_1(\Gamma))=3$ since the pullback of 
the exceptional divisor of the ordinary blowup of $\pr^2$ along the point 
$\rho_1(\be_3)$ is the Cartier divisor $\Bc_0+\be_3$ on $\tilde{S}$. 
This also leads to a contradiction. Thus the case \eqref{lemma:dP622} cannot occur. 

\noindent\underline{\textbf{Step 4}}\\
Let us consider the case \eqref{lemma:dP623} in Lemma \ref{lemma:dP6}. 
We may assume that the morphism $\rho_1\circ\nu\colon\tilde{S}\to\pr^2_1$ 
contracts the curves $\Bc_0,\Bc_1,\be_1$, and the morphism 
$\rho_2\circ\nu\colon\tilde{S}\to\pr^2_2$ 
contracts the curves $\Bc_0,\Bc_1,\be_2$. 
The Picard group of $\tilde{S}$ is generated by the classes of 
$\Bc_0, \Bc_1, \be_1, \be_2$. 
We can take $a_1,a_2,b_0, b_1\in\Z$ such that the strict transform 
$\tilde{\Gamma}\subset\tilde{S}$ of $\Gamma\subset S$ is linearly equivalent to 
$b_0\Bc_0+b_1\Bc_1+a_1\be_1+a_2\be_2$. 
Since $\Gamma$ is a bi-quintic curve, we have $a_1=a_2=5$. 
Moreover, since $\tilde{\Gamma}$ is an irreducible curve and different from 
$\Bc_0, \Bc_1, \be_1, \be_2$, we have $b_1\geq 2b_0$, $10+b_0-2b_1\geq 0$ and 
$b_1\geq 5$. Since the genus of $\tilde{\Gamma}$ is equal to $2$, we get 
\[
b_0^2-10b_1-b_0b_1+b_1^2+31=0. 
\]
Thus we have $(b_0,b_1)=(2,5)$. However, since 
\[
\mult_{\rho_1(\be_1)}(\rho_1(\Gamma))=\left(\tilde{\Gamma}\cdot 
(\Bc_0+\Bc_1+\be_1)\right)=3, 
\]
this leads to a contradiction. Thus we have completed the proof of Proposition 
\ref{proposition:non-existence-surface}. 
\end{proof}

\begin{proposition}\label{proposition:vanishing}
\begin{enumerate}
\renewcommand{\theenumi}{\arabic{enumi}}
\renewcommand{\labelenumi}{(\theenumi)}
\item\label{proposition:vanishing1}
We have 
\[
h^0\left(X_0^+,\sO_{X_0^+}(1,2)-E^+\right)
=h^0\left(X_0^+,\sO_{X_0^+}(2,1)-E^+\right)=1. 
\]
\item\label{proposition:vanishing2}
For any $j\geq 1$, we have 
\[
h^j\left(X_0^+,\sO_{X_0^+}(2-j,2)-E^+\right)
=h^j\left(X_0^+,\sO_{X_0^+}(2,2-j)-E^+\right)=0. 
\]
\end{enumerate}
\end{proposition}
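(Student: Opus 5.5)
The plan is to move everything down to $U$. Since $\tau$ is the blowup along the smooth curve $\Gamma$, we have $\tau_*\sO_{X_0^+}(-E^+)=I_\Gamma$ and $R^k\tau_*\sO_{X_0^+}(-E^+)=0$ for $k\geq 1$. Hence
\[
H^j\left(X_0^+,\sO_{X_0^+}(a_1,a_2)-E^+\right)\cong H^j\left(U,I_\Gamma\otimes\sO_U(a_1,a_2)\right)
\]
for all $j$. We then use the exact sequence
\[
0\to I_\Gamma\otimes\sO_U(a_1,a_2)\to\sO_U(a_1,a_2)\to\sO_U(a_1,a_2)|_\Gamma\to 0.
\]
By symmetry between $\rho_1$ and $\rho_2$, it suffices to treat $\sO(1,2)$ and $\sO(2-j,2)$.

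\textbf{The vanishings for $j\geq 2$.} We use the cohomology of line bundles on $U\subset\pr^2\times\pr^2$. This follows from the Koszul resolution of $U$ as a $(1,1)$-divisor, or from Kawamata--Viehweg vanishing together with Serre duality, using $K_U=\sO_U(-2,-2)$. It gives
\[
H^k(U,\sO_U(1,2))=H^k(U,\sO_U(0,2))=0 \quad (k\geq 1), \qquad H^k(U,\sO_U(-1,2))=0 \quad (k\geq 2).
\]
For example, $H^3(U,\sO_U(-1,2))$ is dual to $H^0(U,\sO_U(-1,-4))=0$.

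For $j=2$ this gives $H^2(I_\Gamma(0,2))\cong H^1(\Gamma,\sO_U(0,2)|_\Gamma)$. The latter vanishes because $\deg=10>2=2g(\Gamma)-2$. For $j=3$ the group is squeezed between $H^2(\Gamma,\cdot)=0$ and $H^3(U,\sO_U(-1,2))=0$. For $j\geq 4$ the groups vanish for dimension reasons.

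\textbf{Reduction of the remaining claims to an upper bound.} The remaining statements are $h^0=1$ and $h^1=0$ for $I_\Gamma(1,2)$. We have
\[
h^0(U,\sO_U(1,2))=3\cdot 6-3=15,
\]
computed from $\pr^2\times\pr^2$ minus the $(0,1)$-twist. Also $\sO_U(1,2)|_\Gamma$ has degree $5+10=15$ on a genus $2$ curve and is non-special, so it has $14$ sections. Hence
\[
h^0(I_\Gamma(1,2))-h^1(I_\Gamma(1,2))=1.
\]
So both claims follow once we prove $h^0(I_\Gamma(1,2))\leq 1$.

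\textbf{Main step: $h^0(I_\Gamma(1,2))\leq 1$.} Suppose instead that there are two independent members $D,D'\in|\sO_U(1,2)|$ containing $\Gamma$.

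First we rule out a common fixed component containing $\Gamma$. Such a component would lie in one of the classes $|\sO_U(1,1)|$, $|\sO_U(1,0)|$, $|\sO_U(0,k)|$, or $|\sO_U(1,k)|$ with $k\leq 1$. Class $(1,1)$ is excluded by Proposition \ref{proposition:non-existence-surface}. Classes $(1,0)$ and $(0,k)$ are excluded because, by Lemma \ref{lemma:trivial-quintic}, $\rho_i(\Gamma)$ is a quintic rather than a line or a conic. We should also check that a fixed component not containing $\Gamma$ reduces to a smaller linear system, which is handled by the same argument. Thus we may take $D\cap D'$ to be a curve containing $\Gamma$, with residual $R$.

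Next we compute intersection numbers with $h_i=c_1(\sO_U(1,0))$, $c_1(\sO_U(0,1))$, using $h_1^2h_2=h_1h_2^2=1$ and $h_1^3=h_2^3=0$. This gives $(h_1+2h_2)^2h_1=8$ and $(h_1+2h_2)^2h_2=5$. So $R$ has bidegree $(3,0)$. Every component of $R$ is then contracted by $\rho_2$, so $R$ is a union of fibres of $\rho_2$ of total degree $3$.

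Finally we apply the liaison formula for curves linked by the complete intersection $D\cap D'$:
\[
2\left(p_a(\Gamma)-p_a(R)\right)=\left((K_U+D+D')\cdot(\Gamma-R)\right)=\left(\sO_U(0,2)\cdot(\Gamma-R)\right)=10.
\]
This gives $p_a(R)=-3$. But a curve of degree $3$ with all components smooth rational fibres of $\rho_2$ has $p_a(R)\geq -2$. This is a contradiction.

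\textbf{Expected obstacle.} I expect the main care to be needed in justifying the linkage computation. One must make sure $D\cap D'$ is a genuine complete intersection curve, so that the fixed-part analysis above is complete. One must also handle non-reduced possibilities for $R$, such as a double fibre; there the bound $p_a\geq -2$ still holds for a multiplicity structure on fibres of a $\pr^1$-bundle, but it needs an argument.
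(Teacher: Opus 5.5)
Your reductions are correct and agree with the paper's. You push forward to $H^j(U,\sO_U(a,b)\otimes I_\Gamma)$ and obtain the vanishing for $j\ge 2$. The Euler characteristic count then reduces everything to $h^0(U,\sO_U(1,2)\otimes I_\Gamma)\le 1$. You use Proposition~\ref{proposition:non-existence-surface} to make $D,D'$ prime, and the residual $R$ of bidegree $(3,0)$ is supported on fibres of $\rho_2$. The liaison value $p_a(R)=-3$ is also right.

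The gap is the last step. The bound $p_a(R)\ge -2$ holds only when $R$ is reduced, i.e.\ three disjoint fibres. A double structure on a fibre $l$ corresponds to a quotient $N^\vee_{l/U}\cong\sO_l^{\oplus 2}\twoheadrightarrow\sO_l(a)$ with any $a\ge 0$, and it has $p_a=-1-a$. Your situation produces exactly such a case. Suppose $D$ and $D'$ are smooth and tangent along a fibre $l_1$ and both contain another fibre $l_2$. Then $R=2l_1\sqcup l_2$, with $2l_1$ taken on $D$. Since $(D\cdot l_1)=1$, we get $N_{l_1/D}\cong\sO_{\pr^1}(-1)$, so $p_a(2l_1)=-2$ and $p_a(R)=-3$. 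This is exactly the value liaison predicts, so comparing genera yields no contradiction.

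Your main step never uses the hypothesis $\mult_p\rho_i(\Gamma)\le 2$, apart from its use inside Proposition~\ref{proposition:non-existence-surface}, and that hypothesis is what must enter here. The paper blows up successively along (possibly infinitely near) fibres $B_1,B_2,B_3$ of $\rho_2$. At each step it uses $n_k=\operatorname{length}(\Gamma^{k-1}\cap B_k)\le 2$ to force $m_k=m'_k=1$ and $n_k=2$. Adjunction on the final blowup then gives $2p_a(\Gamma)-2=10-6=4$, a contradiction.

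Your liaison approach can be repaired by comparing intersection lengths instead of genera, and the repaired version is shorter than the paper's iteration.

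First, $\Gamma$ and $R$ are linked by $X=D\cap D'$ and share no component. Hence $I_R\sO_\Gamma\cong\omega_\Gamma\otimes\omega_X^{-1}|_\Gamma$ with $\omega_X\cong\sO_U(0,2)|_X$. This gives $\operatorname{length}(\sO_\Gamma/I_R\sO_\Gamma)=10-2=8$.

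Second, $R$ is locally Cohen--Macaulay. So near each fibre $l$ occurring in $R$ with multiplicity $\mu_l$, the ideal $I_R$ is $I_l$-primary and contains $I_l^{(\mu_l)}=I_l^{\mu_l}$. The same length is therefore at most $\sum_l\mu_l\operatorname{length}(\sO_{\Gamma\cap l})$. As in the proof of Lemma~\ref{lemma:conic-bundle}, this equals $\sum_l\mu_l\mult_{\rho_2(l)}\rho_2(\Gamma)$. Using $\sum_l\mu_l=3$ and the multiplicity hypothesis, this is at most $2\cdot 3=6<8$.

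This argument treats reduced and non-reduced $R$ uniformly. As written, however, your proposal does not contain it, and its stated genus bound is false.
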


\begin{proof}
Let $I_\Gamma\subset\sO_U$ be the coherent ideal sheaf corresponds to 
$\Gamma\subset U$. 
As is well-known (see \cite[Lemma 4.3.16]{L1}) that 
\[
H^j\left(X_0^+,\sO_{X_0^+}(a,b)-E^+\right)\cong H^j\left(U, \sO_U(a,b)\otimes 
I_\Gamma\right)
\]
holds for any $a,b\in\Z$ and for any $j\geq 0$. 
From the exact sequence
\[
0=H^1\left(\Gamma,\sO_U(0,2)|_\Gamma\right)
\to H^2\left(U, \sO_U(0,2)\otimes I_\Gamma\right)
\to H^2\left(U, \sO_U(0,2)\right)
\]
and the Kodaira vanishing theorem, we have $H^2\left(U, \sO_U(0,2)\otimes 
I_\Gamma\right)
=0$. Similarly, the equality 
$H^3\left(U, \sO_U(-1,2)\otimes I_\Gamma\right)=0$ is trivial. 

Let us consider the following exact sequence 
\begin{eqnarray*}
0&\to&H^0\left(U, \sO_U(1,2)\otimes I_\Gamma\right)\to
H^0\left(U, \sO_U(1,2)\right)\to H^0\left(\Gamma, \sO_U(1,2)|_\Gamma\right) \\
&\to&H^1\left(U, \sO_U(1,2)\otimes I_\Gamma\right)\to H^1\left(U, \sO_U(1,2)\right). 
\end{eqnarray*}
Since 
\begin{eqnarray*}
h^0\left(U, \sO_U(1,2)\right)
&=&h^0\left(\pr^2, T_{\pr^2}\otimes\sO_{\pr^2}(1)\right)=15,\\
h^1\left(U, \sO_U(1,2)\right)&=&0,\quad 
h^0\left(\Gamma, \sO_U(1,2)|_\Gamma\right)=15+1-2=14,
\end{eqnarray*}
we have $h^0\left(U, \sO_U(1,2)\otimes I_\Gamma\right)\geq 1$. 
Moreover, the equality $h^0\left(U, \sO_U(1,2)\otimes I_\Gamma\right)=1$ holds 
if and only if the equality $h^1\left(U, \sO_U(1,2)\otimes I_\Gamma\right)=0$ holds. 
Thus it is enough to show the inequality 
$h^0\left(U, \sO_U(1,2)\otimes I_\Gamma\right)\leq 1$. 

Assume that $h^0\left(U, \sO_U(1,2)\otimes I_\Gamma\right)\geq 2$. 
We can take mutually distinct divisors $F, F'\in|\sO_U(1,2)\otimes I_\Gamma|
\subset|\sO_U(1,2)|$. By Proposition \ref{proposition:non-existence-surface}, 
both $F$ and $F'$ are prime divisors. Let us write $F\cap F'=:\Gamma\cup B^1$. 
Since $\left(\sO_U(1,0)\cdot B^1\right)=3$ and $\left(\sO_U(0,1)\cdot B^1\right)=0$, 
any irreducible component of $B^1$ is a fiber of $\rho_2$. 

Let us take any irreducible component $B_1\subset B^1$ of $B^1$, set 
$p_1:=\rho_2(B_1)\in\pr^2_2$ and let $\theta_1\colon \mathbb{S}_1\to\pr^2_2$ 
be the blowup of $\pr^2_2$ at $p_1$ with the exceptional curve 
$\be_1\subset\mathbb{S}_1$. Moreover, let 
\[\xymatrix{
U \ar[d]_-{\rho_2} & \U_1 \ar[l]_-{\theta'_1} \ar[d]^-{\rho_{2,1}} \\
\pr^2_2 & \mathbb{S}_1 \ar[l]^-{\theta_1}
}\]
be the fiber product. Set $\E_1:=\rho_{2,1}^*\be_1$. The morphism $\theta'_1$ is 
nothing but the blowup of $U$ along $B_1$ with the exceptional divisor $\E_1$. 
Set  
\begin{eqnarray*}
\Gamma^1&:=&(\theta'_1)_*^{-1}\Gamma,\quad
n_1:=\left(\E_1\cdot \Gamma^1\right)=\operatorname{length}\left(\sO_{\Gamma
\cap B_1}\right)
\leq 2, \\
m_1&:=&\mult_{B_1}F, \quad m'_1:=\mult_{B_1}F', \quad
F_1:=(\theta'_1)^{-1}_*F, \quad F'_1:=(\theta'_1)^{-1}_*F'. 
\end{eqnarray*}
Let us write $F_1\cap F'_1=:\Gamma^1\cup B^2$. Then we have 
\begin{eqnarray*}
\left((\theta'_1)^*\sO_U(1,0)\cdot B^2\right)&=&3-m_1m'_1, \\
\left((\theta'_1)^*\sO_U(0,1)\cdot B^2\right)&=&0, \\
\left(\E_1\cdot B^2\right)&=&m_1+m'_1-n_1.
\end{eqnarray*}
Let us decompose $B^2=B^{2,1}+B^{2,2}$ as $1$-cycles, where $B^{2,1}$ is the sum 
of the components of $B^2$ containing $\E_1$. 
Note that $(\rho_{2,1})_*B^{2,2}$ is an effective $1$-cycle on $\mathbb{S}_1$ which 
does not contain $\be_1$ and 
\[
\left(\theta_1^*\sO_{\pr^2_2}(1)\cdot(\rho_{2,1})_*B^{2,2}\right)
=\left((\theta'_1)^*\sO_U(0,1)\cdot B^{2,2}\right)=0, 
\]
since the pullback of $\sO_{\pr_2^2}(1)$ is nef. 
This implies that $(\rho_{2,1})_*B^{2,2}=0$. In particular, we have 
$\left(\E_1\cdot B^{2,2}\right)=0$. 
There is an isomorphism $\E_1\cong\pr^1\times\pr^1$ with 
$\E_1|_{\E_1}\cong\sO(0,-1)$. Under the isomorphism, we can write 
$B^{2,1}\in|\sO(a_1,b_1)|$ with $a_1,b_1\geq 0$. Since 
\[
-a_1=\left(\E_1\cdot B^{2,1}\right)=m_1+m'_1-n_1, 
\]
we must have $a_1=0$, $m_1=m'_1=1$ and $n_1=2$.  In particular, any irreducible 
component of $B^2$ is a fiber of $\rho_{2,1}$. 

Let us take any irreducible component $B_2\subset B^2$ of $B^2$, set 
$p_2:=\rho_{2,1}(B_2)\in\mathbb{S}_1$ and let 
$\theta_2\colon \mathbb{S}_2\to\mathbb{S}_1$ 
be the blowup of $\mathbb{S}_1$ at $p_2$ with the exceptional curve 
$\be_2\subset\mathbb{S}_2$. Moreover, let 
\[\xymatrix{
\U_1 \ar[d]_-{\rho_{2,1}} & \U_2 \ar[l]_-{\theta'_2} \ar[d]^-{\rho_{2,2}} \\
\mathbb{S}_1 & \mathbb{S}_2 \ar[l]^-{\theta_2}
}\]
be the fiber product. Set $\E_2:=\rho_{2,2}^*\be_2$. The morphism $\theta'_2$ is 
nothing but the blowup of $\U_1$ along $B_2$ with the exceptional divisor $\E_2$. 
Set  
\begin{eqnarray*}
\Gamma^2&:=&(\theta'_2)_*^{-1}\Gamma^1,\quad
n_2:=\left(\E_2\cdot \Gamma^2\right)=\operatorname{length}\left(\sO_{\Gamma^1
\cap B_2}\right)
\leq 2, \\
m_2&:=&\mult_{B_2}F_1, \quad m'_2:=\mult_{B_2}F'_1, \quad
F_2:=(\theta'_2)^{-1}_*F_1, \quad F'_2:=(\theta'_2)^{-1}_*F'_1 
\end{eqnarray*}
as before. 
Let us write $F_2\cap F'_2=:\Gamma^2\cup B^3$. Then we have 
\begin{eqnarray*}
\left((\theta'_1\circ\theta'_2)^*\sO_U(1,0)\cdot B^3\right)&=&2-m_2m'_2, \\
\left((\theta'_1\circ\theta'_2)^*\sO_U(0,1)\cdot B^3\right)&=&0, \\
\left((\theta'_2)^*\E_1\cdot B^3\right)&=&0,\\
\left(\E_2\cdot B^3\right)&=&m_2+m'_2-n_2. 
\end{eqnarray*}
Let us decompose $B^3=B^{3,1}+B^{3,2}$ as $1$-cycles, where $B^{3,1}$ is the sum 
of the components of $B^3$ containing $\E_2$. 
Again, $(\rho_{2,2})_*B^{3,2}$ is an effective $1$-cycle on $\mathbb{S}_2$ which 
does not contain $\be_2$, and 
\[
\left(\theta_2^*\left(\theta_1^*\sO_{\pr^2_2}(2)-\be_1\right)\cdot
(\rho_{2,2})_*B^{3,2}\right)
=\left((\theta'_2)^*\left((\theta'_1)^*\sO_U(0,2)-\E_1\right)\cdot B^{3,2}\right)=0. 
\]
This implies that $(\rho_{2,2})_*B^{3,2}=0$. In particular, we have 
$\left(\E_1\cdot B^{3,2}\right)=0$. 
There is an isomorphism $\E_2\cong\pr^1\times\pr^1$ with 
$\E_2|_{\E_2}\cong\sO(0,-1)$. Under the isomorphism, we can write 
$B^{3,1}\in|\sO(a_2,b_2)|$ with $a_2,b_2\geq 0$. Since 
\[
-a_2=\left(\E_2\cdot B^{3,1}\right)=m_2+m'_2-n_2, 
\]
we must have $a_2=0$, $m_2=m'_2=1$ and $n_2=2$.  In particular, again, 
any irreducible component of $B^3$ is a fiber of $\rho_{2,2}$. 

Let us take any irreducible component $B_3\subset B^3$ of $B^3$, let 
$\theta'_3\colon \U_3\to \U_2$ be the blowup of $\U_2$ along $B_3$ with 
the exceptional divisor $\E_3$. Set 
\begin{eqnarray*}
\Gamma^3&:=&(\theta'_3)_*^{-1}\Gamma^2,\quad
n_3:=\left(\E_3\cdot \Gamma^3\right)=\operatorname{length}\left(\sO_{\Gamma^2
\cap B_3}\right)
\leq 2, \\
m_3&:=&\mult_{B_3}F_2, \quad m'_3:=\mult_{B_3}F'_2, \quad
F_3:=(\theta'_3)^{-1}_*F_2, \quad F'_3:=(\theta'_3)^{-1}_*F'_2 
\end{eqnarray*}
as before. Then, we can also compute in a same way that $m_3=m'_3=1$, $n_3=2$ and 
$F_3\cap F'_3=\Gamma^3$ as $1$-cycles on $\U_3$. On the other hand, since 
$F_3\cap F'_3$ is a complete intersection of divisors in $\U_3$, the intersection 
is scheme-theoretically equal to $\Gamma^3$. This implies that 
\[
2p_a(\Gamma^3)-2=\left(\left(K_{\U_3}+F_3+F'_3\right)\cdot F_3\cdot F'_3\right)
=4.
\]
However, this contradicts with the fact that the genus of $\Gamma$ is equal to $2$. 
Thus we get the assertion. 
\end{proof}

\begin{corollary}\label{corollary:g10converse-weakness}
The anti-canonical divisor $-K_{X_0^+}$ of $X_0^+$ is globally generated. 
In particular, the variety $X_0^+$ is a smooth weak Fano threefold with 
$(-K_{X_0^+})^{\cdot 3}=10$. 
\end{corollary}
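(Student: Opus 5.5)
Since $K_U\sim\sO_U(-2,-2)$, we have $-K_{X_0^+}\sim\sO_{X_0^+}(2,2)-E^+$. I plan to obtain global generation from Proposition \ref{proposition:CM} applied to the $\pr^1$-bundle-type morphism $\pi:=\rho_2\circ\tau\colon X_0^+\to\pr^2_2$ (and symmetrically $\rho_1\circ\tau$). Take $A:=\sO_{\pr^2_2}(1)$ and $\sL:=\sO_{X_0^+}(2,1)-E^+$, so that $\sL\otimes\pi^*A\cong-K_{X_0^+}$. To use Proposition \ref{proposition:CM}, I first check its hypotheses. Write $\pi$ as the composition of the blowup $\tau$ along $\Gamma$ with the $\pr^1$-bundle $\rho_2$. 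By Lemma \ref{lemma:trivial-quintic}, the curve $\Gamma$ maps birationally onto the plane quintic $\rho_2(\Gamma)$, and by assumption $\rho_2(\Gamma)$ has multiplicity at most $2$ everywhere. So Lemma \ref{lemma:conic-bundle} \eqref{lemma:conic-bundle2} shows that $-K_{X_0^+}$ is $\pi$-nef; it is $\pi$-big since the general fiber is $\pr^1$. Flatness holds because every fiber is $1$-dimensional and both spaces are smooth (miracle flatness).

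Next I need $\sL$ to be $\pi$-nef. On the fibers of $\pi$, the class $\sO(2,\ast)$ restricts like $-K_{X_0^+}$, because $\sO(0,1)$ is trivial on fibers of $\pi$. Hence $\sL\equiv_\pi-K_{X_0^+}$, so $\sL$ is $\pi$-nef.

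The cohomological hypothesis of Proposition \ref{proposition:CM} \eqref{proposition:CM2} is $H^j(X_0^+,\sL\otimes\pi^*A^{-j})=0$ for $j\ge1$, that is, $h^j(\sO_{X_0^+}(2,1-j)-E^+)=0$ for $j\geq 1$. Proposition \ref{proposition:vanishing} \eqref{proposition:vanishing2} gives vanishing for $\sO(2,2-j)-E^+$, so I shift the roles. I choose instead $\sL:=\sO_{X_0^+}(2,2)-E^+\otimes\pi^*A^{-1}\cdot\pi^*A$, but apply \eqref{proposition:CM21} with $k=0$ to $\sL':=\sO_{X_0^+}(2,2)-E^+$ itself. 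The required hypothesis $H^j(\sO(2,2-j)-E^+)=0$ for $j\geq1$ is exactly Proposition \ref{proposition:vanishing} \eqref{proposition:vanishing2}. Thus \eqref{proposition:CM21} with $k=0$ gives directly that $-K_{X_0^+}$ is globally generated.

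Finally, global generation implies nefness. For the degree, Lemma 2.1 of \cite{MM85} gives $(-K_{X_0^+})^{3}=(-K_U)^3-2(-K_U\cdot\Gamma)+2p_a(\Gamma)-2=48-40+2=10>0$, using $(-K_U\cdot\Gamma)=(\sO_U(2,2)\cdot\Gamma)=20$. Since this is positive, $-K_{X_0^+}$ is also big. The main point to verify carefully is the flatness and $\pi$-nefness input to Proposition \ref{proposition:CM}; this rests on the multiplicity-$\le2$ hypothesis via Lemma \ref{lemma:conic-bundle}, and the rest is formal.
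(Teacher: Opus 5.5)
Your argument is correct and is essentially the paper's proof: apply Proposition \ref{proposition:CM} \eqref{proposition:CM21} with $k=0$ to $\sL=-K_{X_0^+}=\sO_{X_0^+}(2,2)-E^+$ over $\rho_2\circ\tau$ (or symmetrically $\rho_1\circ\tau$). Relative nefness comes from Lemma \ref{lemma:conic-bundle} \eqref{lemma:conic-bundle2}, the cohomological hypothesis is exactly Proposition \ref{proposition:vanishing} \eqref{proposition:vanishing2}, and $(-K_{X_0^+})^{\cdot 3}=10$ follows from \cite[Lemma 2.1]{MM85}. The detour through $\sO_{X_0^+}(2,1)-E^+$, and the garbled formula you wrote for $\sL$, are unnecessary: simply take $\sL=-K_{X_0^+}$ from the start.
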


\begin{proof}
The equality $(-K_{X_0^+})^{\cdot 3}=10$ can be obtained by 
\cite[Lemma 2.1]{MM85}. Thus the assertion is a direct consequence of 
Propositions \ref{proposition:CM} \eqref{proposition:CM2}, \ref{proposition:vanishing}
\eqref{proposition:vanishing2} and Lemma \ref{lemma:conic-bundle} 
\eqref{lemma:conic-bundle2}. 
\end{proof}

Let us consider the elementary flop 
\[\xymatrix{
X_0^+ \ar[d]_-{\tau} \ar@{-->}[rr]^-{\chi_{13}^{-1}} \ar[dr]^-{\beta_{13}^+} & & 
W_1^+ \ar[dl]_-{\beta_{13}} \ar[d]^-{\tau_1^+} \\
U \ar[dr]_-{\rho_1} & P''_1 \ar[d]_(.4){\gamma_{13}} & Y_1 \ar[dl]^-{\psi_1} \\
& \pr^2_1 &
}\]
as in Proposition \ref{proposition:conic-bundle}. 
The variety $Y_1$ is a $\pr^1$-bundle over $\pr^2_1$, and the morphism 
$\tau_1^+\colon W_1^+\to Y_1$ is the blowup of $Y_1$ along a smooth irreducible 
curve $\Gamma_1^{Y_1}\subset Y_1$ of genus $2$ with the exceptional divisor 
$S_1^{W_1^+}\subset W_1^+$. Note that the curve $\Gamma_1^{Y_1}$ birationally 
maps onto the singular curve $\rho_1(\Gamma)$, and the morphism $\beta_{13}^+$ is 
not an isomorphism. 
The variety $W_1^+$ is a smooth weak Fano threefold by Proposition 
\ref{proposition:MDS}. Thus so is $Y_1$ by Lemma \ref{lemma:image-weak}. 
Under the natural isomorphism $\Pic X_0^+\cong\Pic W_1^+$, we know that 
\[
S_1^{W_1^+}\sim(\chi_{13})^{-1}_*\left(\sO_{X_0^+}(5,0)-E^+\right). 
\] 
Thus we get 
\begin{equation}\label{equation:g10converse-Y}
(\tau_1^+)^*(-K_{Y_1})\sim(\chi_{13})^{-1}_*\left(\sO_{X_0^+}(7,2)-2E^+\right).
\end{equation}
Thus, the anti-canonical divisor $-K_{Y_1}$ of $Y_1$ is not divisible by $2$ in 
$\Pic Y_1$. Therefore, there exists a rank $2$ vector bundle $\sE$ on $\pr^2_1$
with $c_1(\sE)=0$ such that the morphism $\psi_1$ is equal to the projective 
space bundle $\pr_{\pr^2_1}(\sE)\to\pr^2_1$. Let $\xi$ be the tautological line bundle 
with respects to the projective space bundle. Then we have 
\[
(\tau_1^+)^*\xi\sim(\chi_{13})^{-1}_*\left(\sO_{X_0^+}(2,1)-E^+\right). 
\]
Note that
\[
h^0\left(\pr^2_1,\sE\right)=h^0\left(W_1^+, (\tau_1^+)^*\xi\right)
=h^0\left(X_0^+,\sO_{X_0^+}(2,1)-E^+\right)=1
\]
by Proposition \ref{proposition:vanishing}. In particular, $\sE$ is not a stable vector 
bundle. Moreover, we get 
\[
h^0\left(\pr^2_1,\sE\otimes\sO_{\pr^2_1}(-1)\right)
=h^0\left(X_0^+,\sO_{X_0^+}(1,1)-E^+\right)=0
\]
by Proposition \ref{proposition:non-existence-surface}. 
By Yasutake's classification result \cite[Proposition 2.10]{Y} and by Lemma 
\ref{lemma:2-31}, there exists a birational morphism $\phi_1\colon Y_1\to Q_1$ 
and a line $Z_2^{Q_1}\subset Q_1$ 
such that $Q_1$ is the $3$-dimensional smooth hyperquadric and the morphism 
$\phi_1$ is the blowup of $Q_1$ along $Z_2^{Q_1}$ with the exceptional divisor 
$F_2^{Y_1}\subset Y_1$. 

\begin{lemma}\label{lemma:g10converse-easy-numerical}
The divisor $F_2^{Y_1}$ does not contain the curve $\Gamma_1^{Y_1}$. 
Moreover, we have 
\begin{eqnarray*}
(\tau_1^+)^*\sO_{Q_1}(1)&\sim&(\chi_{13})^{-1}_*\left(\sO_{X_0^+}(3,1)-E^+\right), \\
(\tau_1^+)^*F_2^{Y_1}&\sim&(\chi_{13})^{-1}_*\left(\sO_{X_0^+}(2,1)-E^+\right). 
\end{eqnarray*}
\end{lemma}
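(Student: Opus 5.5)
The plan is to identify $F_2^{Y_1}$ with the tautological class $\xi$ on $Y_1=\pr_{\pr^2_1}(\sE)$, and then to translate every class into $\Pic X_0^+$ using the dictionary set up just before the statement. That dictionary consists of three facts. First, $\psi_1\circ\tau_1^+$ and $\rho_1\circ\tau\circ\chi_{13}$ agree as rational maps, so $(\tau_1^+)^*\psi_1^*\sO_{\pr^2_1}(1)$ corresponds to $\sO_{X_0^+}(1,0)$. Second, $(\tau_1^+)^*\xi$ corresponds to $\sO_{X_0^+}(2,1)-E^+$. Third, $S_1^{W_1^+}$ corresponds to $\sO_{X_0^+}(5,0)-E^+$. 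Throughout, $h$ denotes $\psi_1^*\sO_{\pr^2_1}(1)$ on $Y_1$.

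\textbf{Step 1: linear equivalences on $Y_1$.} The variety $Y_1$ is smooth and rational, so $\Pic Y_1=\Z[h]\oplus\Z[\xi]$ is torsion free. Since $c_1(\sE)=0$, we have $-K_{Y_1}\sim 2\xi+3h$.

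Next we compute $-K_{Y_1}$ through $\phi_1$. The morphism $\phi_1$ is the blowup of $Q_1$ along the line $Z_2^{Q_1}$, so $-K_{Y_1}\sim\phi_1^*\sO_{Q_1}(3)-F_2^{Y_1}$. By Lemma \ref{lemma:2-31}, $\psi_1$ is given by $|\phi_1^*\sO_{Q_1}(1)-F_2^{Y_1}|$, hence $h\sim\phi_1^*\sO_{Q_1}(1)-F_2^{Y_1}$. Substituting gives $-K_{Y_1}\sim 3h+2F_2^{Y_1}$.

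Comparing the two expressions yields $2F_2^{Y_1}\sim 2\xi$. Torsion-freeness then gives $F_2^{Y_1}\sim\xi$ and $\phi_1^*\sO_{Q_1}(1)\sim\xi+h$. Pulling back by $\tau_1^+$ and applying the dictionary produces exactly the two displayed formulas.

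\textbf{Step 2: non-containment.} Suppose $\Gamma_1^{Y_1}\subset F_2^{Y_1}$. Then the strict transform $(\tau_1^+)^{-1}_*F_2^{Y_1}$ is linearly equivalent to $(\tau_1^+)^*F_2^{Y_1}-mS_1^{W_1^+}$ for some $m\ge 1$. Under the small map $\chi_{13}$ this effective prime divisor becomes an effective prime divisor $D$ on $X_0^+$ whose class is
\[
\sO_{X_0^+}(2,1)-E^+-m\left(\sO_{X_0^+}(5,0)-E^+\right)=\sO_{X_0^+}(2-5m,\,1)+(m-1)E^+ .
\]
The divisor $D$ is not $E^+$, because $D$ is the strict transform of a divisor not contracted by $\tau_1^+$, whereas $E^+$ corresponds to $S_1^{W_1^+}$ there. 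Hence $\tau_*D$ is a nonzero effective divisor on $U$ of class $\sO_U(2-5m,1)$.

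This is impossible. Take a general fiber $l$ of $\rho_2$, which is disjoint from $\Gamma$ and not contained in $\tau_*D$. Then $(\sO_U(1,0)\cdot l)=1$ and $(\sO_U(0,1)\cdot l)=0$, so $(\tau_*D\cdot l)=2-5m<0$, a contradiction.

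\textbf{Main obstacle.} The main care point is the bookkeeping in Step 2. One must check that the strict-transform class is computed correctly across the flop $\chi_{13}$ (strict transforms of prime divisors correspond under the natural isomorphism of Picard groups), and that $D\neq E^+$, so that $\tau_*D$ is genuinely nonzero. Once these are settled, the intersection with a general $\rho_2$-fiber gives the contradiction immediately.
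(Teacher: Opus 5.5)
Your proof is correct in outline and follows essentially the paper's route. The two classes come from comparing $-K_{Y_1}$, computed via $\phi_1$ and Lemma~\ref{lemma:2-31}, with \eqref{equation:g10converse-Y}. Non-containment comes from producing an effective divisor whose pushforward to $U$ has negative $\sO_U(1,0)$-degree. Passing through $\xi$ and $2F_2^{Y_1}\sim 2\xi$ is only a repackaging of the paper's direct solve from $-K_{Y_1}\sim\phi_1^*\sO_{Q_1}(2)+\psi_1^*\sO_{\pr^2_1}(1)$ and $F_2^{Y_1}\sim\phi_1^*\sO_{Q_1}(1)-\psi_1^*\sO_{\pr^2_1}(1)$.

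One justification in Step 2 is false: $E^+$ does \emph{not} correspond to $S_1^{W_1^+}$. Apply Proposition~\ref{proposition:conic-bundle} with $\X=X_0^+$, $\sigma=\tau$, $\sigma^+=\tau_1^+$. The $\tau_1^+$-exceptional divisor $S_1^{W_1^+}$ is the strict transform of $\rho_1^{-1}(\rho_1(\Gamma))$, of class $\sO_{X_0^+}(5,0)-E^+$. The strict transform of $E^+$ on $W_1^+$ is $\tau_1^+$-positive, not contracted. So ``not contracted by $\tau_1^+$'' does not separate $D$ from $E^+$.

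The conclusion $D\neq E^+$ is still true, for a simpler reason. $\Pic X_0^+$ is free on $\sO_{X_0^+}(1,0)$, $\sO_{X_0^+}(0,1)$, $E^+$, and the class of $D$ has $\sO_{X_0^+}(0,1)$-coefficient $1$, whereas $E^+$ has coefficient $0$.

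The paper avoids the issue altogether by subtracting only one copy of $S_1^{W_1^+}$. The effective divisor $(\tau_1^+)^*F_2^{Y_1}-S_1^{W_1^+}$ corresponds exactly to $\tau^*\sO_U(-3,1)$, with no $E^+$ term. Then $h^0(U,\sO_U(-3,1))=0$ finishes the argument without pushing forward or tracking multiplicities.
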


\begin{proof}
Since $-K_{Y_1}\sim\phi_1^*\sO_{Q_1}(2)+\psi_1^*\sO_{\pr^2_1}(1)$ and 
$F_2^{Y_1}\sim \phi_1^*\sO_{Q_1}(1)-\psi_1^*\sO_{\pr^2_1}(1)$ 
(see Lemma \ref{lemma:2-31}), the second assertion is trivial from 
\eqref{equation:g10converse-Y}. 
Assume that $\Gamma_1^{Y_1}\subset F_2^{Y_1}$. Then we have 
\[
0\neq h^0\left(W_1^+,(\tau_1^+)^*F_2^{Y_1}-S_1^{W_1^+}\right)
=h^0\left(U,\sO_U(-3,1)\right)=0, 
\]
a contradiction. 
\end{proof}

\begin{lemma}\label{lemma:g10converse-a}
Set $\Gamma_1^{Q_1}:=\phi_1(\Gamma_1^{Y_1})\subset Q_1$. 
Then the curve $\Gamma_1^{Q_1}$ is a smooth curve of genus $2$ with 
$\left(\sO_{Q_1}(1)\cdot \Gamma_1^{Q_1}\right)=7$ and 
$\operatorname{length}\left(\sO_{\Gamma_1^{Q_1}\cap Z_2^{Q_1}}\right)=2$. 
\end{lemma}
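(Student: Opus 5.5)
We compute the intersection numbers of $\Gamma_1^{Y_1}$ against $\phi_1^*\sO_{Q_1}(1)$ and $F_2^{Y_1}$ on $Y_1$, then push down to $Q_1$ using Lemma \ref{lemma:lengths}. By Lemma \ref{lemma:g10converse-easy-numerical} and the projection formula for $\tau_1^+$,
\[
\left(\phi_1^*\sO_{Q_1}(1)\cdot\Gamma_1^{Y_1}\right)=\left(\psi_1^*\sO_{\pr^2_1}(1)\cdot\Gamma_1^{Y_1}\right)+\left(F_2^{Y_1}\cdot\Gamma_1^{Y_1}\right).
\]
The first term is $5$, because $\psi_1|_{\Gamma_1^{Y_1}}$ is birational onto the quintic $\rho_1(\Gamma)$ (Proposition \ref{proposition:conic-bundle}\eqref{proposition:conic-bundle2} and Lemma \ref{lemma:trivial-quintic}). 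For the second term, I would use $-K_{W_1^+}=(\tau_1^+)^*(-K_{Y_1})-S_1^{W_1^+}$ together with the flop invariance of the anticanonical degree and the known class $(-K_{X_0^+})^{\cdot 3}=10$. A cleaner route is the formula
\[
(-K_{W_1^+})^{\cdot 3}=(-K_{Y_1})^{\cdot 3}-2\left(-K_{Y_1}\cdot\Gamma_1^{Y_1}\right)+2g-2,
\]
with $(-K_{Y_1})^{\cdot 3}=46$ (Lemma \ref{lemma:2-31}), $g=2$ and $(-K_{W_1^+})^{\cdot3}=10$. This gives $(-K_{Y_1}\cdot\Gamma_1^{Y_1})=19$. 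Since $-K_{Y_1}\sim\phi_1^*\sO_{Q_1}(2)+\psi_1^*\sO_{\pr^2_1}(1)$, we get $(\phi_1^*\sO_{Q_1}(1)\cdot\Gamma_1^{Y_1})=7$, and hence $(F_2^{Y_1}\cdot\Gamma_1^{Y_1})=2$. Both quantities can also be cross-checked from the classes in \eqref{equation:g10converse-Y}.

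Next, $\Gamma_1^{Y_1}\not\subset F_2^{Y_1}$ by Lemma \ref{lemma:g10converse-easy-numerical}, so $\phi_1|_{\Gamma_1^{Y_1}}$ is birational onto $\Gamma_1^{Q_1}$. Then $(\sO_{Q_1}(1)\cdot\Gamma_1^{Q_1})=7$ by the projection formula, and Lemma \ref{lemma:lengths}\eqref{lemma:length1} gives $\operatorname{length}(\sO_{\Gamma_1^{Q_1\nu}\cap Z_2^{Q_1}})=(F_2^{Y_1}\cdot\Gamma_1^{Y_1})=2$. Once smoothness is established this length is the scheme-theoretic one claimed.

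The main obstacle is the smoothness of $\Gamma_1^{Q_1}$; the genus then equals $2$ automatically. Singular points can only lie on $Z_2^{Q_1}$, since $\phi_1$ is an isomorphism elsewhere and $\Gamma_1^{Y_1}$ is smooth. Suppose $p\in\Gamma_1^{Q_1}\cap Z_2^{Q_1}$ is singular, and set $l_p:=\phi_1^{-1}(p)$. Since $\Gamma_1^{Y_1}$ is smooth, the singularity forces either $\Gamma_1^{Y_1}$ to meet $l_p$ in at least two points or to meet it with length $\ge 2$. The total length over $p$ is bounded by $(F_2^{Y_1}\cdot\Gamma_1^{Y_1})=2$. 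Following the argument of Lemma \ref{lemma:twisted-quintic}, apply Lemma \ref{lemma:lengths}\eqref{lemma:length2}. The multiplicity $\mult_p\Gamma_1^{Q_1}=(\E_p\cdot\Gamma_p)$ equals $\operatorname{length}(\Gamma_1^{Y_1}\cap l_p)$, which is at most $2$. A singular point would then use up the whole length $2$ at $l_p$. I would rule this out by considering the strict transform $\tilde{l}_p\subset W_1^+$ of $l_p$: we have $(-K_{Y_1}\cdot l_p)=1$, since $l_p$ is a fibre of $F_2^{Y_1}\to Z_2^{Q_1}$, so $(-K_{W_1^+}\cdot\tilde{l}_p)\le 1-2<0$. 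This contradicts the nefness of $-K_{W_1^+}$ from Proposition \ref{proposition:MDS}. This yields smoothness and completes the proof.
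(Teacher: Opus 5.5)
Your proof is correct and follows essentially the same route as the paper: smoothness comes from $(-K_{W_1^+}\cdot(\tau_1^+)^{-1}_*l)=1-\operatorname{length}(\sO_{l\cap\Gamma_1^{Y_1}})<0$ contradicting nefness, and the degrees come from the blowup formula $10=46-2(-K_{Y_1}\cdot\Gamma_1^{Y_1})+2$, which gives $19$ and hence $(\sO_{Q_1}(1)\cdot\Gamma_1^{Q_1})=7$ and $(F_2^{Y_1}\cdot\Gamma_1^{Y_1})=2$. The differences are only cosmetic: you compute the degrees before proving smoothness, and you make explicit the appeal to Lemma \ref{lemma:lengths} that converts $(F_2^{Y_1}\cdot\Gamma_1^{Y_1})=2$ into the scheme-theoretic length.
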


\begin{proof}
Since $\Gamma_1^{Y_1}\not\subset F_2^{Y_1}$, the curve $\Gamma_1^{Y_1}$ 
maps birationally onto its image by $\phi_1$. 
If there is a singular point $p\in\Gamma_1^{Q_1}$, then, since 
$\Gamma_1^{Y_1}$ is smooth, 
the fiber $l:=\phi_1^{-1}(p)$ is a smooth curve with $(-K_{Y_1}\cdot l)=1$ and 
$\operatorname{length}\left(\sO_{l\cap \Gamma_1^{Y_1}}\right)\geq 2$ by Lemma 
\ref{lemma:lengths}. However, since $W_1^+$ is a smooth weak Fano threefold, we have 
\[
0\leq\left(-K_{W_1^+}\cdot (\tau_1^+)_*^{-1}l\right)=1-
\operatorname{length}\left(\sO_{l\cap \Gamma_1^{Y_1}}\right)<0, 
\]
a contradiction. Thus $\Gamma_1^{Q_1}$ is a smooth curve of genus $2$. 
On the other hand, by \cite[Lemma 2.1]{MM85}, we have 
\[
10=(-K_{W_1^+})^{\cdot 3}=(-K_{Y_1})^{\cdot 3}-2\left(\left(-K_{Y_1}\cdot\Gamma_1^{Y_1}
\right)-2+1\right)=48-2\left(-K_{Y_1}\cdot \Gamma_1^{Y_1}\right), 
\]
we get the equality $\left(-K_{Y_1}\cdot \Gamma_1^{Y_1}\right)=19$. 
Since $\left(\psi_1^*\sO_{\pr^2_1}(1)\cdot \Gamma_1^{Y_1}\right)=5$ holds, 
we can get $\left(\sO_{Q_1}(1)\cdot \Gamma_1^{Q_1}\right)=7$ and $\left(F_2^{Y_1}
\cdot \Gamma_1^{Y_1}\right)=2$. 
\end{proof}

Let us consider the Sarkisov link 
\[\xymatrix{
& S_1^+ \ar@{}[r]|{\subset}  & X_1^+  \ar@{-->}[rr]^{\chi_1^{-1}} 
\ar@{->}[ld]_-{\tau_1} \ar@{->}[rd]_-{\beta_1^+} 
& & X_1 \ar@{->}[rd]^-{\sigma_1} \ar@{->}[ld]^-{\beta_1} \ar@{}[r]|{\supset} & F'_1 & \\
\Gamma_1^{Q_1} \ar@{}[r]|{\subset} & Q_1 && \bar{X}_1 && X \ar@{}[r]|{\supset}& Z_1
}\]
from the blowup of $Q_1$ along $\Gamma_1^{Q_1}$ as in Theorem 
\ref{thm:double-projection-from-line-converse} 
\eqref{thm:double-projection-from-line-converse2}; the link 
\eqref{equation:iskovskikh-converse} for the case $g=10$. 
In particular, the variety $X$ is a prime Fano threefold of genus $10$ and 
$Z_1\subset X$ is a line. The proof of the following proposition is 
similar to the proof of Proposition \ref{proposition:goback}. 

\begin{proposition}\label{proposition:goback-g10}
\begin{enumerate}
\renewcommand{\theenumi}{\arabic{enumi}}
\renewcommand{\labelenumi}{(\theenumi)}
\item\label{proposition:goback-g101}
The strict transform $(\tau_1)_*^{-1}\left(Z_2^{Q_1}\right)\subset X_1^+$ of 
$Z_2^{Q_1}\subset Q_1$ is not contained in the strict transform $(\chi_1)_*F'_1$. 
Moreover, the rational map $\chi_1^{-1}$ is an isomorphism around the 
neighborhood of $(\tau_1)_*^{-1}\left(Z_2^{Q_1}\right)$. 
Thus we can consider the strict transform $Z_2\subset X$ of 
$Z_2^{Q_1}\subset Q_1$. 
\item\label{proposition:goback-g102}
The curve $Z_2\subset X$ is a line in $X$. Moreover, $Z_1$, $Z_2$ is 
a totally disjoint pair of lines in $X$. 
\end{enumerate}
\end{proposition}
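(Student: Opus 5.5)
I will follow the proof of Proposition \ref{proposition:goback} closely, replacing the del Pezzo threefold $V_1$ by the hyperquadric $Q_1$ and the linear span of $\Gamma_1$ by the divisor $F^{Q_1}:=\tau_{1*}(\chi_1)_*F'_1$.

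\textbf{Step 1: identify $F^{Q_1}$ and show $Z_2^{Q_1}\not\subset F^{Q_1}$.} From Theorem \ref{thm:double-projection-from-line} (\ref{thm:double-projection-from-line32}) and the Picard relations $S^+\sim 2H-5F'$, $\tau_1^*\sO_{Q_1}(1)\sim H-2F'$ on $X_1^+$ (where $H$ is the pullback of $-K_X$), we get $F'\sim \tau_1^*\sO_{Q_1}(2)-S_1^+$. Hence $F^{Q_1}$ is the unique member of $|\sO_{Q_1}(2)\otimes I_{\Gamma_1^{Q_1}}|$. Suppose $Z_2^{Q_1}\subset F^{Q_1}$. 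I would pull this back to $W_1^+$ and compare with $Y_1$. By Lemma \ref{lemma:g10converse-easy-numerical} we have $(\tau_1^+)^*\phi_1^*\sO_{Q_1}(2)-2F_2^{Y_1}-S_1^{W_1^+}\sim(\chi_{13})^{-1}_*(\sO_{X_0^+}(2,0)-E^+)$. Since $\mathrm{mult}_{Z_2^{Q_1}}F^{Q_1}\geq 1$, one gets
\[
h^0\left(X_0^+,\sO_{X_0^+}(4,1)-E^+\right)\ \text{or rather}\ h^0\left(X_0^+,\sO_{X_0^+}(4,1)-E^+-\cdots\right)
\]
nonzero for the class $(\tau_1^+)^*\phi_1^*\sO_{Q_1}(2)-F_2^{Y_1}-S_1^{W_1^+}\sim(\chi_{13})^{-1}_*(\sO_{X_0^+}(4,1)-E^+)$; that alone is no contradiction. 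So containment gives multiplicity at least one, and I will check whether this already contradicts something. The cleaner route is to compute $F^{Q_1}$ directly: $F^{Y_1}=\phi_1^*F^{Q_1}-mF_2^{Y_1}$ with $m\geq 1$. Its strict transform on $X_0^+$ is then $\sO(6,2)-2E^+-m(\sO(2,1)-E^+)$, which must be effective. For $m=1$ this gives $\sO(4,1)-E^+$, and for $m=2$ it gives $\sO(2,0)$. Note that $\sO(2,0)$ is effective but does not contain $\Gamma$, and $\Gamma$ must be in the transform. The expected contradiction comes from comparing with the classes of Lemma \ref{lemma:pic} (\ref{lemma:pic2}) and the movable/pseudoeffective cone structure: $F'_1$ must map on $X_0^+$ to $F_1^{X_0^+}$, whose class is forced to be $\sO(2,0)-\ldots$. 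I expect this bookkeeping to be the main obstacle, and I would settle it by the Picard computation on $X_0^+$ together with Proposition \ref{proposition:non-existence-surface}.

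\textbf{Step 2: $\chi_1^{-1}$ is an isomorphism near $Z_2^{X_1^+}:=(\tau_1)^{-1}_*Z_2^{Q_1}$.} By Lemma \ref{lemma:g10converse-a}, $(-K_{X_1^+}\cdot Z_2^{X_1^+})=3-2=1$, so this curve is not a flopping curve. Blow it up to get $W_1$. Then $W_1$ is a small $\Q$-factorial modification of $X_0^+$, which is weak Fano by Corollary \ref{corollary:g10converse-weakness}; by Proposition \ref{proposition:MDS}, $-K_{W_1}$ is nef. If a flopping curve $B$ met $Z_2^{X_1^+}$, its strict transform would satisfy $(-K_{W_1}\cdot B^{W_1})<0$, a contradiction.

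\textbf{Step 3: part (2).} On $X_1$ we have $(F'_1\cdot Z_2^{X_1})=(F^{Q_1}\text{-pullback}\cdot Z_2)=0$ by Steps 1 and 2, and $(-K_{X_1}\cdot Z_2^{X_1})=1$. Hence $Z_2$ is a line disjoint from $Z_1$. The blowup $X_0$ of $X_1$ along $Z_2^{X_1}$ is a small $\Q$-factorial modification of $X_0^+$, so it is weak Fano by Proposition \ref{proposition:MDS}. Consequently no line meets both $Z_1$ and $Z_2$: such a line would have $-K_{X_0}$-degree $1-2<0$. So the pair is totally disjoint.
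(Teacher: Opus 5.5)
Your Steps 2 and 3 are correct and follow the paper's argument: nefness of $-K_{W_1}$ keeps $(\tau_1)^{-1}_*Z_2^{Q_1}$ away from the flopping locus, then the intersection count on $X_1$, then weak Fano-ness of $X_0$.

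The gap is Step 1. You never prove $Z_2^{Q_1}\not\subset F^{Q_1}$, and it is not optional. In Step 3, $(F'_1\cdot Z_2^{X_1})=0$ only gives $Z_2\cap Z_1=\emptyset$ if $Z_2^{X_1}\not\subset F'_1$. The same non-containment is needed to identify the blowup of $X_1$ along $Z_2^{X_1}$ with the blowup of $X$ along $Z_1\cup Z_2$.

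You find ``no contradiction'' because every class you write omits the exceptional divisor $S_1^{W_1^+}$. For instance, $(\tau_1^+)^*\phi_1^*\sO_{Q_1}(2)-F_2^{Y_1}-S_1^{W_1^+}$ is not $(\chi_{13})^{-1}_*(\sO_{X_0^+}(4,1)-E^+)$; that class is what you get before subtracting $S_1^{W_1^+}$.

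Here is how to close the step. Since $F^{Q_1}\supset\Gamma_1^{Q_1}$ and $\Gamma_1^{Y_1}\not\subset F_2^{Y_1}$ (Lemma \ref{lemma:g10converse-easy-numerical}), the assumption $Z_2^{Q_1}\subset F^{Q_1}$ makes $(\tau_1^+)^*(\phi_1^*F^{Q_1}-F_2^{Y_1})-S_1^{W_1^+}$ effective on $W_1^+$. Use the relations
$(\tau_1^+)^*\phi_1^*\sO_{Q_1}(1)\sim(\chi_{13})^{-1}_*(\sO_{X_0^+}(3,1)-E^+)$, $(\tau_1^+)^*F_2^{Y_1}\sim(\chi_{13})^{-1}_*(\sO_{X_0^+}(2,1)-E^+)$ and $S_1^{W_1^+}\sim(\chi_{13})^{-1}_*(\sO_{X_0^+}(5,0)-E^+)$. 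The class of this divisor is then
\[
(\chi_{13})^{-1}_*\bigl((\sO_{X_0^+}(6,2)-2E^+)-(\sO_{X_0^+}(2,1)-E^+)-(\sO_{X_0^+}(5,0)-E^+)\bigr)=(\chi_{13})^{-1}_*\sO_{X_0^+}(-1,1).
\]
Pushing forward to $U$ would give an effective member of $|\sO_U(-1,1)|$, which is impossible since $h^0(U,\sO_U(-1,1))=0$. This is the paper's argument. It needs neither cone-structure considerations nor Proposition \ref{proposition:non-existence-surface}.

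As a check, the same computation without subtracting $F_2^{Y_1}$ gives $\sO_{X_0^+}(1,2)-E^+$. That is the class of the strict transform of $F'_1$, consistent with Proposition \ref{proposition:vanishing}. It is not the classes $\sO(4,1)-E^+$ or $\sO(2,0)$ that you wrote.
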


\begin{proof}
\eqref{proposition:goback-g101} 
The strict transform $F_1^{Q_1}:=(\tau_1)_*(\chi_1)_*F'_1\subset Q_1$ is the 
unique element of $|\sO_{Q_1}(2)\otimes I_{\Gamma_1^{Q_1}}|$, where 
$I_{\Gamma_1^{Q_1}}\subset 
\sO_{Q_1}$ is the ideal sheaf corresponds to $\Gamma_1^{Q_1}\subset Q_1$. 
If $Z_2^{Q_1}\subset F_1^{Q_1}$ holds, then 
\[
(\tau_1^+)^{-1}_*\left(\phi_1^*F_1^{Q_1}-F_2^{Y_1}\right)\sim(\chi_{13})_*\sO_{X_0^+}(-1,1),
\]
is effective. Since $h^0\left(U,\sO_U(-1,1)\right)=0$, this leads to a contradiction. 

Note that 
\[
\left(-K_{X_1^+}\cdot (\tau_1)^{-1}_*Z_2^{Q_1}\right)=\left(\sO_{Q_1}(3)\cdot 
Z_2^{Q_1}\right)-\operatorname{length}\left(\sO_{\Gamma_1^{Q_1}\cap Z_2^{Q_1}}\right)=1
\]
by Lemma \ref{lemma:g10converse-a}. 
Thus the curve 
$(\tau_1)_*^{-1}\left(Z_2^{Q_1}\right)$ cannot be contracted by the flopping 
contraction $\beta_1^+\colon X_1^+\to\bar{X}_1$. 
Let us consider the blowup $\phi_1^+\colon W_1\to X_1^+$ of $X_1^+$ along 
$(\tau_1)_*^{-1}\left(Z_2^{Q_1}\right)$. The variety $W_1$ is a smooth weak Fano 
threefold by Proposition \ref{proposition:MDS}. 
In particular, the curve $(\tau_1)_*^{-1}\left(Z_2^{Q_1}\right)$
cannot intersect with any flopping curve of $\chi_1^{-1}$. 
Thus we get the assertion \eqref{proposition:goback-g101}. 

\eqref{proposition:goback-g102} 
Let $Z_2^{X_1}\subset X_1$ be the strict transform of $Z_2^{Q_1}\subset Q_1$. 
By \eqref{proposition:goback-g101} 
(and by Theorem \ref{thm:double-projection-from-line}), 
we have $(F'_1\cdot Z_2^{X_1})=0$ and $(-K_{X_1}\cdot Z_2^{X_1})=1$. 
Thus $Z_2\subset X$ is a line disjoint from $Z_1$. Let $\sigma_2'\colon X_0\to X_1$
be the blowup of $X_1$ along $Z_2^{X_1}$. Note that $X_0$ is nothing but 
the blowup along $Z_1\cup Z_2$. Moreover, the variety $X_0$ is a small $\Q$-factorial 
modification of $X_0^+$. Therefore, again by Proposition \ref{proposition:MDS}, 
the variety $X_0$ is a smooth weak Fano threefold. Therefore, there is no line 
$Z\subset X$ with $Z\cap Z_1\neq\emptyset$ and $Z\cap Z_2\neq\emptyset$. 
Thus we get the assertion \eqref{proposition:goback-g102}. 
\end{proof}

As a consequence, we get the following corollary. 

\begin{corollary}\label{corollary:back-g10}
Let $U$ be the del Pezzo threefold of degree $6$ and rank $2$, 
let $\Gamma\subset U$ be 
a smooth bi-quintic curve of genus $2$ such that the multiplicity of the plane curve 
$\rho_i(\Gamma)$ at any point is at most $2$ for $i\in\{1,2\}$, and let 
$\tau\colon X_0^+\to U$ be the 
blowup along $\Gamma$ with the $\tau$-exceptional divisor $E^+\subset X_0^+$. 
Then $X_0^+$ is a smooth weak Fano threefold and the anti-canonical model 
$\alpha^+\colon X_0^+\to \bar{X}_0$ of $X_0^+$ is small with $\rho(\bar{X}_0)=1$. 
Moreover, we have the following link: 
\begin{equation}\label{equation:ux}
\xymatrix{
&E^+\ar@{}[r]|-{\subset} & 
X_0^+\ar[dl]_-{\tau} \ar@{-->}[rr]^{\chi^{-1}} \ar[dr]_-{\alpha^+} &&
X_0\ar@{}[r]|-{\supset} \ar[dl]^-{\alpha} \ar[dr]^-{\sigma} & F_1\cup F_2 & \\
\Gamma \ar@{}[r]|-{\subset} &U  && \bar{X}_0 && X 
\ar@{}[r]|-{\supset} & Z_1\cup Z_2, 
}
\end{equation}
where $X_0$ is the $(\tau^*K_U)$-flop of $\alpha^+$, i.e., 
\[
X_0=\Proj_{\bar{X}_0}\bigoplus_{m\in\Z_{\geq 0}}(\alpha^+)_*\sO_{X_0^+}
\left(\tau^*(mK_U)\right),
\]
together with the structure morphism $\alpha\colon X_0\to\bar{X}_0$, 
$\chi^{-1}:=\alpha^{-1}\circ\alpha^+$, and 
\[
X=\Proj\bigoplus_{m\in\Z_{\geq 0}}H^0\left(X_0^+, 
m\left(\tau^*\sO_U(5,5)-3E^+\right)\right).
\]
Moreover, the variety $X$ is a prime Fano threefold of genus $10$, the morphism 
$\sigma$ is obtained by the blowup along a totally disjoint pair of lines $Z_1$, $Z_2$ 
in $X$ and the exceptional divisor $F_1+F_2$ of $\sigma$ is the strict transform 
of the unique member of $|\tau^*\sO_U(3,3)-2E^+|$. 
\end{corollary}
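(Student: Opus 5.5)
The plan follows the proof of Corollary \ref{corollary:back-g12}: reduce everything to the forward link \eqref{equation:big} for $g=10$, which is available once we know that the $X$ just constructed is a prime Fano threefold of genus $10$ and that $Z_1,Z_2$ is a totally disjoint pair of lines. By Corollary \ref{corollary:g10converse-weakness}, $X_0^+$ is a smooth weak Fano threefold. By Proposition \ref{proposition:goback-g10}, the link from the blowup of $Q_1$ along $\Gamma_1^{Q_1}$ produces a prime Fano threefold $X$ of genus $10$ together with the totally disjoint pair $Z_1,Z_2$. Moreover, the variety $X_0$ obtained by blowing up $X_1$ along $Z_2^{X_1}$ is the blowup of $X$ along $Z_1\cup Z_2$, and it is a small $\Q$-factorial modification of $X_0^+$.

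Next I apply Proposition \ref{proposition:totally-blowup} \eqref{proposition:totally-blowup3} (here $g=10\geq 9$) together with \eqref{proposition:totally-blowup2}. These give that the anti-canonical model $\alpha$ of $X_0$ is small with $\rho(\bar X_0)=1$. Since $X_0$ and $X_0^+$ are isomorphic in codimension one, they share the anti-canonical model, so $\alpha^+$ is small as well. Now run the forward construction of \S\ref{section:go} starting from $(X,Z_1,Z_2)$. By Theorem \ref{thm:diamond}, the small $\Q$-factorial modifications of $X_0$ are exactly $X_0,W_1,W_1^+,X_0^{+},W_2^+,W_2$, and among them the chamber containing the ample model of $[2,-3,-3]$ as a facet is $\Nef$ of the unique model carrying the contraction $\tau$ to $U$.

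The main obstacle is to identify the $X_0^+$ given in the statement (the blowup of $U$ along $\Gamma$) with the $X_0^+$ of the forward link. I will do this by matching divisor classes through $\chi_{13}$, $\phi_1$ and $\psi_1$. In the forward notation of Lemma \ref{lemma:pic}, the class $\sO_U(1,0)$ should correspond to $[1,-2,-1]$, the class $\sO_U(0,1)$ to $[1,-1,-2]$, and $E^+$ to $[3,-5,-5]$. To verify this, I express $\sO_{Q_1}(1)$, $F_2^{Y_1}$, $S_1^{W_1^+}$ and $\psi_1^*\sO(1)$ via Lemma \ref{lemma:g10converse-easy-numerical} and \eqref{equation:g10converse-Y}, and compare them with the forward Lemma \ref{lemma:pic} \eqref{lemma:pic2}. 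These divisors span $\Pic$, which pins down the identification.

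Given the identification, the contraction $X_0^+\to U$ is the ample model of $\tau^*\sO_U(1,1)$, and this is the same rational contraction as the forward one. Both $X_0^+$'s are small $\Q$-factorial modifications of $X_0$ on which $\sO_U(1,1)+\varepsilon(-K)$ is ample, so they coincide, and $\chi^{-1}$ is the claimed flop. It is a $(\tau^*K_U)$-flop because $\alpha^+$ is $\tau^*K_U$-negative on the flopping curves: $-K$ and $\tau^*\sO_U(1,1)$ span the facet $\Nef(\Y''_i)$ direction. Finally, the class identities give the remaining assertions. The divisor $\sigma^*(-K_X)\sim[1,0,0]$ equals $\tau^*\sO_U(5,5)-3E^+$, since $5[1,-2,-1]+5[1,-1,-2]-3[3,-5,-5]=[1,0,0]$, so $X$ is the Proj stated. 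Likewise $F_1+F_2\sim[0,1,1]=\tau^*\sO_U(3,3)-2E^+$, and uniqueness of this member follows from $h^0(X_0,F_1+F_2)=1$.
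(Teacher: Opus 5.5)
Your proposal is correct and is essentially the paper's proof. The paper gets smallness of $\alpha$ (hence of $\alpha^+$) from Proposition~\ref{proposition:totally-blowup}~\eqref{proposition:totally-blowup3}, applied to the output of Proposition~\ref{proposition:goback-g10}, and calls everything else trivial from \eqref{equation:big}. Your class matching $\sO_U(1,0)\leftrightarrow[1,-2,-1]$, $\sO_U(0,1)\leftrightarrow[1,-1,-2]$, $E^+\leftrightarrow[3,-5,-5]$ is exactly the bookkeeping behind that word, and it does check out against Lemma~\ref{lemma:g10converse-easy-numerical} and \eqref{equation:g10converse-Y}.

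The only slip is your justification of the $(\tau^*K_U)$-flop. The classes $-K$ and $\tau^*\sO_U(1,1)$ do not span $\Nef(\Y''_i)$, which is spanned by $-K$ and a single $\sO_{\Y_i}(1)$. What is needed instead follows from $\tau^*K_U\sim[-4,6,6]$: the class $m[1,-1,-1]+[-4,6,6]$ lies in the interior of $\Nef(X_0)$ for every $m>6$ by Lemma~\ref{lemma:coneX0}.
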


\begin{proof}
By Proposition \ref{proposition:totally-blowup} \eqref{proposition:totally-blowup3}, 
the anti-canonical model $\alpha\colon 
X_0\to \bar{X}_0$ of $X_0$ is small. Hence so is $\alpha^+$. 
The remaining assertions are trivial from the link \eqref{equation:big} 
in \S \ref{section:go} for the case $g=10$. 
\end{proof}

\begin{proof}[Proof of Theorem \ref{thm:main-back} \eqref{thm:main-back2}]
Immediately follows from Corollaries \ref{corollary:back-g10} and 
\ref{corollary:go}. See also Remark \ref{remark:aut}. 
\end{proof}

\section{Flopping and flopped curves}\label{section:flop}

In \S \ref{section:flop}, we assume that 
$X$ is a prime Fano threefold of genus $g\in\{12, 10, 9\}$ 
(in \S \ref{subsection:flop1012}, we further assume that $g\in\{12,10\}$)
and let $Z_1$, $Z_2$ be a pair 
of totally disjoint lines in $X$. We follow the notations in \S \ref{section:go}. 
The purpose of this section is to analyze the flopping curves of the elementary flops 
$\chi_{i1}$, 
$\chi_{i2}$, $\chi_{i3}$ in the diagram \eqref{equation:big2}, especially when 
$g\in\{12,10\}$. In \S \ref{section:flop},  
\begin{itemize}
\item
let $B_{11},\dots,B_{1m_1}\subset X_1$ be the flopping curves of 
$\beta_1\colon X_1\to \bar{X}_1$, 
\item
let $B_{21},\dots,B_{2m_2}\subset X_2$ be the flopping curves of 
$\beta_2\colon X_2\to \bar{X}_2$, and 
\item
let $B_{1k}^{X_0}\subset X_0$ ($1\leq k\leq m_1$), $B_{2l}^{X_0}\subset X_0$ 
$(1\leq l\leq m_2)$ be the strict transform of $B_{1k}\subset X_1$, $B_{2l}\subset X_2$, 
respectively. 
\end{itemize}
Note that $B_{i1}^{X_0},\dots,B_{im_i}^{X_0}\subset X_0$ are nothing but the 
flopping curves of $\chi_{i1}$. The goal of \S \ref{section:flop} is to prove 
Theorem \ref{thm:flop-curve}. 

\subsection{General properties of flopping curves}\label{subsection:flop-general}

\begin{lemma}\label{lemma:conic-bridge}
Take $\{i,j\}=\{1,2\}$. Assume that there exists a (smooth) conic $\sC\subset X$ in $X$ 
satisfying $\sC\cap Z_1\neq\emptyset$ and $\sC\cap Z_2\neq\emptyset$. 
For $\V\in\{X_0,X_1,X_2\}$, let $\sC^{\V}\subset\V$ be the strict transform of $\sC$ 
to $\V$. 
\begin{enumerate}
\renewcommand{\theenumi}{\arabic{enumi}}
\renewcommand{\labelenumi}{(\theenumi)}
\item\label{lemma:conic-bridge1}
We have 
\[
\operatorname{length}\left(\sO_{\sC\cap Z_1}\right)=
\operatorname{length}\left(\sO_{\sC\cap Z_2}\right)=1, \quad
\left(-K_{X_0}\cdot \sC^{X_0}\right)=0.
\]
\item\label{lemma:conic-bridge2}
The curve $\sC^{X_0}$ and the locus $\Exc(\chi_{i1})$ in $X_0$ are disjoint. 
In other words, we have $\sC^{X_0}\cap B_{ik}^{X_i}=\emptyset$ for any $1\leq k\leq m_i$. 
In particular, we can define the strict transform $\sC^{W_i}\subset W_i$ of 
$\sC^{X_0}$ on $W_i$. 
\item\label{lemma:conic-bridge3}
The curve $\sC^{W_i}\subset W_i$ is a flopping curve of the elementary 
flop $\chi_{i2}$. In particular, since all flopping curves of $\chi_{i2}$ are mutually disjoint, 
for any other conic $\sC'\subset X$ with $\sC'\cap Z_1\neq\emptyset$ and 
$\sC'\cap Z_2\neq\emptyset$, we have $\sC^{X_0}\cap{\sC'}^{X_0}=\emptyset$. 
\end{enumerate}
\end{lemma}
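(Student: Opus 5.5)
The plan is to compute everything by intersection numbers on $X_0$, $X_i$ and $X_i^+$, using Lemma \ref{lemma:lengths} and Lemma \ref{lemma:negativity}, together with the facts that $X_0$ and $W_i$ are smooth weak Fano threefolds (Proposition \ref{proposition:MDS}).

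For \eqref{lemma:conic-bridge1}, $\sC$ is smooth, so Lemma \ref{lemma:lengths} \eqref{lemma:length1} gives $\left(F_k\cdot\sC^{X_0}\right)=\operatorname{length}\left(\sO_{\sC\cap Z_k}\right)\geq 1$ for $k=1,2$ (here $Z_1\cap Z_2=\emptyset$). Hence
\[
\left(-K_{X_0}\cdot\sC^{X_0}\right)=2-\operatorname{length}\left(\sO_{\sC\cap Z_1}\right)-\operatorname{length}\left(\sO_{\sC\cap Z_2}\right)\leq 0.
\]
Since $-K_{X_0}$ is nef, equality holds and both lengths are $1$. It follows that $\left(-K_{X_i}\cdot\sC^{X_i}\right)=1$, $\left(F'_i\cdot\sC^{X_i}\right)=1$ and $\left(\sO_{\X_i}^{X_i}(1)\cdot\sC^{X_i}\right)=0$, using $\sO_{\X_i}^{X_i}(1)\sim\sigma_i^*(-K_X)-2F'_i$ (Lemma \ref{lemma:pic}).

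For \eqref{lemma:conic-bridge2}, I would argue by contradiction. Every $B_{ik}^{X_0}$ maps onto $B_{ik}$, so if $\sC^{X_0}$ met some $B_{ik}^{X_0}$, then $\sC^{X_i}$ would meet $\Exc(\beta_i)=\Exc(\chi_i)$. Moreover $\sC^{X_i}\not\subset\Exc(\chi_i)$, because $\left(-K_{X_i}\cdot\sC^{X_i}\right)=1\neq 0$.
\begin{itemize}
\item The flopping curves of $\beta_i$ (lines meeting $Z_i$, and the possible $(-3)$-curve in $F'_i$) all have positive intersection with $F'_i$. So $\beta_i$ is $(-F'_i)$-negative.
\item Apply Lemma \ref{lemma:negativity} \eqref{lemma:negativity23} with the Cartier divisor $A=-F'_i$. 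This gives $\left((\chi_i)_*F'_i\cdot\sC^{X_i^+}\right)\geq 2$.
\item The divisor $\sigma_i^*(-K_X)$ is $\beta_i$-trivial, so its intersection number is preserved: $\left((\chi_i)_*\sigma_i^*(-K_X)\cdot\sC^{X_i^+}\right)=2$.
\item Therefore $\left(\tau_i^*\sO_{\X_i}(1)\cdot\sC^{X_i^+}\right)\leq 2-4<0$. This contradicts the nefness of $\tau_i^*\sO_{\X_i}(1)$.
\end{itemize}
So $\sC^{X_0}$ is disjoint from $\Exc(\chi_{i1})$. The only delicate bookkeeping is the inequality $\geq 1$ per intersection point.

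For \eqref{lemma:conic-bridge3}, since $\chi_{i1}$ and $\chi_i$ are isomorphisms near $\sC$, the intersection numbers computed above persist on $W_i$ and on $X_i^+$.
\begin{itemize}
\item We have $\left(\tau_i^*\sO_{\X_i}(1)\cdot\sC^{X_i^+}\right)=0$, so $\sC^{X_i^+}$ is a fibre of $\tau_i$ over some point $o\in\Gamma_i$.
\item This fibre meets $Z_j^{X_i^+}$ because $\sC$ meets $Z_j$. Hence $o\in\Gamma_i\cap Z_j^{\X_i}$.
\item By the description of $\chi_{i2}$ in Example \ref{example:3flop}, the flopping curves of $\chi_{i2}$ are exactly the strict transforms in $W_i$ of the fibres $\tau_i^{-1}(o_k)$ over the points $o_k\in\left(\Gamma_i\cap Z_j^{\X_i}\right)_{\operatorname{red}}$. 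So $\sC^{W_i}$ is one of them.
\end{itemize}
These flopping curves lie over distinct points, so they are pairwise disjoint, and a fibre determines the conic. Thus for distinct conics $\sC,\sC'$, the curves $\sC^{W_i}$ and ${\sC'}^{W_i}$ are disjoint. By \eqref{lemma:conic-bridge2} both avoid $\Exc(\chi_{i1}^{-1})$, so $\sC^{X_0}\cap{\sC'}^{X_0}=\emptyset$.

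I expect the main obstacle to be step \eqref{lemma:conic-bridge2}. The key is choosing $A=-F'_i$ so that Lemma \ref{lemma:negativity} yields an integral jump that makes $\sO_{\X_i}(1)$ negative on the transformed conic.
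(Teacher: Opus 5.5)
Parts (1) and (3) are fine, and your mechanism in (2) is the paper's: the negativity lemma played against the nefness of the pullback of $\sO_{\X_i}(1)$. However, step (2) as written relies on a false claim. The divisor $\sigma_i^*(-K_X)$ is \emph{not} $\beta_i$-trivial. Every flopping curve of $\beta_i$ maps birationally onto a line of $X$: either a line meeting $Z_i$, or $Z_i$ itself in the case of the negative section of $F'_i$. So $\sigma_i^*(-K_X)$ has degree $1$ on each of them and is in fact $\beta_i$-ample. The $\beta_i$-trivial class is $-K_{X_i}=\sigma_i^*(-K_X)-F'_i$. Consequently your value $\bigl((\chi_i)_*\sigma_i^*(-K_X)\cdot\sC^{X_i^+}\bigr)=2$ is wrong. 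Under the hypothesis you are refuting it equals $1+f^+\geq 3$, where $f^+:=\bigl((\chi_i)_*F'_i\cdot\sC^{X_i^+}\bigr)$.

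The contradiction survives once the bookkeeping is redone with the correct trivial class. Since $-K_{X_i}$ is pulled back from $\bar{X}_i$, you have $\bigl(-K_{X_i^+}\cdot\sC^{X_i^+}\bigr)=\bigl(-K_{X_i}\cdot\sC^{X_i}\bigr)=1$. Also $\tau_i^*\sO_{\X_i}(1)\sim(\chi_i)_*(-K_{X_i}-F'_i)$. Together these give $\bigl(\tau_i^*\sO_{\X_i}(1)\cdot\sC^{X_i^+}\bigr)=1-f^+\leq -1$.

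You need neither $A=-F'_i$ nor the integral version (2)(iii) of Lemma \ref{lemma:negativity}. The class $A:=\sigma_i^*(-K_X)-2F'_i=-K_{X_i}-F'_i$ is already $\beta_i$-negative and has degree $0$ on $\sC^{X_i}$. So part (2)(ii) of Lemma \ref{lemma:negativity} gives $0>\bigl(\tau_i^*\sO_{\X_i}(1)\cdot\sC^{X_i^+}\bigr)\geq 0$ at once.

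This is exactly the paper's argument, run one level up on $\chi_{i1}\colon X_0\dashrightarrow W_i$ with $A=\sO_{\X_i}^{X_0}(1)$. Working there also gives $\bigl(\sO_{\X_i}^{W_i}(1)\cdot\sC^{W_i}\bigr)=0$ for free. The paper then reads off (3) as ``$K_{W_i}$-trivial and contracted over $\X_i$''. Your (3) instead identifies $\sC^{W_i}$, via Example \ref{example:3flop}, as the strict transform of a fibre $\tau_i^{-1}(o)$ with $o\in\Gamma_i\cap Z_j^{\X_i}$. That is equally valid and makes the disjointness statement more transparent.
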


\begin{proof}
\eqref{lemma:conic-bridge1}
By Lemma \ref{lemma:lengths}, we get 
\[
0\leq\left(-K_{X_0}\cdot\sC^{X_0}\right)=2
-\operatorname{length}\left(\sO_{\sC\cap Z_1}\right)
-\operatorname{length}\left(\sO_{\sC\cap Z_2}\right).
\]
Hence the assertion \eqref{lemma:conic-bridge1} is trivial. 

\eqref{lemma:conic-bridge2}
Since $\Exc(\chi_{i1})$ is equal to the union $\bigcup_{k=1}^{m_i}B_{i k}^{X_0}$, we can 
consider the strict transform $\sC^{W_i}\subset W_i$ of $\sC^{X_0}\subset X_0$. 
Since $\chi_{i1}$ is an $\sO_{\X_i}^{X_0}(1)$-negative elementary flop 
and $\sO_{\X_i}^{W_i}(1)$ is nef on $W_i$, we have 
\[
0=\left(\sO_{\X_i}^{X_0}(1)\cdot\sC^{X_0}\right)\geq
\left(\sO_{\X_i}^{W_i}(1)\cdot\sC^{W_i}\right)\geq 0
\]
by Lemma \ref{lemma:negativity}. 
Thus, again by Lemma \ref{lemma:negativity}, we have 
$\sC^{X_0}\cap\Exc(\chi_{i1})=\emptyset$ and the equality
$\left(\sO_{\X_i}^{W_i}(1)\cdot\sC^{W_i}\right)=0$. 

\eqref{lemma:conic-bridge3}
From the equality $\left(\sO_{\X_i}^{W_i}(1)\cdot\sC^{W_i}\right)=0$, 
the curve $\sC^{W_i}$ is a $K_{W_i}$-trivial curve over $\X_i$ on $W_i$. 
Thus $\sC^{W_i}$ is a flopping curve of $\chi_{i2}$. 
\end{proof}

\begin{lemma}\label{lemma:half-half}
Take $\{i,j\}=\{1,2\}$. 
\begin{enumerate}
\renewcommand{\theenumi}{\arabic{enumi}}
\renewcommand{\labelenumi}{(\theenumi)}
\item\label{lemma:half-half1}
Assume that there exists $1\leq l\leq m_j$ such that the curve $B_{jl}^{X_0}\subset X_0$ 
is disjoint from $\Exc(\chi_{i1})$. Let $B_{jl}^{W_i}\subset W_i$ be the strict transform 
of $B_{jl}^{X_0}\subset X_0$ to $W_i$. Then $\chi_{i2}\colon W_i\dashrightarrow W_i^+$ is 
an isomorphism around a neighborhood of $B_{jl}^{W_i}$. Let $B_{jl}^{W_i^+}\subset W_i^+$ 
be the strict transform of $B_{jl}^{W_i}\subset W_i$ to $W_i^+$. Then the curve 
$B_{jl}^{W_i^+}$ is a flopping curve of the elementary flop $\chi_{i3}\colon W_i^+
\dashrightarrow X_0^+$. 
\item\label{lemma:half-half2}
For any $1\leq l\leq m_j$, we have 
\[
\#\left\{1\leq k\leq m_i\mid B_{ik}^{X_0}\cap B_{jl}^{X_0}\neq\emptyset\right\}\leq 1.
\]
Moreover, if $B_{ik}^{X_0}\cap B_{jl}^{X_0}\neq\emptyset$, then the strict transform 
$B_{jl}^{W_i}\subset W_i$ of $B_{jl}^{X_0}\subset X_0$ to $W_i$ is a flopping curve 
of $\chi_{i2}$. 
\end{enumerate}
\end{lemma}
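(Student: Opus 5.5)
The plan is to argue purely numerically: compute intersection numbers of $B_{jl}^{X_0}$ against the basis $[a_0,a_1,a_2]$ of Lemma~\ref{lemma:pic}, then feed them into the negativity Lemma~\ref{lemma:negativity}. This is the same mechanism as in the proof of Lemma~\ref{lemma:conic-bridge}.

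\textbf{Numerics.} Since $B_{jl}^{X_0}$ is a flopping curve of $\chi_{j1}$, we have $\bigl(-K_{X_0}\cdot B_{jl}^{X_0}\bigr)=0$. Because $Z_1$, $Z_2$ are totally disjoint, the image of $B_{jl}$ in $X$ is either a line meeting $Z_j$ or lies in $F'_j$ (Theorem~\ref{thm:double-projection-from-line}~\eqref{thm:double-projection-from-line2}). In either case it is disjoint from $Z_i$, so $\bigl(F_i\cdot B_{jl}^{X_0}\bigr)=0$. In all three genera, Lemma~\ref{lemma:pic} gives
\[
\sO_{\Y_i}^{X_0}(1)\sim[1,-2,-1]=-K_{X_0}-F_i,
\qquad
\sO_{\X_i}^{X_0}(1)\sim[1,-2,0]=-K_{X_0}-F_i+F_j .
\]
Hence $\bigl(\sO_{\Y_i}^{X_0}(1)\cdot B_{jl}^{X_0}\bigr)=0$ and $\bigl(\sO_{\X_i}^{X_0}(1)\cdot B_{jl}^{X_0}\bigr)=\bigl(F_j\cdot B_{jl}^{X_0}\bigr)$. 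The latter is at most $1$: it equals $1$ when the image is a line meeting $Z_j$ transversally (it is $\le 1$ by Lemma~\ref{lemma:lengths} since $Z_1$, $Z_2$ are totally disjoint), and it is $\le 0$ for the $(-3)$-curve.

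\textbf{Proof of (1).} If $B_{jl}^{X_0}\cap\Exc(\chi_{i1})=\emptyset$, then $\chi_{i1}$ is an isomorphism near it and all intersection numbers are preserved on $W_i$. By Lemma~\ref{lemma:cone-hoka}, $\sO_{\Y_i}^{W_i^+}(1)$ is nef on $W_i^+$ but not on $W_i$, so $\chi_{i2}$ is an $\sO_{\Y_i}^{W_i}(1)$-negative elementary flop. Lemma~\ref{lemma:negativity}~\eqref{lemma:negativity2} then gives
\[
0=\bigl(\sO_{\Y_i}^{W_i}(1)\cdot B_{jl}^{W_i}\bigr)\ge\bigl(\sO_{\Y_i}^{W_i^+}(1)\cdot B_{jl}^{W_i^+}\bigr)\ge 0 ,
\]
and by part~(ii) equality forces $B_{jl}^{W_i}\cap\Exc(\chi_{i2})=\emptyset$. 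So $\chi_{i2}$ is an isomorphism near $B_{jl}^{W_i}$. On $W_i^+$ the curve $B_{jl}^{W_i^+}$ is then trivial against both $-K_{W_i^+}$ and $\sO_{\Y_i}^{W_i^+}(1)$. These two classes span the face $\Nef(\Y''_i)$ by Lemma~\ref{lemma:cone-hoka}~\eqref{lemma:cone-hoka2}, so $B_{jl}^{W_i^+}$ is contracted by $\beta_{i3}$, i.e.\ it is a flopping curve of $\chi_{i3}$.

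\textbf{Proof of (2).} Here $\chi_{i1}$ is $\sO_{\X_i}^{X_0}(1)$-negative and $\sO_{\X_i}^{W_i}(1)$ is nef on $W_i$. Both are Cartier, so Lemma~\ref{lemma:negativity}~\eqref{lemma:negativity23} gives
\[
1\ge\bigl(\sO_{\X_i}^{X_0}(1)\cdot B_{jl}^{X_0}\bigr)\ge\bigl(\sO_{\X_i}^{W_i}(1)\cdot B_{jl}^{W_i}\bigr)+\#\bigl(B_{jl}^{X_0}\cap\Exc(\chi_{i1})\bigr)\ge\#\bigl(B_{jl}^{X_0}\cap\Exc(\chi_{i1})\bigr).
\]
So $B_{jl}^{X_0}$ meets $\Exc(\chi_{i1})=\bigcup_k B_{ik}^{X_0}$ in at most one point. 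To convert this into ``at most one $k$'' I need the $B_{ik}^{X_0}$ to be pairwise disjoint; I expect this to be the main point needing care. For $g\in\{12,10\}$ it follows from Proposition~\ref{proposition:disjoint-lines}. In general I would argue that two flopping lines meeting at a point of the blowup would give a configuration contradicting the nefness of $-K$ on some small modification, in the style of Proposition~\ref{proposition:goback}. If $B_{ik}^{X_0}\cap B_{jl}^{X_0}\neq\emptyset$, then equality holds throughout, so $\bigl(\sO_{\X_i}^{W_i}(1)\cdot B_{jl}^{W_i}\bigr)=0$ and $\bigl(-K_{W_i}\cdot B_{jl}^{W_i}\bigr)=0$, using $\bigl(-K\cdot B_{jl}^{W_i}\bigr)\le 0$ from Lemma~\ref{lemma:negativity} together with nefness. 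These two classes span $\Nef(\X'_i)$ by Lemma~\ref{lemma:cone-hoka}~\eqref{lemma:cone-hoka1}, so $B_{jl}^{W_i}$ is contracted by $\beta_{i2}$ and is a flopping curve of $\chi_{i2}$.
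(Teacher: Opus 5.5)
Your route (intersection numbers in the basis of Lemma~\ref{lemma:pic}, fed into Lemma~\ref{lemma:negativity}) is the paper's, but two steps fail as written.

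In part (1) you apply Lemma~\ref{lemma:negativity} to $B_{jl}^{W_i}$ and $\chi_{i2}$ without checking the hypothesis $B_{jl}^{W_i}\not\subset\Exc(\chi_{i2})$, and your numerics cannot supply it. You only bound $\bigl(\sO_{\X_i}^{X_0}(1)\cdot B_{jl}^{X_0}\bigr)=\bigl(F_j\cdot B_{jl}^{X_0}\bigr)$ by $1$, and you claim it is $\le 0$ for the $(-3)$-curve. That claim is false, and were it true, part (1) itself would fail. The $(-3)$-curve (present when $\sN_{Z_j/X}\cong\sO_{\pr^1}(1)\oplus\sO_{\pr^1}(-2)$) lies in $F_j$, so it is always disjoint from $\Exc(\chi_{i1})$. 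An $\sO_{\X_i}$-trivial, $K$-trivial curve on $W_i$ is contracted by $\beta_{i2}$ (Lemma~\ref{lemma:cone-hoka}), so $\chi_{i2}$ would not be an isomorphism near it. The correct value is $1$ in every case. Indeed $-K_{X_0}=\sigma^*(-K_X)-F_1-F_2$, and $\sigma$ maps $B_{jl}^{X_0}$ isomorphically onto a line: a line meeting $Z_j$, or $Z_j$ itself for the $(-3)$-section. Hence $\bigl(F_j\cdot B_{jl}^{X_0}\bigr)=1-0-0=1$ and $\bigl(\sO_{\X_i}^{W_i}(1)\cdot B_{jl}^{W_i}\bigr)=1\neq 0$. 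So $B_{jl}^{W_i}$ is not a flopping curve of $\chi_{i2}$, which is precisely the step the paper makes before invoking negativity. Total disjointness is what gives $(F_i\cdot B_{jl}^{X_0})=0$; it plays no role in computing $(F_j\cdot B_{jl}^{X_0})$.

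In part (2), the passage from ``at most one point'' to ``at most one $k$'' is covered only for $g\in\{12,10\}$, via Proposition~\ref{proposition:disjoint-lines}. That forward reference is not circular, since its proof through Corollary~\ref{corollary:12goback} never uses this lemma. The lemma is also stated for $g=9$, however, and there your appeal to nefness ``in the style of Proposition~\ref{proposition:goback}'' is not an argument. The missing idea is elementary and works for every $g$:
\begin{enumerate}
\item[(a)] Suppose $p\in B_{ik}^{X_0}\cap B_{ik'}^{X_0}\cap B_{jl}^{X_0}$ with $k\neq k'$. Then $p\notin F_i$, because $B_{jl}^{X_0}\cap F_i=\emptyset$.
\item[(b)] Hence neither curve is the $(-3)$-curve, and $\sigma(B_{ik}^{X_0})$, $\sigma(B_{ik'}^{X_0})$ are distinct lines meeting $Z_i$ and passing through $\sigma(p)\notin Z_i$.
\item[(c)] They therefore meet $Z_i$ at distinct points, so together with $Z_i$ they form a triangle spanning a plane.
\item[(d)] Each quadric containing $X$ then vanishes on a plane cubic, hence on the whole plane. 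Since $X$ is an intersection of quadrics (Theorem~\ref{thm:iskovskikh}), the plane lies in $X$, which is absurd.
\end{enumerate}
This is exactly the paper's argument.
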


\begin{proof}
\eqref{lemma:half-half1}
From the assumption, we have 
\begin{eqnarray*}
\left(\sO_{\X_i}^{W_i}(1)\cdot B_{jl}^{W_i}\right)&=&
\left(\sO_{\X_i}^{X_0}(1)\cdot B_{jl}^{X_0}\right)=1, \\
\left(\sO_{\Y_i}^{W_i}(1)\cdot B_{jl}^{W_i}\right)&=&
\left(\sO_{\Y_i}^{X_0}(1)\cdot B_{jl}^{X_0}\right)=0.
\end{eqnarray*}
Thus $B_{jl}^{W_i}\subset W_i$ is not a flopping curve of $\chi_{i2}$. Let 
$B_{jl}^{W_i^+}\subset W_i^+$ be the strict transform of $B_{jl}^{W_i}$. 
Since $\chi_{i2}$ is an $\sO_{\Y_i}^{W_i}(1)$-negative elementary flop 
and $\sO_{\Y_i}^{W_i^+}(1)$ is nef on $W_i^+$, we have 
\[
0=\left(\sO_{\Y_i}^{W_i}(1)\cdot B_{jl}^{W_i}\right)\geq
\left(\sO_{\Y_i}^{W_i^+}(1)\cdot B_{jl}^{W_i^+}\right)\geq 0
\]
by Lemma \ref{lemma:negativity}. Thus, again by Lemma \ref{lemma:negativity}, 
the curve $B_{jl}^{W_i}$ is disjoint from $\Exc(\chi_{i2})$, and the curve $B_{jl}^{W_i^+}$ 
is a $K_{W_i^+}$-trivial 
curve over $\Y_i$. This implies that $B_{jl}^{W_i^+}$ is a flopping curve of 
$\chi_{i3}$. 

\eqref{lemma:half-half2}
Assume that there exist $1\leq k<k'\leq m_i$ such that 
$B_{ik}^{X_0}\cap B_{jl}^{X_0}\neq\emptyset$ and $B_{ik'}^{X_0}\cap B_{jl}^{X_0}
\neq\emptyset$. Set $p_k:=B_{ik}^{X_0}\cap B_{jl}^{X_0}$ and 
$p_{k'}:=B_{ik'}^{X_0}\cap B_{jl}^{X_0}$. If $p_k\neq p_{k'}$, then, by Lemma 
\ref{lemma:negativity}, we have 
\[
\left(\sO_{\X_i}^{W_i}(1)\cdot B_{jl}^{W_i}\right)\leq
\left(\sO_{\X_i}^{X_0}(1)\cdot B_{jl}^{X_0}\right)-2=-1.
\]
This leads to a contradiction since $\sO_{\X_i}^{W_i}(1)$ is nef on $W_i$. 
Thus we must have $p_k=p_{k'}$. However, the three numbers of lines 
$Z_i$, $\sigma_i(B_{ik})$, $\sigma_i(B_{ik'})$ in $X\subset \pr^{g+1}$ 
must lie in a $2$-dimensional linear 
subspace of $\pr^{g+1}$. This contradicts with Theorem \ref{thm:iskovskikh} 
\eqref{thm:iskovskikh2}. 

If $B_{ik}^{X_0}\cap B_{jl}^{X_0}\neq\emptyset$, then the same argument gives 
the equality $\left(\sO_{\X_i}^{W_i}(1)\cdot B_{jl}^{W_i}\right)=0$. 
Thus the curve $B_{jl}^{W_i}\subset W_i$ is a flopping curve of $\chi_{i2}$.
\end{proof}

\begin{lemma}\label{lemma:12goback}
Take $\{i,j\}=\{1,2\}$. 
Let $B^{W_i}\subset W_i$ be a flopping curve of the elementary flop 
$\chi_{i2}\colon W_i\dashrightarrow W_i^+$. 
\begin{enumerate}
\renewcommand{\theenumi}{\arabic{enumi}}
\renewcommand{\labelenumi}{(\theenumi)}
\item\label{lemma:12goback1}
We can define the strict transform $B^{X_0}\subset X_0$ of the curve 
$B^{W_i}\subset W_i$. 
\item\label{lemma:12goback2}
On of the following holds: 
\begin{enumerate}
\renewcommand{\theenumii}{\roman{enumii}}
\renewcommand{\labelenumii}{(\theenumii)}
\item\label{lemma:12goback21}
there exist $1\leq k\leq m_i$ and $1\leq l\leq m_j$ such that the curve $B^{X_0}$ 
in $X_0$ is equal to the curve $B_{jl}^{X_0}$, and we have 
$B_{ik}^{X_0}\cap B_{jl}^{X_0}\neq\emptyset$ (In particular, $Z_1$, $Z_2$ are \emph{not} 
absolutely disjoint.), or
\item\label{lemma:12goback22}
the image $\sigma\left(B^{X_0}\right)\subset X$ of $B^{X_0}$ in $X$ is 
a (smooth) conic intersecting with both $Z_1$ and $Z_2$. 
\end{enumerate}
\end{enumerate}
\end{lemma}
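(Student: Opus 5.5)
To prove (1), I would apply Lemma \ref{lemma:negativity} to $\chi_{i1}^{-1}\colon W_i\dashrightarrow X_0$. This map is the flop of $\beta_{i1}^+$, and it is negative with respect to the strict transform of an appropriate divisor, for instance $\sO_X^{W_i}(1)$. Indeed, $\sO_X^{X_0}(1)$ is nef on $X_0$, while the flopped curves of $\chi_{i1}$ satisfy $\bigl(\sO_X^{W_i}(1)\cdot\cdot\bigr)<0$. By Lemma \ref{lemma:negativity} \eqref{lemma:negativity1}, $\Exc(\chi_{i1}^{-1})$ is exactly the union of the flopped curves of $\chi_{i1}$, and these are contracted by $\beta_{i1}^+$. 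A flopping curve $B^{W_i}$ of $\chi_{i2}$ is contracted by $\beta_{i2}$. Since $\Nef(\bar{X}'_i)$ and $\Nef(\X'_i)$ are distinct facets of $\Nef(W_i)$ (Lemma \ref{lemma:cone-hoka}), their extremal rays differ, so $B^{W_i}$ is not one of the flopped curves of $\chi_{i1}$. Hence $B^{W_i}\not\subset\Exc(\chi_{i1}^{-1})$, and the strict transform $B^{X_0}$ is defined.

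For (2), I would compare intersection numbers. On $W_i$ we have $\bigl(-K_{W_i}\cdot B^{W_i}\bigr)=0$ and $\bigl(\sO_{\X_i}^{W_i}(1)\cdot B^{W_i}\bigr)=0$. The map $\chi_{i1}^{-1}$ is $\sO_{\X_i}$-positive on $W_i$, equivalently $\chi_{i1}$ is $\sO_{\X_i}^{X_0}(1)$-negative. Applying Lemma \ref{lemma:negativity} \eqref{lemma:negativity2} to $\chi_{i1}$ gives
\[
\bigl(\sO_{\X_i}^{X_0}(1)\cdot B^{X_0}\bigr)\ge 0,
\qquad
\bigl(-K_{X_0}\cdot B^{X_0}\bigr)=0,
\]
the second because $-K$ is preserved by flops. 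By Lemma \ref{lemma:negativity} \eqref{lemma:negativity23}, $\bigl(\sO_{\X_i}^{X_0}(1)\cdot B^{X_0}\bigr)$ is at least the number of points of $B^{X_0}\cap\Exc(\chi_{i1})$.

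Next I write $d:=\bigl(\sO_X^{X_0}(1)\cdot B^{X_0}\bigr)$ and $e_k:=\bigl(F_k\cdot B^{X_0}\bigr)\ge0$; these are nonnegative because $B^{X_0}\not\subset F_k$. This last fact I would verify using $\bigl(\sO_{\X_i}^{W_i}(1)\cdot\cdot\bigr)=0$ together with the structure of $F_j^{W_i}$. The condition $-K_{X_0}\cdot B^{X_0}=0$ gives $d=e_1+e_2$. From Lemma \ref{lemma:pic}, $\sO_{\X_i}^{X_0}(1)=[1,-2,0]$ (with indices adjusted for $i$), $[2,-5,0]$ for $g=10$, and so on. Thus the quantity $\bigl(\sO_{\X_i}^{X_0}(1)\cdot B^{X_0}\bigr)=d-2e_i=e_j-e_i$ is small and nonnegative. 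It is positive exactly when $B^{X_0}$ meets $\Exc(\chi_{i1})$.

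Now I split into cases. In \emph{Case A}, $B^{X_0}$ is contracted by $\alpha$, i.e.\ it is a flopping curve of $X_0$. Then $B^{X_0}$ is one of the $B_{ik}^{X_0}$ or $B_{jl}^{X_0}$, because the curves contracted by $\alpha$ are exactly those. The value $B_{ik}^{X_0}$ is excluded, since those curves are flopped away. So $B^{X_0}=B_{jl}^{X_0}$, and it must meet some $B_{ik}^{X_0}$, since otherwise Lemma \ref{lemma:half-half} \eqref{lemma:half-half1} would make it a flopping curve of $\chi_{i3}$ rather than of $\chi_{i2}$. In \emph{Case B}, $B^{X_0}$ is not contracted by $\alpha$. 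Then $d=e_1+e_2$, and nefness on $X_0$ forces $\bigl(\sO_{\bar X_0}^{X_0}(1)\cdot B^{X_0}\bigr)=0$, which contradicts not being contracted. So I need to redo the bound: the correct constraint comes from $\bigl(-K\cdot B\bigr)=0$, which forces $B^{X_0}$ to be $\alpha$-contracted or to have $-K$-degree zero in another way.

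The real route to the conic case is the following. The image $\sigma(B^{X_0})$ is a curve with $(-K_X)$-degree $d=e_1+e_2$, and $B^{X_0}$ meets both $F_1$ and $F_2$ unless it is a $B_{kl}$. I would use Lemma \ref{lemma:12goback}'s setting together with the nef bound on $W_i$ to get $d\le 2$. Then $d=2$ with $e_1=e_2=1$ gives a conic meeting both lines, while $d=1$ would give a line meeting both, which is impossible by total disjointness.

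The main obstacle is bounding $d$. I expect to obtain it from $\sO_{\Y_i}^{W_i}(1)$ being nef on $W_i$ with $B^{W_i}$ having degree $\ge0$, and then transferring back through $\chi_{i1}$.
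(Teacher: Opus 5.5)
\textbf{Part (1).} Your argument is fine. It differs from the paper's only in the justification: the paper observes that $B^{W_i}$ meets $F_j^{W_i}$, and $\chi_{i1}$ is an isomorphism near $F_j$ because $Z_1$, $Z_2$ are totally disjoint.

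\textbf{Part (2): what is correct.} Two of your inequalities hold. First, $(-K_{X_0}\cdot B^{X_0})=0$, so $d=e_1+e_2$. Second, $(\sO_{\X_i}^{X_0}(1)\cdot B^{X_0})=e_j-e_i\geq 0$, and this is strictly positive exactly when $B^{X_0}$ meets $\Exc(\chi_{i1})$.

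\textbf{Part (2): the gap.} Nothing you use bounds $d$. Being contracted by $\beta_{i2}$ fixes the class of $B^{W_i}$ only up to a positive integer multiple $c$. The conditions $(-K_{W_i}\cdot B^{W_i})=(\sO_{\X_i}^{W_i}(1)\cdot B^{W_i})=0$ give $(\sO_X^{W_i}(1)\cdot B^{W_i})=2c$ and $(F_1^{W_i}\cdot B^{W_i})=(F_2^{W_i}\cdot B^{W_i})=c$. Hence $e_j=c$ (the $F_j$-degree is unchanged, since $\chi_{i1}$ is an isomorphism near $F_j$), $0\leq e_i\leq c$, and $d=c+e_i$.

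The missing idea is that $c=1$. This is not a cone-theoretic fact; it comes from the explicit description of $\chi_{i2}$ in Example \ref{example:3flop}. Every flopping curve of $\chi_{i2}$ is the strict transform of a fiber $l$ of $\tau_i$ over a point of $\Gamma_i\cap Z_j^{\X_i}$. Since $-K_{W_i}=(\phi_i^+)^*(-K_{X_i^+})-F_j^{W_i}$ and $(-K_{X_i^+}\cdot l)=1$, you get $(F_j^{W_i}\cdot B^{W_i})=1$.

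Once $e_j=1$, the lemma follows directly.
If $e_i=1$, then $e_j-e_i=0$, so $B^{X_0}\cap\Exc(\chi_{i1})=\emptyset$ and $d=2$; this is the conic case.
If $e_i=0$, then $d=1$, so $\sigma(B^{X_0})$ is a line meeting $Z_j$. By Theorem \ref{thm:double-projection-from-line} it equals some $B_{jl}^{X_0}$, and it meets $\Exc(\chi_{i1})=\bigcup_k B_{ik}^{X_0}$ because $e_j-e_i>0$.
This is the paper's proof, which phrases the bound as $0\leq(F_i\cdot B^{X_0})\leq(F_i^{W_i}\cdot B^{W_i})=1$.

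\textbf{Your proposed fix does not work.} The divisor $\sO_{\Y_i}^{W_i}(1)$ is not nef on $W_i$: it lies outside $\Nef(W_i)$ by Lemma \ref{lemma:cone-hoka}. Since $\chi_{i2}$ is $\sO_{\Y_i}^{W_i}(1)$-negative, in fact $(\sO_{\Y_i}^{W_i}(1)\cdot B^{W_i})=-1$.

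\textbf{Your case split is also wrong.} The curves contracted by $\alpha$ are not only the $B_{ik}^{X_0}$ and $B_{jl}^{X_0}$. Strict transforms of conics meeting both lines are also $\alpha$-contracted (Lemma \ref{lemma:conic-bridge}). In fact every $B^{X_0}$ is $\alpha$-contracted, since $(-K_{X_0}\cdot B^{X_0})=0$. So "contracted by $\alpha$ or not" cannot separate (i) from (ii); the correct dichotomy is $e_i=0$ versus $e_i=1$.

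Likewise, $d=1$ does not give a line meeting both $Z_1$ and $Z_2$. It gives $e_i=0$ and $e_j=1$, which is alternative (i).

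\textbf{Minor slip.} $\sO_{\X_i}^{X_0}(1)$ is $[1,-2,0]$ (for $i=1$) in every genus. The class $[2,-5,0]$ is that of $S_1$ when $g=10$.
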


We remark that, for the case \eqref{lemma:12goback21}, the pair $(k,l)$ is uniquely 
determined. This follows from Lemma \ref{lemma:half-half} \eqref{lemma:half-half2}. 

\begin{proof}
Since 
\[
\left(\sO_{\X_i}^{W_i}(1)\cdot B^{W_i}\right)=0,\quad
\left(F_j^{W_i}\cdot B^{W_i}\right)=1,\quad
\left(-K_{W_i}\cdot B^{W_i}\right)=0,
\]
we get $\left(F_i^{W_i}\cdot B^{W_1}\right)=1$ and 
$\left(\sO_X^{W_i}(1)\cdot B^{W_i}\right)=2$. 
We remark that $\chi_{i1}$ is an isomorphism around a neighborhood of $F_j$, and 
$F_j^{W_i}$ and $B^{W_i}$ transversely meet at one point. 
Thus we can consider the strict transform $B^{X_0}\subset X_0$ of $B^{W_i}\subset W_i$, 
and we have $B^{X_0}\not\subset F_i$ since $F_1\cap F_2=\emptyset$. 
Since the elementary flop $\chi_{i1}^{-1}$ is $F_i^{W_i}$-negative, we have 
\[
0\leq\left(F_i\cdot B^{X_0}\right)\leq\left(F_i^{W_i}\cdot B^{W_i}\right)=1 
\]
by Lemma \ref{lemma:negativity}. 

Assume that $\left(F_i\cdot B^{X_0}\right)=0$. Then
we have $B^{X_0}\cap \Exc(\chi_{i1})\neq\emptyset$. Thus 
there exists $1\leq k\leq m_i$ such that $B^{X_0}\cap B_{ik}^{X_0}\neq\emptyset$. 
Since $\left(F_j\cdot B^{X_0}\right)=1$ and $\left(-K_{X_0}\cdot B^{X_0}\right)=0$, 
we have $\left(\sO_X^{X_0}(1)\cdot B^{X_0}\right)=1$. Therefore, there exists 
$1\leq l\leq m_j$ such that $B^{X_0}=B_{jl}^{X_0}$ holds. 

Assume that $\left(F_i\cdot B^{X_0}\right)=1$. Then, by Lemma \ref{lemma:negativity}, 
we have $B^{X_0}\cap\Exc(\chi_{i1})=\emptyset$. Then we have 
$\left(\sO_X^{X_0}(1)\cdot B^{X_0}\right)=\left(\sO_X^{W_i}(1)\cdot B^{W_i}\right)=2$. 
Thus we get the assertion. 
\end{proof}

\begin{corollary}\label{corollary:12goback}
Take $\{i,j\}=\{1,2\}$. 
Assume that $Z_1$, $Z_2$ are \emph{absolutely disjoint}. Set 
\[
\mathscr{C}:=\left\{\sC_1,\dots,\sC_{m_0}\right\}:=
\{\sC\subset X\text{ a conic}\mid\sC\cap Z_1\neq\emptyset\text{ and }
\sC\cap Z_2\neq\emptyset\}
\]
with $m_0:=\#\mathscr{C}$. 
Let $\sC_n^{X_0}\subset X_0$ be the strict transform of $\sC_n\subset X$ to $X_0$ 
for any $1\leq n\leq m_0$. 
\begin{enumerate}
\renewcommand{\theenumi}{\arabic{enumi}}
\renewcommand{\labelenumi}{(\theenumi)}
\item\label{corollary:12goback1}
We have $m_0\geq 1$ and 
\[
m_0\leq\begin{cases}
1 & \text{if }g=12, \\
2 & \text{if }g=10, \\
3 & \text{if }g=9.
\end{cases}
\]
\item\label{corollary:12goback2}
For any $1\leq n\leq m_0$, we can consider the strict transform $\sC_n^{W_i}$ 
of $\sC_n^{X_0}\subset X_0$ to $W_i$. Moreover, we have 
\[
\left\{\text{Flopping curves of }\chi_{i2}\right\}=
\left\{\sC_n^{W_i}\mid 1\leq n\leq m_0\right\}.
\]
\item\label{corollary:12goback3}
For any $1\leq l\leq m_j$, let $B_{jl}^{W_i}\subset W_i$ be the strict transform of 
$B_{jl}^{X_0}\subset X_0$ to $W_i$. Then $\chi_{i2}$ is an isomorphism around 
a neighborhood of $B_{jl}^{W_i}$. The strict transform $B_{jl}^{W_i^+}\subset W_i^+$ 
of $B_{jl}^{W_i}$ to $W_i^+$ is a flopping curve of $\chi_{i3}$. 
\item\label{corollary:12goback4}
If $g=12$ (resp., if $g=10$), then we have $m_1$, $m_2\leq 3$ 
(resp., $m_1$, $m_2\leq 4$). Moreover, the curves 
\[
B_{11}^{X_0},\dots,B_{1m_1}^{X_0},B_{21}^{X_0},\dots,B_{2m_2}^{X_0},
\sC_1^{X_0},\dots,\sC_{m_0}^{X_0}
\]
on $X_0$ are mutually disjoint. 
\end{enumerate}
\end{corollary}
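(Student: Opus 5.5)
The plan is to first show that, under absolute disjointness, the flopping curves of $\chi_{i1}$ for $i=1$ and for $i=2$ never meet. For any $k,l$, the images $\sigma(B_{ik}^{X_0})$ and $\sigma(B_{jl}^{X_0})$ are lines in $X$ meeting $Z_i$ and $Z_j$ respectively, by Theorem \ref{thm:double-projection-from-line} \eqref{thm:double-projection-from-line2}. Here we use that the normal bundles of $Z_i$ and $Z_j$ are irrelevant: any $(-3)$-curve would lie in $F_i$, which is disjoint from $F_j$. So if $B_{ik}^{X_0}\cap B_{jl}^{X_0}\neq\emptyset$, these two lines meet in $X$ (away from $Z_1\cup Z_2$), contradicting absolute disjointness. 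Consequently case \eqref{lemma:12goback21} of Lemma \ref{lemma:12goback} never occurs, and every $B_{jl}^{X_0}$ is disjoint from $\Exc(\chi_{i1})=\bigcup_k B_{ik}^{X_0}$.

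Assertion \eqref{corollary:12goback2} then follows in two directions.
\begin{itemize}
\item By Lemma \ref{lemma:12goback}, every flopping curve of $\chi_{i2}$ is the strict transform of a conic in $\mathscr{C}$.
\item Conversely, by Lemma \ref{lemma:conic-bridge} \eqref{lemma:conic-bridge2} and \eqref{lemma:conic-bridge3}, each $\sC_n^{X_0}$ misses $\Exc(\chi_{i1})$, and its transform $\sC_n^{W_i}$ is a flopping curve of $\chi_{i2}$.
\end{itemize}
Assertion \eqref{corollary:12goback3} is exactly Lemma \ref{lemma:half-half} \eqref{lemma:half-half1}, since its hypothesis was verified above.

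For \eqref{corollary:12goback1}, I will use the description of $\chi_{i2}$ in \eqref{equation:2nd-flop} as the flop of Example \ref{example:3flop}. There the flopping curves are in bijection with the points of $(\Gamma_i\cap Z_j^{\X_i})_{\mathrm{red}}$. This set is nonempty, because its length is $1$, $2$ or $3$ by Lemma \ref{lemma:1st-flop}. Hence $\chi_{i2}$ is not an isomorphism, so $m_0\geq 1$, and by \eqref{corollary:12goback2} we get $m_0\leq \operatorname{length}(\sO_{\Gamma_i\cap Z_j^{\X_i}})$, which gives the stated bounds.

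For \eqref{corollary:12goback4}, the bound on $m_j$ comes from \eqref{corollary:12goback3}: the curves $B_{jl}^{W_i^+}$, for $1\leq l\leq m_j$, are pairwise distinct flopping curves of $\chi_{i3}$. Their number is bounded by counting flopping curves of $\chi_{i3}$, and this counting is the step needing care.
\begin{itemize}
\item For $g=12$, by Example \ref{example:3flop} these flopping curves correspond to the points of $(C_i\cap\Gamma_i^{Q_i})_{\mathrm{red}}$. That set has total length $3$ by Lemma \ref{lemma:3rd-flop} \eqref{lemma:3rd-flop1}, so $m_j\leq 3$.
\item For $g=10$, by Remark \ref{remark:conic-bundle} the flopping curves correspond to the singular points of the plane quintic $\psi_i(\Gamma_i^{Y_i})$. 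This curve has arithmetic genus $6$ and geometric genus $2$, so its total $\delta$-invariant is $4$. Each singular point has multiplicity at most $2$ by Lemma \ref{lemma:3rd-flop} \eqref{lemma:3rd-flop2}, hence $\delta\geq 1$ there, so there are at most $4$ singular points and $m_j\leq 4$.
\end{itemize}

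Disjointness in \eqref{corollary:12goback4} is assembled from the following.
\begin{itemize}
\item $B_{1k}^{X_0}\cap B_{2l}^{X_0}=\emptyset$ by the first paragraph.
\item The conic transforms are pairwise disjoint, and disjoint from all $B_{ik}^{X_0}$, by Lemma \ref{lemma:conic-bridge} \eqref{lemma:conic-bridge2} and \eqref{lemma:conic-bridge3}.
\item For two curves $B_{jl}^{X_0}$ and $B_{jl'}^{X_0}$: $\chi_{i2}\circ\chi_{i1}$ is an isomorphism near them, and their images $B_{jl}^{W_i^+}$ are flopping curves of the elementary flop $\chi_{i3}$. In Example \ref{example:3flop} and Remark \ref{remark:conic-bundle} such curves lie over distinct points of the base ($Q_i$, resp. $\pr^2_i$), hence are disjoint. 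Therefore $B_{jl}^{X_0}\cap B_{jl'}^{X_0}=\emptyset$.
\end{itemize}
The main point to verify carefully is that distinct flopping curves of $\chi_{i3}$ lie in distinct fibers. This should be immediate from the explicit descriptions, since each fiber over a special point contains a single flopping curve.
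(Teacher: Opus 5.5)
Your proposal is correct and follows the paper's proof essentially step by step. Assertions (1) and (2) come from Lemma \ref{lemma:12goback}, Lemma \ref{lemma:conic-bridge} and the count of points of $(\Gamma_i\cap Z_j^{\X_i})_{\mathrm{red}}$ in Example \ref{example:3flop}, and (4) comes from counting and separating the flopping curves of $\chi_{i3}$ via Example \ref{example:3flop} and Remark \ref{remark:conic-bundle}. The only difference is cosmetic: for (3) you cite Lemma \ref{lemma:half-half} \eqref{lemma:half-half1} directly after checking $B_{jl}^{X_0}\cap\Exc(\chi_{i1})=\emptyset$, while the paper reruns the negativity inequality. One small imprecision: if one of the two curves is the $(-3)$-curve in $F_i$, the intersection point lies on $Z_i$ rather than ``away from $Z_1\cup Z_2$''; the contradiction still follows from total disjointness.
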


\begin{proof}
\eqref{corollary:12goback1}
Take any flopping curve $B^{W_i}\subset W_i$ of $\chi_{i2}$. Since 
$Z_1$ and $Z_2$ are absolutely disjoint, the stroict transform $B^{X_0}\subset X_0$ 
satisfies that $\sigma(B^{X_0})\in\mathscr{C}$ by Lemma \ref{lemma:12goback} 
\eqref{lemma:12goback2}. In particular, we have $m_0\geq 1$. 
Moreover, for any $\sC\in\mathscr{C}$, we can consider its strict transform 
$\sC^{W_i}\subset W_i$ and is a flopping curve of $\chi_{i2}$ by Lemma 
\ref{lemma:conic-bridge}. From Lemma \ref{lemma:1st-flop} and Example 
\ref{example:3flop}, we get the assertion \eqref{corollary:12goback1}. 

\eqref{corollary:12goback2} Follows immediately from the proof of 
\eqref{corollary:12goback1}. 

\eqref{corollary:12goback3}
By \eqref{corollary:12goback2}, we can consider the strict transform 
$B_{jl}^{W_i^+}\subset W_i^+$ of $B_{jl}^{W_i}\subset W_i$ to $W_i^+$. Moreover, 
since $\sO_{\Y_i}^{W_i^+}(1)$ is nef on $W_i^+$ and $\chi_{i2}$ is 
$\sO_{\Y_i}^{W_i}(1)$-negative, we have 
\[
0=\left(\sO_{\Y_i}^{X_0}(1)\cdot B_{jl}^{X_0}\right)
=\left(\sO_{\Y_i}^{W_i}(1)\cdot B_{jl}^{W_i}\right)
\geq \left(\sO_{\Y_i}^{W_i^+}(1)\cdot B_{jl}^{W_i^+}\right)\geq 0
\]
by Lemma \ref{lemma:negativity}. Thus we get the assertion \eqref{corollary:12goback3}. 

\eqref{corollary:12goback4}
If $g\in\{12,10\}$, then the flopping curves of $\chi_{i3}$ are mutually disjoint. 
Moreover, by Example \ref{example:3flop} and Remark \ref{remark:conic-bundle}, 
the number of flopping curve of $\chi_{i3}$ is equal to 
\[
\begin{cases}
\#\left(C_i\cap\Gamma_i^{Q_i}\right)_{\operatorname{red}}\leq 3 & \text{if }g=12, \\
\#\operatorname{Sing}(\rho_i(\Gamma))\leq 4 & \text{if }g=10,
\end{cases}
\]
where $\operatorname{Sing}(\rho_i(\Gamma))$ is the set of singular points 
of $\rho_i(\Gamma)$. (Since the arithmetic genus of plane quintics are equal to $6$, 
the number of singular points is at most $4=6-2$.)
Since $\chi_{i1}$ (resp., $\chi_{i2}$) is an isomorphism around a neighborhood of 
$B_{jl}^{X_0}$ (resp., $B_{jl}^{W_i}$), the curves 
$B_{j1}^{X_0},\dots,B_{jm_j}^{X_0}$ are mutually disjoint. 
Thus the assertion \eqref{corollary:12goback4} follows from the assumption 
$Z_1$ and $Z_2$ are absolutely disjoint and from Lemma \ref{lemma:conic-bridge}
\eqref{lemma:conic-bridge2}. 
\end{proof}

\begin{remark}\label{remark:IKTT}
The proof of Corollary \ref{corollary:12goback}, especially the proof for the 
existence of the conic $\sC_1$ for the case $g=12$, 
is similar to the proof of \cite[Lemma 5.12]{IKTT}. 
We note that $m_0=1$ when $Z_1$ and $Z_2$ are absolutely disjoint and $g=12$. 
\end{remark}

We are ready to prove Proposition \ref{proposition:disjoint-lines}. 

\begin{proof}[Proof of Proposition \ref{proposition:disjoint-lines}]
Let us set $Z_1:=Z$. Take any $\left[Z_2\right]\in\Sigma(X)\setminus\D_{Z_1}$, 
where $\D_{Z_1}\subset \Sigma(X)$ be as in Lemma \ref{lemma:totally-disjoint}. 
Then $Z_1$, $Z_2$ are absolutely disjoint. Hence we can apply Corollary 
\ref{corollary:12goback}, and Proposition \ref{proposition:disjoint-lines} is just 
an immediate corollary of Corollary \ref{corollary:12goback}.
\end{proof}

\subsection{The cases $g=12$ and $g=10$}\label{subsection:flop1012}

In \S \ref{subsection:flop1012}, we further assume that $g\in\{12,10\}$. 
Moreover, after perturbation, by Lemma \ref{lemma:half-half} \eqref{lemma:half-half2}, we 
may assume that there exists $0\leq\bar{m}\leq\min\{m_1,m_2\}$ such that, 
for any $1\leq k\leq m_1$ and for any $1\leq l\leq m_2$, we have 
\[
B_{1k}^{X_0}\cap B_{2l}^{X_0}\neq\emptyset\Leftrightarrow 1\leq k=l\leq\bar{m}.
\]
Take $\{i,j\}=\{1,2\}$. We set 
\begin{eqnarray*}
\bar{\mathscr{B}}_i^{X_0}&:=&\left\{B_{ik}^{X_0}\subset X_0
\mid 1\leq k\leq \bar{m}\right\}, \\
\mathscr{B}_i^{X_0}&:=&\left\{B_{ik}^{X_0}\subset X_0
\mid \bar{m}+1\leq k\leq m_i\right\}.
\end{eqnarray*}
Moreover, we set 
\begin{eqnarray*}
\mathscr{C}&:=&\left\{\sC_n\subset X_0\mid 1\leq n\leq m_0\right\}\\
&:=&\left\{\sC\subset X\text{ a conic}\mid \sC\cap Z_1\neq\emptyset\text{ 
and }\sC\cap Z_2\neq\emptyset\right\}, \\
\mathscr{C}^{X_0}&:=&\left\{\sC_n^{X_0}:=\sigma^{-1}_*\sC_n\subset X_0\mid 
1\leq n\leq m_0\right\},\\
\mathscr{C}^{W_i}&:=&\left\{\sC_n^{W_i}:=(\chi_{i1})_*\sC_n^{X_0}\subset W_i\mid 
1\leq n\leq m_0\right\}
\end{eqnarray*}
as in Corollary \ref{corollary:12goback}, where $m_0:=\#\mathscr{C}$. 
By Proposition \ref{proposition:disjoint-lines} and Lemma \ref{lemma:conic-bridge} 
\eqref{lemma:conic-bridge2}, the $1$-dimensional schemes 
\[
B_{11}^{X_0}\cup B_{21}^{X_0},\dots,B_{1\bar{m}}^{X_0}\cup B_{2\bar{m}}^{X_0},
B_{1\bar{m}+1}^{X_0},\dots,B_{1m_1}^{X_0},B_{2\bar{m}+1}^{X_0},\dots,B_{2m_2}^{X_0}, 
\sC_1^{X_0},\dots,\sC_{m_0}^{X_0}
\]
are mutually disjoint. 

The purpose of \S \ref{section:flop} is to prove the following theorem.

\begin{thm}\label{thm:flop-curve}
Assume that $g\in\{12, 10\}$. Take $\{i,j\}=\{1,2\}$. 
\begin{enumerate}
\renewcommand{\theenumi}{\arabic{enumi}}
\renewcommand{\labelenumi}{(\theenumi)}
\item\label{thm:flop-curve1}
We have 
\[
\left\{\text{Flopping curves of }\chi_{i1}\right\}=\bar{\mathscr{B}}_i^{X_0}\sqcup
\mathscr{B}_i^{X_0}. 
\]
Thus we can set 
\begin{eqnarray*}
\bar{\mathscr{B}}_j^{W_i}&:=&\left\{B_{jl}^{W_i}:=(\chi_{i1})_*B_{jl}^{X_0}\subset W_i 
\mid 1\leq l\leq \bar{m}\right\}, \\
\mathscr{B}_j^{W_i}&:=&\left\{B_{jl}^{W_i}:=(\chi_{i1})_*B_{jl}^{X_0}\subset W_i 
\mid \bar{m}+1\leq l\leq m_j\right\}, \\
\mathscr{C}^{W_i}&:=&\left\{\sC_n^{W_i}:=(\chi_{i1})_*\sC_n^{X_0}\subset W_i 
\mid 1\leq n\leq m_0\right\}. 
\end{eqnarray*}
We also set 
\begin{eqnarray*}
\bar{\mathscr{B}}_i^{+W_i}&:=&\left\{B_{ik}^{+W_i}\subset W_i 
\mid 1\leq k\leq \bar{m}\right\}, \\
\mathscr{B}_i^{+W_i}&:=&\left\{B_{ik}^{+W_i}\subset W_i 
\mid \bar{m}+1\leq k\leq m_i\right\}, 
\end{eqnarray*}
where $B_{ik}^{+W_i}\subset W_i$ is the flopped curve of $B_{ik}^{X_0}\subset X_0$ with 
respects to the elementary flop $\chi_{i1}$ for any $1\leq k\leq m_i$. 
\item\label{thm:flop-curve2}
We have 
\[
\left\{\text{Flopping curves of }\chi_{i2}\right\}=\bar{\mathscr{B}}_j^{W_i}\sqcup
\mathscr{C}^{W_i}. 
\]
Thus we can set 
\begin{eqnarray*}
\bar{\mathscr{B}}_i^{+W^+_i}&:=&\left\{B_{ik}^{+W_i^+}:=(\chi_{i2})_*B_{ik}^{+W_i}\subset W^+_i 
\mid 1\leq k\leq \bar{m}\right\}, \\
\mathscr{B}_i^{+W^+_i}&:=&\left\{B_{ik}^{+W^+_i}:=(\chi_{i2})_*B_{ik}^{+W_i}\subset W^+_i 
\mid \bar{m}+1\leq k\leq m_i\right\}, \\
\mathscr{B}_j^{W^+_i}&:=&\left\{B_{jl}^{W^+_i}:=(\chi_{i2})_*B_{jl}^{W_i}\subset W^+_i 
\mid \bar{m}+1\leq l\leq m_j\right\}. 
\end{eqnarray*}
We also set 
\begin{eqnarray*}
\bar{\mathscr{B}}_j^{-W^+_i}&:=&\left\{B_{jl}^{-W^+_i}\subset W^+_i 
\mid 1\leq l\leq \bar{m}\right\}, \\
\mathscr{C}^{+W^+_i}&:=&\left\{\sC_n^{+W^+_i}\subset W^+_i 
\mid 1\leq n\leq m_0\right\}, 
\end{eqnarray*}
where $B_{jl}^{-W^+_i}\subset W^+_i$ (resp., $\sC_n^{+W^+_i}\subset W^+_i$) 
is the flopped curve of $B_{jl}^{W_i}\subset W_i$ (resp., $\sC_n^{W_i}\subset W_i$) with 
respects to the elementary flop $\chi_{i2}$ for any $1\leq l\leq \bar{m}$ (resp., 
for any $1\leq n\leq m_0$).
\item\label{thm:flop-curve3}
We have 
\[
\left\{\text{Flopping curves of }\chi_{i3}\right\}=\bar{\mathscr{B}}_i^{+W^+_i}\sqcup
\mathscr{B}_j^{W^+_i}. 
\]
Moreover, we have the following: 
\begin{enumerate}
\renewcommand{\theenumii}{\roman{enumii}}
\renewcommand{\labelenumii}{(\theenumii)}
\item\label{thm:flop-curve31}
For any $1\leq n\leq m_0$, we have
\[
(\chi_{i3})_*\sC_n^{+W_i^+}=(\chi_{j3})_*\sC_n^{+W_j^+}.
\]
\item\label{thm:flop-curve32}
For any $1\leq k\leq \bar{m}$, the curve 
\[
(\chi_{j3})_*B_{ik}^{- W_j^+}\subset X_0^+
\]
is equal to the flopped curve of $B_{ik}^{+ W_i^+}\subset W_i^+$ with respects to 
the elementary flop $\chi_{i3}$. 
\item\label{thm:flop-curve33}
For any $\bar{m}+1\leq l\leq m_j$, the curve 
\[
(\chi_{j3})_*B_{jl}^{+ W_j^+}\subset X_0^+
\]
is equal to the flopped curve of $B_{jl}^{W_i^+}\subset W_i^+$ with respects to 
the elementary flop $\chi_{i3}$. 
\end{enumerate}
\item\label{thm:flop-curve4}
Let us set 
\begin{eqnarray*}
\bar{\mathscr{B}}_i^{-X_0^+}&:=&\left\{B_{ik}^{-X_0^+}:=(\chi_{j3})_*B_{ik}^{-W_j^+}
\subset X_0^+ 
\mid 1\leq k\leq \bar{m}\right\}, \\
\mathscr{B}_i^{+X_0^+}&:=&\left\{B_{ik}^{+X_0^+}:=(\chi_{i3})_*B_{ik}^{+W^+_i}\subset X_0^+ 
\mid \bar{m}+1\leq k\leq m_i\right\}, \\
\mathscr{C}^{+X_0^+}&:=&\left\{\sC_n^{+X_0^+}:=(\chi_{i3})_*\sC_n^{+W_i^+}\subset X_0^+ 
\mid 1\leq n\leq m_0\right\}. 
\end{eqnarray*}
Then 
\[
\begin{cases}
\mathscr{C}^{+X_0^+}=\left\{\tau^{-1}_*\sC^+\subset X_0^+\mid\sC^+\subset\hat{Q}
\colon\text{ bi-line with }
\operatorname{length}\left(\sO_{\hat{C}\cap\sC^+}\right)=2\right\} & \text{if }g=12, \\
\mathscr{C}^{+X_0^+}=\left\{\tau^{-1}_*\sC^+\subset X_0^+\mid\sC^+\subset 
U\colon\text{ bi-line with }
\operatorname{length}\left(\sO_{\Gamma\cap\sC^+}\right)=4\right\} & \text{if }g=10. 
\end{cases}
\]
\item\label{thm:flop-curve5}
We have the following: 
\begin{enumerate}
\renewcommand{\theenumii}{\roman{enumii}}
\renewcommand{\labelenumii}{(\theenumii)}
\item\label{thm:flop-curve51}
Assume that $g=12$. Then we have 
\[
\bar{m}+m_0=1, \quad m_j=\#\left(C_i\cap\Gamma_i^{Q_i}\right)_{\operatorname{red}}, 
\]
where we recall that $C_i\subset Q_i$ is the center of the blowup $\psi_i\colon 
Y_i\to Q_i$ and $\Gamma_i^{Q_i}\subset Q_i$ is the center of the blowup 
$\rho_i\colon\hat{Q}\to Q_i$. (In particular, we have $1\leq m_j\leq 3$.)
\item\label{thm:flop-curve52}
Assume that $g=10$. Then we have 
\[
1\leq\bar{m}+m_0\leq 2, \quad m_j=\#\Sing\left(\rho_i(\Gamma)\right), 
\]
where we recall that $\Gamma\subset U$ is the center of the blowup $\tau\colon 
X_0^+\to U$. (In particular, we have $1\leq m_j\leq 4$.)
\end{enumerate}
\end{enumerate}
\end{thm}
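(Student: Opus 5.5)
We will prove the assertions in order, tracking each flopping curve through the chain of elementary flops in \eqref{equation:big2}. Throughout we use three tools:
\begin{itemize}
\item the negativity Lemma \ref{lemma:negativity};
\item the explicit divisor classes of Lemma \ref{lemma:pic};
\item the fact from Theorem \ref{thm:kollar} that a flop reverses intersection numbers on flopped curves.
\end{itemize}

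For \eqref{thm:flop-curve1}, the flopping curves of $\chi_{i1}$ are by construction the lifts of the flopping curves of $\beta_i$, since $\chi_i$ is an isomorphism near $Z_j^{X_i}$ (Lemma \ref{lemma:1st-flop}). The claimed decomposition is then just the partition fixed by our normalization of $\bar m$.

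For \eqref{thm:flop-curve2}, Lemma \ref{lemma:12goback} shows that every flopping curve of $\chi_{i2}$ is either some $B_{jl}^{W_i}$ with $B_{jl}^{X_0}$ meeting some $B_{ik}^{X_0}$ (so $l\le\bar m$), or the strict transform of a conic in $\mathscr{C}$. Conversely:
\begin{itemize}
\item Lemma \ref{lemma:conic-bridge} \eqref{lemma:conic-bridge3} shows that each $\sC_n^{W_i}$ is a flopping curve;
\item Lemma \ref{lemma:half-half} \eqref{lemma:half-half2} shows the same for $B_{jl}^{W_i}$ with $l\le\bar m$.
\end{itemize}

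For \eqref{thm:flop-curve3}, the curves $B_{jl}^{W_i^+}$ with $l>\bar m$ are flopping curves of $\chi_{i3}$ by Lemma \ref{lemma:half-half} \eqref{lemma:half-half1}. For $B_{ik}^{+W_i^+}$ with $k\le\bar m$, we compute intersection numbers with $\sO_{\Y_i}(1)$, $\sO_{\X_i}(1)$ and $F_j$ via Theorem \ref{thm:kollar}, together with the reversal under $\chi_{i1}$ and the correction coming from the neighbouring flopped curve $B_{jl}^{-W_i^+}$ under $\chi_{i2}$. The expected outcome is $(\sO_{\Y_i}^{W_i^+}(1)\cdot B_{ik}^{+W_i^+})=0$ together with $K$-triviality, so these curves are flopping curves as well. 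The remaining curves must be excluded:
\begin{itemize}
\item for $k>\bar m$, the curves $B_{ik}^{+W_i^+}$ should satisfy $\sO_{\Y_i}^{W_i^+}(1)\cdot B>0$;
\item the curves $\sC_n^{+W_i^+}$ and $B_{jl}^{-W_i^+}$ should not be contracted over $\Y_i$.
\end{itemize}
To see that nothing else occurs, we compare the number of curves found with the count of flopping curves of $\chi_{i3}$ given by Example \ref{example:3flop} and Remark \ref{remark:conic-bundle}, namely $\#(C_i\cap\Gamma_i^{Q_i})_{\rm red}$, respectively $\#\Sing\rho_i(\Gamma)$. Alternatively, we run Lemma \ref{lemma:12goback}-type arguments backwards from $X_0^+$.

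For the identifications \eqref{thm:flop-curve31}--\eqref{thm:flop-curve33}, the plan is to use the two routes $X_0\dashrightarrow X_0^+$ through $W_1$ and $W_2$, which agree by Proposition \ref{proposition:merge}. A curve on $X_0^+$ that is not in $\Exc$ of the map $X_0^+\dashrightarrow W_j^+$ is identified by its strict transform on the other side. For instance, $\sC_n^{+W_i^+}$ is disjoint from the flopping locus of $\chi_{i3}$ by negativity, and $\sC_n^{+W_j^+}$ likewise from that of $\chi_{j3}$. Their images in $X_0^+$ are then both the curves with prescribed intersection numbers $(-K,\tau^*(\cdot),E^+)$ lying over a common point configuration. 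Uniqueness comes from the disjointness statements already established, since two such curves with the same numerical class over the same fibre of $\alpha^+$ must coincide. The analogous argument handles the $\bar{\mathscr B}$ and $\mathscr B$ curves, using that the flopped curve of $B_{ik}^{+W_i^+}$ is characterized by reversed intersection numbers and by lying in the same $\alpha^+$-fibre as $(\chi_{j3})_*B_{ik}^{-W_j^+}$.

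For \eqref{thm:flop-curve4}, we compute the numerical class of $\sC_n^{+X_0^+}$ using Lemma \ref{lemma:pic}. We expect $(\sO_{\Y_i}^{X_0^+}(1)\cdot\sC)=1$ for both $i$, and $E^+$-degree $2$ (respectively $4$), which makes $\tau_*$ of the curve a bi-line with the stated intersection length by Lemma \ref{lemma:lengths}. The converse runs the argument backwards, since such a strict transform is $K$-trivial and hence a flopped curve of $\alpha^+$.

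Finally, \eqref{thm:flop-curve5} follows by equating the two counts of flopping curves of $\chi_{i3}$: part \eqref{thm:flop-curve3} gives $\bar m+(m_j-\bar m)=m_j$, and this equals $\#(C_i\cap\Gamma_i^{Q_i})_{\rm red}$, respectively $\#\Sing\rho_i(\Gamma)$. The bound $\bar m+m_0\le1$ for $g=12$ (respectively $\le 2$ for $g=10$) comes from the flopping curves of $\chi_{i2}$ in \eqref{thm:flop-curve2}. Their number is at most $\operatorname{length}(\sO_{\Gamma_i\cap Z_j^{\X_i}})$, which is $1$ or $2$ by Lemma \ref{lemma:1st-flop} and Example \ref{example:3flop}. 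It is at least $1$ because $\chi_{i2}$ is a genuine flop.

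The main obstacle is \eqref{thm:flop-curve3}: the intersection-number bookkeeping for $B_{ik}^{+W_i^+}$ with $k\le\bar m$ across two consecutive flops, and the matching of curves from the two sides in \eqref{thm:flop-curve31}--\eqref{thm:flop-curve33}.
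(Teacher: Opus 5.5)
Your treatment of parts (1), (2), (3)(i), (3)(iii) and the deduction of (5) from (3) matches the paper. There is, however, a genuine gap in part (3): the inclusion ``every flopping curve of $\chi_{i3}$ lies in $\bar{\mathscr{B}}_i^{+W_i^+}\sqcup\mathscr{B}_j^{W_i^+}$'' is not proved, and (3)(ii), the converse in (4) and part (5) all rest on it.

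Your counting argument is circular. Example~\ref{example:3flop} and Remark~\ref{remark:conic-bundle} say $\chi_{i3}$ has $N_i:=\#(C_i\cap\Gamma_i^{Q_i})_{\mathrm{red}}$ (resp.\ $\#\Sing(\rho_i(\Gamma))$) flopping curves. Exhibiting $m_j$ of them only gives $m_j\le N_i$. The equality $m_j=N_i$ is exactly what you then derive in (5) \emph{from} (3). Checking that the known curves $\mathscr{B}_i^{+W_i^+}$, $\bar{\mathscr{B}}_j^{-W_i^+}$, $\mathscr{C}^{+W_i^+}$ are $\sO_{\Y_i}$-positive does not exclude an unknown flopping curve either, and ``run Lemma~\ref{lemma:12goback}-type arguments backwards'' is only a pointer.

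The missing argument is a chase starting from an \emph{arbitrary} flopping curve $B\subset W_i^+$ of $\chi_{i3}$. Such a $B$ is the $K$-trivial strict transform of a $\psi_i$-fibre, so $(S_i^{W_i^+}\cdot B)=1$ (resp.\ $2$). Together with $(\sO_{\Y_i}\cdot B)=(-K\cdot B)=0$, Lemma~\ref{lemma:pic} forces $(\sO_X\cdot B,\,F_i\cdot B,\,F_j\cdot B)=(1,0,1)$. Hence $(\sO_{\X_i}\cdot B)=(\sO_{\bar{X}_i}\cdot B)=1$ and $(\sO_{\bar{X}_j}\cdot B)=0$.
\begin{itemize}
\item Positivity on $\sO_{\X_i}$ shows $B$ is not flopped by $\chi_{i2}$, so $B^{W_i}$ exists.
\item Lemma~\ref{lemma:negativity} for the $\sO_{\bar{X}_i}$-negative flop $\chi_{i2}^{-1}$ gives $0\le(\sO_{\bar{X}_i}^{W_i}(1)\cdot B^{W_i})\le 1$.
\item If this number is $1$, then $B^{W_i}$ avoids $\Exc(\chi_{i2})$ and is not flopped by $\chi_{i1}$. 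Negativity for $\chi_{i1}^{-1}$ with $\sO_{\bar{X}_j}$ then shows $B^{X_0}$ is $\sO_{\bar{X}_j}$-trivial and disjoint from $\Exc(\chi_{i1})$, i.e.\ $B^{X_0}=B_{jl}^{X_0}$ with $l>\bar{m}$.
\item If it is $0$, then $B^{W_i}$ is a flopped curve $B_{ik}^{+W_i}$ of $\chi_{i1}$ meeting $\Exc(\chi_{i2})$, and part (2) forces $k\le\bar{m}$.
\end{itemize}

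The forward inclusion for $k\le\bar{m}$ is also only announced as an ``expected outcome'', and the mechanism you suggest does not work as stated. Theorem~\ref{thm:kollar} says nothing about $B_{ik}^{+W_i}$ under $\chi_{i2}$: it is not a flopping curve of $\chi_{i2}$, it only meets one, namely $B_{jk}^{W_i}$ (the fibre of $W_i\to\bar{X}_0$ is connected). What you need is the strict inequality in Lemma~\ref{lemma:negativity} for the $\sO_{\Y_i}$-negative flop $\chi_{i2}$, squeezed against nefness on $W_i^+$:
\[
0\le(\sO_{\Y_i}^{W_i^+}(1)\cdot B_{ik}^{+W_i^+})<(\sO_{\Y_i}^{W_i}(1)\cdot B_{ik}^{+W_i})=-(\sO_{\Y_i}^{X_0}(1)\cdot B_{ik}^{X_0})=1.
\]

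Two further steps need repair once (3) is complete.
\begin{itemize}
\item In (3)(ii) you cannot read off the class of $(\chi_{j3})_*B_{ik}^{-W_j^+}$, since $B_{ik}^{-W_j^+}$ meets $\Exc(\chi_{j3})$. The clean argument: the $\alpha^+$-fibre over $\alpha(B_{ik}^{X_0}\cup B_{jk}^{X_0})$ has exactly two components. A curve flopped by both $\chi_{i3}$ and $\chi_{j3}$ would be trivial on $-K$, $\sO_{\Y_1}$ and $\sO_{\Y_2}$, which span the full-dimensional cone $\Nef(X_0^+)$ (Lemma~\ref{lemma:cone-hoka}).
\item In the converse of (4), $K$-triviality only puts $\tau^{-1}_*\sC^+$ in $\Exc(\alpha^+)$. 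To land in $\mathscr{C}^{+X_0^+}$ you need either the complete list of components of $\Exc(\alpha^+)$, which again rests on the completed (3), or a passage to $W_1^+$. That passage is possible because the $\sO_{\Y_1}$-degree is $1$. Negativity with $\sO_{\X_1}$ then gives $0\le\cdot\le 0$, so the strict transform is a flopped curve of $\chi_{12}$ disjoint from $\Exc(\chi_{13})$, hence lies in $\mathscr{C}^{+W_1^+}$.
\end{itemize}
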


\begin{proof}
\eqref{thm:flop-curve1} is trivial. \eqref{thm:flop-curve2} follows from 
Lemmas \ref{lemma:conic-bridge} \eqref{lemma:conic-bridge3}, 
\ref{lemma:half-half} \eqref{lemma:half-half2} and \ref{lemma:12goback} 
\eqref{lemma:12goback2}. 

\eqref{thm:flop-curve3}
Any curve in $\mathscr{B}_j^{W^+_i}$ is a flopping curve of $\chi_{i3}$ by 
Lemma \ref{lemma:half-half} \eqref{lemma:half-half1}. Take any 
$B_{ik}^{+W_i^+}\in\bar{\mathscr{B}}_i^{+W^+_i}$. Note that $B_{ik}^{+W_i}$ intersects with 
$B_{jk}^{W_i}\subset\Exc(\chi_{i2})$. By Lemma \ref{lemma:negativity} and Theorem 
\ref{thm:kollar}, we have 
\[
0\leq\left(\sO_{\Y_i}^{W_i^+}(1)\cdot B_{ik}^{+W_i^+}\right)<
\left(\sO_{\Y_i}^{W_i}(1)\cdot B_{ik}^{+W_i}\right)
=-\left(\sO_{\Y_i}^{X_0}(1)\cdot B_{ik}^{X_0}\right)=1. 
\]
This implies that the curve $B_{ik}^{+W_i^+}\subset W_i^+$ is a flopping curve of 
$\chi_{i3}$. 

Conversely, take any flopping curve $B^{W_i^+}\subset W_i^+$ of $\chi_{i3}$. 
Note that 
\[
\left(\sO_{\Y_i}^{W_i^+}(1)\cdot B^{W_i^+}\right)=\left(-K_{W_i^+}\cdot B^{W_i^+}\right)=0. 
\]
Moreover, since 
\[
\left(S_i^{W_i^+}\cdot B^{W_i^+}\right)=\begin{cases}
1 & \text{if }g=12, \\
2 & \text{if }g=10, \\
\end{cases}\]
we have 
\[
\left(\sO_X^{W_i^+}(1)\cdot B^{W_i^+}\right)=1,\quad
\left(F_i^{W_i^+}\cdot B^{W_i^+}\right)=0,\quad
\left(F_j^{W_i^+}\cdot B^{W_i^+}\right)=1.
\]
Since $\left(\sO_{\X_i}^{W_i^+}(1)\cdot B^{W_i^+}\right)=1>0$, we can consider the 
strict transform $B^{W_i}\subset W_i$ of $B^{W_i^+}\subset W_i^+$ to $W_i$. 
Note that, by Lemma \ref{lemma:negativity}, we have 
\[
0\leq\left(\sO_{\bar{X}_i}^{W_i}(1)\cdot B^{W_i}\right)\leq
\left(\sO_{\bar{X}_i}^{W^+_i}(1)\cdot B^{W^+_i}\right)=1.
\]
Thus we have one of: 
\begin{enumerate}
\renewcommand{\theenumi}{\alph{enumi}}
\renewcommand{\labelenumi}{(\theenumi)}
\item\label{negativity-flop1}
the curve $B^{W_i}$ is disjoint from $\Exc(\chi_{i2})$, does not contain 
$\Exc(\chi_{i1}^{-1})$, and we can consider the strict transform $B^{X_0}\subset X_0$ 
of $B^{W_i}$, or 
\item\label{negativity-flop2}
the curve $B^{W_i}$ intersects with $\Exc(\chi_{i2})$, and is a flopped curve 
of $\chi_{i1}$. 
\end{enumerate}

Let us consider the case \eqref{negativity-flop1}. By Lemma \ref{lemma:negativity}, 
we have 
\[
0\leq\left(\sO_{\bar{X}_j}^{X_0}(1)\cdot B^{X_0}\right)\leq
\left(\sO_{\bar{X}_j}^{W_i}(1)\cdot B^{W_i}\right)=0.
\]
Therefore, the curve $B^{X_0}$ is disjoint from $\Exc(\chi_{i1})$ and must be equal 
to $B_{jl}^{X_0}$ for some $1\leq j\leq m_j$. This implies that $\bar{m}+1\leq l\leq m_j$. 
Thus we get $B^{W_i^+}\in\mathscr{B}_j^{W^+_i}$. 

Let us consider the case \eqref{negativity-flop2}. 
There exists $1\leq k\leq m_i$ such that $B^{W_i}=B_{ik}^{+W_i}$. If $k\geq\bar{m}+1$, 
then $B_{ik}^{+W_i}$ is disjoint from $\Exc(\chi_{i2})$ by \eqref{thm:flop-curve2}, 
a contradiction. Thus we have $1\leq k\leq \bar{m}$, i.e., we have 
$B^{W_i^+}\in\bar{\mathscr{B}}_i^{+W^+_i}$. 

\eqref{thm:flop-curve31}
Note that $\chi_{i1}$ is an isomorphism around a neighborhood of $\sC_n^{X_0}$, 
and $\chi_{i3}$ is an isomorphism around a neighborhood of $\sC_n^{+W_i^+}$. 
Thus, the curve $(\chi_{i3})_*\sC_n^{+W_i^+}\subset X_0^+$ is equal to the fiber 
$(\alpha^+)^{-1}\left(\alpha(\sC_n^{X_0})\right)$. Thus we get the assertion 
\eqref{thm:flop-curve31}. 

\eqref{thm:flop-curve32}
By chasing the flops $X_0\dashrightarrow W_i\dashrightarrow W_i^+\dashrightarrow 
X_0^+$, the fiber 
\[
(\alpha^+)^{-1}\left(\alpha(B_{ik}^{X_0}\cup B_{jk}^{X_0})\right)
\]
consists of two numbers of curves. Since $\Nef(X_0^+)=\R_{\geq 0}\left[-K_{\Y}\right]
+\Nef(\Y_1)+\Nef(\Y_2)$, any flopped curve of $\chi_{i3}$ cannot be a flopped curve 
of $\chi_{j3}$. Thus we get the assertion \eqref{thm:flop-curve32}. 

\eqref{thm:flop-curve33}
As in the proof of \eqref{thm:flop-curve31}, we have 
\[
(\chi_{j3})_*B_{jl}^{+W_j^+}=(\alpha^+)^{-1}\left(\alpha(B_{jl}^{X_0})\right). 
\]
Thus the assertion \eqref{thm:flop-curve33} is also trivial. 

\eqref{thm:flop-curve4}
Take any $1\leq n \leq m_0$. Note that 
\[
\left(\sO_X^{X_0^+}(1)\cdot\sC_n^{+X_0^+}\right)=-2, \quad
\left(F_1^{X_0^+}\cdot\sC_n^{+X_0^+}\right)=
\left(F_2^{X_0^+}\cdot\sC_n^{+X_0^+}\right)=-1
\]
by Theorem \ref{thm:kollar}. Thus we get 
\[
\left(\sO_{\Y_1}^{X_0^+}(1)\cdot\sC_n^{+X_0^+}\right)
=\left(\sO_{\Y_2}^{X_0^+}(1)\cdot\sC_n^{+X_0^+}\right)=1, \quad
\left(E^+\cdot\sC_n^{+X_0^+}\right)=\begin{cases}
2 & \text{if }g=12, \\
4 & \text{if }g=10.\end{cases}\]

Conversely, take any curve $\tilde{\sC}:=\tau^{-1}_*\sC^+\subset X_0^+$ in the 
right hand side of \eqref{thm:flop-curve4}. 
 Note that 
\[
\left(\sO_X^{X_0^+}(1)\cdot\tilde{\sC}\right)=-2, \quad
\left(F_1^{X_0^+}\cdot\tilde{\sC}\right)=
\left(F_2^{X_0^+}\cdot\tilde{\sC}\right)=-1.
\]
Since $\left(\sO_{\Y_1}^{X_0^+}(1)\cdot\tilde{\sC}\right)=1$, we can consider the 
strict transform ${\tilde{\sC}}^{W_1^+}\subset W_1^+$ of $\tilde{\sC}$. By Lemma 
\ref{lemma:negativity}, we get 
\[
0\leq\left(\sO_{\X_1}^{W_1^+}(1)\cdot{\tilde{\sC}}^{W_1^+}\right)\leq
\left(\sO_{\X_1}^{X_0^+}(1)\cdot{\tilde{\sC}}\right)=0.
\]
This implies that the curve ${\tilde{\sC}}^{W_1^+}\subset W_1^+$ is a flopped curve 
of $\chi_{12}$, and is disjoint from $\Exc(\chi_{13})$. Therefore, from 
\eqref{thm:flop-curve2} and \eqref{thm:flop-curve3}, we have 
$\tilde{\sC}\in\mathscr{C}^{+X_0^+}$. 

\eqref{thm:flop-curve5}
This is an immediate consequence of the assertions 
\eqref{thm:flop-curve2} and \eqref{thm:flop-curve3}. 
\end{proof}

\begin{corollary}\label{corollary:flop-alpha}
We follow the notation in Theorem \ref{thm:flop-curve}. Then we have 
\begin{eqnarray*}
\Exc(\alpha)&=&\bigcup_{k=1}^{m_1}B_{1k}^{X_0}\cup\bigcup_{l=1}^{m_2}B_{2l}^{X_0}\cup
\bigcup_{n=1}^{m_0}\sC_n^{X_0}, \\
\Exc(\alpha^+)&=&\bigcup_{k=1}^{\bar{m}}\left(B_{1k}^{-X_0^+}\cup B_{2k}^{-X_0^+}\right)
\cup\bigcup_{k=\bar{m}+1}^{m_1}B_{1k}^{+X_0^+}\cup
\bigcup_{l=\bar{m}+1}^{m_2}B_{2l}^{+X_0^+}\cup\bigcup_{n=1}^{m_0}\sC_n^{+X_0^+}.
\end{eqnarray*}
\end{corollary}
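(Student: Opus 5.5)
The plan is to use that $\Exc(\alpha)$ is the union of all curves contracted by $\alpha$, and that $\alpha$ factors through each $\beta_{i1}$. Since $\alpha\colon X_0\to\bar{X}_0$ is the anti-canonical model, an irreducible curve $B\subset X_0$ is $\alpha$-exceptional if and only if $(-K_{X_0}\cdot B)=0$. By Lemma \ref{lemma:coneX0}, $\Nef(X_0)$ has exactly two facets containing $[-K_{X_0}]=\left[\sO_{\bar{X}_0}^{X_0}(1)\right]$, namely $\Nef(\bar{X}'_1)$ and $\Nef(\bar{X}'_2)$. Any $\alpha$-contracted curve $B$ spans an extremal ray of $\overline{NE}(X_0/\bar{X}_0)$, and the relative cone $\overline{NE}(X_0/\bar{X}_0)$ is $2$-dimensional with exactly two extremal rays, dual to these two facets. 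These rays are contracted by $\beta_{11}$ and $\beta_{21}$. Hence every $\alpha$-exceptional curve is numerically a nonnegative combination of the two ray classes.

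To handle curves that are not on a ray, I would argue via the flopped side, which is cleaner. On $X_0^+$, $\Nef(X_0^+)$ also has the two facets $\Nef(\Y''_1)$ and $\Nef(\Y''_2)$ through $[-K]$ by Lemma \ref{lemma:cone-hoka}. The statement to prove is that any $\alpha$-contracted irreducible curve is contracted by $\beta_{11}$ or by $\beta_{21}$, or is one of the $\sC_n^{X_0}$.

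The first key step uses that $\alpha$ is small, so its fibers are $1$-dimensional. I would take any $\alpha$-contracted irreducible curve $B$ and compare its intersection numbers with $\sO_{\X_1}^{X_0}(1)$ and $\sO_{\X_2}^{X_0}(1)$. If $(\sO_{\bar{X}_1}^{X_0}(1)\cdot B)=0$, then $B$ is $\beta_{11}$-contracted, so $B=B_{1k}^{X_0}$; symmetrically for $\bar{X}_2$. Otherwise both intersections are positive. Writing $B$ in terms of $(\sO_X^{X_0}(1)\cdot B)$ and $(F_i\cdot B)$, the conditions $-K$-trivial, $\sO_{\bar X_i}(1)$-positive, and $F_i\cdot B\le$ (degree) force $(\sO_X\cdot B)=2$ and $(F_1\cdot B)=(F_2\cdot B)=1$, because $\sO_{\bar{X}_1}(1)=[1,-1,0]$ and $\sO_{\bar{X}_2}(1)=[1,0,-1]$ are both positive while $[1,-1,-1]$ is zero. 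I would then show that $\sigma(B)$ is a conic meeting both $Z_1$ and $Z_2$, using Lemma \ref{lemma:lengths} and Lemma \ref{lemma:conic-bridge}. This establishes the first equality.

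For $\Exc(\alpha^+)$, I would use Lemma \ref{lemma:negativity}(1), which gives $\Exc(\chi)=\Exc(\alpha)$ and dually $\Exc(\chi^{-1})=\Exc(\alpha^+)$. Then I would trace the fibers $(\alpha^+)^{-1}(\alpha(\cdot))$ through Theorem \ref{thm:flop-curve}(3)(i)--(iii). For each connected component of $\Exc(\alpha)$, namely $B_{1k}^{X_0}\cup B_{2k}^{X_0}$ with $k\le\bar m$, $B_{ik}^{X_0}$ with $k>\bar m$, or $\sC_n^{X_0}$, the corresponding fiber of $\alpha^+$ is exactly the listed curves. This follows from the identifications already obtained in the proof of Theorem \ref{thm:flop-curve}(3). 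The main obstacle is the middle step, namely excluding $\alpha$-exceptional curves that are contracted by neither $\beta_{i1}$ nor correspond to conics. I expect to resolve it by the numerical computation described above.
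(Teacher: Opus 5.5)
Your treatment of $\Exc(\alpha^+)$ is acceptable once the first equality is known. Lemma \ref{lemma:negativity}~(1) gives $\Exc(\alpha)=\Exc(\chi)$ and $\Exc(\alpha^+)=\Exc(\chi^{-1})$, and $\alpha(\Exc(\alpha))=\alpha^+(\Exc(\alpha^+))$; with these, the fiber identifications in the proof of Theorem \ref{thm:flop-curve}~(3) cover all of $\Exc(\alpha^+)$. The paper handles both equalities at once and more directly: Theorem \ref{thm:flop-curve}~(1)--(3) lists the exceptional loci of $\chi_{i1},\chi_{i2},\chi_{i3}$ and of their inverses, so $\chi$ (resp.\ $\chi^{-1}$) is an isomorphism off the listed union, and Lemma \ref{lemma:negativity}~(1) finishes.

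The genuine gap is your key step for $\Exc(\alpha)$. Put $a=(\sO_X^{X_0}(1)\cdot B)$ and $f_i=(F_i\cdot B)$. The conditions you invoke amount to $a=f_1+f_2$, $f_2=(\sO_{\bar{X}_1}^{X_0}(1)\cdot B)>0$ and $f_1=(\sO_{\bar{X}_2}^{X_0}(1)\cdot B)>0$, and $f_i\le a$ adds nothing. Both $(a,f_1,f_2)=(3,1,2)$ and $(4,2,2)$ satisfy all of them, so $a=2$ does not follow. Moreover, Lemma \ref{lemma:conic-bridge} runs in the wrong direction (conic $\Rightarrow$ flopping curve), so it cannot identify $\sigma(B)$. (Minor: an $\alpha$-contracted curve need not span an extremal ray of $\overline{NE}(X_0/\bar{X}_0)$; the $\sC_n^{X_0}$ do not.)

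The missing idea is to transport $B$ across the first flops with the negativity lemma, using divisors that are nef only on the neighbouring chambers.
Since $f_2>0$, $B$ is not a $B_{1k}^{X_0}$, so $B\not\subset\Exc(\chi_{11})$. As $\chi_{11}$ is $\sO_{\X_1}^{X_0}(1)$-negative and $\sO_{\X_1}^{W_1}(1)$ is nef on $W_1$, Lemma \ref{lemma:negativity}~(2) gives $f_2-f_1=(\sO_{\X_1}^{X_0}(1)\cdot B)\ge(\sO_{\X_1}^{W_1}(1)\cdot B^{W_1})\ge 0$. The same argument with $\chi_{21}$ and $\sO_{\X_2}^{X_0}(1)$ gives $f_1\ge f_2$. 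Hence equality holds, so $B\cap\Exc(\chi_{11})=\emptyset$. Then $B^{W_1}$ is trivial against both $-K_{W_1}$ and $\sO_{\X_1}^{W_1}(1)$, so it is a flopping curve of $\chi_{12}$ by Lemma \ref{lemma:cone-hoka}. Now Lemma \ref{lemma:12goback} (whose input is $(F_2^{W_1}\cdot B^{W_1})=1$ from Example \ref{example:3flop}) shows that $\sigma(B)$ is a conic meeting $Z_1$ and $Z_2$; the alternative $B=B_{2l}^{X_0}$ is excluded by $f_1>0$. In particular $f_1=f_2=1$ and $a=2$.

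With this inserted your argument is correct, but it is then just the paper's chase through $\chi_{i1}$ and $\chi_{i2}$ carried out for a single curve.
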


\begin{proof}
Obviously, the set $\Exc(\alpha)$ (resp., the set $\Exc(\alpha^+)$) 
contains those curves. 
On the other hand, by Theorem \ref{thm:flop-curve}, if we exclude the union of 
those curves, then the rational map $(\alpha^{+1})^{-1}\circ\alpha\colon X_0
\dashrightarrow X_0^+$ (resp., the rational map $\alpha^{-1}\circ\alpha^+\colon X_0^+
\dashrightarrow X_0$)
is an isomorphism onto its image. 
Thus we get the assertion from Lemma \ref{lemma:negativity} \eqref{lemma:negativity1}. 
\end{proof}

\part{Applications in genus twelve}\label{part:application}

\section{Configurations of lines}\label{section:configuration}

In this section, we discuss several possibilities for the configurations of lines 
in prime Fano threefolds of genus $12$. 

\begin{lemma}\label{lemma:3lines}
Let $X$ be a prime Fano threefold of genus $12$. 
\begin{enumerate}
\renewcommand{\theenumi}{\arabic{enumi}}
\renewcommand{\labelenumi}{(\theenumi)}
\item\label{lemma:3lines1}
Assume that there exist distinct lines $\left[Z'_1\right], \left[Z'_2\right], 
\left[Z'_3\right]\in\Sigma(X)$ in $X$ with 
$Z'_1\cap Z'_2\neq\emptyset$, 
$Z'_2\cap Z'_3\neq\emptyset$ and 
$Z'_3\cap Z'_1\neq\emptyset$. 
Then there exists a point $p\in X$ such that $p\in Z'_i$ for all $i=1,2,3$. 
\item\label{lemma:3lines2}
For any point $p\in X$, the number of lines in $X$ passing through $p$ is at most $3$. 
\item\label{lemma:3lines3}
If there exists a point $p\in X$ and distinct lines $\left[Z'_1\right], \left[Z'_2\right], 
\left[Z'_3\right]\in\Sigma(X)$ in $X$ with $p\in Z'_i$ for all $i=1,2,3$, 
then we have 
$\sN_{Z'_i/X}\cong\sO_{\pr^1}\oplus\sO_{\pr^1}(-1)$
for all $i=1,2,3$. 
\end{enumerate}
\end{lemma}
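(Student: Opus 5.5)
For \eqref{lemma:3lines1} I would argue as in the proof of Lemma \ref{lemma:half-half} \eqref{lemma:half-half2}. Suppose the three pairwise intersection points are not all equal. Three pairwise meeting lines in $\pr^{13}$ that are not concurrent span a plane $P\cong\pr^2$. By Theorem \ref{thm:iskovskikh} \eqref{thm:iskovskikh2}, $P\cap X$ is cut out in $P$ by conics containing the cubic curve $Z'_1\cup Z'_2\cup Z'_3$. Any such conic must therefore vanish identically on $P$, so $P\subset X$. This is impossible, since $\Pic X=\Z[-K_X]$ and every surface in $X$ has degree divisible by $22$. Hence the three lines pass through a common point $p$.

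For \eqref{lemma:3lines2} and \eqref{lemma:3lines3} I would fix $Z:=Z'_1$ and use Iskovskikh's link \eqref{equation:iskovskikh} from $\sigma\colon X'\to X$, the blowup along $Z$. By Theorem \ref{thm:double-projection-from-line} \eqref{thm:double-projection-from-line2}, the strict transforms of the other lines through $p$ are flopping curves of $\beta$, and so is the $(-3)$-curve $\gamma\subset F'$ when $\sN_{Z/X}\cong\sO(1)\oplus\sO(-2)$. By Proposition \ref{proposition:disjoint-lines} there are at most $3$ flopping curves, and they are pairwise disjoint; this already bounds the number of lines through $p$ by $4$. To improve this, let $f_p:=\sigma^{-1}(p)$ and let $k$ be the number of lines through $p$. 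Then $f_p$ meets $k-1$ flopping curves $B_2,\dots,B_k$, transversally and at distinct points, since distinct lines through $p$ have distinct tangent directions. Each $B_l$ satisfies $(\sigma^*(-K_X)\cdot B_l)=(F'\cdot B_l)=1$. Using Theorem \ref{thm:kollar} and Lemma \ref{lemma:negativity} (the Cartier version \eqref{lemma:negativity23}), I would compute the numerical class of the strict transform $f_p^+\subset X^+$ and its image $\tau(f_p^+)\subset V$. The expected answer is a rational curve of degree $k+1$ meeting $\Gamma$ with length $2k+1$. Since $2k+1>k+1$, this curve lies in the hyperplane section $F^V=V\cap\langle\Gamma\rangle$.

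The goal is then to contradict $k=4$ using the geometry of $F^V$ and the flopped curves, which by Theorem \ref{thm:double-projection-from-line-converse} \eqref{thm:double-projection-from-line-converse1} are exactly the lines of $V$ that are $2$-secant to $\Gamma$. In parallel I would exploit the tangent cone: all lines through $p$ lie in $T_pX\cap X$. Quadrics containing $X$ restrict to $T_pX\cong\pr^3$ as cones with vertex $p$, so the directions of lines through $p$ form a finite intersection of plane conics in $\pr^2$, which has at most $4$ points. The case of exactly $4$ points would need to be excluded by the degree computation above.

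For \eqref{lemma:3lines3}, suppose $\sN_{Z'_1/X}\cong\sO(1)\oplus\sO(-2)$ and there are three lines through $p$. Then $\gamma$, $B_2$ and $B_3$ are three flopping curves, so by Proposition \ref{proposition:disjoint-lines} they must be pairwise disjoint. I would then run the same computation for $f_p$, which now meets $B_2$, $B_3$ and also $\gamma$ (a section of the ruled surface $F'$), to get a numerical contradiction such as a negative degree of $\tau(f_p^+)$ or an impossible secancy. The main obstacle I expect is this intersection bookkeeping on $X^+$, in both \eqref{lemma:3lines2} and \eqref{lemma:3lines3}: it depends on the exact class of $f_p^+$ after a flop whose flopping curves all meet $f_p$.
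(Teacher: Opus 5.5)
Your argument for \eqref{lemma:3lines1} is the one the paper intends; the paper simply cites Theorem \ref{thm:iskovskikh} \eqref{thm:iskovskikh2}.

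For \eqref{lemma:3lines2} and \eqref{lemma:3lines3}, however, there is a genuine gap. You never reach a contradiction, and the one computation you sketch has the sign reversed. Set $A:=\sigma^*(-K_X)-2F'=\chi^{-1}_*\tau^*\sO_V(1)$. Every flopping curve $B$ of $\beta$ (the strict transform of a line meeting $Z'_1$, or the $(-3)$-curve $\gamma\subset F'$) has $(\sigma^*(-K_X)\cdot B)=(F'\cdot B)=1$, hence $(A\cdot B)=-1$. So $\chi$ is $A$-negative, and Lemma \ref{lemma:negativity} \eqref{lemma:negativity23} says that intersection numbers with $A$ go \emph{down}, by at least one for each point of $f_p\cap\Exc(\beta)$:
\[
\left(\tau^*\sO_V(1)\cdot f_p^+\right)\leq\left(A\cdot f_p\right)-\#\left(f_p\cap\Exc(\beta)\right)=2-\#\left(f_p\cap\Exc(\beta)\right).
\]
Thus the image of $f_p^+$ in $V$ has degree at most $3-k$, not $k+1$. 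The same holds for $S^+=\chi_*(\sigma^*(-K_X)-3F')$: since $(S\cdot B)=-2$, its degree decreases rather than increases. Your inference that $\tau(f_p^+)\subset F^V$ therefore rests on the wrong numbers.

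With the correct inequality no further geometry is needed, and this is exactly the paper's proof. If $k\geq 4$, then $f_p$ meets $k-1\geq 3$ flopping curves. If $k=3$ and $\sN_{Z'_1/X}\cong\sO_{\pr^1}(1)\oplus\sO_{\pr^1}(-2)$, then $f_p$ meets $B_2$, $B_3$, and also $\gamma$, which is a section of $F'\to Z'_1$. The flopping curves are pairwise disjoint by Proposition \ref{proposition:disjoint-lines}, so in either case $f_p$ meets $\Exc(\beta)$ in at least $3$ distinct points. Hence $(\tau^*\sO_V(1)\cdot f_p^+)\leq -1$, contradicting the nefness of $\tau^*\sO_V(1)$ on $X^+$. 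This proves \eqref{lemma:3lines2}, and \eqref{lemma:3lines3} follows by running the same argument with each $Z'_i$ in the role of $Z'_1$.

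Your tangent-cone argument only reproduces the bound $4$ that Proposition \ref{proposition:disjoint-lines} already gives, and the $F^V$ analysis is left unspecified. As written, neither \eqref{lemma:3lines2} nor \eqref{lemma:3lines3} is established.
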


\begin{proof}
\eqref{lemma:3lines1} is trivial from Theorem \ref{thm:iskovskikh} \eqref{thm:iskovskikh2}. 
Let us consider \eqref{lemma:3lines2} and \eqref{lemma:3lines3}. 
For a point $p\in X$, let $\left\{Z'_i\right\}_{1\leq i\leq m}$ be the 
set of lines in $X$ passing through $p$. Assume that $m\geq 3$. 
Consider the Sarkisov link
\[
\xymatrix{
 & F' \ar@{}[r]|{\subset}  & X'  \ar@{-->}[rr]^{\chi} 
\ar@{->}[ld]_-{\sigma} \ar@{->}[rd]_-{\beta} 
& & X^+ \ar@{->}[rd]^-{\tau} \ar@{->}[ld]^-{\beta^+} \ar@{}[r]|{\supset} & S^+ & \\
 Z'_1 \ar@{}[r]|{\subset} & X && \bar{X} && V \ar@{}[r]|{\supset}& \Gamma
}
\]
from the blowup of $X$ along $Z'_1$ as in \eqref{equation:iskovskikh} for the case 
$g=12$. Let $(Z'_i)^{X'}\subset X'$ be the strict transform of $Z'_i\subset X$ for any 
$2\leq i\leq m$, and set $l:=\sigma^{-1}(p)\subset X'$. 
The curves $(Z'_i)^{X'}$ are flopping curves of $\beta$ intersecting with $l$. 
Moreover, if $\sN_{Z'_1/X}\cong\sO_{\pr^1}(1)\oplus\sO_{\pr^1}(-2)$, then 
the $(-3)$-curve in $F'$ intersects with $l$. 
Note that, by Proposition \ref{proposition:disjoint-lines}, 
all flopping curves of $\beta$ are mutually disjoint. 
Thus, if $m\geq 4$, or if $m=3$ and 
$\sN_{Z'_1/X}\cong\sO_{\pr^1}(1)\oplus\sO_{\pr^1}(-2)$, 
then the curve $l\subset X'$ intersects with $\Exc(\beta)\subset X'$ at least 
$3$ numbers of points. By Lemma \ref{lemma:negativity}, since 
the elementary flop $\chi$ is $\chi^{-1}_*\tau^*\sO_V(1)$-negative 
and $\tau^*\sO_V(1)$ is nef, we get
\[
2=\left(\chi^{-1}_*\tau^*\sO_V(1)\cdot l\right)
\geq\left(\tau^*\sO_V(1)\cdot \chi_*l\right)+3\geq 3, 
\]
a contradiction. Thus we get the assertions 
\eqref{lemma:3lines2} and \eqref{lemma:3lines3}. 
\end{proof}

\begin{remark}\label{remark:3lines}
There exists a prime Fano threefold $X$ of genus $12$ and a point $p\in X$ 
such that there are $3$ numbers of lines in $X$ passing through $p$. 
See the following example. 
\end{remark}

\begin{example}\label{example:3lines}
Let $Q$ be the $3$-dimensional smooth hyperquadric in $\pr^4$, and let 
$F^Q, (F')^Q\subset Q$ be smooth hyperplane sections such that 
$F^Q\cap (F')^Q=f_1\cup f_2$ holds, where $f_1$, $f_2$ are distinct lines in $Q\subset 
\pr^4$. Set $q:=f_1\cap f_2$. We fix isomorphisms $F^Q\cong\pr^1\times\pr^1$, 
$(F')^Q\cong\pr^1\times\pr^1$ satisfying $f_1\in|\sO_{\pr^1\times\pr^1}(1,0)|$ and 
$f_2\in|\sO_{\pr^1\times\pr^1}(0,1)|$.
Let $C^Q\subset F^Q$ be a general smooth curve satisfying 
$C^Q\in|\sO_{\pr^1\times\pr^1}(2,1)|$ under the isomorphism 
$F^Q\cong\pr^1\times\pr^1$, and set $q_0:=f_1\cap C^Q$ and 
$\{q_1,q_2\}:=f_2\cap C^Q$. Since $C^Q$ is taken to be general in the complete linear 
system, the points $q, q_0,q_1,q_2$ are mutually distinct. 
Take a smooth curve $\Gamma^Q\subset (F')^Q$ such that 
$\Gamma^Q\in|\sO_{\pr^1\times\pr^1}(3,1)|$ under the isomorphism 
$(F')^Q\cong\pr^1\times\pr^1$ satisfying $f_1\cap\Gamma^Q=q_0$ and 
$f_2\cap\Gamma^Q=\{q_1,q_2,q'\}$ with $q'\neq q_1,q_2$. Consider the Sarkisov link 
\[
\xymatrix{
 & E^Y \ar@{}[r]|{\subset} & Y 
\ar@{}[r]|{\supset} \ar@{->}[ld]_{\psi} \ar@{->}[rd]^{\phi}
& F^Y & \\
 C^Q \ar@{}[r]|{\subset} & Q && 
 V \ar@{}[r]|{\supset} & Z^V
}
\]
from the blowup of $Q$ along $C^Q$ as in \eqref{equation:fujita-converse}. 
By Theorem \ref{thm:dP5} and Proposition \ref{proposition:fujita}, the divisor 
$F^Y\subset Y$ is the strict transform of $F^Q\subset Q$ on $Y$, $F^Y\cong F^Q$ 
under the restriction of $\psi$, and the restriction morphism $\phi|_Y\colon F^Y\to Z^V$ 
is nothing but the projection $\pr^1\times\pr^1\to\pr^1$
to the second projective line under the fixed isomorphism $F^Q\cong\pr^1\times\pr^1$. 
Moreover, if we set 
\begin{eqnarray*}
(F')^Y&:=&\psi^{-1}_*(F')^Q,\quad 
\psi':=\psi|_{(F')^Y}\colon (F')^Y\to (F')^Q, \\
(F')^V&:=&\phi_*((F')^Y), \quad
\phi':=\phi|_{(F')^Y}\colon {(F')^Y}\to (F')^V, 
\end{eqnarray*}
then $(F')^V\subset V$ is a normal surface such that $(F')^V\in|\sO_V(1)|$ with 
$Z^V\subset (F')^V$. Since $(F')^Q$ and $C^Q$ transversely intersect at the points 
$q_0,q_1,q_2$, the morphism $\psi'$ is nothing but the blowup of $(F')^Q$ at 
the points $q_0,q_1,q_2$. Let $\mathbf{e}_i^Y\subset (F')^Y$ be the $(-1)$-curves 
over the point $q_i$ ($i=0,1,2$). 
Set $f_2^Y:=\psi^{-1}_*f_2$. Then the curve $f_2^Y\subset (F')^Y$ is a $(-2)$-curve 
in $(F')^Y$ and the morphism $\phi'$ contracts only the curve $f_2^Y$. 
Therefore, the surface $(F')^V\subset V$ is a del Pezzo surface of degree $5$ having 
only $A_1$ singular point at $p':=\phi'(f_2^Y)\in (F')^V$. 
Set $\Gamma^Y:=\psi^{-1}_*\Gamma^V$ and $\Gamma:=\phi_*\Gamma^Y$. 
Note that, the curve $\Gamma^Y\subset Y$ satisfies that,  
\begin{eqnarray*}
\Gamma^Y&\in&\left|(\psi')^*\sO_{\pr^1\times\pr^1}(3,1)-\mathbf{e}_0^Y
-\mathbf{e}_1^Y-\mathbf{e}_2^Y\right|, \\
-K_{(F')^Y}&\sim&(\psi')^*\sO_{\pr^1\times\pr^1}(2,2)-\mathbf{e}_0^Y
-\mathbf{e}_1^Y-\mathbf{e}_2^Y.
\end{eqnarray*}
Thus we have 
\begin{eqnarray*}
&&H^1\left((F')^Y,-K_{(F')^Y}-\Gamma^Y\right)
\cong H^1\left((F')^Y,(\psi')^*\sO_{\pr^1\times\pr^1}(-1,1)\right)\\
&\cong&\left(H^1\left(\pr^1, \sO(-1)\right)\otimes H^0\left(\pr^1,\sO(1)\right)\right)
\oplus\left(H^0\left(\pr^1, \sO(-1)\right)\otimes H^1\left(\pr^1,\sO(1)\right)\right)
=0.
\end{eqnarray*}
This implies that the restriction homomorphism 
\[
H^0\left((F')^Y, -K_{(F')^Y}\right)\to H^0\left(\Gamma^Y, -K_{(F')^Y}|_{\Gamma^Y}\right)
\]
is surjective. Thus, the image $\Gamma\subset V\subset \pr^6$ of $\Gamma^Y$ is 
a twisted (hence smooth) quintic rational curve passing through $p'$. 
For $i=1,2$, let $(Z'_i)^V\subset V$ be the image of $\mathbf{e}_i^Y\subset Y$. 
Then, since $q'\in f_2$, the curve $(Z'_i)^V$ is a line in $V$ with 
$\operatorname{length}\left(\sO_{\Gamma\cap (Z'_i)^V}\right)\geq 2$ and 
$p'\in (Z'_i)^V$. 
Let us consider the Sarkisov link 
\[\xymatrix{
& S^+ \ar@{}[r]|{\subset}  & X^+  \ar@{-->}[rr]^{\chi^{-1}} 
\ar@{->}[ld]_-{\tau} \ar@{->}[rd]_-{\beta^+} 
& & X' \ar@{->}[rd]^-{\sigma} \ar@{->}[ld]^-{\beta} \ar@{}[r]|{\supset} & F' & \\
\Gamma \ar@{}[r]|{\subset} & V && \bar{X} && X \ar@{}[r]|{\supset}& Z_0
}\]
from the blowup of $V$ along $\Gamma$ as in \eqref{equation:iskovskikh-converse} 
for the case $g=12$. 
In particular, the variety $X$ is a prime Fano threefold of genus $12$, 
$Z_0\subset X$ is a line, and the divisor $F'\subset X'$ is the strict transform of 
$(F')^V\subset V$ on $X'$. Since $X^+$ is a smooth weak Fano threefold, 
we have $\operatorname{length}\left(\sO_{\Gamma\cap (Z'_i)^V}\right)=2$ and 
the strict transform $(Z'_i)^{X^+}\subset X^+$ of $(Z'_i)^V$ is a flopping curve 
of $\beta^+$ for any $i=1,2$. By Proposition \ref{proposition:disjoint-lines}, 
the curves $(Z'_1)^{X^+}$ and $(Z'_2)^{X^+}$ are mutually disjoint. 
Set $l^{X^+}:=\tau^{-1}(p')\subset X^+$. Then the curve $l^{X^+}$ intersects with 
both $(Z'_1)^{X^+}$ and $(Z'_2)^{X^+}$. Since the elementary flop $\chi^{-1}$ is 
$\chi_*\sigma^*\sO_X(-K_X)$-negative and $\sigma^*\sO_X(-K_X)$ is nef 
on $X'$, by Lemma \ref{lemma:negativity}, we have 
\[
2=\left(\chi_*\sigma^*\sO_X(-K_X)\cdot l^{X^+}\right)
\geq\left(\sigma^*\sO_X(-K_X)\cdot \chi^{-1}_*(l^{X^+})\right)+2\geq 2. 
\]
Thus, the curve $\chi^{-1}_*(l^{X^+})\subset X'$ is a fiber of $\sigma$. 
Set $p:=\sigma\left(\chi^{-1}_*(l^{X^+})\right)$. Note that $p\in Z_0$. 
Let $Z_i^{X'}\subset X'$ be the flopped curve of $(Z'_i)^{X^+}$ 
with respects to the elementary flop $\chi^{-1}$ 
for $i=1,2$. Since $\chi^{-1}_*(l^{X^+})$ and $Z_i^{X'}$ intersect, the image 
$Z_i\subset X$ of $Z_i^{X'}$ is a line passing through $p\in X$ for $i=1,2$ by 
Theorem \ref{thm:double-projection-from-line}. Moreover, since $(F')^V$ is a normal 
surface, we have $\sN_{Z_0/X}\cong\sO_{\pr^1}\oplus\sO_{\pr^1}(-1)$ by 
Theorem \ref{thm:double-projection-from-line-converse} 
\eqref{thm:double-projection-from-line-converse1}. Therefore, again by 
Theorem \ref{thm:double-projection-from-line}, the lines $Z_0$, $Z_1$, $Z_2$ in $X$ are 
mutually distinct, and passing through $p\in X$. 
\end{example}

\begin{thm}\label{thm:configuration}
Let $X$ be a prime Fano threefold of genus $12$. Take any $m\in\{4,5,6\}$. Then we 
have the following: 
\begin{align}\label{equation:no-chain}
\text{there is no series of distinct lines $\left[Z'_1\right],\dots,\left[Z'_m\right]\in
\Sigma(X)$ such that} \\
\text{$Z'_i\cap Z'_{i+1}\neq\emptyset$ for any $1\leq i\leq m$, 
where we set $Z'_{m+1}:=Z'_1$.} \nonumber
\end{align}
\end{thm}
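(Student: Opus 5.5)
The plan is to argue by contradiction, treating $m=4,5,6$ separately but always using the same mechanism. Suppose $Z'_1,\dots,Z'_m$ is a cyclic chain of distinct lines. We pick two lines of the chain that are far apart in the cycle, say $Z_1:=Z'_1$ and $Z_2:=Z'_k$ with $k=3$ (for $m=4$), $k=3$ or $4$ (for $m=5,6$). We then use the results of \S\ref{section:flop} for the pair $(Z_1,Z_2)$ to bound how many chains of short length can join them.

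\textbf{Step 1: the chosen pair is disjoint.} If two non-adjacent lines of the chain met, they would form a shorter cycle together with part of the chain. By Lemma \ref{lemma:3lines}, a triangle forces a common point, and at most $3$ lines pass through a point. We would therefore first reduce to \emph{minimal} cycles, i.e.\ cycles without chords. A chord splits a cycle of length $m\le 6$ into two shorter cycles, one of length $\ge 4$ unless both are triangles. Two triangles sharing an edge, together with Lemma \ref{lemma:3lines}, give $4$ lines through a point or a configuration contradicting \eqref{lemma:3lines2}. So we may assume the cycle is chordless.

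\textbf{Step 2: $m=4$.} Take $Z_1=Z'_1$ and $Z_2=Z'_3$. These are disjoint, and both $Z'_2$ and $Z'_4$ meet both of them. Hence they are not totally disjoint, and Theorem \ref{thm:main} does not apply directly. Instead we work with the single-line link from $Z'_2$. Its flopping curves, by Theorem \ref{thm:double-projection-from-line}, include the strict transforms of $Z'_1$ and $Z'_3$. By Proposition \ref{proposition:disjoint-lines} these transforms are disjoint, and $Z'_4$ then yields a curve meeting two flopping curves at distinct points. Applying Lemma \ref{lemma:negativity}\eqref{lemma:negativity23} to the strict transform of $Z'_4$ with $A=\chi^{-1}_*\tau^*\sO_V(1)$ gives
\[
1\ge\left(\tau^*\sO_V(1)\cdot (Z'_4)^{X^+}\right)+2\ge 2,
\]
a contradiction. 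This requires $(\sigma^*(-K_X)-2F')\cdot(Z'_4)'=1$, which holds because $Z'_4$ is disjoint from $Z'_2$.

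\textbf{Step 3: $m=5,6$.} Choose $Z_1,Z_2$ at distance $2$ along the cycle, for instance $Z'_1$ and $Z'_3$. By chordlessness they are disjoint and are joined only through $Z'_2$ on one side. Their other path along the cycle has length $3$ or $4$. Blowing up $Z'_2$ and arguing as in Step 2 handles the short side. For the long side, we consider the two lines $Z'_m$ and $Z'_4$ adjacent to $Z_1,Z_2$ respectively. By Theorem \ref{thm:flop-curve} and Lemma \ref{lemma:half-half}\eqref{lemma:half-half2} (applied in the totally disjoint situation obtained by instead choosing $Z_1=Z'_1$, $Z_2=Z'_4$ when $m=6$, or $Z'_1,Z'_3$ when $m=5$), these become flopping curves $B_{1k}^{X_0}$ and $B_{2l}^{X_0}$. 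The lines joining them (for $m=5$, $Z'_4$ meets $Z'_5$ directly, giving $B_{1k}^{X_0}\cap B_{2l}^{X_0}\neq\emptyset$; for $m=6$, via $Z'_5$) contribute to $\bar m$ or force extra intersections. In either case we aim to contradict $\bar m+m_0=1$ from Theorem \ref{thm:flop-curve}\eqref{thm:flop-curve51}, or to produce a curve meeting $\Exc$ at two points and contradict nefness via Lemma \ref{lemma:negativity}\eqref{lemma:negativity23}.

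The main obstacle is Step 3, in particular ensuring total disjointness of the chosen pair so that Theorem \ref{thm:flop-curve} applies. When $m=6$ this is the case for the opposite lines $Z'_1,Z'_4$, provided no other line meets both. If some other line does meet both, that line produces a shorter cycle, and the case reduces to $m=4$ or $5$ by induction. This is why we would prove the cases in the order $m=4,5,6$.
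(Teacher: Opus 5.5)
Your Steps 1 and 2 are sound and match the paper. The reduction to a chordless cycle with no triple points is the paper's Lemma \ref{lemma:configuration}. Your $m=4$ argument is the paper's up to relabelling: blow up one line, the opposite line meets $\Exc(\beta)$ in two points, and Lemma \ref{lemma:negativity} (2)(iii) contradicts nefness of $\tau^*\sO_V(1)$.

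The gap is Step 3, case $m=5$.

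\begin{itemize}
\item In a $5$-cycle, any two non-adjacent lines (e.g.\ $Z'_1,Z'_3$) are both met by a third line of the cycle ($Z'_2$). So no pair from the cycle is totally disjoint, and Theorem \ref{thm:flop-curve} cannot be invoked.
\item Even granting it, the single pair $Z'_4\cap Z'_5\neq\emptyset$ only gives $\bar m\ge 1$, which is consistent with $\bar m+m_0=1$.
\item Your fallback, a curve meeting $\Exc$ in two points, also does not occur. Blowing up $Z'_1$, each of $Z'_3$ and $Z'_4$ meets $\Exc(\beta)$ in only one point, on the transforms of $Z'_2$ and $Z'_5$ respectively.
\end{itemize}

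The missing idea is to exploit this one-point contact. Lemma \ref{lemma:negativity} (2)(iii) gives $\left(\tau^*\sO_V(1)\cdot\chi_*(Z'_i)^{X'}\right)\le 0$, hence $=0$, for $i=3,4$. So both curves are contracted by the blowup $\tau\colon X^+\to V$ along the smooth curve $\Gamma$.

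These two curves still meet in $X^+$. The point $Z'_3\cap Z'_4$ lies neither on $Z'_1$ nor on any line meeting $Z'_1$: such a line would create a triple point or a $4$-cycle, both already excluded. Hence $\chi\circ\sigma^{-1}$ is a local isomorphism there. Every nontrivial fibre of $\tau$ is an irreducible $\pr^1$, so the two distinct curves would lie in the same fibre and coincide, a contradiction.

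Your $m=6$ case depends on $m=5$ to know that $Z'_1,Z'_4$ are totally disjoint (a line meeting both would close up a $5$-cycle). So it is incomplete as it stands. Once that is available, state explicitly that two pairs contribute to $\bar m$: $Z'_2\cap Z'_3$ and $Z'_5\cap Z'_6$. These points lie off $Z'_1\cup Z'_4$, so the intersections persist in $X_0$, and they give $\bar m\ge 2$. That is what contradicts $\bar m+m_0=1$ in Theorem \ref{thm:flop-curve}; a single pair would not.
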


In order to show this theorem, we prepare the following lemma: 

\begin{lemma}\label{lemma:configuration}
Let $X$ be a prime Fano threefold of genus $12$. Assume that $m'=4$, or 
$m'\geq 5$ and the property \eqref{equation:no-chain} holds for any $4\leq m\leq m'-1$. 
Assume that there is a series of distinct lines 
$\left[Z'_1\right],\dots,\left[Z'_{m'}\right]\in\Sigma(X)$ 
such that $Z'_i\cap Z'_{i+1}\neq\emptyset$ for any $1\leq i\leq m'$, 
where we set $Z'_{m'+1}:=Z'_1$. 
Then we have the following: 
\begin{enumerate}
\renewcommand{\theenumi}{\arabic{enumi}}
\renewcommand{\labelenumi}{(\theenumi)}
\item\label{lemma:configuration1}
For any point $p\in X$, we have 
\[
\#\left\{i\in\{1,\dots,m'\}\mid p\in Z'_i\right\}\leq 2. 
\]
\item\label{lemma:configuration2}
For any $1\leq i\leq m'$, we have 
\[
\#\left\{j\in\{1,\dots,m'\}\setminus\{i\}\mid Z'_i\cap Z'_j\neq\emptyset\right\}=2. 
\]
\end{enumerate}
\end{lemma}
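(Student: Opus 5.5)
We prove (1) first and then deduce (2) from (1) together with the inductive hypothesis on shorter cycles.

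For (1), suppose a point $p\in X$ lies on three of the lines $Z'_i$. By Lemma \ref{lemma:3lines} \eqref{lemma:3lines2}, no further line of $X$ passes through $p$, so exactly three of the $Z'_i$ pass through $p$; call them $Z'_a, Z'_b, Z'_c$. The plan is to derive a contradiction with the cycle structure via Proposition \ref{proposition:disjoint-lines}.

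Consider the double projection link \eqref{equation:iskovskikh} from $Z'_a$. Let $l=\sigma^{-1}(p)$. The strict transforms of $Z'_b$ and $Z'_c$ are flopping curves meeting $l$. Since $m'\geq 4$, there is a further line $Z'_d$ in the cycle adjacent to $Z'_a$, with $d\notin\{b,c\}$, or else the cycle forces some other incidence. If $Z'_d$ meets $Z'_a$ at a point $p'\neq p$, then the fibre $\sigma^{-1}(p')$ meets a third flopping curve. The plan is not to use that fibre directly, but to run the negativity estimate of Lemma \ref{lemma:negativity} \eqref{lemma:negativity23} on $F'$ or on a suitable fibre. The more robust route is to shortcut the cycle instead, as follows.

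\begin{itemize}
\item Because $Z'_a, Z'_b, Z'_c$ pairwise meet at $p$, we may delete the lines strictly between two of them along the cycle.
\item This produces a shorter cycle of distinct lines with consecutive intersections.
\item Its length is at least $4$ unless the original cycle had a special shape.
\end{itemize}

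Concretely, order the cycle and pick two of $a,b,c$ that are non-adjacent in the cyclic order; such a pair exists when $m'\geq 4$ unless $\{a,b,c\}$ are consecutive. The two arcs between them give cycles of lengths $k+1$ and $m'-k+1$. One of these lies in $[4,m'-1]$, which contradicts the hypothesis. The alternative is that one arc has length $3$, i.e.\ it is a triangle. Triangles reduce to concurrency by Lemma \ref{lemma:3lines} \eqref{lemma:3lines1}, and concurrency of four lines is excluded by Lemma \ref{lemma:3lines} \eqref{lemma:3lines2}. The remaining case is $m'=4$ with three concurrent lines among four. This is handled by the negativity argument: blow up a line through $p$ that meets the fourth line at a different point. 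The fibre over $p$ then meets two disjoint flopping curves (disjoint by Proposition \ref{proposition:disjoint-lines}). A second fibre meets the flopping curve coming from the fourth line. Deriving the contradiction from this requires tracking the divisor $\sO_V(1)$ as in the proof of Lemma \ref{lemma:3lines}. This bookkeeping of the small cases is the step I expect to be the main obstacle.

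For (2), each $Z'_i$ meets $Z'_{i\pm1}$, so the count is at least $2$. Suppose instead $Z'_i$ meets some non-adjacent $Z'_j$. The chord $Z'_iZ'_j$ splits the cycle into two shorter cycles of distinct lines. Each has length at least $3$, and their lengths sum to $m'+2$.

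\begin{itemize}
\item If either has length between $4$ and $m'-1$, the hypothesis \eqref{equation:no-chain} gives a contradiction. (For $m'=4$ this case does not arise.)
\item Otherwise one piece is a triangle, say $Z'_i, Z'_{i+1}, Z'_j$. By Lemma \ref{lemma:3lines} \eqref{lemma:3lines1}, these three lines are concurrent at a point, which contradicts (1).
\item For $m'=4$ both pieces are triangles, and (1) again gives the contradiction.
\end{itemize}
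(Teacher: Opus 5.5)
Your argument for (2), and for (1) when $m'\geq 5$, is essentially the paper's. The chord through $p$ splits the cycle into two cycles of distinct lines of lengths $k+1$ and $m'-k+1$. These sum to $m'+2\geq 7$, so one of them has length in $[4,m'-1]$. (A non-adjacent pair among $a,b,c$ always exists once $m'\geq 4$, whether or not they are consecutive.)

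The gap is the case $m'=4$ of (1). You defer it to a negativity computation that you do not carry out, and as sketched it does not produce a contradiction. Blow up one of the concurrent lines and apply Lemma \ref{lemma:negativity} \eqref{lemma:negativity23}. For the fibre $l$ over $p$ you only get $(\tau^*\sO_V(1)\cdot\chi_*l)=0$. For the fibre $l'$ over the other intersection point you only get $(\tau^*\sO_V(1)\cdot\chi_*l')\leq 1$. Both are consistent, and three concurrent lines genuinely occur by Example \ref{example:3lines}. Since your proof of (2) for $m'=4$ relies on (1), both parts are left open for $m'=4$.

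The missing step is elementary. It is already in your own sentence about triangles, but you applied it to the wrong arc. For $m'=4$, say $p\in Z'_1\cap Z'_2\cap Z'_3$.
\begin{itemize}
\item The fourth line $Z'_4$ meets its neighbours $Z'_3$ and $Z'_1$, and $Z'_1\cap Z'_3=\{p\}$, so $Z'_1,Z'_3,Z'_4$ pairwise meet.
\item By Lemma \ref{lemma:3lines} \eqref{lemma:3lines1} these three lines share a point. That point must be $p$, because two distinct lines meet in at most one point.
\item Hence four distinct lines pass through $p$, contradicting Lemma \ref{lemma:3lines} \eqref{lemma:3lines2}.
\end{itemize}
This is exactly how the paper finishes. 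It rotates so that $p\in Z'_1\cap Z'_i\cap Z'_j$ with $1<i<j<m'$, and the hypothesis forces $j=3$. The shorter cycle $Z'_1,Z'_3,\dots,Z'_{m'}$ then forces $m'=4$ and the concurrency of $Z'_1,Z'_3,Z'_4$.
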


\begin{proof}
\eqref{lemma:configuration1} 
Assume that $Z'_1\cap Z'_i\cap Z'_j\neq\emptyset$ for some $1<i<j\leq m'$. 
We may assume that $3\leq j<m'$. (If $j=m'$, then replace $Z'_1,Z'_i,Z'_{m'}$ with 
$Z'_2,Z'_{i+1},Z'_1$, and if $i+1=m'$, then replace $Z'_2,Z'_{m'},Z'_1$ with 
$Z'_3,Z'_1,Z'_2$, after changing the order of $Z'_1,\dots,Z'_{m'}$ suitably.)
Then the curves $Z'_1,\dots,Z'_j$ satisfy the property \eqref{equation:no-chain}. 
Thus we have $j=3$ and $Z'_1\cap Z'_2\cap Z'_3\neq\emptyset$ by 
Lemma \ref{lemma:3lines}. On the other hand, the curves 
$Z'_1,Z'_3,\dots,Z'_{m'}$ also satisfy the property \eqref{equation:no-chain}. 
Thus we have $m'-1=3$, and $Z'_1\cap Z'_3\cap Z'_4\neq\emptyset$ again by 
Lemma \ref{lemma:3lines}. This contradicts with Lemma \ref{lemma:3lines} 
\eqref{lemma:3lines2}. Thus we get the assertion \eqref{lemma:configuration1}.

\eqref{lemma:configuration2}
Assume that $Z'_1\cap Z'_m\neq\emptyset$ for some $m\neq 1,2, m'$. 
Then the curves $Z'_1,\dots,Z'_m$ satisfy the property \eqref{equation:no-chain}. 
Thus we must have $m=3$. By Lemma \ref{lemma:3lines}, we have 
$Z'_1\cap Z'_2\cap Z'_3\neq\emptyset$. Thus the assertion 
\eqref{lemma:configuration2}
follows from \eqref{lemma:configuration1}. 
\end{proof}

\begin{proof}[Proof of Theorem \ref{thm:configuration}]
Assume that there exist a series of distinct lines $Z'_1,\dots,Z'_m$ in $X$ 
such that $Z'_i\cap Z'_{i+1}\neq\emptyset$ for $1\leq i\leq m$, whee $Z'_{m+1}:=Z'_1$. 
By induction on $m$ and by Lemma \ref{lemma:configuration}, 
we may assume the properties in Lemma \ref{lemma:configuration}. 
Let 
\[
\xymatrix{
 & F' \ar@{}[r]|{\subset}  & X'  \ar@{-->}[rr]^{\chi} 
\ar@{->}[ld]_-{\sigma} \ar@{->}[rd]_-{\beta} 
& & X^+ \ar@{->}[rd]^-{\tau} \ar@{->}[ld]^-{\beta^+} \ar@{}[r]|{\supset} & S^+ & \\
 Z'_1 \ar@{}[r]|{\subset} & X && \bar{X} && V \ar@{}[r]|{\supset}& \Gamma
}
\]
be the Sarkisov link from the blowup of $X$ along $Z'_1$ as in \eqref{equation:iskovskikh}. 
Note that the strict transforms of $Z'_2$ and $Z'_m$ to $X'$ 
are flopping curves of the elementary flop $\beta$. 

Assume that $m=4$. Then, the strict transform $(Z'_3)^{X'}\subset X'$ of 
$Z'_3$ intersects with $\Exc(\beta)$ at least $2$ points. By Lemma 
\ref{lemma:negativity}, we get 
\[
1=\left(\chi^{-1}_*\tau^*\sO_V(1)\cdot (Z'_3)^{X'}\right)
\geq\left(\tau^*\sO_V(1)\cdot \chi_*(Z'_3)^{X'}\right)+2\geq 2. 
\]
This leads to a contradiction. 

Assume that $m=5$. Then, for $i\in\{3,4\}$, the strict transform $(Z'_i)^{X'}\subset X'$ of 
$Z'_i$ intersects with $\Exc(\beta)$ at least $1$ point. By Lemma 
\ref{lemma:negativity}, we get 
\[
1=\left(\chi^{-1}_*\tau^*\sO_V(1)\cdot (Z'_i)^{X'}\right)
\geq\left(\tau^*\sO_V(1)\cdot \chi_*(Z'_i)^{X'}\right)+1\geq 1. 
\]
Therefore, Both $\chi_*(Z'_3)^{X'}$ and $\chi_*(Z'_4)^{X'}$ are fibers of $\tau$. 
However, since $Z'_3$ and $Z'_4$ intersect, so are 
$\chi_*(Z'_3)^{X'}$ and $\chi_*(Z'_4)^{X'}$. 
(By induction, the rational map $\chi\circ\sigma^{-1}$ is an isomorphism around 
a neighborhood of the point $Z'_3\cap Z'_4$.)
Since any nontrivial fiber of $\tau$ is irreducible, this leads to a contradiction. 

Assume that $m=6$. From the result for $m=5$, the lines $Z'_1, Z'_4$ in $X$ must be 
totally disjoint. However, the curve $Z'_1$ intersects with both $Z'_2$ and $Z'_6$, and 
the curve $Z'_4$ intersects with both $Z'_3$ and $Z'_5$. Since 
$Z'_2\cap Z'_3\neq\emptyset$ and $Z'_4\cap Z'_5\neq\emptyset$, it contradicts 
with Theorem \ref{thm:flop-curve} \eqref{thm:flop-curve51}. 

As a consequence, we have completed the proof of Theorem \ref{thm:configuration}. 
\end{proof}

\section{Multiplicative group actions}\label{section:special}

In \S \ref{section:special}, we consider the case that the link in \S 
\ref{section:go} for the case $g=12$ and is effectively $\G_m$-equivariant. 
We firstly recall the following result: 

\begin{proposition}[{\cite[Lemmas 5.49 and 5.50]{Book}}]\label{proposition:book}
Let us fix an action $\G_m\curvearrowright\pr^4_{x_0\dots x_4}$ with 
\[
\lambda\cdot\left[x_0:x_1:x_2:x_3:x_4\right]=\left[x_0:\lambda x_1:\lambda^2 x_2
:\lambda^3 x_3:\lambda^4 x_4\right] 
\]
and a $\G_m$-equivariant twisted quartic curve $\Gamma\subset\pr^4$ 
defined by the image of 
\begin{eqnarray*}
\pr^1_{t_0t_1} &\to&\pr^4_{x_0\dots x_4} \\
\left[t_0:t_1\right]&\mapsto&
\left[t_0^4:t_0^3t_1:t_0^2t_1^2:t_0t_1^3:t_1^4\right]. 
\end{eqnarray*}
For any $q\in\pr^1\setminus\{0,1,\infty\}$, let us consider the $\G_m$-invariant 
smooth hyperquadric $Q\langle q\rangle\subset\pr^4$ containing $\Gamma$ 
defined by 
\[
x_1x_3-qx_0x_4+(q-1)x_2^2=0.
\]
Let $\rho^q\colon\hat{Q}\langle q\rangle\to Q\langle q\rangle$ be the 
blowup of $Q\langle q\rangle$ along $\Gamma$. Then, the threefold 
$\hat{Q}\langle q\rangle$ is a Fano threefold of type 2.21 with $\G_m\subset
\Aut(\hat{Q}\langle q\rangle)$, and the threefolds 
$\left\{\hat{Q}\langle q\rangle\right\}_{q\in\pr^1\setminus\{0,1,\infty\}}$ are 
mutually non-isomorphic to each other. 
Conversely, any Fano threefold $\hat{Q}$ of 
type 2.21 with $\G_m\subset \Aut(\hat{Q})$ is isomorphic to 
$\hat{Q}\langle q\rangle$ for some $q\in\pr^1\setminus\{0,1,\infty\}$. 
\end{proposition}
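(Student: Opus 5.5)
The plan is to reduce to normal forms for a $\G_m$-invariant pair (quadric, twisted quartic) in $\pr^4$, and then to detect isomorphism classes through the geometry of the second contraction $\rho_2$. The statement is attributed to \cite{Book}, so the argument can be organized along the same lines.

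\textbf{Step 1: linearize the action.} Let $\hat{Q}$ be of type 2.21 with $\G_m\subset\Aut(\hat{Q})$. The $\G_m$-action preserves the two extremal rays. Since $\G_m$ is connected, it cannot swap them, so it descends via $\rho_1$ to a faithful action on $Q_1\subset\pr^4$ preserving $\Gamma_1^{Q_1}$. This action is linear, because $\Pic Q_1$ is generated by $\sO_{Q_1}(1)$ and the action lifts to $H^0(Q_1,\sO_{Q_1}(1))$ after passing to a finite cover, which for $\G_m$ is again $\G_m$. The rational normal quartic has $\Aut=\PGL_2$ acting through $\mathrm{Sym}^4$. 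A one-dimensional torus in $\PGL_2$ is conjugate to the diagonal torus, so after a coordinate change on $\pr^1$ both $\Gamma$ and the action are in the stated normal form, with weights $0,1,2,3,4$.

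\textbf{Step 2: classify the invariant quadrics.} Quadrics containing $\Gamma$ form the $6$-dimensional space spanned by the $2\times2$ minors of the catalecticant matrix. The weight-$4$ part of this space is spanned by $x_1x_3-x_2^2$ and $x_0x_4-x_2^2$ (using $x_0x_4-x_1x_3$ as well). The other weight spaces contain only rank $\le 3$ quadrics, which are singular. Indeed, an invariant quadric must be a weight vector, and a smooth one needs weight $4$, because a nondegenerate form must pair weight $w$ with weight $4-w$. Hence, up to scaling, the invariant quadrics through $\Gamma$ are $x_1x_3-qx_0x_4+(q-1)x_2^2$ with $q\in\pr^1$. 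A direct rank check shows smoothness exactly for $q\ne0,1,\infty$. This proves the converse statement, and shows that each $\hat{Q}\langle q\rangle$ is of type 2.21 with a $\G_m$-action.

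\textbf{Step 3: mutual non-isomorphism (main obstacle).} An isomorphism $\hat{Q}\langle q\rangle\cong\hat{Q}\langle q'\rangle$ might swap $\rho_1$ and $\rho_2$, so first compute the other side. Using Proposition \ref{proposition:2-21}, the linear system $|\rho_1^*\sO(2)-S_1|$ is given by the quadrics through $\Gamma$ modulo $Q\langle q\rangle$. Its image $Q_2$ together with $\Gamma_2$ is again a $\G_m$-invariant normal form with some parameter $q^*=\iota(q)$, computed from the $\G_m$-weights of the five remaining quadrics.

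Next, let the normalizer $N$ of $\G_m$ in $\PGL_2$ act. The element $t_0\leftrightarrow t_1$ reverses the weights and fixes the form $x_1x_3-qx_0x_4+(q-1)x_2^2$. Any isomorphism can be composed with an element of the group generated by the torus and this involution, so it becomes $\G_m$-equivariant up to inverting the torus, and hence it preserves the normal form. Such an isomorphism rescales the coordinates by weights, which leaves $q$ unchanged. The fact that $\G_m$ is a maximal torus in $\Aut$ makes this conjugation argument work; that maximality still has to be checked, for instance from the finite $\PGL_2$-stabilizer of $(Q,\Gamma)$.

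So $q'\in\{q,\iota(q)\}$, and it remains to verify the identity $\iota(q)=q$, i.e.\ that the second contraction has the same invariant. This weight calculation is where I expect the real work, and I would carry it out explicitly. If it failed, the claimed injectivity would fail, so I anticipate that it holds by the symmetry of the construction.
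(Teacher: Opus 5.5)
Your Steps 1 and 2 are fine, and they give the construction and the converse:

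\begin{itemize}
\item invariant quadrics are weight vectors, and nondegeneracy forces weight $4$;
\item the weight-$4$ pencil is $\{x_1x_3-qx_0x_4+(q-1)x_2^2=0\}$, smooth iff $q\notin\{0,1,\infty\}$;
\item one small correction: the weight-$3$ and weight-$5$ minors have rank $4$, not $\le 3$, though they are still singular.
\end{itemize}

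\textbf{The gap is Step 3.} You reduce to $q'\in\{q,\iota(q)\}$ and then only anticipate $\iota(q)=q$. As you note yourself, $\iota(q)\neq q$ would give $\hat{Q}\langle q\rangle\cong\hat{Q}\langle\iota(q)\rangle$, so this identity is the whole non-isomorphism claim. The method you indicate also cannot compute $\iota(q)$. The five quadrics spanning $H^0(\sO_{Q\langle q\rangle}(2)\otimes I_\Gamma)$ have weights $2,\dots,6$ for every $q$. So weights only show that $(Q_2,\Gamma_2)$ is \emph{some} normal form; pinning down the parameter needs explicit equations.

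\textbf{Case A is also mis-argued.} Composing with elements of the normalizer does not make a map equivariant. Also, the $\PGL_2$-stabilizer of $Q\langle q\rangle$ is never finite: it contains $\G_m$, and equals $\PGL_2$ for $q=1/4$, where $Q\langle 1/4\rangle=\{x_0x_4-4x_1x_3+3x_2^2=0\}$. The correct argument runs as follows.
\begin{itemize}
\item The induced $g\in\PGL_2$ carries $\G_m$ into $\mathrm{Stab}(Q\langle q'\rangle)^0$.
\item There, both $g\G_m g^{-1}$ and $\G_m$ are maximal tori (since $\PGL_2$ has rank one), hence conjugate.
\item After correcting $g$ by the stabilizer, $g$ normalizes $\G_m$.
\item That normalizer is generated by $\G_m$ and $t_0\leftrightarrow t_1$, and it fixes $Q\langle q\rangle$; hence $q'=q$.
\end{itemize}

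\textbf{The missing ingredient.} The paper does not reprove this proposition (it cites \cite{Book}). What you need is the map $\iota''_Q$ recorded in Lemma~\ref{lemma:mumu2}, which lifts to an automorphism of $\hat{Q}\langle q\rangle$ exchanging $\rho_1$ and $\rho_2$. Concretely, fix $s$ with $s^2=q$ and take
\[
y_0=x_0x_2-x_1^2,\quad y_1=s(x_0x_3-x_1x_2),\quad y_2=q(x_0x_4-x_2^2),\quad y_3=s(x_1x_4-x_2x_3),\quad y_4=x_2x_4-x_3^2
\]
as a basis of $|\rho_1^*\sO_{Q_1}(2)-S_1|$ (Proposition~\ref{proposition:2-21}).

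First, $Q_2=Q\langle q\rangle$. Write $P=x_0x_4$, $R=x_1x_3$, $T=x_2^2$. Then
\[
(x_0x_3-x_1x_2)(x_1x_4-x_2x_3)-(x_0x_2-x_1^2)(x_2x_4-x_3^2)=-(R-P)(R-T).
\]
On $Q\langle q\rangle$ we have $R=qP-(q-1)T$, so this equals $-q(q-1)(P-T)^2$. Hence $y_1y_3-qy_0y_4+(q-1)y_2^2=0$ on the image.

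Second, $\Gamma_2=\Gamma$, where $\Gamma_2=\rho_2(S_2)$; polarize. Let $\gamma(a)=[1:a:a^2:a^3:a^4]$.
\begin{itemize}
\item For $a\neq b$, the chord through $\gamma(a),\gamma(b)$ lies in $Q\langle q\rangle$ iff $ab=q(a+b)^2$.
\item Its strict transform is contracted by $\rho_2$ to
\[
[1:s(a+b):q(a+b)^2:s\,ab(a+b):a^2b^2]=[1:v:v^2:v^3:v^4],\qquad v=s(a+b).
\]
\item For tangent lines (the only ones when $q=1/4$), evaluating the $y_i$ at $\gamma'(a)$ gives the same formula with $b=a$.
\end{itemize}

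Thus $(Q_2,\Gamma_2)=(Q\langle q\rangle,\Gamma)$, i.e.\ $\iota(q)=q$. Together with the corrected Case A, this completes the proof.
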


\begin{lemma}\label{lemma:gm}
Let $\hat{C}\subset\hat{Q}\langle q\rangle$ be a $\G_m$-invariant bi-cubic 
curve in $\hat{Q}\langle q\rangle$ under the $\G_m$-action in Proposition 
\ref{proposition:book}. Then, after twisting by the involution 
\[
\iota'_Q\colon\left[x_0:x_1:x_2:x_3:x_4\right]\mapsto\left[x_4:x_3:x_2:x_1:x_0\right]
\]
if necessary, the strict transform 
$(\rho^q)_*\hat{C}\subset Q\langle q \rangle$ must be 
equal to the curve $C\langle q\rangle$ defined by the image of 
\begin{eqnarray*}
\pr^1_{u_0u_1} &\to& \pr^4_{x_0\dots x_4}\\ 
\left[u_0:u_1\right]&\mapsto&\left[u_0^3:u_0^2u_1:u_0u_1^2:(1-q)u_1^3:0\right]. 
\end{eqnarray*}
(We remark that the involution $\iota'_Q$ is an element in 
$\Aut(Q\langle q\rangle; \Gamma)$.)
\end{lemma}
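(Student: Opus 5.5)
We reduce the statement to a classification of $\G_m$-invariant twisted cubics in $Q\langle q\rangle$ meeting $\Gamma$ in a subscheme of length $3$. By Lemma \ref{lemma:2-21-bi-cubic} \eqref{lemma:2-21-bi-cubic1}, $C:=(\rho^q)_*\hat{C}$ is a twisted cubic in $Q\langle q\rangle\subset\pr^4$ with $\operatorname{length}\left(\sO_{C\cap\Gamma}\right)=3$. Since $\rho^q$ is $\G_m$-equivariant, $C$ is $\G_m$-invariant. The action on $\pr^4$ has distinct weights $0,1,2,3,4$, so the fixed points are the five coordinate points $e_0,\dots,e_4$. Only $e_0,e_4$ lie on $\Gamma$, and the $\G_m$-invariant curves are closures of one-dimensional orbits.

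The plan is as follows. First, $C\cong\pr^1$ carries a nontrivial $\G_m$-action, because $C$ is not pointwise fixed. Hence $C$ has exactly two fixed points $e_a,e_b$ with $a<b$, and an invariant parametrization has the form $[u_0:u_1]\mapsto$ monomials $u_0^{3-k}u_1^{k}$ placed in coordinates whose weights are affine in $k$. A nondegenerate twisted cubic spans a $\pr^3$, so it uses four coordinates $x_{c_0},x_{c_1},x_{c_2},x_{c_3}$ with $c_k=c_0+dk$. Since the weights lie in $\{0,\dots,4\}$, we must have $d=1$, and the coordinate set is $\{0,1,2,3\}$ or $\{1,2,3,4\}$. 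These two cases are exchanged by $\iota'_Q$, which preserves $Q\langle q\rangle$ and $\Gamma$, so we may assume $x_4\equiv0$. Then, after rescaling $u$ and using the torus only where it is compatible, $C$ is the image of $[u_0:u_1]\mapsto[a_0u_0^3:a_1u_0^2u_1:a_2u_0u_1^2:a_3u_1^3:0]$ with all $a_k\neq0$.

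Next we impose $C\subset Q\langle q\rangle$. Substituting gives $a_1a_3u_0^2u_1^4+(q-1)a_2^2u_0^2u_1^4=0$, that is, $a_1a_3=(1-q)a_2^2$. Then we impose the length condition. The curves $C$ and $\Gamma$ both lie in the hyperplane-free setting of $\pr^4$, and $C\cap\Gamma$ must be supported at $\G_m$-fixed points or contain whole orbits. An orbit is not possible, since $C\neq\Gamma$ and both curves are orbit closures. So the support is contained in $\{e_0\}$, which is the only common fixed point when $x_4=0$ and $e_3\notin\Gamma$. The local intersection length at $e_0$ should then be computed in the affine chart $x_0=1$. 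Near $e_0$, $\Gamma$ is given by $(t,t^2,t^3,t^4)$ and $C$ by $(a_1s/a_0,\,a_2s^2/a_0,\,a_3s^3/a_0,\,0)$. Computing $\operatorname{length}\left(\sO_{C\cap\Gamma}\right)$ via the ideal of $\Gamma$ pulled back to $C$, via Lemma \ref{lemma:lengths}, the pullback ideal is generated by $x_2-x_1^2$, $x_3-x_1x_2$, $x_4-x_1x_3$, and so on. This gives length $\geq2$ iff $a_2a_0=a_1^2$, and length $\geq3$ iff additionally $a_3a_0^2=a_1^3$, given that $x_4$ vanishes to order $4$ on $\Gamma$ but identically on $C$.

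Finally, these conditions determine $(a_k)$ up to the torus and the rescaling of $u$. Normalizing $a_0=a_1=1$, we get $a_2=1$, and then $a_3=1-q$ from the quadric equation. We also need to check that the length-$3$ condition together with $C\subset Q\langle q\rangle$ are compatible, whereas the naive value $a_3=1$ is forced to change. I expect the main obstacle to be the careful length computation at $e_0$: determining exactly which of these relations are forced, and reconciling them with the quadric equation. In particular, one must see that one of the naive "osculation" relations is replaced by the quadric constraint, so that exactly the stated curve $C\langle q\rangle$ results. I would do this by directly computing the local algebra $\Bbbk[s]/(\text{pullbacks of the } 2\times2 \text{ minors defining }\Gamma)$.
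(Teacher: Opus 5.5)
Your first half is sound, and it is a genuinely different and shorter route than the paper's. The span of $C$ is a $\G_m$-invariant $\pr^3$ whose four weights form an arithmetic progression in $\{0,\dots,4\}$, so after $\iota'_Q$ it is $\{x_4=0\}$. This is exactly the shortcut of Remark \ref{remark:kuznetsov}. It replaces the paper's geometric detour: the reverse link, the normal bundles $\sO_{\pr^1}(-2)\oplus\sO_{\pr^1}(1)$ of the invariant lines, $F_2^{Y_1}\cong\F_2$, and the singular quadric $F_2^{Q_1}$ with vertex on $\Gamma$. Your support argument ($C\cap\Gamma\subset\{e_0\}$) and the quadric relation $a_1a_3=(1-q)a_2^2$ are also correct.

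The gap is the length computation at $e_0$, which is off by one, and this leaves your last step inconsistent. You claim that length $\geq 3$ forces $a_0^2a_3=a_1^3$, i.e.\ $a_3=1$ after normalizing $a_0=a_1=1$. But $C\subset Q\langle q\rangle$ forces $a_3=1-q\neq 1$. Taken literally, your conditions prove that no such $C$ exists. Saying that one osculation relation is ``replaced'' by the quadric constraint is not an argument.

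Here is what actually holds. In the chart $x_0=1$ with $s=u_1/u_0$, the generators $x_2-x_1^2$, $x_3-x_1x_2$, $x_4-x_1x_3$ of the ideal of $\Gamma$ pull back to $c_2s^2$, $c_3s^3$, $c_4s^4$, where $c_2=a_2/a_0-a_1^2/a_0^2$, $c_3=a_3/a_0-a_1a_2/a_0^2$ and $c_4=-a_1a_3/a_0^2\neq 0$. So contact of order $2$ is automatic: both curves are tangent at $e_0$ to the invariant line $\overline{e_0e_1}$. The length is $2$ if $a_0a_2\neq a_1^2$. It is $3$ if $a_0a_2=a_1^2$ and $a_0a_3\neq a_1a_2$. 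It is $4$ otherwise.

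Your relation $a_0^2a_3=a_1^3$ is thus the condition for length $4$. The quadric relation rules it out, since together with $a_0a_2=a_1^2$ it would give $a_2^2=(1-q)a_2^2$, i.e.\ $q=0$. Hence length $3$ is equivalent to the single condition $a_0a_2=a_1^2$, which is the paper's condition $\alpha_2=\alpha_1^2$. Normalizing $a_0=a_1=1$ then gives $a=(1,1,1,1-q)$, i.e.\ $C\langle q\rangle$. With this correction your argument is complete.
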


\begin{proof}
From $\hat{C}\subset\hat{Q}\langle q\rangle$, the link in Corollary 
\ref{corollary:back-g12} 
ends with a prime Fano threefold $X$ with an effective $\G_m$-action and 
two $\G_m$-invariant lines $Z_1\cup Z_2$ on $X$ which are totally 
disjoint. 
By \cite[Lemma 21 (2)]{DFK} (see also \cite[Proposition 5.4.4]{KPS18}), 
we have $\sN_{Z_i/X}\cong\sO_{\pr^1}(-2)\oplus\sO_{\pr^1}(1)$. 

By looking at the reverse direction of the link, 
we get the assertion as in the proof of \cite[Lemma 23]{DFK}. 
We give an alternative (but an essentially same and precise) 
proof for readers' convenience. 
Set $Q_1:=Q\langle q\rangle$, $\hat{Q}:=\hat{Q}\langle q \rangle$, $\rho_1:=\rho^q$ 
and $C_1:=(\rho^q)_*\hat{C}$. Consider the link 
\[\xymatrix{
& E^{Y_1} \ar@{}[r]|{\subset} & Y_1 
\ar@{}[r]|{\supset} \ar@{->}[ld]_{\psi_1} \ar@{->}[rd]^{\phi_1}
& F_2^{Y_1} & \\
C_1  \ar@{}[r]|{\subset} & Q_1 && V_1 \ar@{}[r]|{\supset} & Z_2^{V_1}
}\]
from the blowup $Q_1$ along $C_1$ 
as in \eqref{equation:fujita-converse}. 
Set $\Gamma_1:=(\phi_1\circ\psi_1^{-1})_*\Gamma$. 
Note that $F_2^{Q_1}:=(\psi_1)_*F_2^{Y_1}$ is nothing but the hyperplane 
section of $Q_1$ containing $C_1$. 
Consider the diagram
\[\xymatrix{
& W_1^+ \ar@{->}[d]_{\tau_1^+} \ar@{-->}[rr]^-{\chi_{12}^{-1}} \ar[rd]^-{\beta_{12}^+} 
& & W_1 \ar@{->}[d]^{\phi_1^+} \ar[ld]_-{\beta_{12}} & \\
F_2^{Y_1} \ar@{}[r]|{\subset} & Y_1 \ar@{->}[rd]_{\phi_1} & V'_1 
\ar[d]_(.4){\gamma_{12}} & X_1^+ \ar@{}[r]|{\supset} 
\ar@{->}[ld]^{\tau_1}  & S_1^+ \\
& Z_2^{V_1} \ar@{}[r]|{\subset} & V_1  \ar@{}[r]|{\supset} & \Gamma_1 & 
}\]
as in \S \ref{section:go}, where $\tau_1^+$ is the blowup along 
$(\phi_1)^{-1}_*\Gamma_1$ and $\phi_1^+$ is the blowup along $(\tau_1)^{-1}_*Z_2^{V_1}$. 
Since $Z_1$, $Z_2$ is a pair of totally disjoint $2$ lines, we have 
\[
\sN_{(\tau_1)^{-1}_*Z_2^{V_1}/X_1^+}\cong\sN_{Z_2/X}
\cong\sO_{\pr^1}(-2)\oplus\sO_{\pr^1}(1).
\] 
In particular, the exceptional divisor of $\phi_1^+$ is isomorphic to the Hirzebruch 
surface $\F_3$ having a $(-3)$-curve. Since the rational map 
$\chi_{12}^{-1}\colon W_1^+\dashrightarrow W_1$ is obtained by Atiyah's flop, 
the surface $F_2^{Y_1}$ must be isomorphic to the Hirzebruch surface 
$\F_2$ having a $(-2)$-curve. 
(Indeed, by Theorem \ref{thm:dP5} \eqref{thm:dP53}, 
the surface $F_2^{Y_1}$ is isomorphic to 
either $\pr^1\times\pr^1$ or $\F_2$.)
Therefore, we can show that 
$\sN_{Z_2^{V_1}/V_1}\cong\sO_{\pr^1}(-1)\oplus\sO_{\pr^1}(1)$. 
(This result follows from \cite[Proposition 20 (4)]{DFK}, but we gave an alternative 
proof for convenience.)
Moreover, the curve 
$(\phi_1)^{-1}_*\Gamma_1$ in $Y_1$ must intersects with $F_2^{Y_1}$ at a point 
in the $(-2)$-curve. 
In particular, the surface $F_2^{Q_1}$ is a singular 
quadric surface, and the vertex of $F_2^{Q_1}$ 
lies on the curve $\Gamma$. 

Since the divisor $F_2^{Q_1}\subset Q_1$ is a $\G_m$-invariant hyperplane section, 
the defining equation of $F_2^{Q_1}\subset Q_1$ is one of $x_0=0,\dots,x_4=0$. 
Since $F_2^{Q_1}$ is singular and its vertex lies on $\Gamma$, we must have 
$x_0=0$ or $x_4=0$. 
By twisting by $\iota'_Q$ if necessary. we may assume that 
$x_4=0$. Let us set $p_{x_0}:=\left[1:0:0:0:0\right]$. Since 
$\left[0:0:0:0:1\right]\not\in F_2^{Q_1}$, the curves $\Gamma$ and $C_1$ meet 
only at the point $p_{x_0}$. Thus we have 
\[
\operatorname{length}_{p_{x_0}}\left(\sO_{\Gamma\cap C_1}\right)=3.
\]
A general point in $C_1$ can be written as 
\[
\left[1:\alpha_1:\alpha_2:(1-q)\alpha_1^{-1}\alpha_2^2:0\right]\in\pr^4
\]
for some $\alpha_1,\alpha_2\in\Bbbk^\times$ with $\alpha_2\neq 1-q$, and $C_1$ 
is the closure of the $\G_m$-orbit of the point. 
Thus $C_1$ is scheme-theoretically defined by the equations 
\[
\left\{
\begin{aligned}
x_4=&0,\\
\alpha_1^2x_0x_2-\alpha_2x_1^2=&0,\\
\alpha_1^2x_0x_3-(1-q)\alpha_2x_1x_2=&0,\\
x_1x_3-(1-q)x_2^2=&0.
\end{aligned}
\right.\]
Thus, around $p_{x_0}\in\A^4_{x_1\dots x_4}\subset\pr^4_{x_0\dots x_4}$, 
the defining ideal $I_{C_1}\subset\Bbbk\left[x_1,\dots,x_4\right]$ of $C_1$ satisfies that 
\[
I_{C_1}=\left(x_4,\,\,\alpha_1^2x_2-\alpha_2x_1^2,\,\, 
\alpha_1^2x_3-(1-q)\alpha_2x_1x_2, \,\,x_1x_3-(1-q)x_2^2\right). 
\]
The image of $I_{C_1}$ under the surjection 
\begin{eqnarray*}
\Bbbk\left[x_1,\dots,x_4\right]&\twoheadrightarrow&\Bbbk[t]\cong\Bbbk\left[
x_1,\dots,x_4\right]/I_\Gamma \\
x_i &\mapsto& t^i
\end{eqnarray*}
is equal to 
\[
\left(t^4,\,\,(\alpha_1^2-\alpha_2)t^2,\,\,(\alpha_1^2-(1-q))t^3,\,\,q t^4\right). 
\]
Together with the condition on the length, we must have $\alpha_2=\alpha_1^2$. 
Thus the assertion follows. 
\end{proof}

\begin{remark}\label{remark:kuznetsov}
Alexander Kuznetsov pointed out to the author that the proof of Lemma \ref{lemma:gm} 
can be simplified. 
In fact, since $C_1\subset Q_1$ is a $\G_m$-invariant \emph{cubic} curve, the 
linear span of $C_1$ must be generated by four consecutive weights of $\G_m$. 
Thus, we can immediately show that 
the defining equation of $F_2^{Q_1}$ must be $x_1=0$ or $x_4=0$.
\end{remark}

\begin{definition}\label{definition:x22-from-q}
Let $X_0^+\langle q \rangle\to \hat{Q}\langle q \rangle$ be the blowup along 
the strict transform $\hat{C}\langle q\rangle:=(\rho^q)^{-1}_*(C\langle q\rangle)
\subset\hat{Q}\langle q\rangle$ of $C\langle q\rangle$. 
Moreover, let 
\[
\hat{Q}\langle q\rangle \, \longleftarrow X_0^+\langle q \rangle \, 
\dasharrow \, X_0\langle q \rangle \, \longrightarrow \, X\langle q \rangle
\]
be the link as in \eqref{equation:back-g12}. 
The $X\langle q\rangle$ is a prime Fano threefold of genus $12$ 
such that $\G_m\subset\Aut(X\langle q\rangle)$ holds.
\end{definition}

We set $G:=\G_m\rtimes \boldsymbol{\mu}_2$. 
From now on, we will prove that 
$\Aut(\hat{Q}\langle q \rangle;\hat{C}\langle q\rangle)\cong G$. 

\begin{lemma}\label{lemma:mumu2}
Fix a square root $s$ of $q$. 
Consider the birational involution $\iota''_Q$ on $Q\langle q \rangle$
defined by 
\begin{eqnarray*}
&\iota''_Q&\colon\left[x_0:x_1:x_2:x_3:x_4\right]\mapsto\\
&&\left[x_0x_2-x_1^2:s(x_0x_3-x_1x_2):q(x_0x_4-x_2^2):s(x_1x_4-x_2x_3)
:x_2x_4-x_3^2\right]. 
\end{eqnarray*}
Then, as in \cite[\S 5.9]{Book}, both $\iota'_Q$ and $\iota''_Q$ lift to regular involutions 
$\iota'$, $\iota''\in\Aut(\hat{Q}\langle q\rangle)$. Set 
$\hat{\iota}:=\iota''\circ\iota'\in\Aut(\hat{Q}\langle q\rangle)$. 
\begin{enumerate}
\renewcommand{\theenumi}{\arabic{enumi}}
\renewcommand{\labelenumi}{(\theenumi)}
\item\label{lemma:mumu21}
We have $\hat{\iota}\in\Aut(\hat{Q}\langle q\rangle; \hat{C}\langle q\rangle)$ 
such that $\hat{\iota}^*\rho_1^*\sO_{Q_1}(1)\cong\rho_2^*\sO_{Q_2}(1)$. 
\item\label{lemma:mumu22}
The subgroup of $\Aut(\hat{Q}\langle q\rangle; \hat{C}\langle q\rangle)$ generated 
by $\G_m$ and $\hat{\iota}$ is isomorphic to $G$. 
\end{enumerate}
\end{lemma}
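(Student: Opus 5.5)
The plan is to work directly on $Q_1:=Q\langle q\rangle\subset\pr^4$ and track what the two involutions do to $\Gamma$ and to the curve $C_1:=C\langle q\rangle$, using the criterion of Lemma \ref{lemma:2-21-bi-cubic}.

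\textbf{Step 1: the classes swapped by $\hat\iota$.} The involution $\iota'$ is the lift of the linear involution $\iota'_Q\in\Aut(Q\langle q\rangle;\Gamma)$, so it preserves $\rho_1$ and fixes $\rho_1^*\sO_{Q_1}(1)$. The map $\iota''_Q$ is given by the quadrics through $\Gamma$; these are the $2\times 2$ minors of the Hankel matrix of $\Gamma$, suitably weighted by $s$ and $q$. Hence $\iota''_Q$ is exactly the map defined by $|\sO_{Q_1}(2)\otimes I_\Gamma|$, that is, by $\rho_2^*\sO_{Q_2}(1)\sim\rho_1^*\sO_{Q_1}(2)-S_1$ (Proposition \ref{proposition:2-21}). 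Its lift $\iota''$ therefore interchanges the two contractions $\rho_1,\rho_2$, once we identify $Q_2$ with $Q_1$ via the scaling built into the formula. This identification is where the specific coefficients $s,q$ are used: one checks that the image quadric is again $x_1x_3-qx_0x_4+(q-1)x_2^2=0$. Granting this, $\hat\iota^*\rho_1^*\sO_{Q_1}(1)\cong\rho_2^*\sO_{Q_2}(1)$, which is the second half of (1).

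\textbf{Step 2: $\hat\iota$ preserves $\hat C\langle q\rangle$.} I would compute directly. First, $\iota'_Q(C_1)$ is the curve $[0:(1-q)u_1^3:u_0u_1^2:u_0^2u_1:u_0^3]$. Next, substitute this parametrization into the quadrics defining $\iota''_Q$. Writing $a=u_0^3$, $b=u_0^2u_1$, $c=u_0u_1^2$, $d=(1-q)u_1^3$ for the coordinates $x_4,x_3,x_2,x_1$, every component acquires a common factor; I expect it to be a power of $u_1$ or $u_0$ times a constant. After dividing out this factor, the image should again be a cubic parametrization of the shape $[v_0^3:v_0^2v_1:v_0v_1^2:(1-q)v_1^3:0]$ after the substitution $v_0/v_1=\lambda u_0/u_1$ for a suitable constant $\lambda$.

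The check that the fourth coordinate vanishes, and that the constant $1-q$ reappears, is the main computational obstacle. If the direct substitution becomes messy, I would use a fallback. The curve $\hat\iota(\hat C\langle q\rangle)$ is a $\G_m$-invariant bi-cubic curve, because $\hat\iota$ normalizes $\G_m$ and swaps the $\rho_i$-degrees $3,3$. By Lemma \ref{lemma:gm}, its image in $Q_1$ is then either $C_1$ or $\iota'_Q(C_1)$. It remains to rule out $\iota'_Q(C_1)$, which can be done by checking the single point $\hat\iota$ of one convenient orbit point, for example the limit point $p_{x_0}$ or the vertex of the hyperplane section $x_4=0$.

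\textbf{Step 3: the group structure, part (2).} By Step 1, $\hat\iota\notin\G_m$, since it acts nontrivially on $\Pic\hat Q$. I would then verify the relation $\hat\iota\circ\lambda\circ\hat\iota^{-1}=\lambda^{-1}$ on the coordinates of $\pr^4$. For $\iota'_Q$ this relation is clear, since it reverses the weights. For $\iota''_Q$ the $i$-th quadric has weight $i+2$, so $\iota''_Q$ commutes with $\G_m$ up to rescaling. Composing the two gives the inversion relation. Finally, $\hat\iota^2=1$: the composite $\iota''_Q\circ\iota'_Q\circ\iota''_Q\circ\iota'_Q$ is a $\G_m$-equivariant automorphism that preserves $\rho_1$, hence a diagonal element, and a check at one point of $\Gamma$ shows it is trivial. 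If instead $\hat\iota^2$ turns out to be a nontrivial element of $\G_m$, I would replace $\hat\iota$ by $\hat\iota\circ\mu$ for a suitable $\mu\in\G_m$, which preserves $\hat C\langle q\rangle$ and satisfies $(\hat\iota\mu)^2=\hat\iota^2$ twisted appropriately. Together these give a surjection $\G_m\rtimes\boldsymbol{\mu}_2\to\langle\G_m,\hat\iota\rangle$, which is injective because $\G_m$ acts effectively and $\hat\iota\notin\G_m$.
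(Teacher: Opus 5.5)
Your proposal is correct and is essentially the paper's argument. The paper likewise verifies $\hat\iota(\hat C\langle q\rangle)=\hat C\langle q\rangle$ by direct computation, and it does work out: after dividing by $-u_1^3$, the curve $\iota''_Q(\iota'_Q(C\langle q\rangle))$ is parametrized by $[(1-q)^2u_1^3:s(1-q)u_0u_1^2:qu_0^2u_1:sq\,u_0^3:0]$, which is $C\langle q\rangle$ with $v_0/v_1$ proportional to $u_1/u_0$, not to $u_0/u_1$ as you guessed. For the swap of $\rho_1^*\sO_{Q_1}(1)$ and $\rho_2^*\sO_{Q_2}(1)$ and for the group structure, the paper simply cites \cite[Remark 5.52]{Book}, whereas you verify these by hand via $|\sO_{Q_1}(2)\otimes I_\Gamma|$ and the weights.

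One small caution: your fallback in Step 3 could never help, because $\hat\iota\mu\hat\iota^{-1}=\mu^{-1}$ gives $(\hat\iota\mu)^2=\hat\iota^2$ for every $\mu\in\G_m$. It is unnecessary anyway, since $\iota'_Q\circ\iota''_Q\circ\iota'_Q=\iota''_Q$ holds identically, so $\hat\iota^2=1$.
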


\begin{proof}
We can directly check that $\hat{\iota}(\hat{C}\langle q \rangle)
=\hat{C}\langle q \rangle$. 
The remaining assertions follow directly from \cite[Remark 5.52]{Book}. 
\end{proof}

\begin{thm}\label{thm:mumu2}
We have $\Aut(\hat{Q}\langle q \rangle; \hat{C}\langle q \rangle)\cong G$. 
In particular, we have 
\[
\Aut(X_0^+\langle q \rangle)\cong\Aut(X_0\langle q \rangle)
\cong G.
\] 
\end{thm}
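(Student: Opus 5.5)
The inclusion $G\subset\Aut(\hat{Q}\langle q\rangle;\hat{C}\langle q\rangle)$ is Lemma \ref{lemma:mumu2} \eqref{lemma:mumu22}, so the task is the reverse inclusion. Once that is established, the second statement follows from Remark \ref{remark:aut}, which gives $\Aut(X_0^+)\cong\Aut(\hat{Q};\hat{C})$ and $\Aut(X_0)\cong\Aut(X_0^+)$.

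Set $H:=\Aut(\hat{Q}\langle q\rangle;\hat{C}\langle q\rangle)$. Any element of $H$ acts on $\ND(\hat{Q})$ and preserves the nef cone, whose two rays are spanned by $\rho_1^*\sO_{Q_1}(1)$ and $\rho_2^*\sO_{Q_2}(1)$. This gives a homomorphism $H\to\boldsymbol{\mu}_2$. By Lemma \ref{lemma:mumu2} \eqref{lemma:mumu21}, $\hat{\iota}$ maps to the nontrivial element, so the homomorphism is surjective. It therefore suffices to prove that its kernel $H_0$ equals $\G_m$.

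An element of $H_0$ preserves $\rho_1$, so it descends to an automorphism of $Q_1=Q\langle q\rangle$ preserving both $\Gamma$ and $C_1=C\langle q\rangle$. Thus $H_0\hookrightarrow\Aut(Q\langle q\rangle;\Gamma)\cap\Aut(Q\langle q\rangle; C\langle q\rangle)$.

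Automorphisms of $Q$ preserving the twisted quartic $\Gamma$ are restrictions of elements of $\PGL_2$ acting on $\pr^4=\pr(S^4)$ and preserving the quadric. The $\G_m$-invariant quadric is stabilized by the normalizer of the torus, namely $\G_m\rtimes\langle\iota'_Q\rangle$. I would check this directly: for $q\neq 0,1,\infty$, the quadric is not $\PGL_2$-invariant, so its stabilizer is a proper subgroup containing the torus, and the only candidates are the torus or its normalizer. This matches \cite[\S 5.9]{Book}.

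It remains to determine which of these elements preserve $C\langle q\rangle$. The torus does. The involution $\iota'_Q$ sends $C\langle q\rangle$, which lies in $x_4=0$ and passes through $[1:0:0:0:0]$, to a curve in $x_0=0$. Hence $\iota'_Q\notin H_0$, and the same holds for $\lambda\iota'_Q$ for every $\lambda\in\G_m$. It follows that $H_0=\G_m$ and $H\cong\G_m\rtimes\boldsymbol{\mu}_2=G$.

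The main obstacle is the precise identification $\Aut(Q\langle q\rangle;\Gamma)=\G_m\rtimes\langle\iota'_Q\rangle$. If a direct stabilizer computation is messy, I would cite \cite[Lemma 5.50, Remark 5.52]{Book}, which computes $\Aut(\hat{Q}\langle q\rangle)$. Its identity component is $\G_m$, and the elements preserving $\rho_1$ form $\G_m\rtimes\boldsymbol{\mu}_2$ generated by the torus and $\iota'$.
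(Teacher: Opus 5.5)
Your reduction is correct and matches the paper's. The action on the two extremal rays of $\Nef(\hat{Q}\langle q\rangle)$ gives a surjection $H\to\boldsymbol{\mu}_2$ via $\hat{\iota}$, and its kernel is $\Aut(Q\langle q\rangle;\Gamma\cup C\langle q\rangle)$, so everything hinges on showing this group is $\G_m$. The gap is in that step.

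Your key claim is that for every $q\in\pr^1\setminus\{0,1,\infty\}$ the quadric $Q\langle q\rangle$ is not $\PGL(2,\Bbbk)$-invariant, hence $\Aut(Q\langle q\rangle;\Gamma)=\G_m\rtimes\langle\iota'_Q\rangle$. This is false for $q=1/4$. The classical invariant $x_0x_4-4x_1x_3+3x_2^2$ of binary quartics vanishes on $\Gamma$, and it is a multiple of the equation $x_1x_3-\frac14x_0x_4-\frac34x_2^2$ of $Q\langle 1/4\rangle$. Remark \ref{remark:counter-intuitive} records exactly this: $\Aut^0(\hat{Q}\langle 1/4\rangle)=\PGL(2,\Bbbk)$. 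So your fallback citation (``identity component $\G_m$'') also fails there.

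At $q=1/4$ you therefore have $\Aut(Q\langle q\rangle;\Gamma)=\PGL(2,\Bbbk)$. Ruling out the coset $\G_m\iota'_Q$ then says nothing about the stabilizer of $C\langle q\rangle$ in this larger group. Separately, your list of closed subgroups of $\PGL(2,\Bbbk)$ containing the torus omits the two Borel subgroups. For $q\neq 1/4$ these can be excluded, but you need to give the argument.

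The missing idea, which is how the paper argues and which works uniformly in $q$, is to bypass the stabilizer of the quadric and use $C\langle q\rangle$ directly.
\begin{enumerate}
\item Any $\theta\in\Aut(Q\langle q\rangle;\Gamma\cup C\langle q\rangle)$ is given by some $\begin{pmatrix}a&b\\c&d\end{pmatrix}\in\PGL(2,\Bbbk)=\Aut(\pr^4;\Gamma)$.
\item $\theta$ must fix the unique point $[1:0:0:0:0]$ of $\Gamma\cap C\langle q\rangle$ (see the proof of Lemma \ref{lemma:gm}), which forces $c=0$.
\item Parametrize $C\langle q\rangle$ as in Lemma \ref{lemma:gm} and require $\theta(C\langle q\rangle)$ to satisfy, e.g., $x_0x_2-x_1^2=0$. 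The resulting sextic in $u_0,u_1$ has $u_0^3u_1^3$-coefficient $-2a^5d^2bq$. Since $q\neq 0$, this forces $b=0$, so $\theta\in\G_m$.
\end{enumerate}
With this step in place, the rest of your argument goes through, including the passage to $X_0$ and $X_0^+$ via Remark \ref{remark:aut}.
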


\begin{proof}
By Lemma \ref{lemma:mumu2}, it is enough to show that 
$\Aut(Q\langle q \rangle; C\langle q\rangle\cup \Gamma)=\G_m$. 
As in \cite[Lemma 5.50]{Book}, for any $\theta\in
\Aut(Q\langle q \rangle; C\langle q\rangle\cup \Gamma)$, if 
$\begin{pmatrix}a & b\\c & d\end{pmatrix}$ with $ad-bc=1$ be the image of 
the embedding 
\[
\Aut(Q\langle q \rangle; C\langle q\rangle\cup \Gamma)\subset\Aut(\pr^4; \Gamma)
=\PGL(2,\Bbbk), 
\]
then $\theta$ is given by 
\[
\begin{bmatrix}x_0\\ x_1\\ x_2 \\x_3\\ x_4
\end{bmatrix}\mapsto
\begin{bmatrix}
a^4 & 4a^3b & 6a^2b^2 & 4ab^3 & b^4\\
a^3c & a^3d+3a^2bc & 3a^2bd+3ab^2c & 3ab^2d+b^3c & b^3d\\
a^2c^2& 2a^2cd+2abc^2 & a^2d^2+4abcd+b^2c^2 & 2abd^2+2b^2cd & b^2d^2\\
ac^3 & 3ac^2d+bc^3 & 3acd^2+3bc^2d & ad^3+3bcd^2 & bd^3 \\
c^4 & 4c^3d & 6c^2d^2 & 4cd^3 & d^4
\end{bmatrix}
\begin{bmatrix}x_0\\ x_1\\ x_2 \\x_3\\ x_4
\end{bmatrix}.
\]
Since $\theta(p_{x_0})=p_{x_0}$, we have $c=0$. Moreover, since 
$\theta(C\langle q\rangle)=C\langle q\rangle$, 
we can directly check that $b=0$. Thus we get the assertion. 
\end{proof}

\begin{remark}\label{remark:mumu2}
Let $X$ be a prime Fano threefold of genus $12$ with $\Aut^0(X)\cong\G_m$. 
\begin{enumerate}
\renewcommand{\theenumi}{\arabic{enumi}}
\renewcommand{\labelenumi}{(\theenumi)}
\item\label{remark:mumu21}
In \cite[Theorem 22]{DFK}, the authors showed that $\Aut(X)\cong G$ 
by focusing on the involution $\hat{\iota}$. The above theorem gives a new, but 
an almost same, proof to show $\Aut(X)\cong G$. In fact, there are exactly 
two $\G_m$-invariant lines $Z_1$, $Z_2$ in $X$.
Moreover, for any $\left[Z\right]\in\Sigma(X)\setminus\{\left[Z_1\right],
\left[Z_2\right]\}$, we have $Z\cap Z_1=\emptyset$ and $Z\cap Z_2=\emptyset$, 
see \cite[Proposition 5.4.3]{KPS18} and \cite[Lemma 3.1]{KP18} (see also 
the proof of \cite[Theorem 22]{DFK}). In particular, 
the pair $Z_1$, $Z_2$ of lines is an absolutely disjoint pair of lines in $X$. Therefore 
we get $\Aut(X)=\Aut(X; Z_1\cup Z_2)\cong\Aut(X_0\langle q \rangle)\cong G$ 
for some $q\in\pr^1\setminus\{0,1,\infty\}$. 
\item\label{remark:mumu22}
In \cite[Proposition 3.2 and Lemma 4.1]{KP18} and \cite[Lemma 5.12]{IKTT}, 
the authors showed that there 
uniquely exists a conic on $X$ passing through both $Z_1$ and $Z_2$. 
Corollary \ref{corollary:12goback} gives an alternative proof of it. 
\end{enumerate}
\end{remark}

We can focus on the images of flopped curves. 

\begin{proposition}\label{proposition:gm-flop}
Let 
\[
\hat{Q}\langle q\rangle \, \longleftarrow X_0^+\langle q \rangle \, 
\dasharrow \, X_0\langle q \rangle \, \longrightarrow \, X\langle q \rangle
\]
be the link as in \eqref{equation:back-g12}. Then there are exactly $3$ number of the flopped 
curves $B_1^{+X_0^+}$, $B_2^{+X_0^+}$, $\sC^{+X_0^+}\subset X_0^+\langle q\rangle$ 
of $X_0\langle q\rangle\dashrightarrow X_0^+\langle q \rangle$. 
The image of the curve $B_2^{+X_0^+}$ to $Q\langle q\rangle$ is the point 
$\left[1:0:0:0:0\right]\in Q\langle q \rangle$. 
The image $B_1^+\langle q \rangle\subset Q\langle q \rangle$ of the curve 
$B_1^{+X_0^+}$ is the line in $Q\langle q \rangle$ defined by $x_0=x_1=x_2=0$. 
The image $\sC^+\langle q \rangle\subset Q\langle q \rangle$ of the curve 
$\sC^{+X_0^+}$ is the line in $Q\langle q \rangle$ defined by $x_1=x_2=x_4=0$. 
\end{proposition}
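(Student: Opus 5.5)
The plan is to apply Theorem \ref{thm:flop-curve} and Corollary \ref{corollary:flop-alpha} to the $\G_m$-equivariant link in Definition \ref{definition:x22-from-q}, and then locate the flopped curves explicitly on $Q\langle q\rangle=Q_1$.

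\textbf{Counting the flopped curves.} By Remark \ref{remark:mumu2}, the pair $Z_1,Z_2$ is absolutely disjoint. Hence $\bar{m}=0$, and Theorem \ref{thm:flop-curve} \eqref{thm:flop-curve51} gives $m_0=1$ and $m_j=\#(C_i\cap\Gamma_i^{Q_i})_{\operatorname{red}}$. In the proof of Lemma \ref{lemma:gm}, $C\langle q\rangle$ and $\Gamma$ meet only at $p_{x_0}=[1:0:0:0:0]$, with length $3$. Thus $m_2=1$. Applying the involution $\hat{\iota}$ of Lemma \ref{lemma:mumu2}, which swaps $\rho_1$ and $\rho_2$, gives $m_1=1$ as well. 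By Corollary \ref{corollary:flop-alpha}, $\Exc(\alpha^+)$ is then exactly $B_1^{+X_0^+}\cup B_2^{+X_0^+}\cup\sC^{+X_0^+}$.

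\textbf{Identifying each curve.} By Theorem \ref{thm:flop-curve} \eqref{thm:flop-curve3}, \eqref{thm:flop-curve33}, the flopped curve of $\chi_{13}$ is $B_2^{+X_0^+}$. By Example \ref{example:3flop}, it is the strict transform of the fiber of $\rho_1$ over the unique point of $C_1\cap\Gamma_1^{Q_1}$. So it maps to $p_{x_0}$. The remaining curves $B_1^{+X_0^+}$ and $\sC^{+X_0^+}$ are not contracted by $\rho_1\circ\tau$.

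Since the link is $\G_m$-equivariant and there is a unique curve of each type, all three curves are $\G_m$-invariant. Hence their images in $Q_1$ are $\G_m$-invariant curves, i.e., closures of orbits or coordinate lines. Their degrees are computed from Lemma \ref{lemma:pic} and Theorem \ref{thm:kollar}. For $\sC^{+X_0^+}$, Theorem \ref{thm:flop-curve} \eqref{thm:flop-curve4} gives $(\sO_{Q_1}^{X_0^+}(1)\cdot\sC^{+X_0^+})=1$, so its image is a line. For $B_1^{+X_0^+}$, the flopped curve of $B_{1}^{X_0}$ via $\chi_{11}$, the degree is $-(\sO_{Q_1}^{X_0}(1)\cdot B_1^{X_0})=-([1,-2,-1]\cdot B_1^{X_0})$. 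With $(\sO_X(1)\cdot B)=1$, $F_1\cdot B=1$ and $F_2\cdot B=0$, this gives $-(1-2)=1$, so its image is also a line.

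\textbf{Locating the two lines.} The $\G_m$-invariant lines in $Q\langle q\rangle$ are spanned by two coordinate points $e_a,e_b$. Such a line lies in $Q$ only for pairs of weights avoiding the monomials of the quadric $x_1x_3-qx_0x_4+(q-1)x_2^2$. The candidates are therefore $\langle e_0,e_1\rangle$, $\langle e_0,e_3\rangle$, $\langle e_1,e_4\rangle$ and $\langle e_3,e_4\rangle$. The two lines named in the statement are $\{x_0=x_1=x_2=0\}=\langle e_3,e_4\rangle$ and $\{x_1=x_2=x_4=0\}=\langle e_0,e_3\rangle$.

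To choose among the candidates, I would use intersection data with $\Gamma$ and $C\langle q\rangle$.
\begin{itemize}
\item For $\sC^+$: Theorem \ref{thm:flop-curve} \eqref{thm:flop-curve4} gives length $2$ with $\hat{C}$, and in $Q_2$ it is also a line.
\item For $B_1$: it comes from the line $Z_1$, so its intersections with $E^+$ and with $S_1,S_2$ follow from Lemma \ref{lemma:pic}.
\end{itemize}
The key check is whether the line meets $C\langle q\rangle$ and $\Gamma$. Note that $C\langle q\rangle$ passes through $e_0$ and $e_3$, and $\Gamma$ passes through $e_0$ and $e_4$. The intersection numbers should single out $\langle e_0,e_3\rangle$, which meets $C$ at two points (length $2$), for $\sC^+$. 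They should single out $\langle e_3,e_4\rangle$ for $B_1^+$.

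The main obstacle is this final case elimination: computing the relevant intersection numbers with $S_1$ and $E^+$ against the explicit coordinates. I would first try using length with $\Gamma$, computed via Lemma \ref{lemma:lengths}, together with symmetry under $\hat{\iota}$.
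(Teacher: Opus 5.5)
Your count of flopped curves ($\bar m=0$, $m_0=m_1=m_2=1$), the identification of $B_2^{+X_0^+}$ with the strict transform of $\rho_1^{-1}(p_{x_0})$, the degree computations, and the list of the four $\G_m$-invariant lines $\langle e_0,e_1\rangle,\langle e_0,e_3\rangle,\langle e_1,e_4\rangle,\langle e_3,e_4\rangle$ in $Q\langle q\rangle$ are correct and agree with the paper. One small addition: your degree for $B_1^+$ is computed on $W_1$. It persists to $X_0^+$ because $B_{11}^{+W_1}$ avoids $\Exc(\chi_{12})$ and $\Exc(\chi_{13})$ when $\bar m=0$.

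The gap is the final elimination, which is the real content of the statement. You only assert that the intersection data ``should single out'' the lines, and for $B_1^+$ the criterion you name cannot do it. That criterion is the length with $\Gamma$ together with whether the line meets $C\langle q\rangle$ inside $Q\langle q\rangle$. By Lemma \ref{lemma:pic}, $(S_1\cdot B_1^{+X_0^+})=2$ and $(E^+\cdot B_1^{+X_0^+})=1$. But $\langle e_0,e_1\rangle$ is the common tangent line of $\Gamma$ and $C\langle q\rangle$ at $e_0$. So it meets $\Gamma$ with length $2$ and $C\langle q\rangle$ with length $2$, and passes the same tests as $\langle e_3,e_4\rangle$, which is tangent to $\Gamma$ at $e_4$ and meets $C\langle q\rangle$ at $e_3$. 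Meeting $C\langle q\rangle$ at $e_0\in C\langle q\rangle\cap\Gamma$ says nothing about whether the strict transforms meet in $\hat Q\langle q\rangle$.

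The missing step has to be argued in $\hat Q\langle q\rangle$. Here is one way.
\begin{itemize}
\item Since $\langle e_3,e_4\rangle$ meets $\Gamma$ with length $2$, its strict transform has $\rho_2^*\sO_{Q_2}(1)$-degree $2-2=0$, so it is a fiber of $\rho_2$.
\item That fiber meets $\hat C$ over $e_3$, because $e_3\in C\langle q\rangle\setminus\Gamma$.
\item A $\rho_2$-fiber meeting $\hat C$ lies over $C_2\cap\Gamma_2^{Q_2}$, which is a single point since $m_1=1$.
\item By Theorem \ref{thm:flop-curve} (3)(iii) with $(i,j)=(2,1)$ and Example \ref{example:3flop}, that fiber is $\tau(B_1^{+X_0^+})$.
\end{itemize}
Alternatively, a local computation at $e_0$ works. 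Use coordinates $(x_1,x_2,x_3)$ on $Q\langle q\rangle$ and blow up the ideal $(x_2-x_1^2,\,x_3-x_1^3)$ of $\Gamma$. The strict transforms of $\langle e_0,e_1\rangle$ and $C\langle q\rangle$ then hit $\rho_1^{-1}(e_0)$ at different points.

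The paper instead uses $\hat\iota$ from Lemma \ref{lemma:mumu2}. It swaps $\rho_1$ and $\rho_2$ and preserves $\hat C$, so it carries $B_2^+$ to $B_1^+$. Hence the image of $B_1^+$ is the transform of $p_{x_0}$ under the birational involution, namely $\{x_0=x_1=x_2=0\}$.

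For $\sC^+$ your data do suffice once you use $(S_1\cdot\sC^{+X_0^+})=1$. Length $1$ with $\Gamma$ together with length $\ge 2$ with $C\langle q\rangle$ leaves only $\langle e_0,e_3\rangle$. The paper excludes $\langle e_0,e_1\rangle$ by $\hat\iota$-invariance instead.
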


\begin{proof}
By Remark \ref{remark:mumu2} and Corollary \ref{corollary:flop-alpha}, 
there are exactly $3$ flopped curves 
$B_1^{+X_0^+}$, $B_2^{+X_0^+}$, $\sC^{+X_0^+}\subset X_0^+\langle q\rangle$.
(In the terminologies in \S \ref{section:flop}, we have $\bar{m}=0$ and 
$m_0=m_1=m_2=1$.) Moreover, the curve $B_2^{+X_0^+}$ is contracted 
by $\rho^q\circ\tau$ onto the point 
$C\langle q\rangle\cap\Gamma=\left[1:0:0:0:0\right]$, 
the image $B_1^+\langle q \rangle\subset Q\langle q \rangle$ of the curve 
$B_1^{+X_0^+}$ is the image of the inverse image of the point 
$C\langle q\rangle\cap\Gamma=\left[1:0:0:0:0\right]$ by the graph of the birational 
involution $\hat{\iota}$, which is nothing but the line defined by $x_0=x_1=x_2=0$. 
The image $\sC^+\langle q \rangle \subset Q\langle q \rangle$ 
of $\sC^{+X_0^+}$ must be a $\G_m$-invariant line such that 
\[
\operatorname{length}\left(\sO_{C\langle q\rangle\cap\sC^+\langle q \rangle}\right)
\geq 2. 
\]
Any point in $C\langle q\rangle\cap\sC^+\langle q \rangle$ is $\G_m$-invariant. 
Thus the line $\sC^+\langle q\rangle$ must be either $x_2=x_3=x_4=0$ or 
$x_1=x_2=x_4=0$. On the other hand, since $\sC^+\langle q\rangle$ is 
$\hat{\iota}$-invariant, the curve $\sC^+\langle q \rangle$ must be equal to the line 
$x_1=x_2=x_4=0$. 
\end{proof}

\section{On parametrizations}\label{section:parameter}

Let $X$ be a prime Fano threefold of genus $12$ with $\G_m\subset\Aut(X)$. 
Such prime Fano threefolds $X$ of genus $12$ have been studied 
in \cite{KPS18} and \cite{KP18}, and there is an exactly 
$1$-dimensional family of such isomorphism classes. 
In \cite[Theorem 22]{DFK}, the authors parametrized such family of isomorphism 
classed of $X$ as $X_{22}^m(v)$ $(v\in\pr^1\setminus\{0,1,\infty\})$. 
If $v=-4$, we write $X_{22}^{\MU}:=X_{22}^m(-4)$ and called it 
the \emph{Mukai--Umemura threefold}, which was deeply studied in \cite{MU}. 
If $v\neq -4$, then we have $\Aut^0(X_{22}^m(v))=\G_m$. 
Under fixing the embedding $\G_m\subset\Aut(X)$, 
there are exactly two $\G_m$-invariant lines $Z_1$, $Z_2$ in $X$. 
Moreover, the two curves are totally disjoint.

\subsection{Relation with Fano threefolds of type 2.21}\label{subsection:221}

Let $X:=X_{22}^m(v)$ $(v\in\pr^1\setminus\{0,1,\infty\})$ and let $Z_1$, 
$Z_2\subset X$ be the lines as above (under the fixed embedding 
$\G_m\subset\Aut(X)$). From the variety $X$ and the pair $Z_1$, $Z_2$, 
we can consider the link in \S \ref{section:go}, which ends with 
a Fano threefold $\hat{Q}$ of type 2.21 with a bi-cubic curve with 
$\G_m\subset\Aut(\hat{Q}; \hat{C})$. 

\begin{proposition}\label{proposition:parameter}
We have 
$X_{22}^m(v)\cong X\langle q\rangle$ with $q=\frac{1}{1-v}\Leftrightarrow
v=\frac{q-1}{q}$. 
\end{proposition}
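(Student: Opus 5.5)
We compare the two parametrizations through a discrete invariant that both families share and that can be computed on each side: the configuration of special curves, or equivalently some cross-ratio on $\Gamma\cong\pr^1$. The family $\{X\langle q\rangle\}$ is one-dimensional and exhausts, up to isomorphism, all $X$ with $\G_m\subset\Aut(X)$. Indeed, by Corollary \ref{corollary:go}, Lemma \ref{lemma:gm} and Proposition \ref{proposition:book}, any such $X$ with its two $\G_m$-invariant lines yields some $(\hat{Q}\langle q\rangle,\hat{C}\langle q\rangle)$. The $X\langle q\rangle$ are also pairwise non-isomorphic, except possibly under the residual symmetry. Hence $v\mapsto q$ is a well-defined bijection $\pr^1\setminus\{0,1,\infty\}\to\pr^1\setminus\{0,1,\infty\}$, and the task is only to identify it.

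First, $\hat{Q}\langle q\rangle$ determines $q$ up to the automorphisms of the $(\hat{Q},\hat{C})$ data. By Theorem \ref{thm:mumu2} and Theorem \ref{thm:main-back}, $\Aut(X;Z_1\cup Z_2)\cong\Aut(\hat{Q}\langle q\rangle;\hat{C}\langle q\rangle)$, so the correspondence $v\leftrightarrow q$ is canonical. It suffices to match one invariant that is explicitly computable in the model of \cite{DFK} and in the model $Q\langle q\rangle$.

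The natural candidate is the $\G_m$-fixed point configuration. On $\Gamma\subset Q\langle q\rangle$, the $\G_m$-fixed points are $[1:0:0:0:0]$ and $[0:0:0:0:1]$. On $C\langle q\rangle$, the fixed points are $[1:0:0:0:0]$ and $[0:0:0:1:0]$. The quadric coefficient $q$ appears through the relation between these points and the lines described in Proposition \ref{proposition:gm-flop}. I would transport via the link the $\G_m$-weights of the tangent spaces at fixed points, or a cross-ratio of four $\G_m$-fixed points on the unique $\G_m$-invariant conic $\sC$ through $Z_1$ and $Z_2$ (Remark \ref{remark:mumu2}). I would then compare this with the same datum computed from the explicit equations of $X_{22}^m(v)$ in \cite{DFK}.

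Concretely, I would run the inverse link of Corollary \ref{corollary:back-g12} explicitly on the toric-like data: $Q\langle q\rangle$, then the blowup at $C\langle q\rangle$, then $V_1$ via Theorem \ref{thm:dP5}\eqref{thm:dP56} as a projection from a line in $\pr^6$, then $X$ via the double projection. This gives a $\G_m$-equivariant description of $X\langle q\rangle$, for instance as the closure of a $\G_m$-orbit or through the Mukai–Umemura type equations, in which the parameter can be read off and compared with the definition of $v$ in \cite{DFK}. As a sanity check, the Mukai–Umemura value $v=-4$ must correspond to the unique $q$ for which $\Aut^0(\hat{Q}\langle q\rangle;\hat{C}\langle q\rangle)$ jumps, or at least to the $q$ where the extra symmetry appears. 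With $q=1/(1-v)$ this gives $q=1/5$, and I would verify that value in the $\PGL_2$ computation of Theorem \ref{thm:mumu2}. The boundary values $v\in\{0,1,\infty\}$ must also correspond to $q\in\{\infty,0,1\}$ by degeneration, which pins down a Möbius transformation once the map is known to be fractional-linear.

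The main obstacle is making the explicit comparison with the normalization of \cite{DFK}. This requires tracking an invariant through several birational steps. I expect the cleanest route is to argue that the bijection is an algebraic isomorphism of punctured $\pr^1$'s respecting the three punctures, hence a Möbius map determined by the punctures. One further datum, such as the Mukai–Umemura point, fixes the remaining ambiguity, and checking that the resulting permutation of punctures matches $q=\frac{1}{1-v}$ completes the argument.
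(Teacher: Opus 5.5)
There is a genuine gap: the proposal never produces a quantity that is computed on both sides, and the shortcuts you offer in place of a computation do not work.

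First, the invariants you list cannot detect the modulus. Tangent $\G_m$-weights at fixed points are integers, hence constant in the family; any discrete invariant is. A finite $\G_m$-invariant configuration of points on a curve $\cong\pr^1$ with nontrivial $\G_m$-action, such as $\Gamma$ or the conic $\sC$, is contained in its two fixed points. So there is no cross-ratio of ``four $\G_m$-fixed points'' to take. On the $q$-side the modulus is really the position of $Q\langle q\rangle$ in the pencil of $\G_m$-invariant quadrics through $\Gamma$, relative to its singular members $q=0,1,\infty$. Matching this with the normalization of \cite{DFK} is exactly what needs computing.

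Second, the ``M\"obius map plus anchor'' route needs an argument that $v\mapsto q$ is algebraic, not merely a bijection of sets, and one independently computed pair. Your anchor is not available. By Theorem~\ref{thm:mumu2}, $\Aut(\hat{Q}\langle q\rangle;\hat{C}\langle q\rangle)\cong\G_m\rtimes\boldsymbol{\mu}_2$ for \emph{every} $q$. This is as it must be, since even for $X_{22}^{\MU}$ the stabilizer of $Z_1\cup Z_2$ is the normalizer of a torus. So nothing special happens at $q=1/5$ in that $\PGL(2,\Bbbk)$ computation. The only jump visible on the $\hat{Q}$-side, $\Aut^0(\hat{Q}\langle 1/4\rangle)=\PGL(2,\Bbbk)$, is not the Mukai--Umemura point (Remark~\ref{remark:counter-intuitive}). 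Obtaining $q=1/5$ from $q=\frac{1}{1-v}$ is circular. Likewise, nothing in your argument forces which puncture goes to which.

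The missing idea is where $v$ actually lives. In \cite{DFK}, $X_{22}^m(v)$ is built by the inverse double projection from the quintic del Pezzo threefold $V$ and a rational quintic curve. That curve is the image of $\Upsilon^m(v)=\{v\mathbf{x}_0\mathbf{y}_1+\mathbf{x}_1^4\mathbf{y}_0=0\}\subset\F_3$ under the normalization $\F_3\to F^V$ of the non-normal hyperplane section. So you need only make the \emph{first} step of the inverse link of Corollary~\ref{corollary:back-g12} explicit, not run the whole link to $X$. Concretely:
\begin{itemize}
\item write the map $Q\langle q\rangle\dashrightarrow V_1\subset\pr^6$ given by $|\sO_{Q\langle q\rangle}(2)\otimes I_{C\langle q\rangle}|$ in coordinates in which $V_1$ has the standard five quadratic equations;
\item parametrize the image $\Gamma_1$ of $\Gamma$, and observe that $\langle\Gamma_1\rangle\cap V_1=\{z_0=0\}$;
\item write the normalization $\nu\colon\F_3\to\{z_0=0\}\cap V_1$ in the same coordinates;
\item check that $\Gamma_1=\nu(\Upsilon^m(v))$ exactly when $v=\frac{q-1}{q}$.
\end{itemize}
Uniqueness of the inverse Iskovskikh link (Theorem~\ref{thm:double-projection-from-line-converse}) then gives $X\langle q\rangle\cong X_{22}^m(v)$. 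Your ``concretely'' paragraph points toward this but stops before the comparison, and that comparison is the entire content of the proof.
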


\begin{proof}
Set $Q_1:=Q\langle q\rangle$, $C_1:=C\langle q\rangle$, let 
\[\xymatrix{
& E^{Y_1} \ar@{}[r]|{\subset} & Y_1 
\ar@{}[r]|{\supset} \ar@{->}[ld]_{\psi_1} \ar@{->}[rd]^{\phi_1}
& F_2^{Y_1} & \\
C_1  \ar@{}[r]|{\subset} & Q_1 && V_1 \ar@{}[r]|{\supset} & Z_2^{V_1}
}\]
be the link as in \eqref{equation:fujita-converse}, and let $\Gamma_1\subset V_1$ be 
the strict transform of $\Gamma\subset Q_1$. The rational map 
\[
\phi_1\circ\psi_1^{-1}\colon Q_1\dashrightarrow V_1\subset\pr^6_{z_0\dots z_6}
\]
is obtained by the linear system $\left|\sO_{Q_1}(2)\otimes I_{C_1}\right|$. 
Thus we may assume that $\phi_1\circ\psi_1^{-1}$ is obtained by 
\begin{eqnarray*}
\left[x_0:\cdots:x_4\right]
&\mapsto&
\Bigl[\frac{q-1}{q}(x_0x_2-x_1^2):-x_0x_4:x_2x_4:-\frac{q}{q-1}x_4^2:
x_1x_4\\
&&:-\frac{1}{q}\left(x_0x_3+(q-1)x_1x_2\right):\frac{1}{q-1}x_3x_4\Bigr]. 
\end{eqnarray*}
The image $V_1\subset \pr^6_{z_0\dots z_6}$ of $Q_1$ is scheme-theoretically 
defined  by the equations 
\[
\left\{
\begin{aligned}
&z_4z_5-z_0z_2+z_1^2&=&0,\\
&z_4z_6-z_1z_3+z_2^2&=&0,\\
&z_4^2-z_0z_3+z_1z_2&=&0,\\
&z_1z_4-z_0z_6-z_2z_5&=&0,\\
&z_2z_4-z_3z_5-z_1z_6&=&0.
\end{aligned}
\right.\]
In fact, the image of a general point on $Q_1$ lies on the above subscheme, and 
the subscheme defines the del Pezzo threefold of degree $5$ by 
\cite[\S 5.8]{Book}. Moreover, the curve $\Gamma_1\subset V_1$ is defined by 
\[
\left\{\left[0:-u_0^4u_1:u_0^2u_1^3:-\frac{q}{q-1}u_1^5:
u_0^3u_1^2:-u_0^5:\frac{1}{q-1}u_0u_1^4
\right]\,\,\bigg|\,\,\left[u_0:u_1\right]\in\pr^1\right\}.
\]
In particular, $F_1^{V_1}:=\langle \Gamma_1\rangle\cap V_1\subset V_1$ 
is defined by $z_0=0$. 
Let $\F_3$ be the Hirzebruch surface admitting a $(-3)$-curve, and let
$\mathbf{x}_0$, $\mathbf{x}_1$, $\mathbf{y}_0$, $\mathbf{y}_1$ be the 
multi-homogeneous coordinates of $\F_3$, where 
$\deg\mathbf{x}_0=\deg\mathbf{x}_1=(1,0)$, $\deg\mathbf{y}_0=(-3,1)$ and 
$\deg\mathbf{y}_1=(0,1)$. We can directly check that the normalization morphism 
$\nu\colon\F_3\to F_1^{V_1}\subset\pr^6$ is given by 
\begin{eqnarray*}
\left[\mathbf{x}_0:\mathbf{x}_1; \mathbf{y}_0:\mathbf{y}_1\right]&\mapsto&
\Bigl[0:\mathbf{x}_0^3\mathbf{x}_1\mathbf{y}_0: 
-\mathbf{x}_0\mathbf{x}_1^3\mathbf{y}_0: -\mathbf{x}_1\mathbf{y}_1:
-\mathbf{x}_0^2\mathbf{x}_1^2\mathbf{y}_0:
\mathbf{x}_0^4\mathbf{y}_0:\mathbf{x}_0\mathbf{y}_1+\mathbf{x}_1^4\mathbf{y}_0
\Bigr].
\end{eqnarray*}
On the other hand, the image of the curve 
\[
\Upsilon^m(v):=\left\{v\mathbf{x}_0\mathbf{y}_1+\mathbf{x}_1^4\mathbf{y}_0=0\right\}
\]
(see \cite[Example 15]{DFK}) by $\nu$ is given by 
\[
\left\{\left[0:-vu_0^4u_1:vu_0^2u_1^3:-u_1^5:
vu_0^3u_1^2:-vu_0^5:(1-v)u_0u_1^4
\right]\,\,\Big|\,\,\left[u_0:u_1\right]\in\pr^1\right\}.
\]
Thus we have the equality $v=\frac{q-1}{q}$. 
\end{proof}

\begin{remark}\label{remark:counter-intuitive}
As in \cite[\S 5.9]{Book}, $\Aut^0(\hat{Q}\langle q\rangle)=\G_m$ if and only if 
$q\neq 1/4$, and $\Aut^0(\hat{Q}\langle 1/4\rangle)=\PGL(2,\Bbbk)$. 
It is counter-intuitive that $X\langle 1/4\rangle$ is \emph{not} isomorphic
to $X_{22}^{\MU}$. In fact, $X_{22}^{\MU}\cong X\langle 1/5\rangle$ holds. 
\end{remark}

\subsection{On Kuznetsov--Prokhorov's parametrizations}\label{subsection:KP}

Next, let us consider Kuznetsov--Prokhorov's parametrization \cite{KP18}. 
We largely follow the terminologies in \cite[\S 2]{CS}. 
We again set $G:=\G_m\rtimes \boldsymbol{\mu}_2$. 
Consider the $4$-dimensional projective space $\pr^4_{\ttx \tty \ttz \ttt \ttw}$ 
together with a $G$-action by 
\begin{eqnarray*}
\lambda\colon\left[\ttx:\tty:\ttz:\ttt:\ttw\right]&\mapsto&
\left[\ttx:\lambda\tty:\lambda^3\ttz:\lambda^5\ttt:\lambda^6\ttw\right],\\
\iota\colon\left[\ttx:\tty:\ttz:\ttt:\ttw\right]&\mapsto&
\left[\ttw:\ttt:\ttz:\tty:\ttx\right],
\end{eqnarray*}
where $\lambda\in\G_m$ and $\iota\in\boldsymbol{\mu}_2$. 
Let $\Lambda\subset\pr^4$ be the $G$-invariant rational sextic curve defined by 
the image of 
\[
\left[s_0:s_1\right]\mapsto\left[s_0^6:s_0^5s_1:s_0^3s_1^3:s_0s_1^5:s_1^6\right]. 
\]

Take any $u\in\pr^1\setminus\{0,1,\infty\}$, and consider the following $G$-invariant 
smooth hyperquadric $Q\left[u\right]\subset\pr^4$ containing $\Lambda$ 
defined by the equation 
\[
u(\ttx \ttw-\ttz^2)+(\ttz^2-\tty\ttt)=0. 
\]
Consider the following $G$-equivariant Sarkisov link 
\[\xymatrix{
& \E_\Lambda \ar@{}[r]|{\subset}  & \tilde{Q}\left[u\right]  \ar@{-->}[rr]^{\tilde{\chi}} 
\ar@{->}[ld]_-{\sigma_\Lambda} \ar@{->}[rd] 
& & \tilde{Q}^+\left[u\right] \ar@{->}[rd]^-{\sigma_{\sC_2\left[u\right]}} 
\ar@{->}[ld] \ar@{}[r]|{\supset} 
& \mathbb{S}_{\sC_2\left[u\right]} & \\
\Lambda \ar@{}[r]|{\subset} & Q\left[u\right] && \bar{Q}\left[u\right] && 
X^{\KP}\left[u\right] \ar@{}[r]|{\supset}& \sC_2\left[u\right]
}\]
starting from the blowup of $Q\left[u\right]$ along $\Lambda$ 
as in \cite[Theorem 2.2]{KP18} (see also \cite[(2.5)]{CS}), 
which is the converse of the link \eqref{equation:takeuchi}. 
By \cite[Theorem 1.3]{KP18}, all $X^{\KP}\left[u\right]$ are pairwise non-isomorphic 
to each other, $G\subset\Aut\left(X^{\KP}\left[u\right]\right)$, and any prime 
Fano threefold $X$ of genus $12$ with $\G_m\subset \Aut(X)$ is isomorphic to 
$X^{\KP}\left[u\right]$ for some $u\in\pr^1\setminus\{0,1,\infty\}$. 
Thus, there are $q\in\pr^1\setminus\{0,1,\infty\}$ and 
$v\in \pr^1\setminus\{0,1,\infty\}$ such that $X^{\KP}\left[u\right]\cong 
X\langle q\rangle\cong X_{22}^m(v)$, where $X\langle q\rangle$, $X_{22}^m(v)$
are as in \S \ref{section:special}, \S \ref{subsection:221}, respectively. 

\begin{thm}\label{thm:parameter}
We have $X\langle q\rangle\cong X^{\KP}\left[u\right]$ with $q=\frac{u}{u-1}
\Leftrightarrow u=\frac{q}{q-1}$. In particular, we have 
$X_{22}^m(v)\cong X^{\KP}\left[u\right]$ with $u=\frac{1}{v}$. 
\end{thm}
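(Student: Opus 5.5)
The plan is to compare the two Sarkisov links that leave the same $G$-invariant conic. On $X:=X\langle q\rangle$, which carries $G=\G_m\rtimes\boldsymbol{\mu}_2$ by Theorem \ref{thm:mumu2}, I would locate the unique conic $\sC$ meeting both $\G_m$-invariant lines $Z_1$, $Z_2$ (Corollary \ref{corollary:12goback} and Remark \ref{remark:mumu2}). This conic is $G$-invariant. Running Takeuchi's link (Theorem \ref{thm:takeuchi}) from $\sC$ then ends $G$-equivariantly at a hyperquadric $Q$ containing a $G$-invariant rational sextic $\Lambda$. Up to the normalizations fixed in \S\ref{subsection:KP}, every such pair $(Q,\Lambda)$ is some $(Q[u],\Lambda)$. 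So the task reduces to computing which $u$ arises from a given $q$.

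The computation will pass through $\hat{Q}\langle q\rangle$ rather than through $X$ directly. Using the diagram \eqref{equation:big2} and Theorem \ref{thm:flop-curve}\eqref{thm:flop-curve4}, the strict transform of $\sC$ on $X_0^+\langle q\rangle$ is the flopped curve $\sC^{+X_0^+}$. Proposition \ref{proposition:gm-flop} places its image in $Q\langle q\rangle$ as the line $x_1=x_2=x_4=0$. The linear system giving Takeuchi's map $X\dashrightarrow Q[u]$ is $|\sigma_{\sC}^*(-K_X)-2\mathbb{S}|$. On $X_0^+$ I would write this class in the basis of Lemma \ref{lemma:pic}, using $\sC^{+X_0^+}$ together with $E^+$ and the pullbacks of $\sO_{Q_i}(1)$, and then push it down to $Q\langle q\rangle$. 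I expect this to be the linear system of quadrics, or perhaps cubics, in $\pr^4_{x_0\dots x_4}$ with prescribed base conditions along $\Gamma$, $C\langle q\rangle$ and the line $\{x_1=x_2=x_4=0\}$.

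Next comes the explicit part. Because all the data are $\G_m$-invariant, this linear system decomposes into weight spaces. Choosing a weight basis gives an explicit $\G_m$-equivariant map $Q\langle q\rangle\dashrightarrow\pr^4_{\ttx\tty\ttz\ttt\ttw}$, where the weights on the target should come out as $0,1,3,5,6$ after rescaling $\G_m$. I would fix the scalars on the coordinates so that the image of the relevant curve becomes exactly the normalized sextic $\Lambda$ and so that $\hat\iota$ goes to $\iota$. Substituting the map into the two invariant quadrics $\ttx\ttw-\ttz^2$ and $\ttz^2-\tty\ttt$ should then give $u=\frac{q}{q-1}$.

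The second claim follows by composing with Proposition \ref{proposition:parameter}. Since $v=\frac{q-1}{q}$, we get $u=1/v$.

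The main obstacle is identifying the correct linear system and checking that the image of $\sC$-related data in $Q[u]$ is really the normalized $\Lambda$. A cruder alternative would be to use a $G$-invariant numerical quantity that separates the members of each family. One candidate is the position of the $\G_m$-fixed points on $\Lambda$ relative to the involution. Another is the cross-ratio of the four points where $\Lambda$ meets the fixed locus, matched against the corresponding data on $\Gamma\subset Q\langle q\rangle$. Either would determine $u$ up to the finite ambiguity $u\leftrightarrow$ its images under the symmetries of the family; that ambiguity would then be resolved by checking the Mukai--Umemura point, namely $q=1/5$ from Remark \ref{remark:counter-intuitive}.
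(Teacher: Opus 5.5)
Your reduction is sound, and it is genuinely different from the paper's. You go out of $X\langle q\rangle$ along Takeuchi's link from the unique $G$-invariant conic and compute $Q\langle q\rangle\dashrightarrow Q[u]$. The paper instead goes into $X^{\KP}[u]$ through the explicit quintic basis $\ttg_9,\dots,\ttg_{21},\ttg'_{15}$ of \cite{CS}, singles out $\mathbf{W}_1=\langle\ttg_{15},\dots,\ttg_{19}\rangle$ by weights, and reads $Q_1$ off the cubic map $[\frac{1-u}{u}\tth_{13}:\tth_{12}:\tth_{11}:\tth_{10}:\tth_9]$.

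However, your proposal stops exactly where the content lies, and what it says about that step does not work. The class $\sigma_{\sC}^*(-K_X)-2\mathbb{S}$ lives on the blowup along $\sC$, not on $X_0^+$, so it cannot be written in the basis of Lemma~\ref{lemma:pic}. Two things block this:
\begin{itemize}
\item $\sC$ meets $Z_1$ and $Z_2$, so the blowups along $\sC$ and along $Z_1\cup Z_2$ do not commute.
\item $\sC^{X_0}$ is a flopping curve of $\alpha$, so a vanishing condition along it is not the same condition along $\sC^{+X_0^+}$.
\end{itemize}
Moreover, every member of $|{-K_X}-2\sC|$ contains $Z_1$ and $Z_2$, since their strict transforms have degree $-1$ against $\sigma_{\sC}^*(-K_X)-2\mathbb{S}$. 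Because $\sigma^*(-K_X)\sim 6\,\sO_{Q_1}^{X_0}(1)-2S_1-3E$, the naive transform to $Q\langle q\rangle$ is a system of sextics. It has the fixed components $\{x_1^2=x_0x_2\}$ and $\{x_4=0\}$, which are the images of $F_1$ and $F_2$. What remains is a system of cubics. With $D:=x_1^2-x_0x_2$, one may take, up to rescaling the coordinates,
\[
(\ttx,\tty,\ttz,\ttt,\ttw)=\bigl(x_4(x_2x_3-x_1x_4),\,x_4(x_1x_3-x_0x_4),\,x_4D,\,x_2D,\,x_1D\bigr).
\]

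The decisive gap comes next. These cubics satisfy $\ttx\ttw-\tty\ttt+\ttz^2=0$ identically, with no $q$ in it. This is forced: the diagonal torus commuting with $\G_m$ carries any smooth $\G_m$-invariant quadric $\alpha\ttx\ttw+\beta\tty\ttt+\gamma\ttz^2$ to any other. So substituting into $\ttx\ttw-\ttz^2$ and $\ttz^2-\tty\ttt$ yields nothing until the coordinate scalars are pinned down. They must be fixed by requiring that the image of the $\sigma_\Lambda$-exceptional divisor be the \emph{normalized} $\Lambda$.

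Matching $\hat\iota$ with $\iota$ does not pin them down, since a two-parameter family of rescalings remains. Nor is the relevant locus ``$\sC$-related'': the divisor over $\sC$ goes to $\{\ttx\ttw=\tty\ttt\}$. What you need is the divisor of $Q\langle q\rangle$ corresponding to $\mathbb{E}^+\sim\sigma_{\sC}^*(-2K_X)-5\mathbb{S}$. This is the cubic section $\{x_1^2x_4=x_2^3\}$, on which $\ttx\ttw/\ttz^2=-q$ and $\tty\ttt/\ttz^2=1-q$. Rescaling both ratios to $1$ turns the quadric into a multiple of $u(\ttx\ttw-\ttz^2)+(\ttz^2-\tty\ttt)$ with $u=q/(q-1)$. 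The paper faces the same normalization and settles it by checking that $\{\tth_{15}=0\}$ maps onto the standard $\Gamma$.

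Your fallback fails for the same reason. The triple $(\pr^4,\Lambda,G)$ has no moduli, so fixed points, involution-fixed points and cross-ratios on $\Lambda$ are the same for every $u$. The parameter lives only in the pencil of $G$-invariant quadrics through $\Lambda$.
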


\begin{proof}
The proof is divided into 4 numbers of steps. 

\noindent\underline{\textbf{Step 1}}\\
We firstly recall that, for any prime Fano threefold $X$ of genus $12$ and 
a totally disjoint pair of lines $Z_1$, $Z_2$ in $X$, the rational map 
\[
\mu_1:=\psi_1\circ\phi_1^{-1}\circ\tau_1\circ\chi_1\circ\sigma_1^{-1}\colon 
X\dashrightarrow Q_1
\quad(\text{resp., }
\mu_2:=\psi_2\circ\phi_2^{-1}\circ\tau_2\circ\chi_2\circ\sigma_2^{-1}\colon
X\dashrightarrow Q_2) 
\]
in the link \eqref{equation:big} is given by the sub-linear system 
\[
\left|\sigma^*(-K_X)-2F_1-F_2\right|\quad 
(\text{resp., }\left|\sigma^*(-K_X)-F_1-2F_2\right|) 
\]
of the complete linear system $|-K_X|$. 

\noindent\underline{\textbf{Step 2}}\\
From now on, let us consider $X:=X^{\KP}\left[u\right]$ and let $Z_1$, $Z_2\subset X$ 
be the $\G_m$-invariant lines in $X$. 
We recall the result in \cite[\S 2]{CS}. Let us set 
$\ttf:=\ttx\ttw-\tty\ttt$. As in \cite[\S 2]{CS}, the equation $\ttf=0$ is 
the defining equation of the 
strict transform of $\mathbb{S}_{\sC_2\left[u\right]}$ to $Q\left[u\right]$. 
Moreover, let us set 
\[\begin{aligned}
&\tth_3:=\tty^3-\ttx^2\ttz, \quad \tth_5:=\ttx^2\ttt-\tty^2\ttz, \quad
\tth_6:=\ttx\ttf, \quad
\tth_7:=\tty\ttf, \quad \tth_8:=\tty^2\ttw-\ttx\ttz\ttt, \quad
\tth_9=\ttz\ttf, \\
&\tth_{10}:=\ttx\ttt^2-\tty\ttz\ttw, \quad
\tth_{11}:=\ttt\ttf, \quad \tth_{12}:=\ttw\ttf, \quad \tth_{13}:=\tty\ttw^2-\ttz\ttt^2, 
\quad \tth_{15}:=\ttt^3-\ttz\ttw^2, \\
&\ttg_{10}:=(u-1)\ttx^2\tty\ttz\ttw-3\ttx\tty^2\ttz\ttt+(2-u)\ttx\tty\ttz^3+\tty^4\ttw+
\ttx^3\ttt^2, \\
&\ttg'_{15}:=(u-1)\ttx^2\ttt^3+(u-1)\tty^3\ttw^2-(u+4)\tty^2\ttz\ttt^2+(3u+2)
\ttx\tty\ttz\ttt\ttw+4(1-u)\tty\ttz^3\ttt,\\
&\ttg_{20}:=(u-1)\ttx\ttz\ttt\ttw^2-3\tty\ttz\ttt^2\ttw+(2-u)\ttz^3\ttt\ttw+\ttx\ttt^4+
\tty^2\ttw^3, 
\end{aligned}
\]
and $\ttg_{j+6}:=\ttf\tth_{j}$ for $j=3,5,\dots,13,15$, 
as in \cite[(2.15), (2.17), (2.18)]{CS}. 
By \cite[(2.20)]{CS}, the linear span of $\ttg_9,\dots,\ttg_{21}, \ttg'_{15}$ in 
$H^0\left(Q\left[u\right], \sO_{Q\left[u\right]}(5)\right)$ gives the birational map 
\[
\mu_0:=\sigma_{\sC_2\left[u\right]}\circ\tilde{\chi}\circ\sigma_\Lambda^{-1}
\colon Q\left[u\right]\dashrightarrow X=X^{\KP}\left[u\right].
\] 
Therefore, the elements $\ttg_9,\dots,\ttg_{21}, \ttg'_{15}$ can naturally be seen as 
a basis of $H^0\left(X, -K_X\right)$. Note that there is a natural $G$-action 
to $H^0\left(X, -K_X\right)$. 

\noindent\underline{\textbf{Step 3}}\\
Let $\mathbf{W}_i$ ($i=1,2$) be the $5$-dimensional subspace 
$H^0\left(X, -K_X\right)$ corresponds 
to the rational map $\mu_i\colon X\dashrightarrow Q_i$ in Step 1. 
As in Remark \ref{remark:IKTT} or \cite[Proposition 5.1]{KP18}, both $Z_1$ and $Z_2$
intersect with $\sC_2\left[u\right]$. 
Together with the observation in Step 1, any element in the linear system 
$|\mathbf{W}_i|$ must contains the conic $\sC_2\left[u\right]$. 
Therefore, by \cite[Remark 2.21 and Lemma 2.22]{CS}, 
$\mathbf{W}_1$ and $\mathbf{W}_2$ must be contained in the $9$-dimensional 
$G$-invariant subspace 
\[
\mathbf{W}_0:=\langle \ttg_{11},\ttg_{12}, \ttg_{13}, \ttg_{14}, \ttg_{15}, \ttg_{16}, \ttg_{17}, 
\ttg_{18}, \ttg_{19}
\rangle\subset H^0\left(X, -K_X\right). 
\]
Both $\mathbf{W}_1$ and $\mathbf{W}_2$ are $\G_m$-invariant subspaces of 
$\mathbf{W}_0$, and $\iota^*\mathbf{W}_1=\mathbf{W}_2$ holds. 
Moreover, by Lemma \ref{lemma:pic} \eqref{lemma:pic1}, the subspace 
$\mathbf{W}_1\cap \mathbf{W}_2$ 
is $1$-dimensional. (Indeed, $E\subset X_0$ is the unique element in $H^0\left(
X_0,\sigma^*(-K_X)-2F_1-2F_2\right)$.)
Furthermore, as in Proposition \ref{proposition:book}, 
the weights of semi-invariant bases of $\mathbf{W}_i$ are arranged in equally spaced 
intervals. 
Therefore, after twisting by $\iota$ if necessary, we may assume that 
\[
\mathbf{W}_1=\langle\ttg_{15}, \ttg_{16}, \ttg_{17}, 
\ttg_{18}, \ttg_{19}\rangle\subset H^0\left(X, -K_X\right). 
\]

\noindent\underline{\textbf{Step 4}}\\
Therefore, we may assume that the composition 
$\mu_1\circ\mu_0\colon Q\left[u\right]\dashrightarrow Q_1$ is given by 
\[
\mu_1\circ\mu_0\colon\left[\ttx:\tty:\ttz:\ttt:\ttw\right]\mapsto
\left[\frac{1-u}{u}\tth_{13}:\tth_{12}:\tth_{11}:\tth_{10}:\tth_{9}\right]\in\pr^4_{x_0\dots x_4}.
\]
Let us consider the image of $\mu_1\circ\mu_0$. 
A general point of $Q\left[u\right]$ can be written as 
\[
\left[\ttx:u(\ttx\ttw-\ttz^2)+\ttz^2:\ttz:1:\ttw\right]\in\pr^4 
\]
with $(\ttx,\ttz,\ttw)\in\A^3$. 
The image of the point by $\mu_1\circ\mu_0$ is 
\[
\left[\ttx\ttw^3+\frac{1-u}{u}\ttz^2\ttw^2-\frac{1}{u}\ttz:(\ttx\ttw-\ttz^2)\ttw: 
\ttx\ttw-\ttz^2:\frac{1}{1-u}(\ttx-u\ttx\ttz\ttw^2-(1-u)\ttz^3\ttw)
:(\ttx\ttw-\ttz^2)\ttz\right]
\]
and thus the defining equation of $Q_1$ is 
\[
x_1x_3-x_2^2+\frac{u}{u-1}(x_2^2-x_0x_4)=0.
\]
Let us consider the image $S_i^X$ of the prime divisor $S_i\subset X_0$ in $X$ 
for $i=1, 2$ (in the sense of \S \ref{section:go}). 
Again by \cite[Remark 2.21 and Lemma 2.22]{CS}, the defining equation of $S_1^X$ 
(and also $S_2^X$) is given by either $\ttg_9=0$ or $\ttg_{21}=0$. 
As we have seen in \S \ref{section:go}, the image in $Q_1$ 
of the prime divisor $S_i^X$ is 
either contracted onto a twisted quartic curve or an element in $|\sO_{Q_1}(3)|$. 
Consider the surface in $Q\left[u\right]$ defined by the equation $\tth_{15}=0$. 
The surface is a rational surface and a general point of the surface can be written as 
\[
\left[\ttx:u\ttx\ttw+\frac{1-u}{\ttw^4}:\frac{1}{\ttw^2}:1:\ttw\right]\in\pr^4 
\]
with $\ttx,\ttw\in\Bbbk^\times$. The image of the point by $\mu_1\circ\mu_0$ is 
\[
\left[\ttw^4(\ttx\ttw^5-1):\ttw^3(\ttx\ttw^5-1):\ttw^2(\ttx\ttw^5-1):
\ttw(\ttx\ttw^5-1):\ttx\ttw^5-1\right].
\]
Therefore, the closure of the dominant image of the surface is the twisted quartic curve 
defined by the image of 
\begin{eqnarray*}
\pr^1_{t_0t_1} &\to&\pr^4_{x_0\dots x_4} \\
\left[t_0:t_1\right]&\mapsto&
\left[t_0^4:t_0^3t_1:t_0^2t_1^2:t_0t_1^3:t_1^4\right]. 
\end{eqnarray*}
In particular, the defining equation of $S_1^X$ is equal to $\ttg_{21}=0$ (and the line 
$Z_1$ is equal to the line $\ell_1\subset X^{\KP}\left[u\right]$ in the sense of 
\cite[\S 2]{CS}). 
Thus we have $q=\frac{u}{u-1}$. The equality $u=\frac{1}{v}$ immediately follows from 
Proposition \ref{proposition:parameter}. 
\end{proof}

\end{document}